\theoremstyle{definition}
\newtheorem{thm}[equation]{Theorem}
\newtheorem*{theoremA}{Theorem A}
\newtheorem*{theoremB}{Theorem B}
\newtheorem*{theoremC}{Theorem C}
\newtheorem*{theoremD}{Theorem D}
\newtheorem*{ack}{Acknowledgements}
\newtheorem{lemma}[equation]{Lemma}
\newtheorem{corollary}[equation]{Corollary}
\newtheorem{remark}[equation]{Remark}
\numberwithin{equation}{section} 
\def\End{\operatorname{End}}
\def\Aut{\operatorname{Aut}}
\def\Ind{\operatorname{Ind}}
\def\U{\operatorname{U}}
\def\Hom{\operatorname{Hom}}
\def\Lie{\operatorname{Lie}}
\def\dim{\operatorname{dim}}
\def\infl{\operatorname{infl}}
\def\Id{\operatorname{Id}}
\def\Ql{\overline{\mathbb{Q}}_{\ell}}
\def\Fl{\overline{\mathbb{F}}_{\ell}}
\def\Gal{\operatorname{Gal}}
\def\GL{\operatorname{GL}}
\def\ddag{{\,\dag}}
\def\Tr{\operatorname{Tr}}
\def\Res{\operatorname{Res}}
\def\Ind{\operatorname{Ind}}
\def\GE{G_E}
\def\ind{\operatorname{ind}}
\def\I{\operatorname{I}}
\def\R{\operatorname{R}}
\def\CC{\mathbb{C}}
\def\bbM{\mathbb{M}}
\def\GG{\mathbb{G}}
\def\ZZ{\mathbb{Z}} 
\def\J{{\rm J}}
\def\K{{\rm K}}
\def\Cc{\mathcal{C}}
\def\Hh{\mathcal{H}}
\def\Ll{\mathcal{L}}
\def\Mm{\mathcal{M}}
\def\Vv{\mathcal{V}}
\def\Ww{\mathcal{W}}
\def\AA{\mathfrak{A}}
\def\BB{\mathfrak{B}}
\def\HH{\mathfrak{H}}
\def\JJ{\mathfrak{J}}
\def\MM{\mathfrak{M}}
\def\QQ{\mathfrak{Q}}
\def\RR{\mathfrak{R}}
\def\a{\alpha} 
\def\b{\beta}
\def\e{\varepsilon}
\def\g{\gamma}
\def\k{\kappa}
\def\l{\lambda}
\def\o{\mathfrak{o}}
\def\p{\mathfrak{p}}
\def\s{\sigma}
\def\t{\tau}
\def\th{\theta}
\def\Ga{\Gamma}
\def\La{\Lambda}
\def\Y{\Upsilon}
\def\Si{\Sigma}
\def\om{\omega}
\def\>{\geqslant}
\def\<{\leqslant}
\def\nn{\mathfrak{n}}
\def\ie{\emph{i.e.}}
\def\cf{\emph{cf.}}
\def\tG{\widetilde{G}}
\def\tH{\widetilde{H}}
\def\tJ{\widetilde{J}}
\def\tK{\widetilde{K}}
\def\tP{\widetilde{P}}
\def\tQ{\widetilde{Q}}
\def\tU{\widetilde{U}}
\def\tM{\widetilde{M}}
\def\tL{\widetilde{L}}
\def\tGE{\tG_E}
\def\tth{\widetilde{\th}}
\def\teta{\widetilde{\eta}}
\def\tl{\widetilde{\l}}
\def\trho{\widetilde{\rho}}
\def\tk{\widetilde{\k}}
\def\tchi{\widetilde{\chi}}
\def\ttau{\widetilde{\t}}
\def\ov#1{\overline{#1}}
\def\({\left(}
\def\){\right)}
\def\presuper#1#2%
\tikzstyle{notestyleraw}=[
\begin{document}

\title{Cuspidal~$\ell$-modular representations of~$p$-adic classical groups}
\author{Robert Kurinczuk\footnote{Robert Kurinczuk, Heilbronn Institute for Mathematical Research, Department of Mathematics, University of Bristol, BS8 1TW, United Kingdom. Email: robkurinczuk@gmail.com},  Shaun Stevens\footnote{Shaun Stevens, School of Mathematics, University of East Anglia, Norwich, NR4 7TJ, United~Kingdom.~Email: shaun.stevens@uea.ac.uk}}
\maketitle

\begin{abstract}
\noindent For a classical group over a non-archimedean local field of odd residual characteristic~$p$, we construct all cuspidal representations over an arbitrary algebraically closed field of characteristic different from~$p$, as representations induced from a cuspidal type. We also give a fundamental step towards the classification of cuspidal representations, identifying when certain cuspidal types induce to equivalent representations; this result is new even in the case of complex representations. Finally, we prove that the representations induced from more general types are quasi-projective, a crucial tool for extending the results here to arbitrary irreducible representations.
\end{abstract}
\section{Introduction}
In recent years, congruences between automorphic representations have assumed a central importance in number theory.  This has led to the desire to understand representations of reductive~$p$-adic groups on vector spaces over fields of positive characteristic~$\ell$. There are vast differences between the cases~$\ell=p$ and~$\ell\neq p$, with the latter sharing many similarities with the theory of complex representations, including the existence of a Haar measure. However, there are also many important and interesting differences between the~$\ell\neq p$ theory and the theory for complex representations, including the presence of compact open subgroups of measure zero, the non-semisimplicity of smooth representations of compact open subgroups, and that cuspidal representations can and do appear as subquotients of parabolically induced representations (in fact, all of these phenomena are related).  In this article we focus on the~$\ell\neq p$ case, and work with an arbitrary algebraically closed field of characteristic~$\ell$ or zero.

The theory of (smooth) representations of a general reductive~$p$-adic group over such fields was developed by Vign\'eras in~\cite{Vig96}.  However, many subsequent articles and fundamental results (for example, the unicity of supercuspidal support) focus just on the general linear group.  One of the main reasons that this group has been more accessible for a modular theory, is that the Bushnell--Kutzko classification of irreducible complex representations via types extends in a natural way to~$\ell$-modular representations, which is the subject of the final chapter of [ibid.].  This classification, in favourable circumstances, allows one to reduce a problem to an analogous question in associated finite groups where hopefully it is either tractable to the pursuer, or already known.  Recently, this approach has been adopted for other groups: S\'echerre and M\'inguez in~\cite{VA2} for inner forms of~$\GL_n$; and the first author in~\cite{Kurinczuk} for unramified~$\U(2,1)$.  In this article, we pursue this approach for~$p$-adic classical groups~$G$ over locally compact non-archimedean local fields with odd residual characteristic.

Of particular importance in this approach is the construction of all irreducible cuspidal~complex representations of general linear groups as compactly induced representations.   We accomplish this for $\ell$-modular representations in our main results:

\begin{theoremA}[Theorems \ref{cusptype},~\ref{exhaustion}]
There is an explicit list of \emph{cuspidal types}, consisting of certain pairs $(J,\lambda)$, with $J$ a compact open subgroup of $G$ and $\lambda$ an irreducible $R$-representation of $J$ such that 
\begin{enumerate}
\item\label{part1theoremA} the compactly induced representation $\ind_J^G\lambda$ is irreducible and cuspidal;
\item every irreducible cuspidal representation arises as in \ref{part1theoremA}, for some cuspidal type~$(J,\lambda)$.
\end{enumerate}
\end{theoremA}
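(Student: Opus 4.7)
The plan is to adapt to the $\ell$-modular setting the strategy of Stevens for complex representations of $p$-adic classical groups. A cuspidal type $(J,\l)$ will be built from a skew semisimple stratum $[\La,n,0,\b]$: the group $J = J(\b,\La)$ has a pro-$p$ normal subgroup $J^1$ with reductive quotient $J/J^1$ a product of finite classical groups, and the representation takes the form $\l = \k\otimes\t$, where $\k$ is a $\b$-extension of an $\ell$-modular Heisenberg representation $\n$ of $J^1$ and $\t$ is the inflation of an irreducible cuspidal $R$-representation of $J/J^1$. I would first establish (or cite from preceding sections) the basic properties of $\ell$-modular simple characters, Heisenberg representations and $\b$-extensions, the bulk of which should follow by reduction-mod-$\ell$ arguments from the complex theory via suitable integral structures.

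For part (\ref{part1theoremA}), the main computation is that of the $G$-intertwining of~$\l$. Standard technology pins down the intertwining of $\n$ as $J^1 B^{\times} J^1$, where $B$ is the centraliser of $\b$, and since $\b$-extensions intertwine on the whole of the intertwining of~$\n$, the intertwining of $\l$ reduces to that of~$\t$ inside the finite reductive quotient of the fixator of $\La$. Cuspidality of $\t$ in the modular sense then forces the intertwining of $\l$ to be $J$ itself, whence irreducibility of $\ind_J^G\l$ follows via Mackey's formula. Cuspidality of $\pi = \ind_J^G\l$ is then obtained by computing $\R_P^G(\pi)$ for an arbitrary proper parabolic $P=MN$, using the Mackey-type filtration of the Jacquet module indexed by double cosets $P\backslash G / J$ in the style of Bushnell--Kutzko, and reducing the vanishing once more to cuspidality of $\t$ at proper parabolics of the finite reductive quotient.

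For part (ii), start from an irreducible cuspidal $\pi$. The first step is to produce \emph{some} semisimple type $(J_0,\l_0)$ contained in $\pi$, using a skew exhaustion theorem for $\ell$-modular semisimple characters applied to any nonzero vector of $\pi$ fixed by a sufficiently small compact open subgroup. Writing $\l_0 = \k_0 \otimes \t_0$, one must then show that $(J_0,\l_0)$ is cuspidal in the sense of part (\ref{part1theoremA}): that the underlying stratum is maximal (so $J_0/J_0^1$ is the full reductive quotient) and $\t_0$ is cuspidal as a representation of $J_0/J_0^1$. If either condition fails, the Bushnell--Kutzko cover construction, adapted to the $\ell$-modular classical setting, exhibits $\pi$ as a subquotient of a proper parabolic induction, contradicting cuspidality.

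The hard part will be the pathology of cuspidality in the modular world: an irreducible cuspidal representation can appear as a subquotient (though neither as a quotient nor as a subrepresentation) of a proper parabolic induction, so in the exhaustion step one cannot argue naively via Frobenius reciprocity. This is compounded by the non-semisimplicity of $\ell$-modular representations of finite reductive groups, which demands careful tracking of which~$\t_0$ count as cuspidal and how they interact with $\b$-extensions. The resolution is to transfer the analysis to Hecke algebra modules: $\pi$ will correspond to a simple module for $\Hh(G,\l_0)$, and for a non-cuspidal $(J_0,\l_0)$ the Hecke algebra isomorphism with the analogous algebra for a proper Levi subgroup will force $\pi$ to arise from parabolic induction from that Levi, contradicting cuspidality and closing the argument.
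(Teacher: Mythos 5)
Your overall architecture (strata, Heisenberg representations, $\beta$-extensions, $\lambda=\kappa\otimes\tau$, covers for exhaustion) matches the paper, but both halves of your argument lean on mechanisms that are exactly the ones that break when $\ell>0$. For part (i), you deduce irreducibility from ``intertwining of $\lambda$ equals $J$, hence Mackey gives $\End_G(\ind_J^G\lambda)\simeq R$, hence irreducible''. The last implication is a complex-coefficient fact (it uses unitarity/semisimplicity of restrictions to compact subgroups) and fails for $\ell$-modular representations: a scalar endomorphism algebra does not imply irreducibility of a compactly induced representation. The paper instead invokes Vign\'eras's criterion (its Lemma 2.4): one needs $\End_G(\ind_J^G\lambda)\simeq R$ \emph{and} the statement that whenever $\lambda$ is a subrepresentation of an irreducible $\pi$ it is also a quotient. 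The second condition is the real content, and it is supplied by the reduction-to-level-zero computation (Corollary 7.6 in the paper): $\R_{\kappa}\circ\I_{\kappa}(\tau)\simeq\tau$, equivalently the $\eta$-isotypic component of $\I_{\kappa}(\tau)$ is exactly $\kappa\otimes\tau$. That computation in turn rests on the asymmetric intertwining results for transfers of semisimple characters and Heisenberg representations and on the Morris--Vign\'eras level-zero decomposition in $\GE$; none of this appears in your outline, and without it your irreducibility step does not go through.

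For part (ii), you correctly flag the modular pathology, but your proposed fix is the one the paper explicitly rules out: you want to pass to simple modules over $\Hh(G,\lambda_0)$ and use a Hecke-algebra isomorphism with a proper Levi. Modulo $\ell$ the compactly induced representation $\ind_{J_0}^G\lambda_0$ need not be projective, so the category of representations generated by their $\lambda_0$-isotypic part is \emph{not} equivalent to $\Hh(G,\lambda_0)$-modules; this is precisely the obstruction the paper discusses (and why it proves quasi-projectivity separately and builds covers on pro-$p$ groups for future use). Moreover, even if you could produce such a correspondence, ``$\pi$ arises from parabolic induction from a proper Levi'' is not a contradiction with cuspidality in the modular setting, since cuspidal representations do occur as subquotients of proper parabolic inductions; the contradiction has to be with the vanishing of Jacquet modules. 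The paper's route avoids both problems: one first finds a skew semisimple character in $\pi$ (the exhaustion theorem of St05 applies verbatim since the groups are pro-$p$), chooses the pair with $P^{\circ}(\Lambda_E)$ minimal so that $\tau$ is cuspidal, and then, if the stratum is not cuspidal (non-maximal parahoric or non-compact centre of $\GE$), uses the $G$-cover $(J^{\circ}_P,\lambda^{\circ}_P)$ of Theorem 8.9: the defining property of a cover (in Vign\'eras's/Blondel's formulation, valid over $R$) forces the relevant Jacquet module of $\pi$ to be non-zero, directly contradicting cuspidality. You should replace your Hecke-module step by this cover property.
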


See below for a more precise definition of cuspidal type.  For complex representations this is the main result of \cite{St08}.  But here we do more, giving an initial refinement of this exhaustive list of cuspidal types.  Part of the data used to define a cuspidal type is a family of \emph{skew semisimple characters}.  In the case where two cuspidal types are defined relative to the same family (see below for a more precisely-worded condition), we obtain the following \emph{intertwining implies conjugacy} result:

\begin{theoremB}[Theorem \ref{cuspcomp}]
Let~$(J_1,\lambda_1)$,~$(J_2,\lambda_2)$ be cuspidal types defined relative to the same family of skew semisimple characters.  Then $\ind_{J_1}^G\lambda_1\simeq \ind_{J_2}^G\lambda_2$ if and only if there exists $g\in G$ such that $J_1^g=J_2$ and $\lambda_1^g\simeq \lambda_2$.
\end{theoremB}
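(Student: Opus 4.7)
The reverse implication is immediate, since conjugation by $g\in G$ identifies $\ind_{J_1}^G\lambda_1$ with $\ind_{J_2}^G\lambda_2^g\simeq\ind_{J_2}^G\lambda_2$. For the forward implication, my plan is to follow the Bushnell--Kutzko strategy of reducing intertwining-implies-conjugacy to its analogue for the finite reductive quotient.

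First I would pass from the isomorphism of induced representations to a non-trivial intertwining of the types themselves: Frobenius reciprocity combined with the Mackey decomposition, valid for compact induction from an open subgroup, applied to a nonzero map $\ind_{J_1}^G\lambda_1\to\ind_{J_2}^G\lambda_2$, produces some $g\in G$ with $\Hom_{J_1\cap J_2^g}(\lambda_1,\lambda_2^g)\neq 0$; equivalently, $g$ lies in the intertwining set $I_G(\lambda_1,\lambda_2)$.

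Next I would exploit the common-family hypothesis. Each $\lambda_i$ restricts on its pro-$p$ subgroup $H^1_i$ to a multiple of a skew semisimple character $\theta_i$ drawn from the given family, so any intertwiner of the $\lambda_i$ is also an intertwiner of the $\theta_i$. The intertwining theorem for skew semisimple characters --- extended from $\Ql$ to arbitrary $R$ by reduction modulo $\ell$, using that these characters take values in $p$-power roots of unity --- allows us to modify $g$ by elements of $J_1$ and $J_2$ so that $g$ conjugates the stratum data underlying $\lambda_1$ to those underlying $\lambda_2$; in particular $gJ_1g^{-1}=J_2$. Conjugating $(J_2,\lambda_2)$ by $g^{-1}$, which preserves the equivalence class of the compact induction, we may assume $J_1=J_2=:J$ and $\theta_1=\theta_2=:\theta$, reducing the problem to deciding whether $\lambda_1\simeq\lambda_2$ as representations of $J$. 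Writing $\lambda_i=\kappa_i\otimes\tau_i$ with $\kappa_i$ a beta-extension of the Heisenberg representation attached to $\theta$ and $\tau_i$ the inflation of a cuspidal $R$-representation of $\mathcal{G}:=J/J^1$, the uniqueness of beta-extensions up to twist gives $\kappa_2\simeq\kappa_1\otimes\chi$ for some character $\chi$ of $\mathcal{G}$, and the problem becomes whether $\tau_1\simeq\tau_2\otimes\chi^{-1}$ up to conjugation by $N_G(J)/J$ --- a classical intertwining-implies-conjugacy statement for cuspidal representations of a finite reductive group over $R$.

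I expect the main obstacle to lie in this second step: establishing the skew intertwining theorem for semisimple characters with $R$-coefficients in the generality needed to pin $g$ down to a conjugator of stratum data. In the complex setting this rests on the full semisimple strata machinery developed by Stevens, and the descent to modular coefficients should be essentially algebraic, but the bookkeeping required to match the integral and mod-$\ell$ beta-extensions, and to handle the non-semisimple behaviour of Heisenberg representations over $R$, is where the genuine new content will appear.
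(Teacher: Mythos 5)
There is a genuine gap at the step where you pass from an intertwiner of the characters to the conjugacy $gJ_1g^{-1}=J_2$. Because the two cuspidal types are defined relative to the same family, the skew semisimple characters $\th_1$ and $\th_2$ are transfers of one another, so $\GE\subseteq I_G(\th_1,\th_2)$ automatically; the asymmetric intertwining theorem (Theorem~\ref{classthetaint}) gives $I_G(\th_1,\th_2)=J_2\GE J_1$, and hence the only information you can extract from your $g$ is that, after modifying by elements of $J_1$ and $J_2$, it lies in $\GE$. An element of $\GE$ need not carry the stratum data of $\lambda_1$ to that of $\lambda_2$: the underlying self-dual lattice sequences $\La_E$ and $\Y_E$ give maximal parahoric subgroups $P^{\circ}(\La_E)$ and $P^{\circ}(\Y_E)$ of $\GE$ which may a priori lie in different conjugacy classes (maximal parahorics of a classical group are not all conjugate), and nothing at the level of the characters --- nor of the Heisenberg representations, since $\GE$ intertwines those as well by Theorem~\ref{intetas} --- can rule this out. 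The paper's proof of Theorem~\ref{cuspcomp} derives exactly this conjugacy from the cuspidality of the level-zero parts: one computes $\R_{\k_{\La}}\circ\I_{\k_{\Y}}(\t_{\Y})$ by a Mackey decomposition, compares it summand by summand with the level-zero composite $\R^E_{\La}\circ\I^E_{\Y}(\t_{\Y})$ (Lemma~\ref{lemmaifzerothen}, which rests on the one-dimensionality of the Heisenberg intertwining spaces), and then uses Vign\'eras's level-zero decomposition together with cuspidality (Corollary~\ref{levelzerocorollaries}~\ref{part2levelzerocorollaries}) to show that everything vanishes unless $P^{\circ}(\La_E)$ and $P^{\circ}(\Y_E)$ are $\GE$-conjugate (Corollary~\ref{maincorollary}~\ref{maincorollarypart1}). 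None of this appears in your outline, and without it the forward implication does not get started: an isomorphism of the induced representations gives you an element of $I_G(\lambda_1,\lambda_2)\subseteq J_2\GE J_1$, but that set is nonempty for trivial reasons under the common-family hypothesis.

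Two smaller points of divergence. First, once one may assume $\La=\Y$ (so $J_1=J_2$) and $\k_{\La}=\k_{\Y}\otimes\chi$, the paper does not appeal to any intertwining-implies-conjugacy statement for cuspidal representations of the finite quotient: Corollary~\ref{maincorollary}~\ref{maincorollarypart2} (maximality of the parahoric plus compact centre gives $\R_{\k_{\Y}}\circ\I_{\k_{\Y}}(\t)\simeq\t$) and adjunction yield $\Hom_G\bigl(\I_{\k_{\Y}}(\chi\otimes\t_{\La}),\I_{\k_{\Y}}(\t_{\Y})\bigr)\simeq\Hom_{M(\Y_E)}(\chi\otimes\t_{\La},\t_{\Y})\neq 0$, whence $\chi\otimes\t_{\La}\simeq\t_{\Y}$ directly. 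Second, your anticipated difficulty in ``reducing the intertwining theorem for semisimple characters mod $\ell$'' is not where the new content lies: since $H^1$ and $J^1$ are pro-$p$ and $\operatorname{char} R\neq p$, the character-level statements hold over $R$ verbatim; the genuinely new ingredient is the asymmetric version for two different lattice sequences related by transfer, proved via the $\dagger$-construction, and the reduction-to-level-zero formalism built on it.
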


Note that $\lambda_1^g$ here denotes the representation of $J_1^g=g^{-1}J_1 g$ given by $\lambda_1^g(j)=\lambda_1(gj g^{-1})$, for $j\in J_1$.  In forthcoming joint work with Skodlerack, this theorem will be combined with work of the second author and Skodlerack to prove an intertwining implies conjugacy result without the condition on the skew semisimple characters.  We now give more details and explain our approach. 
%{\color{blue}
%The classification of all irreducible (smooth) representations on complex vector spaces of general linear groups over locally compact non-archimedean local fields was accomplished in the fundamental work of Bushnell and Kutzko in~\cite{BK93}.    Of particular importance is the construction and classification of all irreducible cuspidal~$\mathbb{C}$-representations as compactly induced representations from extended maximal simple types.  In work of the second author, notably~\cite{St05},~\cite{St08} and~\cite{MiSt}, significant steps have been made towards accomplishing a ``Bushnell--Kutzko'' classification via types for all irreducible~$\mathbb{C}$-representations of classical groups over locally compact non-archimedean local fields of odd residual characteristic.  All irreducible cuspidal~$\mathbb{C}$-representations have been constructed and types have been constructed for each Bernstein block using Bushnell and Kutzko's theory of covers. This article adds to the compendium, giving interesting new comparisons between naturally related irreducible cuspidal~$\mathbb{C}$-representations in the second authors exhaustive list and extending the theory to~$\ell$-modular representations.}

Let~$G$ be a~$p$-adic classical group with~$p$ odd, that is (the points of) a unitary, symplectic or special orthogonal group defined over a locally compact non-archimedean local field $F$ of residual characteristic~$p$.  Let $\beta\in \Lie G$ be a semisimple element, and put $\GE=C_G(\beta)$ the $G$-centraliser of $\beta$.  Let $\Lambda$ be an $\o_F$-lattice sequence corresponding to a point in the Bruhat--Tits building of $\GE$.  From $\beta$ and $\La$ we get a set of \emph{self-dual semisimple characters} $\theta_{\Lambda}$ of a group $H^1_{\Lambda}$; and given another lattice sequence $\Upsilon$ as above, there is a canonical \emph{transfer map} giving a corresponding self-dual semisimple character $\theta_{\Y}$ of $H^1_{\Y}$.  Also write $J_{\Lambda}$ for the normaliser of $\theta_{\La}$ in the (non-connected) parahoric subgroup of $G$ corresponding to $\La$, and $J^1_{\La}$ for its pro-$p$ radical.  There is a unique irreducible representation $\eta_{\Lambda}$ of $J^1_{\La}$ which contains $\theta_{\La}$ on restriction.  Our first major diversion from the earlier results of the second author is:

%Our first task, which is accomplished in Section \ref{background}, is to describe the formation of semisimple strata and characters in~$G$, taken from earlier works of the second author, and to explain that it is valid in our greater generality with coefficients in an algebraically closed field~$R$ of characteristic~$\ell$ not equal to~$p$; essentially because it is a construction of characters on pro-$p$ subgroups of~$G$.  

\begin{theoremC}[{Theorems \ref{classthetaint} \& \ref{intetas}}]With notation as above.
\begin{enumerate}
\item The intertwining of $\theta_{\Lambda}$ with $\theta_{\Upsilon}$ is $J_{\Upsilon}\GE J_{\Lambda}$.
\item The intertwining spaces of $\eta_{\Lambda}$ with $\eta_{\Upsilon}$ are at most one dimensional; more precisely:
\[
\dim_{R}\Hom_{J_{\Lambda}^1\cap (J_{\Y}^1)^g}(\eta_\La,\eta_\Y^g)=
\begin{cases} 
1&\text{if }g\in J_\Y\GE J_\La;\\ 
0&\text{otherwise.}
\end{cases}
\]
\end{enumerate}
\end{theoremC}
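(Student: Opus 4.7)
The plan is to ascend to an ambient general linear group and then descend, adapting the second author's earlier single-lattice-sequence arguments to handle two distinct lattice sequences and modular coefficients. Throughout, write $\tG$ for the product of general linear groups in which $G$ sits as the fixed points of an involution $\s$, and $\tGE=C_{\tG}(\b)$; the groups $\tJ_\La$, $\tJ_\Y$ and the characters $\tth_\La$, $\tth_\Y$ are the corresponding $\tG$-objects that restrict/lift to the $G$-data.

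For part~(i), the inclusion $J_\Y\GE J_\La\subseteq I_G(\th_\La,\th_\Y)$ is essentially formal from the transfer property of semisimple characters and the definitions of the normalisers $J_\La$, $J_\Y$. For the reverse, first establish the analogous statement in $\tG$: the $\tG$-intertwining of $\tth_\La$ with $\tth_\Y$ equals $\tJ_\Y\tGE\tJ_\La$. This is a two-sequence refinement of the Bushnell--Kutzko--Stevens intertwining formula, obtainable by interpolating through a chain of lattice sequences and tracking the transfer, together with the block-decomposition of elements of $\tGE$ afforded by $\b$. To descend to $G$, given $g\in G$ with $g=yzx$, $y\in\tJ_\Y$, $z\in\tGE$, $x\in\tJ_\La$, applying $\s$ produces a second such factorisation of $g$; a cohomological argument -- exploiting that the pro-$p$ radicals $\tJ_\La^1$, $\tJ_\Y^1$ have vanishing first $\s$-cohomology when $p$ is odd -- then allows the factorisation to be rewritten with each factor individually $\s$-fixed, hence lying in $J_\Y$, $\GE$, $J_\La$ respectively.

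For part~(ii), the vanishing of the Hom-space outside $J_\Y\GE J_\La$ follows at once from~(i), because $\eta_\La|_{H^1_\La}$ is a multiple of $\th_\La$ (and similarly for $\Y$), so any nonzero intertwining of the $\eta$'s forces one of the underlying characters. For $g=uzv$ in the support, replacing $g$ by a suitable conjugate reduces the computation to the case $g=z\in\GE$. The characterisation of $\eta_\La$ as the unique irreducible $R$-representation of $J^1_\La$ containing $\th_\La$, together with the non-degenerate symplectic form on $J^1_\La/H^1_\La$ induced by $\th_\La$, realises $\eta_\La$ as a Heisenberg-type representation; a Mackey computation, valid over any $R$ of characteristic~$\ne p$ since $J^1_\La/H^1_\La$ is an $\FF_p$-vector space, then yields precisely a one-dimensional Hom-space, the uniqueness of dimension coming from the unicity clause in the Heisenberg construction.

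The main obstacle is the descent step in~(i): producing a $\s$-equivariant factorisation of an intertwining element across two distinct lattice sequences. In the single-sequence case one exploits the Iwahori-type decomposition internal to $\tJ_\La$, whereas here one must track the factorisation simultaneously in $\tJ_\La$ and $\tJ_\Y$ and verify compatibility through their joint interaction with $\tGE$. A further technical burden is that the requisite cohomological vanishing must hold on intersections such as $\tJ_\La^1\cap\tJ_\Y^1$ and their $\GE$-conjugates; once these structural facts are in place, part~(ii) reduces to a now-routine Heisenberg calculation, which behaves uniformly in the coefficient ring~$R$ precisely because the relevant quotients are $p$-groups and $\ell\ne p$.
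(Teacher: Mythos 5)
Your overall architecture (pass to $\tG$, descend to $G$ by a fixed-point argument, then deduce the Heisenberg statement) matches the paper's, but at the two places where the asymmetry $\La\neq\Y$ actually bites, your proposal substitutes a gesture for an argument. First, the $\tG$-level statement $I_{\tG}(\tth_\La,\tth_\Y)=\tJ^1_\Y\tGE\tJ^1_\La$ (Theorem~\ref{intchars1}) is the genuinely new content, and ``interpolating through a chain of lattice sequences and tracking the transfer'' is not a proof: no existing intertwining formula applies to a pair of non-conjugate lattice sequences, and an \emph{ab initio} redevelopment of the semisimple-character theory for pairs of sequences (as hinted at in~\cite[1.5.12]{BK93}) is a substantial undertaking. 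The paper avoids it by the $\dagger$-construction: both sequences are replaced by strict sequences $\La^\dag,\Y^\dag$ in $V^{\dag}$ whose orders are principal of the same block size, hence conjugate by some $x\in\tGE^\ddag$ (Lemma~\ref{lem:conj}); the known single-sequence formula then applies in $\tG^\ddag$, and one recovers the asymmetric formula by intersecting with the Levi $\tM^\ddag$ (vanishing $H^1$ of the $2$-group $\Ga$ on pro-$p$ groups) and restricting to one block. Second, your descent to $G$ silently assumes that $g\in G$ intertwining $\th_\La$ with $\th_\Y$ already intertwines $\tth_\La$ with $\tth_\Y$; this is exactly where the paper invokes the Glauberman correspondence (\cite[Corollary~2.5]{St00}) to get $I_G(\th_\La,\th_\Y)=I_{\tG}(\tth_\La,\tth_\Y)\cap G$, and the subsequent intersection $(\tJ_\Y\tGE\tJ_\La)\cap G=J_\Y\GE J_\La$ needs not only Theorem~\ref{fixedpt1} but also the semisimple intersection property (Corollary~\ref{ssintprop}), itself proved by another application of $\dagger$.

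For part~(ii), the vanishing off $J_\Y\GE J_\La$ does follow from part~(i) as you say, but the claim that the one-dimensionality on $J_\Y\GE J_\La$ is ``a now-routine Heisenberg calculation'' is where your argument would fail. With $\La\neq\Y$ the representations $\eta_\La$ and $\eta_\Y$ live on different groups and there is no single symplectic space on which to run the standard uniqueness argument; what the paper actually does is compare $\dim I_g(\ind_{H^1_\La}^{J^1_\La}\th_\La,\ind_{H^1_\Y}^{J^1_\Y}\th_\Y)$ with the number of double cosets $H^1_\Y\backslash J^1_\Y y J^1_\La/H^1_\La$, and the exact count $(J^1_\La:H^1_\La)^{1/2}(J^1_\Y:H^1_\Y)^{1/2}$ (Lemma~\ref{counting}) rests on the asymmetric index identity of Lemma~\ref{doublecosets1}, proved via the asymmetric exact sequences of Lemma~\ref{exactsequences2}\ref{exactsequences2:2} (again a $\dagger$-consequence) and the Haar-measure/lattice-duality lemmas of~\cite[5.1.3, 5.1.5]{BK93}. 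Without this counting input, your Mackey computation gives an upper bound structure but not the precise dimension one; so the quantitative heart of Theorem~\ref{intetas} is missing from the proposal.
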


This theorem is an asymmetric generalisation of~\cite[Propositions 3.27 \& 3.31]{St05} (\cf~also~\cite{MiSt}) which deals with the case~$\La=\Y$.  It appears possible, and indeed it is already hinted at in~\cite[1.5.12]{BK93}, that one could prove such an intertwining result by developing the theory \emph{ab initio}, with lattice sequences such as these rather than just a single lattice sequence.  However, our approach is more brief and elegant, utilising a construction for semisimple characters to relate the case of not necessarily conjugate lattice sequences to the case of conjugate lattice sequences in a larger group.  This construction is inspired by a similar one for simple strata, in work of the second author with Broussous and S\'echerre \cite{BSS}.  

%Our next theorem extends the intertwining of the semisimple characters to their associated Heisenberg representations.  Let~$\eta_{\La}$ be the unique Heisenberg representation containing~$\th_{\La}$ and~$\eta_{\Y}=\t_{\La,\Y,\b}(\eta_{\La})$ be the unique Heisenberg representation containing~$\th_{\Y}$.  
%
%
%
%Our proof follows the original of Bushnell and Kutzko for Heisenberg representations for general linear groups in~\cite[5.1.8]{BK93}, which adapts surprisingly well to our asymmetric setting.  

The next step is to extend $\eta_{\La}$ to a suitable representation of $J_{\Lambda}$, called a \emph{$\beta$-extension}, which is accomplished in Section \ref{betasects}.  While we have to change the proofs of~\cite{St08} here, the changes are straightforward.  That the formation of covers, of~\cite{St08} and~\cite{MiSt}, is still valid in positive characteristic is proved in Sections \ref{skewcoverssect} and \ref{selfdualcovers}.  Let $\kappa_{\La}$ be a $\beta$-extension of $\eta_{\Lambda}$.  The quotient $J_{\La}/J_{\La}^1$ is a product of finite reductive groups and we write $J^{\circ}$ or the inverse image of the connected component.  Let $\tau$ be an irreducible representation of $J_{\La}/J_{\La}^1$ with cuspidal restriction to $J^{\circ}_{\La}/J^1_{\La}$, and put $\lambda=\kappa_{\La}\otimes\tau$ and $J=J_{\La}$. We call the pair $(J,\lambda)$ a \emph{type}; and if the centraliser~$\GE$ has compact centre, and the corresponding (connected) parahoric subgroup~$J^{\circ}_{\La}\cap\GE$ is maximal, we call the pair~$(J,\lambda)$ a \emph{cuspidal type}.

Finally, we are able to extend the main result of the second author in~\cite{St08} to~$\ell$-modular representations (see Theorem A).  Our approach to proving Theorem A is different to~\cite{St08} at the top level of the construction, relying on a \emph{reduction to level zero} argument (see Section \ref{Redlevzerosect}).  Thanks to our work in this paper on asymmetric intertwining of semisimple characters and Heisenberg representations, this new approach allows us to compare cuspidal representations in this exhaustive list whose semisimple characters are in the same family (i.e. are related by the transfer map), and make an initial refinement of the exhaustive list (see Theorem B).

We now mention further results we prove with future work in mind.  In the $\ell$-modular setting, compactly induced representations from types may not be projective.  This provides an obstruction to following Bushnell--Kutzko's approach via covers to the admissible dual, as the category of representations containing a type $(J,\lambda)$, will not in general be equivalent to the the category of right modules over the algebra $\End_G(\ind_J^G\lambda)$.  Following M\'inguez--S\'echerre we construct covers on pro-$p$ groups (Theorem \ref{Gcovers2}); these will have the advantage of providing such an equivalence of categories to the category of modules over an algebra as above. It may be that this algebra will prove unwieldy for classification purposes, but it can be related to a similar algebra in depth zero. For general linear groups, promising initial results in this direction have recently been obtained by Chinello in his thesis \cite{GC15}, while Dat has begun a detailed study of the depth zero subcategory in \cite{Dattame}.  Writing~$\lambda^\circ$ for an irreducible component of the restriction of~$\lambda$ to~$J^\circ$, we thus show:
\begin{theoremD}[Theorem 10.2] 
The representation $\ind_{J^{\circ}}^G \lambda^\circ$ is quasi-projective.  
\end{theoremD}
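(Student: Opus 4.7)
The plan is to establish quasi-projectivity of $V := \ind_{J^\circ}^G \lambda^\circ$ by a reduction-to-level-zero argument, exploiting the tensor factorisation $\lambda^\circ = \kappa^\circ \otimes \tau^\circ$---with $\kappa^\circ := \kappa_\La|_{J^\circ}$ and $\tau^\circ$ the inflation to $J^\circ$ of an irreducible cuspidal representation of the connected finite reductive quotient $J^\circ/J^1$---together with the asymmetric intertwining control provided by Theorem~C. Recall that $V$ is quasi-projective exactly when, for every $G$-subrepresentation $U \subseteq V$, every $G$-morphism $f : V \to V/U$ admits a lift $\widetilde{f} : V \to V$ whose composite $V \to V \to V/U$ equals $f$.

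First, by Frobenius reciprocity, the existence of such a lift amounts to: given any $J^\circ$-morphism $\lambda^\circ \to V/U$, produce a $J^\circ$-morphism $\lambda^\circ \to V$ whose image in $V/U$ is the given morphism. Next, apply the Mackey formula
\[
V|_{J^\circ} \;=\; \bigoplus_{g \in J^\circ \backslash G / J^\circ}\ind_{J^\circ \cap (J^\circ)^g}^{J^\circ}\bigl((\lambda^\circ)^g\bigr),
\]
and pass to $\lambda^\circ$-isotypic components. Because $J^1$ is pro-$p$ and the residue characteristic of $R$ is $\ell \neq p$, the $\eta_\La$-isotypic functor is exact on smooth $R$-representations, so these components commute with the quotient $V \twoheadrightarrow V/U$. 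By Theorem~C(ii), only double cosets meeting $J\GE J$ contribute, and the contribution from the Heisenberg factor $\eta_\La$ to each local intertwining space is exactly one-dimensional.

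Using this one-dimensionality, I would construct, via the $\beta$-extension $\kappa^\circ$, a canonical identification between the $\lambda^\circ$-isotypic component of an arbitrary $G$-subquotient of $V$ and the $\tau^\circ$-isotypic component of a corresponding $\GE$-subquotient of the depth-zero representation $V_0 := \ind_{J^\circ \cap \GE}^{\GE}\tau^\circ$, matching $\End_G(V) \simeq \End_{\GE}(V_0)$ and inducing compatible $\Hom$-spaces. Under this identification, quasi-projectivity of $V$ transfers to quasi-projectivity of $V_0$. Since $\tau^\circ$ descends to an irreducible cuspidal $R$-representation of the connected finite reductive quotient of the parahoric $J^\circ \cap \GE$ of $\GE$, the latter follows from quasi-projectivity of cuspidal $R$-representations of finite reductive groups in characteristic $\ell \neq p$, combined with standard depth-zero Mackey arguments inside $\GE$.

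The principal obstacle is making the passage from $V$ to $V_0$ truly functorial on arbitrary $G$-subrepresentations---not merely on irreducible constituents or endomorphism algebras. This functoriality rests crucially on Theorem~C(ii): the one-dimensionality of the local intertwining spaces of $\eta_\La$ is precisely what ensures that the $\eta_\La$-isotypic functor transports smooth $R$-subrepresentations of $V$ faithfully to smooth $R$-subrepresentations of $V_0$, so that subobjects and quotients correspond exactly, and the reduction to the depth-zero case can be carried through.
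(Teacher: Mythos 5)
Your central step is a genuine gap, and you have flagged it yourself: the ``canonical identification'' between arbitrary $G$-subquotients of $V=\ind_{J^{\circ}}^{G}\lambda^{\circ}$ and $\GE$-subquotients of the depth-zero representation $V_0$, together with $\End_G(V)\simeq\End_{\GE}(V_0)$, is asserted but never constructed, and it does not follow from Theorem C(ii). One-dimensionality of the spaces $I_g(\eta_\La,\eta_\Y)$ yields, via Lemma~\ref{BK532}, vector-space isomorphisms of individual double-coset contributions, but by itself it gives no compatibility with the action of the finite reductive quotient, with convolution in the Hecke algebras, or with arbitrary smooth subrepresentations -- which is exactly what a functorial transfer of quasi-projectivity would require. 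Indeed the paper only establishes a support-preserving \emph{injection} $\Hh(P(\La'_E),\rho)\hookrightarrow\Hh(G,\l)$, not an isomorphism, and in Section~\ref{Qproj} it explicitly declines to prove even that the relevant distinguished representatives $n$ intertwine $\k^{\circ}_{\Y}$ (the hypothesis under which Lemma~\ref{quasprojlemma} upgrades from a vector-space isomorphism to one of representations), precisely because that is not needed. So the bridge on which your whole reduction rests is the hard, unproved statement; quasi-projectivity cannot simply be ``transferred'' across it, and the failure of projectivity of compactly induced types in the modular setting is exactly why no naive categorical comparison with the depth-zero side is available.

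For contrast, the paper's proof never leaves the restriction to $J_{\Y}$: it verifies Vign\'eras's simple criterion for quasi-projectivity (Lemma~\ref{quasiprojlemma}). Since $J^1_{\Y}$ is pro-$p$, the $\eta$-isotypic component of $\I_{\k^{\circ}_{\Y}}(\t)$ is a direct summand of its restriction and the complement has no subquotient containing $\eta$, hence none isomorphic to $\lambda^{\circ}$; then $\I_{\k^{\circ}_{\Y}}(\t)^{\eta}\simeq\k^{\circ}_{\Y}\otimes\R_{\k^{\circ}_{\Y}}\circ\I_{\k^{\circ}_{\Y}}(\t)$, and the Mackey summands of $\R_{\k^{\circ}_{\Y}}\circ\I_{\k^{\circ}_{\Y}}(\t)$ are controlled by comparison with level zero: cuspidality of $\t$ kills every coset whose projection $w$ satisfies $wK\neq K$ (Lemma~\ref{lemmaifzerothen} with Corollary~\ref{levelzerocorollaries}), while Lemma~\ref{quasprojlemma} shows each surviving summand has dimension $\dim\t$, so the $\lambda^{\circ}$-isotypic part is a direct summand and no copy of $\lambda^{\circ}$ hides in its complement. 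If you want to salvage your plan, redirect it at this restriction-level statement rather than at an identification with the depth-zero block of $\GE$.
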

Thanks to work of Vign\'eras and Arabia \cite{vignerasselecta}, this implies that the irreducible quotients of $\ind_{J^{\circ}}^G \lambda^\circ$ are in bijection with the simple right modules of $\End_G(\ind_{J^{\circ}}^G \lambda^\circ)$, (see Section~\ref{background} for details).  As any irreducible representation of $G$ is a quotient of such an induced representation, this result is the starting point of an approach to classifying all irreducible $\ell$-modular representations of $G$.  

\begin{ack}
This work was supported by the Engineering and Physical Sciences Research Council (EP/H00534X/1) and the Heilbronn Institute for Mathematical Research.
\end{ack}

\section{Notation and background}\label{background}
%%%%%%%%%%%%%%%%%%%%%%%%%%%%%%%%%%%%%%%%%%%%%%%%%%%
%%%%%%%%%%%%%%%%%%%%%%%%%%%%%%%%%%%%%%%%%%%%%%%%%%%

Let~$F_0$ be a non-archimedean local field of odd residual
characteristic~$p$ and let~$F$ be either~$F_0$ or a quadratic
extension of~$F_0$.  Let~$\ov{\phantom{w}}$ denote the generator
of~$\Gal(F/F_0)$.  If~$E$ is a non-archimedean local field we denote
by~$\o_E$ the ring of integers of~$E$, by~$\p_E$
the unique maximal ideal of~$\o_E$, by~$k_E$ the residue
field and by~$q_E$ the cardinality of~$k_E$.  We
write~$\o_0=\o_{F_0}$, and similarly
abbreviate~$\p_0,k_0,q_0$. We fix a uniformizer~$\varpi_F$
of~$F$ such that~$\ov{\varpi_F}=-\varpi_F$ if~$F/F_0$ is
ramified and~$\ov{\varpi_F}=\varpi_F$ otherwise.  We fix a
character~$\psi_0$ of the additive group~$F_0$ with
conductor~$\p_0$ and let~$\psi_F=\psi_0\circ \Tr_{F/F_0}$.

Let~$V$ be an~$N$-dimensional~$F$-vector space equipped with a
non-degenerate~$\e$-hermitian form~$h:V\times V\to F$ with~$\e=\pm1$.
Let~$A=\End_F(V)$ and~$\tG=\Aut_F(V)$. 
The group~$G^+=\{g\in G: h(gv,gw)=h(v,w)\text{ for all }v,w\in V\}$ is
the~$F_0$-points of a unitary, symplectic or orthogonal algebraic
group~$\GG^+$ defined over~$F_0$. We let~$G$ denote
the~$F_0$-points of the connected component of~$\GG^+$ and
call~$G$ a \emph{classical group}.  Hence the special orthogonal group
is a classical group whereas the full orthogonal group is not. 

Let~$\ov{\phantom{w}}$ denote the adjoint (anti)-involution
induced on~$A$ by~$h$ and let~$A^{-}=\{a\in
A:a+\ov{a}=0\}\simeq\text{Lie}(G)$.  Let~$\s$ denote both
the involution on~$\tG$ defined by~$\s:g\mapsto
\ov{g}^{-1}$, for~$g\in \tG$, and its
derivative~$a\mapsto -\ov{a}$, for~$a\in A$.  Let~$\Si$ be
the cyclic group of order two generated by~$\s$.
Then~$G^+=\tG^{\Si}$ and~$A^-=A^{\Si}$.  We
have~$A=A^{-}\oplus A^{+}$ where~$A^+=\{a\in A:
a-\ov{a}=0\}$. We let~$\psi_A=\psi_F\circ \Tr_{A/F}$.  If~$S$ is
a subset of~$A$, we let~$S^*=\{x\in A:\psi_A(xS)=1\}$.

We let~$R$ denote an algebraically closed field of
characteristic~$\ell$ different from~$p$, allowing the case~$\ell=0$. For any locally compact topological group~$H$, we denote by~$\RR_R(H)$ the category of smooth~$R$-representations of~$H$.

%%%%%%%%%%%%%%%%%%%%%%%%%%%%%%%%%%%%%%%%%%%%%%%%%%%
\subsection{Representations and Hecke algebras}
%%%%%%%%%%%%%%%%%%%%%%%%%%%%%%%%%%%%%%%%%%%%%%%%%%%

For general results on representations of reductive~$p$-adic groups
over an algebraically closed field of characteristic different from~$p$, we
refer to Vign\'eras's book~\cite{Vig96}.

Let~$G$ be a reductive~$p$-adic group.  Let~$K, K_1,K_2$ be compact
open subgroups of~$G$,~$(\t,\Ww)$ be a smooth~$R$-representation of~$K$, and~$(\t_i,\Ww_i)$ be
smooth~$R$-representations of~$K_i$, for~$i=1,2$. For~$g\in G$,
the~$g$-\emph{intertwining space} of~$\t_1$ with~$\t_2$ is defined to
be the set
\[
I_g(\t_1,\t_2)=\Hom_{K_1\cap K_2^g}(\t_1,\t_2^g),
\]
and the \emph{intertwining} of~$\t_1$ with~$\t_2$ in~$G$ is 
\[
I_G(\t_1,\t_2)=\{g\in G: I_g(\t_1,\t_2)\neq0\},
\]
where~$K_2^g=g^{-1}K_2 g$ and~$\t_2^g(x)=\t_2(gxg^{-1})$
for~$x\in K_2^g$.  For an~$R$-representation~$(\pi,\Vv)$ of a
locally profinite group we denote by~$(\pi^{\vee},\Vv^{\vee})$
its contragredient representation.

\begin{remark}
The motivation for this definition is provided by the following decomposition
\[\Hom_G(\ind_{K_1}^G(\tau_1),\ind_{K_2}^G(\tau_2))\simeq\bigoplus_{K_2\backslash I_G(\tau_1,\tau_2)/K_1} I_g(\tau_1,\tau_2),\]
 by reciprocity and Mackey theory.  Note that, if~$K=K_1=K_2$,~$\tau=\tau_1=\tau_2$ and~$g\in G$, for complex representations or if~$K$ is pro-$p$, the spaces~$I_g(\tau)=\Hom_{K\cap K^g}(\t,\t^g)$ and~$\Hom_{K\cap \presuper{g}K}(\t,\presuper{g}\t)$ are the same, as representations of~$K\cap \presuper{g}K=K\cap K^g$ are semisimple, so in previous works one sees intertwining defined in either way. % {\color{red}However, if~$K$ is not pro-$p$ this may no longer be the case.}
\end{remark}

Suppose that~$K_1$ and~$K_2$ are normal open subgroups of~$K$.
Let~$\Hh(G,\t_1,\t_2)$ be the~$R$-vector space of
compactly supported functions~$f:G\to
\Hom_R(\Ww_1,\Ww_2)$ which transform on the left
by~$\t_2$ and on the right
by~$\t_1$. Let~$\Hh(G,\t)=\Hh(G,\t,\t)$
denote the~$R$-algebra consisting of compactly supported
functions~$f:G\to \End_R(\Ww)$ which transform on the
left and the right by~$\t$ together with the convolution product 
\[
f_1\star f_2 (h)=\sum_{g\in G/K}f_1(g)f_2(g^{-1}h),
\]
for~$f_1,f_2\in\Hh(G,\t)$. This algebra has a unit element
if the index of every open subgroup in~$K$ is invertible
in~$R$ (\ie~the pro-order of~$K$ is invertible in~$R$).
The~$K$-invariant bilinear
pairing~$\langle\phantom{w},\phantom{w}\rangle$
on~$\Ww\times\Ww^{\vee}$ induces an
anti-isomorphism~$\Hh(G,\t)\to\Hh(G,\t^{\vee})$
by~$f\mapsto f^{\vee}$ with~$f^{\vee}$ defined by~$\langle
w,f^{\vee}(g^{-1})\check{w}\rangle=\langle f(g)w,\check{w}\rangle$
for all~$w\in\Ww$,~$\check{w}\in\Ww^{\vee}$.  Under
convolution~$\Hh(G,\t_1,\t_2)$ has
an~$(\Hh(G,\t_1),\Hh(G,\t_2))$-bimodule
structure.  If~$g\in G$, we let~$\Hh(G,\t_1,\t_2)_g$
denote the subspace of all functions with support~$K_1gK_2$.
 
Under composition,~$\End_G(\ind_K^G\t)$ has an~$R$-algebra
structure and~$\Hom_G(\ind_{K_1}^G\t_1,\ind_{K_2}^G\t_2)$ is 
an~$(\End_G(\ind_{K_1}^G\t_1),\End_G(\ind_{K_2}^G\t_2))$-bimodule.
The proof of the following Lemma follows from the proofs contained in
{\cite[\S8.5,~8.6,~\&~8.10]{Vig96}}.

\begin{lemma}\label{heckelemma} 
\begin{enumerate}
\item\label{heckelemma:1} We have an isomorphism of algebras
\[
\Hh(G,\t)\simeq \End_G(\ind_K^G\t).
\]
\item For~$i=1,2$, we identify~$\Hh(G,\t_i)$
with~$\End_G(\ind_{K_i}^G\t_i)$ by~\ref{heckelemma:1}.  We have
an isomorphism of~$(\Hh(G,\t_1),\Hh(G,\t_2))$-bimodules 
\[
\Hh(G,\t_1,\t_2)\simeq \Hom_G(\ind_{K_1}^G\t_1,\ind_{K_2}^G\t_2).
\]
\item\label{heckelemma:3} For~$i=1,2$, let~$H_i$ be compact open
subgroups of~$G$ containing~$K_i$.  We have an isomorphism
of~$(\Hh(G,\t_1),\Hh(G,\t_2))$-bimodules 
\[
 \Hh(G,\ind_{K_1}^{H_1}\t_1,\ind_{K_2}^{H_2}\t_2)\simeq\Hh(G,\t_1,\t_2),
\]
which restricts to give isomorphisms of vector spaces, for~$g\in G$, 
\[
\Hh(G,\ind_{K_1}^{H_1}\t_1,\ind_{K_2}^{H_2}\t_2)_g\simeq 
\coprod_{\substack{h\in H_1\backslash G/H_2\\K_1hK_2=K_1gK_2}}
\Hh(G,\t_1,\t_2)_h.
\]
\end{enumerate}
\end{lemma}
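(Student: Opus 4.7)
The strategy is to realise each Hom/End space concretely as functions $G\to\Hom_R(-,-)$ under convolution, following Vign\'eras \cite[\S8.5, 8.6, 8.10]{Vig96}, and then to verify the bimodule compatibilities directly. Throughout, I view $\ind_K^G\t$ as the space of compactly supported functions $\phi:G\to\Ww$ satisfying $\phi(kg)=\t(k)\phi(g)$ for all $k\in K$, $g\in G$, and similarly for the other inductions.

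For part~(i), define $\Phi:\Hh(G,\t)\to\End_G(\ind_K^G\t)$ by
\[
(\Phi(f)\phi)(h)=\sum_{g\in G/K}f(g)\phi(g^{-1}h),
\]
the sum being finite since $f$ is compactly supported modulo $K$. A short calculation shows that $\Phi(f)$ is well-defined and $G$-equivariant, and that $\Phi$ is a homomorphism of $R$-algebras, using the convolution formula for $\star$ given in the excerpt. The inverse map comes from Frobenius reciprocity for compact induction: an endomorphism $T$ is determined by the $K$-map $w\mapsto T(\phi_w)$, where $\phi_w\in\ind_K^G\t$ is supported on $K$ with $\phi_w(k)=\t(k)w$; the associated Hecke function is then $g\mapsto (w\mapsto T(\phi_w)(g^{-1}))$. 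No averaging or semisimplicity is invoked, so the argument goes through over $R$ of arbitrary characteristic different from~$p$. Part~(ii) is the same construction with $\End_R(\Ww)$ replaced by $\Hom_R(\Ww_1,\Ww_2)$; one verifies that the two convolution actions on $\Hh(G,\t_1,\t_2)$ match the endomorphism bimodule structure on either end by direct calculation.

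For part~(iii), I would use transitivity of compact induction, $\ind_{K_i}^G\t_i\simeq\ind_{H_i}^G\ind_{K_i}^{H_i}\t_i$, and apply part~(ii) to both sides of this identification to produce the claimed bimodule isomorphism. The refinement on the $g$-supports reduces to a coset counting argument: writing $H_1 g H_2=\bigsqcup_h K_1 h K_2$ with $h$ running over representatives of the fibre of $K_1\backslash G/K_2\to H_1\backslash G/H_2$ above $H_1gH_2$, a function supported on $H_1gH_2$ and valued in $\Hom_R(\ind_{K_1}^{H_1}\Ww_1,\ind_{K_2}^{H_2}\Ww_2)$ corresponds canonically, by restricting to the pieces of the Mackey decomposition of $\ind_{K_i}^{H_i}\Ww_i$, to a collection of functions supported on the individual double cosets $K_1 h K_2$ and valued in $\Hom_R(\Ww_1,\Ww_2)$. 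I expect no substantial obstacle; the only care needed is in tracking the left and right convolution actions through these identifications so that the $(\Hh(G,\t_1),\Hh(G,\t_2))$-bimodule structures are respected, which is routine bookkeeping once the explicit convolution isomorphisms of~(ii) are in place.
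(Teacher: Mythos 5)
This is the standard argument that the paper itself simply outsources to Vign\'eras~\cite[\S8.5, 8.6, 8.10]{Vig96} --- explicit convolution kernels, Frobenius reciprocity, and transitivity of induction plus Mackey decomposition for the support refinement --- so your route coincides with the paper's. Two harmless bookkeeping points: with your model $\phi(kg)=\t(k)\phi(g)$ and your convolution formula, the inverse kernel of $T$ should be $g\mapsto\bigl(w\mapsto T(\phi_w)(g)\bigr)$ rather than $T(\phi_w)(g^{-1})$ (as written the latter transforms by $\t$ on the wrong sides), and your reading of the coproduct in (iii) --- summing over the $K_1$-$K_2$ double cosets contained in $H_1gH_2$ --- is the intended one, being exactly how the lemma is invoked later, e.g.\ in the proof of Theorem~\ref{intetas}.
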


\subsection{Lattice sequences and parahoric subgroups}

An \emph{$\o_F$-lattice sequence} in~$V$ is a function 
\[
\La:\ZZ\to\{\o_F\text{-lattices in }V\}
\]
which is \emph{decreasing}, that is~$\La(n+1)\subseteq \La(n)$,
for all~$n\in\ZZ$, and \emph{periodic}, that is, there
exists a positive integer~$e(\La)$ such
that~$\La(n+e(\La))=\varpi_F\La(n)$, for all~$n\in\ZZ$.

The~$\e$-hermitian form~$h$ defines a duality on the set
of~$\o_F$-lattices; given an~$\o_F$-lattice~$L$ we
let~$L^\sharp=\{v\in V:h(v,L)\subseteq \p_F\}$.  An~$\o_F$-lattice
sequence~$\La$ is called \emph{self-dual} if~$\La(k)^\sharp=\La(1-k)$,
for all~$k\in\ZZ$.

An~$\o_F$-lattice sequence~$\La$ induces a decreasing filtration
on~$A$ by~$\o_F$-lattices~$\AA_n(\La)$ in~$A$ where
\[
\AA_n(\La)=\{x\in A:x\La(m)=\La(m+n),m\in\ZZ\},~\text{for }n\in\ZZ.
\]
This filtration induces a \emph{valuation} on~$A$ defined by
\[
\nu_{\La}(x)=\begin{cases}
\sup\{n\in\ZZ:x\in\AA_n(\La)\}&\text{if }x\in A\backslash\{0\};\\
\infty&\text{if }x=0.
\end{cases}
\]
If~$\La$ is self-dual, it induces a decreasing filtration on~$A^{-}$
by~$\o_F$-lattices~$\AA^{-}_n(\La)$ in~$A^{-}$ where
\[
\AA^{-}_n(\La)=\AA_n(\La)\cap A^-,~ \text{for}~ n\in\ZZ.
\]
We let
\[
\tP^n(\La)=\begin{cases}
\AA_n(\La)^{\times} &\text{if }n=0;\\
1+\AA_n(\La)&\text{if }n>0.
\end{cases}
\]
Then~$\tP(\La)=\tP^0(\La)$ is a compact open subgroup of~$\tG$
and~$\tP^n(\La)$,~$n>0$, is a decreasing filtration of~$\tP(\La)$ by
normal open subgroups.  If~$\La$ is self-dual
then~$P(\La)=\tP(\La)\cap G$ (resp.~$P^+(\La)=\tP(\La)\cap G^+$) is a
compact open subgroup of~$G$ (resp.~$G^+$) which has a decreasing
filtration of normal compact open subgroups~$P^n(\La)=\tP^n(\La)\cap
G$,~$n>0$.  We have a short exact sequence 
\[
1\to P^1(\La)\to P(\La)\xrightarrow{\pi} M(\La)\to 1
\]
where~$M(\La)$ is the~$k_0$-points of a reductive group~$\bbM$ defined
over~$k_0$. Let~$M^\circ (\La)$ denote the~$k_0$-points of the
connected component of~$\bbM$ and let~$P^\circ(\La)$ be the inverse
image of~$M^\circ (\La)$ under~$\pi$.  We call the
subgroups~$\tP(\La)$ of~$\tG$ and~$P^\circ(\La)$ of~$G$
\emph{parahoric subgroups}.

In fact, by~\cite{BroussousLemaire} and~\cite{Lemaire}, the
filtrations of parahoric subgroups defined here, by considering
different (self-dual) lattice sequences in the vector space~$V$,
coincide with the Moy--Prasad filtrations. 

Let~$\La$ be an~$\o_F$-lattice sequence in~$V$.  For integers~$a,b\in \mathbb{Z}$, we let~$a\La+b$ be denote the~$\o_F$-lattice sequence in~$V$ defined by
\[a\La+b(r)=\La\left(\lfloor (r-b)/a\rfloor\right),\]
for all~$r\in\mathbb{Z}$.  The \emph{affine class} of~$\La$, is the set of lattices of the form~$a\La+b$ with~$a,b\in\mathbb{Z},a\geqslant 1$.

%%%%%%%%%%%%%%%%%%%%%%%%%%%%%%%%%%%%%%%%%%%%%%%%%%%
\subsection{Semisimple strata and characters} 
%%%%%%%%%%%%%%%%%%%%%%%%%%%%%%%%%%%%%%%%%%%%%%%%%%%

A \emph{stratum} in~$A$ is a quadruple~$[\La,n,r,\b]$ where~$\La$ is
an~$\o_F$-lattice sequence in~$V$,~$n,r\in\ZZ$ with~$n\> r\> 0$,
and~$\b\in\AA_{-r}(\La)$.  A stratum~$[\La,n,r,\b]$ is called
\emph{self-dual} if~$\La$ is self-dual and~$\b\in A^{-}$.  Two
strata~$[\La,n,r,\b_1]$ and~$[\La,n,r,\b_2]$ are called equivalent
if~$\b_1-\b_2\in \AA_{-r}(\La)$.  If~$n\> r\> \frac{n}{2}>0$, an
equivalence class of strata corresponds to a character
of~$\tP_{r+1}(\La)$, by 
\[
[\La,n,r,\b]\mapsto \psi_\b
\]
where~$\psi_{\b}(x)=\psi_A(\b(x-1))$ for~$x\in\tP_{r+1}(\La)$, while
an equivalence class of self-dual strata corresponds to a character
of~$P_{r+1}(\La)$, by
\[
[\La,n,r,\b]\mapsto \psi_\b^{-}= \psi_\b\mid_{P_{r+1}(\La)}.
\]
If~$F[\b]$ is a field then we let~$B=C_A(\b)$ be the~$A$-centraliser
of~$\b$,~$\tGE=B^\times$,~$\BB_k(\La)=\AA_k(\La)\cap B$
and~$\nn_k(\b,\La)=\{x\in\AA_0(\La):\b x-x\b\in\AA_k(\La)\}$.  We
say~$[\La,n,r,\b]$ is a \emph{zero} stratum if~$n=r$ and~$\b=0$ and we
call~$[\La,n,r,\b]$ \emph{simple} if it is either zero or~$F[\b]$ is a
field,~$\La$ is an~$\o_E$-lattice sequence,~$\nu_{\La}(\b)=-n<-r$
and~$\nn_{-r}(\b,\La)\subset \BB_0(\La)+\AA_1(\La)$.
%~$\max\{-n,\sup\{k\in\ZZ:\nn_k(\b,\La)\not\subset \BB_0(\La)+\AA_1(\La)\}\}<-r$. 

Suppose~$V=\bigoplus_{i\in I} V^i$ is a decomposition of~$V$
into~$F$-subspaces.  We let~$\La^i=\La\cap V^i$ and we
let~$\b_i=\mathbf{e}^i\b\mathbf{e}^i$, where~$\mathbf{e}^i:V\to V^i$
is the projection with kernel~$\bigoplus_{j\neq i}V^j$.  The
decomposition~$V=\bigoplus_{i\in I} V^i$ of~$V$ is called a
\emph{splitting} of~$[\La,n,r,\b]$ if~$\b=\sum_{i\in I}\b_i$
and~$\La(k)=\bigoplus_{i\in I}\La^i(k)$, for all~$k\in\ZZ$.  A
stratum~$[\La,n,r,\b]$ in~$A$ is called \emph{semisimple} if it is
zero or~$\nu_{\La}(\b)=-n$ and there exists a
splitting~$\bigoplus_{i\in I} V^i$ for~$[\La,n,r,\b]$ such that:
\begin{enumerate}
\item for~$i\in I$, the stratum~$[\La^i,q_i,r,\b_i]$ in~$\End_F(V^i)$
is simple, where
\[
q_i=\begin{cases}
r&\text{if }\b_i=0,\\
-\nu_{\La^i}(\b_i)&\text{otherwise;}
\end{cases}
\]
\item for~$i,j\in I$ with~$i\neq j$, the
stratum~$[\La^i\oplus\La^j,\max\{q_i,q_j\},r,\b_i+\b_j]$ is not
equivalent to a simple stratum in~$\End_F(V^i\oplus V^j)$.
\end{enumerate}
We write~$E=F[\b]$ and~$E_i=F[\b_i]$, hence~$E=\bigoplus_{i\in I}E_i$
is a sum of fields. As in the case when~$E$ is a field, we
write~$B=C_A(\b)$ and~$\tGE=B^\times$.  By abuse of notation, we will
call a sum~$\bigoplus_{i\in I} \La_i$ of~$\o_{E_i}$-lattice sequences
in~$V_i$ an~$\o_E$-lattice sequence in~$V$. We
write~$\BB_k(\b,\La)=\AA_k(\La)\cap B$ which gives the filtration
on~$B$ by considering~$\La$ as an~$\o_E$-lattice sequence.  We
write~$\BB(\b,\La)=\BB_0(\b,\La)$,~$\QQ(\b,\La)=\BB_1(\b,\La)$
and~$\AA(\La)=\AA_0(\La)$.

Let~$A^{ij}=\Hom_F(V^j,V^i)$ and~$\Ll=\bigoplus_{i\in I} A^{ii}$, and
write~$\tL=\Ll^\times=\prod_{i\in I} \tG_i$,
where~$\tG_i=\Aut_F(V^i)$. Also put~$B_i=C_{A^{ii}}(\b_i)$
and~$\tG_{E_i}=B^\times_i\subseteq\tG_i$.
Then~$B=\bigoplus_{i\in I} B_i\subseteq\Ll$ and~$\tGE=\prod_{i\in
  I}\tG_{E_i}\subseteq\tL$. We write~$\La_E$ when we want to make it
clear that we are considering~$\La$ as an~$\o_E$-lattice sequence.

If~$[\La,n,0,\b]$ is a non-zero semisimple stratum we let 
\[
k_0(\b,\La)=-\min\{r\in\ZZ:[\La,n,r,\b]\text{ is not semisimple}\}
\] 
denote the \emph{critical exponent} of~$[\La,n,0,\b]$
and~$k_F(\b):=\frac{1}{e(\La)}k_0(\b,\La)$; by~\cite[\S3.1]{St05}, this
is independent of~$\La$.

If~$[\La,n,r,\b]$ is self-dual with associated
splitting~$V=\bigoplus_{i\in I}V^i$ then, for each~$i\in I$, there
exists a unique~$\s(i)=j\in I$ such that~$\ov{\b_i}=-\b_j$.  We
set~$I_0=\{i\in I:\s(i)=i\}$ and choose a set of representatives~$I^+$
for the orbits of~$\s$ in~$I\setminus I_0$.  Then we
let~$I_{-}=\s(I_{+})$ so that we have a disjoint union~$I=I_+\cup
I_0\cup I_{-}$.

A semisimple stratum~$[\La,n,r,\b]$ is called \emph{skew} if it is
self-dual and the associated splitting~$\bigoplus_{i\in I} V^i$ is
orthogonal with respect to the~$\epsilon$-hermitian form~$h$,
\ie~$I=I_0$ in the notation above.  In this case, we
let~$G_{E_i}=\tG_{E_i}\cap G$ and~$\GE=\prod_{i\in I}G_{E_i}$.

Associated to a semisimple stratum~$[\La,n,r,\b]$ there are
two~$\o_F$-orders~$\HH(\b,\La)$ and~$\JJ(\b,\La)$ which are defined
inductively in~\cite[\S3.2]{St05}.  These give rise to compact open
subgroups~$\tH(\b,\La)=\HH(\b,\La)\cap\tP(\La)$
and~$\tJ(\b,\La)=\JJ(\b,\La)\cap\tP(\La)$ of~$\tG$ with decreasing
filtrations~$\tH^i(\b,\La)=\HH(\b,\La)\cap\tP_i(\La)$
and~$\tJ^i(\b,\La)=\JJ(\b,\La)\cap\tP_i(\La)$, for~$i\> 1$ by compact
open normal subgroups.

If~$[\La,n,r,\b]$ is self-dual then the associated orders and groups
are stable under the action of~$\Si$ and we
write~$\JJ^{-}(\b,\La)=\JJ(\b,\La)\cap
A^{-}$,~$J(\b,\La)=\tJ(\b,\La)\cap G$,~$J^+(\b,\La)=\tJ(\b,\La)\cap
G^+$,~$J^i(\b,\La)=\tJ^i(\b,\La)\cap G$, for~$i\> 1$, and similarly
define~$\HH^-(\b,\La),H(\b,\La),H^i(\b,\La)$. We
have~$J(\b,\La)=P(\La_E)J^1(\b,\La)$ and
\[
J(\b,\La)/J^1(\b,\La)\simeq P(\La_E)/P^1(\La_E)\simeq M(\La_E).
\]
The group~$M(\La_E)$ is the group of points of a finite reductive
group over~$k_F$, and we denote by~$J^\circ(\b,\La)$ the inverse image
of the connected component~$M^\circ(\La_E)$ under the projection map.

By~\cite[Proposition~3.4]{St05}, the stratum~$[\La,n,r+1,\b]$ is
equivalent to a semisimple stratum~$[\La,n,r+1,\g]$
with~$\g\in\Ll$. In~\cite[Definition~3.13]{St05}, for~$0\< m <r+1$, a
set of characters~$\Cc(\La,m,\b)$ of~$\tH^{m+1}(\b,\La)$ is attached
to~$[\La,n,r,\b]$, depending on our initial choice of~$\psi_F$. Precisely,~$\Cc(\La,m,\b)$ consists of the
characters~$\tth$ of~$\tH^{m+1}(\b,\La)$ which satisfy
\begin{enumerate}
\item~$\tth\mid_{\tH^{m+1}(\b,\La)\cap \tG_{i}}$ is a simple
  character, in the sense of~\cite[Definition~3.2.3]{BK93};
\item if~$m'=\max\{m,\lceil r/2\rceil\}$ then there
  exists~$\tth_0\in\Cc(\La,m',\g)$ such
  that~$\tth\mid_{\tH^{m'+1}(\b,\La)}=\tth_0\psi_{\b-\g}$. 
\end{enumerate}
If~$[\La,n,r,\b]$ is self-dual then~$\Cc(\La,m,\b)$ is preserved by
the involution~$\s$ and, as in~\cite[\S~$3.6$]{St05}, one
associates to~$[\La,n,r,\b]$ the set~$\Cc_{-}(\La,m,\b)$ of characters
of~$H^{m+1}(\b,\La)$ obtained by restriction from~$\Cc(\La,m,\b)^\Si$.

The following results were proved in the case~$R=\CC$ but, since the
groups involved are all pro-$p$, their proofs apply provided the
characteristic of~$R$ is not~$p$, as is the case here.

\begin{thm}[{\cite[Theorem~3.22]{St05}}]\label{theorem22}
Let~$[\La,n,0,\b]$ be a semisimple stratum in~$A$.  
\begin{enumerate}
\item\label{theorem221}  If~$\tth\in\Cc(\La,0,\b)$ then~$I_{\tG}(\tth)=\tJ^1(\b,\La)\tGE\tJ^1(\b,\La)$.
\item\label{theorem222} Let~$[\La',n',0,\b]$ be another semisimple stratum in~$A$.
  There is a bijection
\[
\t_{\La,\La',\b}:\Cc(\La,0,\b)\to\Cc(\La',0,\b),
\]
called the transfer map, which takes~$\tth\in\Cc(\La,0,\b)$ to the
unique character~$\tth'\in\Cc(\La',0,\b)$ such that~$\tGE\subseteq
I_{\tG}(\tth,\tth')$.
\end{enumerate}
\end{thm}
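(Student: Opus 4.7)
The plan is to prove both parts simultaneously by induction on the critical exponent $k_0(\b,\La)$, peeling off one layer at a time via the inductive definition of $\Cc(\La,m,\b)$. The mechanism of descent is the following: by \cite[Proposition~3.4]{St05} (quoted above), $[\La,n,r+1,\b]$ is equivalent to a semisimple stratum $[\La,n,r+1,\g]$ with $\g\in\Ll=\bigoplus_i A^{ii}$, and the defining recursion $\tth|_{\tH^{m'+1}(\b,\La)}=\tth_0\psi_{\b-\g}$ with $\tth_0\in\Cc(\La,m',\g)$ reduces the question to a semisimple character for a stratum of strictly smaller critical exponent (living in the Levi subalgebra $\Ll$, where different blocks are independent). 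The base of the induction is the case $\b=0$, in which~$\tth$ is trivial, $\tJ^1(0,\La)=\tP^1(\La)$, and the statement is standard; the analogous statements in the simple case are the transfer theorem and intertwining formula of Bushnell--Kutzko \cite{BK93}.

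For part \ref{theorem221}, the inclusion $\tJ^1(\b,\La)\tGE\tJ^1(\b,\La)\subseteq I_{\tG}(\tth)$ is the easy direction: $\tJ^1(\b,\La)$-invariance of~$\tth$ is built into the definition of $\tH^1(\b,\La)$ via a commutator computation, and $\tGE$-invariance is transparent from the recursion since $\g\in\Ll$ commutes with $\tGE\subset\tL$ and $\psi_{\b-\g}$ is a $\tGE$-invariant character. For the reverse inclusion, I would take $g\in I_{\tG}(\tth)$, restrict the intertwining operator to $\tH^{m'+1}(\g,\La)$ and apply the induction hypothesis to $\tth_0\in\Cc(\La,m',\g)$, placing~$g$ modulo $\tJ^1$-cosets in the centraliser~$\tL$ of the block decomposition. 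Each diagonal component~$\tG_i$ can then be handled by the simple case from \cite{BK93}, placing the projection in $\tJ^1(\b_i,\La^i)\tG_{E_i}\tJ^1(\b_i,\La^i)$.

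The main obstacle, and the technical heart of the proof, is controlling the off-diagonal components $A^{ij}$ with $i\neq j$: one needs to show that the intertwining condition forces these components to be trivial modulo $\tJ^1(\b,\La)$. This is precisely where clause~(ii) of the definition of a semisimple stratum is used: non-equivalence of $[\La^i\oplus\La^j,\max\{q_i,q_j\},r,\b_i+\b_j]$ to a simple stratum is converted, via a careful Cayley-transform argument on the relevant cross-block filtration quotient $\AA_k/\AA_{k+1}$, into non-intertwining of the character~$\psi_{\b_i-\b_j}$, so that the only way~$g$ can intertwine $\tth$ is for its cross-block components to collapse into $\tJ^1$. This is the step where passing from the simple to the semisimple setting is genuinely new, and it requires the sharp form of the non-equivalence condition.

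For part \ref{theorem222}, I would construct $\t_{\La,\La',\b}$ by the same recursive pattern: declare its restriction to each diagonal block $\tH^{m+1}(\b,\La')\cap\tG_i$ to be the Bushnell--Kutzko simple transfer of $\tth|_{\tH^{m+1}(\b,\La)\cap\tG_i}$, and off-diagonally impose $\t_{\La,\La',\b}(\tth)|_{\tH^{m'+1}(\b,\La')}=\t_{\La,\La',\g}(\tth_0)\,\psi_{\b-\g}$, using the transfer for~$\g$ given by induction. Well-definedness reduces to checking compatibility of these prescriptions on overlaps, which follows by transport of the inductive recipe itself. The containment $\tGE\subseteq I_{\tG}(\tth,\t_{\La,\La',\b}(\tth))$ is visible from the construction because the recursive relation involving $\psi_{\b-\g}$ is manifestly $\tGE$-symmetric across the two lattice sequences. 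Finally, uniqueness of $\tth'$ with $\tGE\subseteq I_{\tG}(\tth,\tth')$ is extracted from part~\ref{theorem221}: any two such candidates differ by a character of $\tH^1(\b,\La')$ whose intertwining contains $\tGE$, and a direct application of~\ref{theorem221} (with $\La=\La'$) forces this difference to be trivial, giving bijectivity.
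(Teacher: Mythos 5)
The paper itself does not prove this statement: it is imported wholesale from~\cite[Theorem~3.22]{St05}, and the only content supplied here is the remark immediately preceding it, namely that all the groups involved ($\tH^{m+1}(\b,\La)$, $\tJ^1(\b,\La)$, \dots) are pro-$p$, so the complex proof applies verbatim over any algebraically closed~$R$ of characteristic~$\ell\neq p$. Your sketch instead tries to reconstruct the proof of the cited theorem. Its skeleton --- induction along the defining recursion through~$\g$, reduction to the simple case of~\cite{BK93} on the diagonal blocks, and use of clause~(ii) of the definition of a semisimple stratum to control cross-block intertwining --- does match the strategy of~\cite{St05}, but at the two genuinely hard points it is not a proof. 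The off-diagonal analysis, which you yourself identify as the technical heart, is only promised (``a careful Cayley-transform argument''); that step is the substance of several pages of~\cite{St05} and cannot be waved through. More concretely, your uniqueness argument in part~(ii) fails as stated: if~$\tth'_1,\tth'_2\in\Cc(\La',0,\b)$ both satisfy~$\tGE\subseteq I_{\tG}(\tth,\tth'_k)$, the ratio~$\tth'_1(\tth'_2)^{-1}$ is a character of~$\tH^1(\b,\La')$ which is in general \emph{not} a semisimple character, so part~(i) says nothing about it; and in any case knowing that a character has large intertwining does not make it trivial, since intertwining by~$g$ only constrains the character on~$\tH^1(\b,\La')\cap\tH^1(\b,\La')^g$. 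The uniqueness of the transfer needs its own inductive argument, carried out in~\cite{St05} alongside the intertwining computation, not a formal corollary of part~(i).

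Finally, the one point this paper actually needs is absent from your write-up: why the statement, proved in \emph{loc.\,cit.} for~$\CC$, holds for~$R$-valued characters in characteristic~$\ell\neq p$. The relevant observation is that the characters in question are valued in $p$-power roots of unity (which exist in~$R$ since~$\ell\neq p$), the groups are pro-$p$ so their smooth $R$-representation theory behaves as in characteristic zero, and the intertwining and transfer arguments of~\cite{St05} are group-theoretic and coefficient-independent; this one-line transfer of coefficients is the paper's entire ``proof'', and any complete treatment should at least record it.
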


Let~$[\La,n,r,\beta]$ be a semisimple stratum.  The \emph{affine class} of~$[\La,n,r,\beta]$ is the set of all (semisimple) strata of the form
\[[\La',n',r',\beta],\]
where~$\La'=a\La+b$ is in the affine class of~$\La$,~$n'=an$ and~$r'$ is any integer such that~$\lfloor r'/a\rfloor=r$.  By induction on~$k_F(\beta)$ (cf.~\cite[Lemma 2.2]{BSS}), many objects associated to a semisimple stratum only depend on the affine class of the stratum.  In particular, if~$[\La',n',r',\beta]$ is in the affine class of~$[\La,n,r,\beta]$, we have:
\begin{enumerate}
\item~$\tH^{m'+1}(\beta',\La')=\tH^{m+1}(\beta,\La)$;
\item~$\mathcal{C}(\La',m',\beta')=\mathcal{C}(\La,m,\beta)$;
\item the transfer map~$\tau_{\La,\La',\beta}:\mathcal{C}(\La,m,\beta)\rightarrow \mathcal{C}(\La',m',\beta)$ is the identity.
\end{enumerate}

If the associated strata are self-dual, then we have the following
analogue of Theorem~\ref{theorem22}.

\begin{thm}[{\cite[Lemma~2.5]{MiSt}}]\label{theorem23}
Let~$[\La,n,0,\b]$ be a self-dual semisimple stratum in~$A$.  
\begin{enumerate}
\item If~$\th\in\Cc_{-}(\La,0,\b)$ then~$I_{G}(\th)=J^1(\b,\La)\GE J^1(\b,\La)$.
\item Let~$[\La',n',0,\b]$ be another self-dual semisimple stratum
  in~$A$.  There is a bijection 
\[
\t_{\La,\La',\b}:\Cc_{-}(\La,0,\b)\to\Cc_{-}(\La',0,\b),
\]
called the transfer map, which takes~$\th\in\Cc_{-}(\La,0,\b)$ to the
unique character~$\th'\in\Cc_{-}(\La',0,\b)$ such that~$\GE\subseteq
I_{G}(\th,\th')$.
\end{enumerate}
\end{thm}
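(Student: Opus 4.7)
The plan is to deduce both statements from Theorem~\ref{theorem22} by a descent from $\tG$ to $G$, exploiting that $\Sigma$ has order~$2$ and $p$ is odd.  By definition every $\th\in\Cc_-(\La,0,\b)$ is the restriction to $H^1(\b,\La)$ of some $\Sigma$-fixed $\tth\in\Cc(\La,0,\b)^\Sigma$.

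For part~(i), the containment $I_G(\th)\supseteq J^1(\b,\La)\GE J^1(\b,\La)$ is obtained by intersecting $I_{\tG}(\tth)=\tJ^1(\b,\La)\tGE\tJ^1(\b,\La)$ with~$G$.  For the reverse inclusion I first claim that, for $g\in G$, the element $g$ intertwines $\th$ in $G$ if and only if $g$ intertwines $\tth$ in $\tG$.  Since $g\in G$, the pro-$p$ group $K=\tH^1(\b,\La)\cap \tH^1(\b,\La)^g$ is $\sigma$-stable and both $\tth|_K$ and $\tth^g|_K$ are $\sigma$-fixed characters.  Any such character factors through the abelianisation $K^{\mathrm{ab}}$ and is trivial on its $(-1)$-eigenspace under $\sigma$, since on such an element $k$ we have $\chi(k)^2=1$ but $\chi(k)$ lies in roots of unity of odd order.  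Moreover the long exact sequence of $\Sigma$-cohomology for $1\to[K,K]\to K\to K^{\mathrm{ab}}\to 1$, together with the vanishing of $H^1(\Sigma,[K,K])$ (because $[K,K]$ is pro-$p$ and $|\Sigma|$ is coprime to $p$), shows that $K^\Sigma$ surjects onto $(K^{\mathrm{ab}})^\Sigma$.  Thus restriction from $\sigma$-fixed characters of $K$ to characters of $K^\Sigma=H^1(\b,\La)\cap H^1(\b,\La)^g$ is injective, so agreement of $\th,\th^g$ on the fixed subgroup forces agreement of $\tth,\tth^g$ on $K$, whence $g\in \tJ^1(\b,\La)\tGE\tJ^1(\b,\La)$ by Theorem~\ref{theorem22}.

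The main obstacle is then to show $G\cap \tJ^1(\b,\La)\tGE\tJ^1(\b,\La)\subseteq J^1(\b,\La)\GE J^1(\b,\La)$.  My plan is a Galois-cohomology argument: the multiplication map $\tJ^1(\b,\La)\times\tGE\times\tJ^1(\b,\La)\to \tJ^1(\b,\La)\tGE\tJ^1(\b,\La)$ carries a natural $\sigma$-action via $(j_1,x,j_2)\mapsto(\sigma(j_2),\sigma(x),\sigma(j_1))$ (compatible with the fact that $\sigma$ is an anti-involution on $\tG$), and the fibre over $g\in G$ is $\sigma$-stable and a torsor under a pro-$p$ stabiliser built from $\tJ^1(\b,\La)\cap\tGE$.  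Since $|\Sigma|=2$ is coprime to $p$, we have $H^1(\Sigma,-)=0$ on this stabiliser, so the $\sigma$-fixed locus on the fibre is non-empty; this yields a decomposition $g=j_1 x j_2$ with $\sigma(x)=x$, i.e.\ $x\in\GE$, and a further symmetrisation (again exploiting that $p$ is odd, so square roots are available inside $\tJ^1(\b,\La)$) brings $j_1,j_2$ into $J^1(\b,\La)$.

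For part~(ii), fix a $\Sigma$-fixed lift $\tth\in\Cc(\La,0,\b)^\Sigma$ of $\th$ and let $\tth'=\t_{\La,\La',\b}(\tth)\in\Cc(\La',0,\b)$ be the $\tG$-transfer from Theorem~\ref{theorem22}.  The set $\Cc(\La',0,\b)$ is $\sigma$-stable and $\sigma(\tGE)=\tGE$, so the character $\sigma(\tth')$ also satisfies $\tGE\subseteq I_{\tG}(\tth,\sigma(\tth'))$; the uniqueness clause of Theorem~\ref{theorem22} forces $\sigma(\tth')=\tth'$, and its restriction produces a $\th'\in\Cc_-(\La',0,\b)$ with $\GE\subseteq I_G(\th,\th')$.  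Uniqueness of $\th'$ reduces to that of $\tth'$: any competitor $\th''\in\Cc_-(\La',0,\b)$ with $\GE\subseteq I_G(\th,\th'')$ lifts to a $\Sigma$-fixed $\tth''$, and the injectivity of restriction from $\sigma$-fixed characters established in part~(i) promotes $\GE$-intertwining of $\th,\th''$ to $\tGE$-intertwining of $\tth,\tth''$, whence $\tth''=\tth'$ and $\th''=\th'$.
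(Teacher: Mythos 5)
The paper itself does not prove this statement: it is imported wholesale from~\cite[Lemma~2.5]{MiSt}, with the remark that the arguments only involve pro-$p$ groups and so survive in characteristic~$\ell\neq p$. Your Glauberman-style descent from Theorem~\ref{theorem22} is indeed the standard route (it is essentially how Theorem~\ref{classthetaint}, the asymmetric generalisation, is proved later in the paper), and your preliminary lemma --- that a $\sigma$-fixed character of a pro-$p$ group $K$ with $p$ odd is determined by its restriction to $K^{\Sigma}$, via triviality on the minus part of $K^{\mathrm{ab}}$ and the surjection $K^{\Sigma}\twoheadrightarrow(K^{\mathrm{ab}})^{\Sigma}$ --- is correct, so $I_G(\th)=I_{\tG}(\tth)\cap G$ is fine. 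The genuine gap is the step $G\cap\tJ^1(\b,\La)\tGE\tJ^1(\b,\La)\subseteq J^1(\b,\La)\GE J^1(\b,\La)$. The fibre of the multiplication map over $g$ is \emph{not} a torsor under a pro-$p$ group: modulo the (pro-$p$) ambiguity in $j_1,j_2$ it is parametrised by $\tJ^1 g\tJ^1\cap\tGE$, which a priori is a union of several $(\tP^1(\La_E),\tP^1(\La_E))$-double cosets that $\sigma$ may permute without fixing any of them, so coprimality of $|\Sigma|$ and $p$ gives nothing directly. What makes the cohomological argument run is precisely the semisimple intersection property, $\tJ^1 y\tJ^1\cap\tGE=\tP^1(\La_E)y\tP^1(\La_E)$ (\cite[Lemma~2.6]{St08}; cf.\ Lemma~\ref{GLssintprop} and Corollary~\ref{ssintprop} here, proved via the $\dagger$-construction), which is exactly the hypothesis needed to apply Theorem~\ref{fixedpt1}(ii)(b); this nontrivial geometric input about semisimple strata is silently assumed in your ``torsor under a pro-$p$ stabiliser'' claim, and it is where the real content of part (i) lies. (Even then the fixed points give $x\in\GE^+$, and one must intersect with $G$ at the end to replace $\GE^+$ by $\GE$, as the paper does.)

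There is a second gap in part (ii). Your construction of $\th'$ (restrict the $\tG$-transfer of the $\Sigma$-fixed lift, with $\sigma(\tth')=\tth'$ forced by uniqueness) is fine, but the uniqueness argument is incomplete: lifting a competitor $\th''$ to a $\Sigma$-fixed $\tth''$ and promoting intertwining only yields $\GE\subseteq I_{\tG}(\tth,\tth'')$, whereas the uniqueness clause of Theorem~\ref{theorem22}\,\ref{theorem222} characterises the transfer by intertwining by \emph{all} of $\tGE$. Intertwining by the smaller group $\GE$ does not formally pin down $\tth''$, so you cannot conclude $\tth''=\tth'$ without a further argument (for instance via the intertwining formula for a pair of self-dual semisimple characters related by transfer, i.e.\ the content of Theorems~\ref{intchars1} and~\ref{classthetaint}, or the argument of~\cite{MiSt} itself).
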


Let~$[\La,n,0,\b]$ be a semisimple stratum and~$\tth\in\Cc(\La,0,\b)$.  

\begin{thm}[{\cite[Corollary~3.25]{St05}}]\label{theorem32}
There exists a unique irreducible representation~$\teta$ of~$\tJ^1(\b,\La)$
containing~$\tth$.
\end{thm}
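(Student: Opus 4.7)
The plan is to construct $\teta$ as a Heisenberg-type representation attached to $\tth$, exploiting the fact that $\tJ^1(\b,\La)/\tH^1(\b,\La)$ is a finite abelian $p$-group, and then to apply the Stone--von Neumann theorem for finite Heisenberg groups, which is valid over any algebraically closed field whose characteristic is prime to $p$.

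First I would record the basic structural facts needed: that $\tH^1(\b,\La)$ is a normal subgroup of $\tJ^1(\b,\La)$ with finite abelian $p$-quotient, and that $\tth\in\Cc(\La,0,\b)$ is a character of $\tH^1(\b,\La)$. These properties are established inductively along with the definitions of $\HH(\b,\La)$ and $\JJ(\b,\La)$ in~\cite[\S3.2]{St05}, and they are purely statements about pro-$p$ groups so they are independent of the coefficient field. Define the alternating form
\[
k_{\tth}:\tJ^1(\b,\La)/\tH^1(\b,\La)\times \tJ^1(\b,\La)/\tH^1(\b,\La)\to R^{\times},\qquad (x,y)\mapsto \tth([x,y]),
\]
where $[x,y]=xyx^{-1}y^{-1}$; that this descends to a well-defined map on the quotient follows because $\tth$ is trivial on commutators $[\tJ^1,\tH^1]$, which is the content of the standard Heisenberg-pairing computation in the simple case of~\cite[Proposition~3.1.15]{BK93}, extended to the semisimple situation using the block decomposition.

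The key claim is that $k_{\tth}$ is non-degenerate. In the simple case this is~\cite[Proposition~3.4.1]{BK93}; for semisimple strata one decomposes $\tJ^1(\b,\La)/\tH^1(\b,\La)$ into diagonal blocks (corresponding to the splitting $V=\bigoplus_{i\in I}V^i$) and off-diagonal blocks (corresponding to pairs $i\ne j$), and reduces the non-degeneracy on the diagonal to the simple case while handling the off-diagonal part via the explicit description of $\tJ^1$ and $\tH^1$ on Hom-spaces $A^{ij}$. This is precisely the argument carried out in~\cite[\S3]{St05}, and since it depends only on the pro-$p$ structure and on the fact that $R$ contains enough roots of unity to receive $\tth$, it transfers verbatim to our setting.

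Once non-degeneracy is in hand, let $Z=\ker\tth$ and write $\overline{V}=\tJ^1(\b,\La)/Z$, so that we have a central extension
\[
1\to\tH^1(\b,\La)/Z\to \overline{V}\to \tJ^1(\b,\La)/\tH^1(\b,\La)\to 1
\]
whose commutator pairing is exactly $k_{\tth}$ under the identification of $\tH^1(\b,\La)/Z$ with a finite cyclic subgroup of $R^{\times}$ via $\tth$. Because $\overline{V}$ is a finite $p$-group and the characteristic of $R$ is not $p$, the classical Stone--von Neumann theorem for finite Heisenberg groups (see e.g.\ the treatment in~\cite[\S8]{BK93}, whose arguments use only that group orders are invertible in the coefficient field) yields a unique irreducible $R$-representation of $\overline{V}$ on which $\tH^1(\b,\La)/Z$ acts by the tautological character. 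Inflating along $\tJ^1(\b,\La)\twoheadrightarrow\overline{V}$ produces the required $\teta$, of dimension $[\tJ^1(\b,\La):\tH^1(\b,\La)]^{1/2}$, and uniqueness follows from uniqueness at the level of $\overline{V}$.

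The main potential obstacle is the non-degeneracy of $k_{\tth}$ in the semisimple case, because the interaction of the off-diagonal blocks with the simple diagonal pieces requires care. However, this is handled exactly as in~\cite[\S3]{St05}, and since every group appearing is pro-$p$ the arguments go through unchanged over $R$; the point to emphasise is that no genuinely new input is required beyond checking that the classical Heisenberg formalism is insensitive to the coefficient field $R$ as soon as $\operatorname{char} R\ne p$.
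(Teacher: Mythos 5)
Your proposal is correct and is essentially the paper's own approach: the paper gives no independent argument but simply cites~\cite[Corollary~3.25]{St05} and remarks that, since $\tH^1(\b,\La)$ and $\tJ^1(\b,\La)$ are pro-$p$ and $\operatorname{char}R\ne p$, the complex-case proof transfers verbatim. What you have written out — the nondegenerate pairing $k_{\tth}$ on $\tJ^1(\b,\La)/\tH^1(\b,\La)$ and the Stone--von Neumann argument over $R$ — is precisely that transferred proof, so there is nothing to add beyond checking the exact numbering of the auxiliary citations to~\cite{BK93} and~\cite{St05}.
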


If~$[\La,n,0,\b]$ is self-dual and~$\th\in\Cc_{-}(\La,0,\b)$, then we
have the following analogue of Theorem~\ref{theorem32}.

\begin{thm}[{\cite[Lemma~2.5]{MiSt}}]\label{theorem33}
There exists a unique representation~$\eta$ of~$J^1(\b,\La)$ containing~$\th$. 
\end{thm}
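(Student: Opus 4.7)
The plan is to reduce to Theorem~\ref{theorem32} applied to a $\s$-stable lift of $\th$, and then descend to the $\Si$-fixed part. By the very definition of $\Cc_{-}(\La,0,\b)$, one can choose $\tth\in\Cc(\La,0,\b)^{\Si}$ with $\th=\tth|_{H^1(\b,\La)}$. Theorem~\ref{theorem32} then supplies the unique irreducible representation $\teta$ of $\tJ^1(\b,\La)$ containing $\tth$; since $\tth^\s=\tth$, the twist $\teta^\s$ also contains $\tth$, so uniqueness forces $\teta^\s\simeq\teta$.

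Next I would exploit the Heisenberg structure. The commutator map composed with $\tth$ equips the elementary abelian $p$-group $\tJ^1(\b,\La)/\tH^1(\b,\La)$ with a non-degenerate alternating form over $\FF_p$, and $\teta$ is the associated Heisenberg representation; Stone--von Neumann is available over $R$ because $\operatorname{char}(R)\neq p$. Since $p$ is odd and $\s$ has order $2$, the involution $\s$ decomposes $\tJ^1/\tH^1$ into its $\pm1$-eigenspaces, and the $+1$-eigenspace is canonically identified with $J^1(\b,\La)/H^1(\b,\La)$. The crucial step is to check that the alternating form restricts non-degenerately to this subspace. Granting this, Stone--von Neumann over $\FF_p$ produces an irreducible representation $\eta$ of $J^1(\b,\La)$ with central character $\th$, and any irreducible representation of $J^1(\b,\La)$ containing $\th$ is forced to coincide with it, since its isomorphism class is pinned down by the central character on the finite Heisenberg quotient. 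Equivalently, $\eta$ can be realised as the unique irreducible constituent of $\teta|_{J^1(\b,\La)}$ whose restriction to $H^1(\b,\La)$ contains $\th$.

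The main obstacle is the non-degeneracy of the restricted alternating form on $J^1/H^1$. In the equal-lattice, complex case this is part of the content of~\cite{St05} (compare \cite[Proposition~3.27]{St05}), and the argument is insensitive to the coefficient field once $\operatorname{char}(R)\neq p$; transplanting it to the present semisimple self-dual setting requires combining the explicit descriptions of $\HH(\b,\La)$ and $\JJ(\b,\La)$ recalled from \cite[\S3.2]{St05} with the associated splitting $V=\bigoplus_{i\in I}V^i$ and the action of $\s$ on it. With the non-degeneracy in hand, both the existence and uniqueness assertions of the theorem follow at once from Stone--von Neumann.
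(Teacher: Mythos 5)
Your proposal is correct in outline, but note that the paper does not actually reprove this statement: its ``proof'' is the citation to \cite[Lemma~2.5]{MiSt}, which in turn rests on \cite{St05} (see Corollary~3.29 there, quoted later in the paper in the proof of Theorem~\ref{intetas}), together with the blanket remark preceding Theorem~\ref{theorem22} that the complex-case proofs transfer to characteristic $\ell\neq p$ because all the groups involved are pro-$p$. What you have written is essentially a reconstruction of the argument behind that citation, so the real comparison is between your explicit Heisenberg/Stone--von Neumann argument and the paper's appeal to the literature. Two points are worth making. First, the step you single out as the main obstacle is already a formal consequence of what you set up, and needs none of the explicit descriptions of~$\HH(\b,\La)$ and~$\JJ(\b,\La)$: since~$\tth$ is~$\Si$-fixed,~$\s$ is an isometry of the pairing~$(x,y)\mapsto\tth([x,y])$ on~$\tJ^1(\b,\La)/\tH^1(\b,\La)$, whose values are~$p$-power roots of unity; as~$p$ is odd, the~$+1$- and~$-1$-eigenspaces of~$\s$ on this finite abelian~$p$-group are orthogonal to each other, so each inherits nondegeneracy from the nondegeneracy of the full pairing, which is the~$\tG$-statement underlying Theorem~\ref{theorem32}. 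Granting the identification of the~$+1$-eigenspace with~$J^1(\b,\La)/H^1(\b,\La)$ --- which follows from the triviality of the cohomology set~$H^1(\Si,\tH^1(\b,\La))$ (as~$\tH^1$ is pro-$p$ and~$|\Si|=2$) together with the observation that a pro-$p$ element of~$G^+$ already lies in~$G$ for~$p$ odd --- existence and uniqueness then follow exactly as you say. Second, your worry about ``transplanting'' is misplaced: \cite{St05} already treats the self-dual \emph{semisimple} case (the relevant statement is in the circle of \cite[Corollary~3.29]{St05} rather than \cite[Proposition~3.27]{St05}, which concerns intertwining), so the only thing that changes in the present paper is the coefficient field, and that change is harmless for precisely the pro-$p$ reason the paper records.
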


We call the representations~$\eta$ and~$\teta$ of
Theorems~\ref{theorem32} and~\ref{theorem33}, \emph{Heisenberg
  representations}.  We define a bijection, which we also denote
by~$\t_{\La,\La',\b}$, between the set of Heisenberg representations
of~$\tJ^1(\b,\La)$ containing a semisimple character
in~$\Cc(\La,0,\b)$ and the set of Heisenberg representations
of~$\tJ^1(\b,\La')$ containing a semisimple character
in~$\Cc(\La',0,\b)$ which restricts to the transfer map,
\ie~if~$\teta$ is the unique Heisenberg representation
of~$\tJ^1(\b,\La)$ containing~$\tth\in\Cc(\La,0,\b)$
then~$\t_{\La,\La',\b}(\teta)$ is the unique Heisenberg
representation of~$\tJ^1(\b,\La')$
containing~$\t_{\La,\La',\b}(\tth)$.   Similarly, we define a
bijection~$\t_{\La,\La',\b}$ between the set of Heisenberg
representations of~$J^1(\b,\La)$ containing a self-dual semisimple
character in~$\Cc_{-}(\La,0,\b)$ and the set of Heisenberg
representations of~$J^1(\b,\La')$ containing a self-dual semisimple
character in~$\Cc_{-}(\La',0,\b)$.

%%%%%%%%%%%%%%%%%%%%%%%%%%%%%%%%%%%%%%%%%%%%%%%%%%%
\subsection{Double coset identities}
%%%%%%%%%%%%%%%%%%%%%%%%%%%%%%%%%%%%%%%%%%%%%%%%%%%

We state mild generalisations of some results of~\cite{StDC}, the
proofs of which, [\emph{op.\,cit.}, Lemmas~$2.1$,~$2.2$ and
Theorem~$2.3$], still apply.  The notation in this short subsection is
independent of that in the rest of the paper. 
Let~$G$ be a group and~$\Ga$ a group of
automorphisms of~$G$.  If~$H$ is a~$\Ga$-stable subgroup of~$G$ we
let~$H^\Ga$ denote subgroup of fixed points of~$\Ga$.

\begin{thm}\label{fixedpt1}Let~$U_1$ and~$U_2$ be~$\Ga$-stable
subgroups of~$G$.
\begin{enumerate}
\item\label{fixedpt1:1}
Suppose that, for all~$g\in G$, the (non-abelian) cohomology
pointed set~$H^1(\Ga,gU_1g^{-1}\cap U_2)$ is trivial.  Then, for all~$g\in
G^{\Ga}$, we have~$(U_1gU_2)^\Ga=U_1^\Ga gU_2^\Ga$.
\item Suppose that~$\Ga$ is a soluble group of order coprime to~$p$,
that~$U_1$ and~$U_2$ are~$\Ga$-stable pro-$p$ subgroups of~$G$, and
that~$g\in G$. 
\begin{enumerate}
\item~$(U_1gU_2)^\Ga\neq \emptyset$ if and only if~$U_1gU_2$ is stable
under~$\Ga$.
\item Let~$H$ be a~$\Ga$-stable subgroup of~$G$ such that~$U_1hU_2\cap
  H=(U_1\cap H)h(U_2\cap H)$, for all~$h\in H$.
  Then~$(U_1HU_2)^\Ga=U_1^\Ga H^\Ga U_2^\Ga$.
\end{enumerate}
\end{enumerate}
\end{thm}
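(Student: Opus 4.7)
The theorem is stated as a mild generalisation of \cite[Lemmas~2.1,~2.2 and Theorem~2.3]{StDC}, so the strategy is to follow the three arguments there, which are set-theoretic and cohomological in nature and apply verbatim in the present context.

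For \emph{part (i)}, the natural approach is a non-abelian $H^1$ argument. Given $x \in (U_1 g U_2)^\Gamma$, write $x = u_1 g u_2$ and use $g \in G^\Gamma$ to rephrase $\gamma(x) = x$ as
\[
u_1^{-1}\gamma(u_1)\ =\ g\bigl(u_2\gamma(u_2)^{-1}\bigr)g^{-1}\ \in\ U_1 \cap gU_2g^{-1}.
\]
The map $\gamma \mapsto u_1^{-1}\gamma(u_1)$ is then a $1$-cocycle of $\Gamma$ with values in $U_1 \cap gU_2g^{-1}$. Because $g \in G^\Gamma$, conjugation by $g$ is $\Gamma$-equivariant and provides an isomorphism between $U_1 \cap gU_2g^{-1}$ and $g^{-1}U_1g \cap U_2$; the hypothesis (applied with $g^{-1}$ in place of $g$) therefore ensures this cocycle is a coboundary, so there exists $v \in U_1 \cap gU_2g^{-1}$ with $u_1^{-1}\gamma(u_1) = v^{-1}\gamma(v)$. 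Hence $u_1 v^{-1} \in U_1^\Gamma$, and rewriting $x = (u_1v^{-1})\,g\,(g^{-1}vg \cdot u_2)$ together with $\Gamma$-invariance of $x$, $u_1 v^{-1}$, and $g$ forces $g^{-1}vg \cdot u_2 \in U_2^\Gamma$.

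For \emph{part (ii)(a)}, the implication $(\Leftarrow)$ is formal, and the content is producing a fixed point when $U_1 g U_2$ is $\Gamma$-stable. I would proceed by induction on $|\Gamma|$, using solubility to pick a normal subgroup $N \triangleleft \Gamma$ of prime index $q \neq p$ and so reducing to the case $\Gamma = \mathbb{Z}/q\mathbb{Z}$. In this cyclic case, the map $(u,v) \mapsto ugv$ presents $U_1 g U_2$ as a continuous image of the pro-$p$ space $U_1 \times U_2$ with pro-$p$ fibres, so every continuous finite $\Gamma$-equivariant quotient has cardinality a power of $p$. Since $\Gamma$-orbit sizes in such a quotient are $1$ or $q$ and $q \neq p$, each finite stage has at least one fixed point, and a compactness argument over the cofiltered system of finite quotients produces a fixed point in $U_1 g U_2$. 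The inductive step applies this reasoning in tandem with (i), which is what guarantees that the $N$-fixed-point locus carries the double-coset structure needed at the next level.

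For \emph{part (ii)(b)}, containment $\supseteq$ is immediate. For $\subseteq$, take $x \in (U_1 H U_2)^\Gamma$ and write $x = u_1 h u_2$ with $h \in H$. Since double cosets partition $U_1 H U_2$ and $x$ is $\Gamma$-fixed, the double coset $U_1 h U_2$ is itself $\Gamma$-stable, so for each $\gamma \in \Gamma$ we have $\gamma(h) \in U_1 h U_2 \cap H = (U_1 \cap H)h(U_2 \cap H)$. Thus this latter double coset is $\Gamma$-stable inside $H$, and applying (ii)(a) inside $H$ to the pro-$p$ subgroups $U_1 \cap H$ and $U_2 \cap H$ produces $h' \in H^\Gamma$ in the same double coset. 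Then $U_1 h' U_2 = U_1 h U_2 \ni x$, and since $\Gamma$ has order coprime to $p$ while $U_1 \cap h' U_2 h'^{-1}$ is pro-$p$, the non-abelian cohomology $H^1(\Gamma, U_1 \cap h' U_2 h'^{-1})$ vanishes, so part (i) applies to give $x \in U_1^\Gamma h' U_2^\Gamma \subseteq U_1^\Gamma H^\Gamma U_2^\Gamma$. The \emph{main obstacle} is part (ii)(a): it is genuinely delicate because $g$ is not assumed $\Gamma$-fixed, so (i) cannot be applied directly, and the pro-$p$ hypothesis enters both through the compactness/counting argument on finite quotients and through the induction on $|\Gamma|$ via a composition series.
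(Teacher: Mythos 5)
The paper gives no argument of its own here: it asserts that the proofs of \cite[Lemmas~2.1, 2.2, Theorem~2.3]{StDC} go through, and your outline does follow that route. Your part (i) is the standard and correct cocycle argument (the $\Gamma$-equivariant conjugation by the fixed element $g$ transporting the hypothesis to $U_1\cap gU_2g^{-1}$ is exactly the right point), and your part (ii)(b) is the correct reduction: pass to the $\Gamma$-stable double coset $(U_1\cap H)h(U_2\cap H)$ inside $H$, apply (ii)(a) there, then finish with (i) and coprime vanishing of $H^1$ with pro-$p$ coefficients. (Two small remarks: your opening sentence of (ii)(a) mislabels which implication is formal, and in (ii)(b) you should say why $U_1\cap H$, $U_2\cap H$ may be treated as pro-$p$ groups --- harmless in the paper's applications, where $H$ is closed, but worth a word.)

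The genuine gap is at the point you yourself flag as delicate, the cyclic case of (ii)(a). The claim ``every continuous finite $\Gamma$-equivariant quotient of $U_1gU_2$ has cardinality a power of $p$'' is false (a $\Gamma$-stable clopen partition into, say, three pieces is a finite equivariant quotient), and it does not follow from the presentation $U_1\times U_2\twoheadrightarrow U_1gU_2$, precisely because that map is not $\Gamma$-equivariant when $g$ is not fixed. Worse, the natural candidates --- quotients of $U_1gU_2$ into $(N_1,N_2)$-double cosets for open subgroups $N_i$ --- need not have $p$-power cardinality: double-coset counts in $p$-groups are not $p$-powers in general (e.g.\ there are three $\langle s\rangle$-double cosets in the dihedral group of order $8$). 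So the counting-mod-$q$ step has no foundation as written. The repair is to exhibit specific equivariant quotients: act instead on the coset space $U_1gU_2/U_2\simeq U_1/(U_1\cap gU_2g^{-1})$, on which $\Gamma$ acts continuously (via $u\mapsto\gamma(u)a_\gamma$ where $\gamma(g)=a_\gamma gb_\gamma$), and which is the inverse limit over $\Gamma$-stable open \emph{normal} subgroups $N\trianglelefteq U_1$ of the finite sets $U_1/N(U_1\cap gU_2g^{-1})$; normality of $N$ is what makes these well-defined $\Gamma$-sets of $p$-power cardinality, so counting mod $q$ and compactness give a $\Gamma$-stable coset $u_1gU_2$. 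One then concludes with a second, easier step inside that single coset: $\gamma(u_1g)=u_1gc_\gamma$ defines a cocycle in $U_2$, and $H^1(\Gamma,U_2)=1$ produces the fixed point. With this two-step argument in place of your quotient claim, the induction via a normal subgroup of prime index together with part (i) works exactly as you describe.
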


%%%%%%%%%%%%%%%%%%%%%%%%%%%%%%%%%%%%%%%%%%%%%%%%%%%
\subsection{Modular representation theory techniques}
%%%%%%%%%%%%%%%%%%%%%%%%%%%%%%%%%%%%%%%%%%%%%%%%%%
%%%%%%%%%%%%%%%%%%%%%%%%%%%%%%%%%%%%%%%%%%%%%%%%%%%

As~$R$-representations of compact open subgroups are not necessarily
semisimple (unlike the case~$R=\CC$), we will need to use appropriate
versions of some well known representation theory techniques.  The
first is the \emph{simple criterion for irreducibility}
of~\cite{vigneras}.

\begin{lemma}\label{simpcrit1}
Let~$\l$ be an irreducible representation of a compact open
subgroup~$K$ of~$G$. Suppose that~$\End_G(\ind_K^G(\l))\simeq R$ and,
for any irreducible representation~$\pi$ of~$G$, if~$\l$ is a
subrepresentation of~$\pi$ then it is also a quotient of~$\pi$.
Then~$\ind_K^G(\l)$ is irreducible.
\end{lemma}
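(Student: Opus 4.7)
The plan is a proof by contradiction: I assume $V := \ind_K^G(\lambda)$ is reducible, and aim to produce a non-scalar $G$-endomorphism of $V$, contradicting the hypothesis $\End_G(V) \simeq R$.

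First, $V$ is generated as a $G$-module by the canonical copy of its finite-dimensional $K$-subspace $\lambda$, hence is finitely generated; Zorn's lemma then furnishes a maximal proper $G$-subrepresentation $N \subsetneq V$, so that $\pi := V/N$ is irreducible and the quotient map $q : V \twoheadrightarrow \pi$ has non-zero kernel $N$. Frobenius reciprocity $\Hom_G(V, \pi) \simeq \Hom_K(\lambda, \pi|_K)$ converts $q$ into a non-zero $K$-homomorphism $\jmath : \lambda \to \pi|_K$, which is injective by the irreducibility of $\lambda$. The hypothesis of the lemma, applied to the irreducible $\pi$ in which $\lambda$ now sits as a sub-$K$-representation, yields a $K$-surjection $p : \pi|_K \twoheadrightarrow \lambda$.

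With $p$ and $q$ in hand, the goal is to realise $\pi$ as a $G$-subrepresentation of $V$. The natural route is to apply Frobenius reciprocity for smooth (as opposed to compact) induction to $p$: this produces a non-zero, and hence (by irreducibility of $\pi$) injective, $G$-map $\pi \hookrightarrow \Ind_K^G(\lambda)$. Once one knows that the image lies inside the compactly induced subspace $V = \ind_K^G(\lambda) \subseteq \Ind_K^G(\lambda)$, the composition $V \xrightarrow{q} \pi \hookrightarrow V$ is a $G$-endomorphism of $V$ whose kernel contains $N \neq 0$ and whose image $\pi$ is non-zero, so it is neither zero nor invertible, and therefore not a scalar—contradicting $\End_G(V) \simeq R$.

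The main obstacle is thus showing that the map $\pi \hookrightarrow \Ind_K^G(\lambda)$ factors through $V$. This is where the hypothesis $\End_G(V) \simeq R$ enters essentially: via Frobenius, $\Hom_K(\lambda, V|_K) \simeq \End_G(V) = R$, so there is, up to scalar, a unique $K$-embedding of $\lambda$ into $V$, namely the tautological one. Combining this rigidity with a compact-support / Mackey-decomposition argument comparing the two $G$-maps $V \to \Ind_K^G(\lambda)$ (the natural inclusion, and the composite $V \xrightarrow{q} \pi \hookrightarrow \Ind_K^G(\lambda)$) should pin the image of $\pi$ inside $V$ and complete the contradiction.
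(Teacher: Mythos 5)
Your overall architecture is sound (reduce to a maximal proper subrepresentation, use both adjunctions, and play the composite $\ind_K^G\lambda\twoheadrightarrow\pi\hookrightarrow\Ind_K^G\lambda$ against the canonical inclusion), and it is essentially the standard route; note the paper itself gives no proof, quoting the criterion from Vign\'eras. However, as written there is a genuine gap precisely at the step where the hypothesis $\End_G(\ind_K^G\lambda)\simeq R$ must do its work. You reduce everything to showing that the map $\pi\hookrightarrow\Ind_K^G\lambda$ factors through the compactly induced subspace, and propose to get this from the rigidity $\Hom_K(\lambda,(\ind_K^G\lambda)|_K)\simeq\End_G(\ind_K^G\lambda)\simeq R$ together with an unspecified ``compact-support / Mackey'' argument. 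That rigidity controls $K$-embeddings of $\lambda$ \emph{into} $\ind_K^G\lambda$, whereas what must be controlled are $G$-maps $\ind_K^G\lambda\to\Ind_K^G\lambda$, i.e.\ $K$-maps $\lambda\to(\Ind_K^G\lambda)|_K$; since restrictions to $K$ are not semisimple in the modular setting, knowing that $\lambda$ sits in $\ind_K^G\lambda$ in only one way says nothing yet about where $\lambda$ can map inside $\Ind_K^G\lambda$. So the decisive step is asserted, not proved.

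The gap can be closed, and more directly than by locating the image of $\pi$. A $G$-map $\ind_K^G\lambda\to\Ind_K^G\lambda$ corresponds, by Frobenius reciprocity and unravelling definitions, to a function $\Phi:G\to\End_R(\Ww)$ (with $\Ww$ the space of $\lambda$) satisfying $\Phi(kgk')=\lambda(k)\Phi(g)\lambda(k')$ for $k,k'\in K$, with \emph{no} support condition; the relation forces $\Phi(g)\in I_g(\lambda)=\Hom_{K\cap K^g}(\lambda,\lambda^g)$ for every $g$. On the other hand, by the Hecke algebra description (Lemma~\ref{heckelemma} and the Remark on intertwining), $\End_G(\ind_K^G\lambda)\simeq\bigoplus_{KgK}I_g(\lambda)$, and the term at $g=1$ already contains the scalars; so the hypothesis $\End_G(\ind_K^G\lambda)\simeq R$ forces $I_g(\lambda)=0$ for all $g\notin K$ and $\End_K(\lambda)=R$. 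Consequently every such $\Phi$ is supported on $K$, i.e.\ every element of $\Hom_G(\ind_K^G\lambda,\Ind_K^G\lambda)$ is a scalar multiple of the canonical inclusion $\iota$. Your composite $\ind_K^G\lambda\twoheadrightarrow\pi\hookrightarrow\Ind_K^G\lambda$ is non-zero, hence a non-zero multiple of $\iota$, hence injective, contradicting that its kernel contains $N\neq0$. (Minor point: choose $N$ maximal among proper subrepresentations \emph{containing} a given non-zero proper one, so that $N\neq0$; finite generation of $\ind_K^G\lambda$ makes Zorn's lemma applicable.) With this the contradiction is immediate and you never need to identify the image of $\pi$ with a subspace of $\ind_K^G\lambda$ at all.
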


A representation~$\pi$ of~$G$ is called \emph{quasi-projective} if,
for all representations~$\pi'$ of~$G$ and all surjective
homomorphisms~$\varphi:\pi\to \pi'$, the homomorphism~$\End_G(\pi)\to
\Hom_G(\pi,\pi')$,~$\a\mapsto \a\circ \varphi$ for~$\a\in\End_G(\pi)$,
is surjective.  The second modular representation theory criterion we
make use of is the \emph{simple criterion for quasi-projectivity}
of~\cite{vigneras}
(\cf~also~\cite[Proposition~$3.15$]{HenniartSecherre}).
 
\begin{lemma}\label{quasiprojlemma}
Let~$K$ be a compact open subgroup of~$G$,~$\l$ an irreducible
representation of~$K$ and~$\pi=\ind_K^G(\l)$.  If the~$\l$-isotypic
component of~$\pi$ is a direct summand of the restriction of~$\pi$
to~$K$ and no subquotient of its complement is isomorphic to~$\l$
then~$\pi$ is quasi-projective.
\end{lemma}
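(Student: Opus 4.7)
The plan is to convert the statement, via Frobenius reciprocity for compact induction, into a surjectivity question for Hom-spaces out of $\lambda$. The natural isomorphisms
\[
\End_G(\pi) \cong \Hom_K(\lambda,\pi|_K), \qquad \Hom_G(\pi,\pi') \cong \Hom_K(\lambda,\pi'|_K),
\]
both obtained by restricting to the canonical copy of $\lambda$ inside $\ind_K^G \lambda$, intertwine the composition map $\alpha \mapsto \varphi \circ \alpha$ with post-composition by the $K$-equivariant surjection $\varphi: \pi|_K \twoheadrightarrow \pi'|_K$. So the task reduces to showing that the induced map $\varphi_{*}: \Hom_K(\lambda,\pi|_K) \to \Hom_K(\lambda,\pi'|_K)$ is surjective.

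The second step uses the hypothesis $\pi|_K = A \oplus B$, with $A$ the $\lambda$-isotypic component and $B$ a $K$-stable complement having no subquotient isomorphic to $\lambda$, to shrink both sides. Since $\Hom_K(\lambda,B)=0$, the left-hand side is $\Hom_K(\lambda,A)$. Letting $K_0 = \ker\varphi$ and writing $\bar A, \bar B$ for the respective images of $A, B$ in $\pi'|_K$, I would first prove $\pi'|_K = \bar A \oplus \bar B$: the sum clearly exhausts $\pi'|_K$, and $\bar A \cap \bar B$ is simultaneously a subrepresentation of $\bar A$ --- a quotient of the semisimple module $A$, hence itself a sum of copies of $\lambda$ --- and of $\bar B$ --- a quotient of $B$, hence without $\lambda$ as a composition factor; so $\bar A \cap \bar B = 0$. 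The same analysis gives $\Hom_K(\lambda,\pi'|_K)=\Hom_K(\lambda,\bar A)$, and $\varphi_{*}$ becomes post-composition with the quotient $q:A \twoheadrightarrow A/(A \cap K_0)=\bar A$.

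Finally, because $A$ is a direct sum of copies of $\lambda$ and hence semisimple, the subrepresentation $A \cap K_0$ admits a $K$-stable complement. This yields a splitting $s:\bar A \to A$ of $q$, and post-composition with $s$ provides a section of $\varphi_{*}$, establishing the required surjectivity. The main subtlety --- and the one point I would emphasise in writing this up --- is the implicit semisimplicity of the isotypic component $A$: understood in the standard sense as the sum of all $K$-subrepresentations of $\pi|_K$ isomorphic to $\lambda$, $A$ is automatically a sum of copies of $\lambda$, and without this a non-split self-extension of $\lambda$ inside $A$ whose socle is swallowed by $A \cap K_0$ would obstruct any lift, defeating quasi-projectivity.
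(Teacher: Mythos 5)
Your proof is correct, and in fact there is no in-paper argument to compare it with: the paper states this lemma as Vign\'eras's ``simple criterion for quasi-projectivity'', citing \cite{vigneras} and \cite[Proposition~3.15]{HenniartSecherre} rather than proving it. Your route---use the adjunction $\Hom_G(\ind_K^G\lambda,-)\simeq\Hom_K(\lambda,(-)|_K)$ and its naturality to reduce quasi-projectivity to surjectivity of $\varphi_*$ on $\Hom_K(\lambda,-)$, then exploit $\pi|_K=A\oplus B$, the vanishing of $\Hom_K(\lambda,B)$ and $\Hom_K(\lambda,\bar B)$, and the semisimplicity of the isotypic component $A$ to split $A\twoheadrightarrow\varphi(A)$ and produce a section---is essentially the standard argument in those references. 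Your closing caveat is also the right one: with the usual definition of the $\lambda$-isotypic component as the sum of all $K$-subrepresentations isomorphic to $\lambda$, semisimplicity of $A$ is automatic, and this is precisely what guarantees the section exists.
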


Let~$\pi,\t$ be~$R$-representations of~$G$. Then~$\Hom_G(\pi,\t)$
is a right~$\End_G(\pi)$-module by pre-composition.  In attempts to
classify the irreducible representations of~$G$, quasi-projective
representations are particularly interesting due to the following
theorem of Arabia.
%%%%%%%%%%%%%%%%%%%%%%%%%%%%%%%%%%%%%%%%%%%%%%%%%%%%%%%%
%\todo[color=orange!40]{Is the following theorem used anywhere in the paper? Yes, at the end.}
%%%%%%%%%%%%%%%%%%%%%%%%%%%%%%%%%%%%%%%%%%%%%%%%%%%%%%%%

\begin{thm}[{\cite[Appendix Th\'eor\`eme~10]{vignerasselecta}}]\label{quasiprojvigneras}
Suppose~$\pi$ is quasi-projective and finitely generated. Then the
functor~$\RR_R(G)\to \End_G(\pi)\text{-mod}$,~$\t\mapsto
\Hom_G(\pi,\t)$, induces a bijection between the irreducible
quotients of~$\pi$ and the simple right~$\End_G(\pi)$-modules. 
\end{thm}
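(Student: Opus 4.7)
Write $A = \End_G(\pi)$ and denote by $F = \Hom_G(\pi,-)$ the functor from $\RR_R(G)$ to right $A$-modules, with the right $A$-action on $F(\sigma)$ given by pre-composition, $f\cdot\alpha = f\circ\alpha$. The plan is to produce mutually inverse correspondences between isomorphism classes of irreducible quotients of $\pi$ and of simple right $A$-modules; this mirrors the classical Morita-type story in the projective case, with quasi-projectivity and finite generation as substitutes for projectivity.

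For the forward direction, let $\sigma$ be an irreducible quotient of $\pi$. Since $\sigma$ is irreducible, any nonzero $f \in F(\sigma)$ is automatically surjective, and applying the defining property of quasi-projectivity to the surjection $f:\pi\to\sigma$ gives that $A \to F(\sigma)$, $\alpha\mapsto f\circ\alpha$, is surjective. Thus every nonzero element of $F(\sigma)$ generates the whole module as a right $A$-module, so $F(\sigma)$ is simple. Moreover, the surjection just constructed identifies $F(\sigma) \simeq A/\mathrm{ann}_A(f)$ as right $A$-modules, a description used repeatedly below.

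For the reverse direction, given a simple right $A$-module $S \simeq A/\mathfrak{m}$ with $\mathfrak{m}$ a maximal right ideal, form the $G$-subrepresentation $\pi_{\mathfrak{m}} := \sum_{\alpha \in \mathfrak{m}} \mathrm{Im}(\alpha) \subseteq \pi$ and set $\sigma(S) = \pi/\pi_{\mathfrak{m}}$. I would then verify, in turn: that $\pi_{\mathfrak{m}}$ is proper in $\pi$ (a non-commutative Nakayama-style argument exploiting finite generation of $\pi$); that $\sigma(S)$ is irreducible (otherwise a proper $G$-subrepresentation produces, via the forward construction and quasi-projectivity, a right ideal strictly between $\mathfrak{m}$ and $A$, contradicting maximality); and that $F(\sigma(S)) \simeq S$, with the projection $\pi \to \sigma(S)$ corresponding under the forward identification to the coset of $1$ in $A/\mathfrak{m}$. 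For injectivity on isomorphism classes, if $F(\sigma_1) \simeq F(\sigma_2)$ via some $\Phi$, pick a surjection $\varphi_1 \in F(\sigma_1)$; since $\Phi$ is $A$-linear, $\mathrm{ann}_A(\varphi_1) = \mathrm{ann}_A(\Phi(\varphi_1))$, so tracing through the reverse construction yields $\sigma_1 \simeq \sigma(S) \simeq \sigma_2$ where $S = F(\sigma_1)$.

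The main obstacle is the first claim in the reverse direction: showing $\pi_{\mathfrak{m}} \neq \pi$. Since quasi-projectivity is strictly weaker than projectivity, one cannot invoke Nakayama directly; instead, one must combine the finite generation of $\pi$ with a careful analysis of the interplay between the left action of $A$ on $\pi$ and the right ideal structure of $\mathfrak{m}$ to exhibit a vector of $\pi$ lying outside $\pi_{\mathfrak{m}}$. A secondary delicate point is ensuring that the identification $\sigma_i \simeq \sigma(F(\sigma_i))$ is natural enough for injectivity to fall out of the reverse construction rather than requiring an independent argument. This is essentially the content of Arabia's proof in \cite{vignerasselecta}.
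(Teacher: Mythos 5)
This theorem is not proved in the paper at all: it is imported verbatim from Arabia's appendix to \cite{vignerasselecta}, so the only thing to compare your attempt with is that source --- which your final paragraph explicitly defers to. Judged as a standalone proof, your proposal has a genuine gap exactly at the point you flag and then punt on: you never establish that $\pi_{\mathfrak{m}}=\sum_{\alpha\in\mathfrak{m}}\operatorname{Im}(\alpha)$ is proper in $\pi$. This is the heart of the theorem and it is not a routine variation on Nakayama: finite generation gives finitely many $\alpha_1,\dots,\alpha_n\in\mathfrak{m}$ and a surjection $\pi^{n}\to\pi$, $(x_i)\mapsto\sum_i\alpha_i(x_i)$, but quasi-projectivity as defined only permits lifting along surjections whose \emph{source} is a single copy of $\pi$; to conclude $\operatorname{id}_\pi=\sum_i\alpha_i\circ\beta_i\in\mathfrak{m}$, one needs projectivity of $\pi$ relative to quotients of finite direct sums $\pi^{n}$, and upgrading ``quasi-projective and finitely generated'' to that relative projectivity is precisely the nontrivial content of Arabia's argument. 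Writing ``a careful analysis of the interplay'' and citing the reference does not fill this in.

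A second step would fail as written. You claim $\pi/\pi_{\mathfrak{m}}$ is irreducible because a proper subrepresentation $N$ with $\pi_{\mathfrak{m}}\subsetneq N\subsetneq\pi$ would produce a right ideal strictly between $\mathfrak{m}$ and $A$. It does not: the natural candidate $I_N=\{\alpha\in A:\alpha(\pi)\subseteq N\}$ contains $\mathfrak{m}$ and is proper, hence \emph{equals} $\mathfrak{m}$ by maximality --- no contradiction, since quasi-projectivity controls maps from $\pi$ onto its quotients and gives you no endomorphism with image inside $N$ but outside $\pi_{\mathfrak{m}}$. What your construction actually yields (granting properness) is only that every irreducible quotient $\sigma$ of $\pi/\pi_{\mathfrak{m}}$ satisfies $\Hom_G(\pi,\sigma)\simeq A/\mathfrak{m}$; well-definedness and injectivity then require a separate argument rather than ``tracing through the reverse construction''. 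One that works: if $q_1:\pi\to\sigma_1$ and $q_2:\pi\to\sigma_2$ are surjections onto non-isomorphic irreducibles with $\operatorname{ann}_A(q_1)=\operatorname{ann}_A(q_2)=\mathfrak{m}$, then $(q_1,q_2):\pi\to\sigma_1\oplus\sigma_2$ is surjective (Goursat), so quasi-projectivity applied to this quotient of $\pi$ gives $\alpha\in A$ with $q_1\circ\alpha=0$ and $q_2\circ\alpha=q_2$, i.e.\ $\alpha\in\mathfrak{m}$ and $\alpha\notin\mathfrak{m}$, a contradiction. Your forward direction (any nonzero $f:\pi\to\sigma$ is surjective, quasi-projectivity makes $\Hom_G(\pi,\sigma)=f\cdot A\simeq A/\operatorname{ann}_A(f)$ simple) is correct and complete; the rest is a plan, not a proof.
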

%
%We state some results from Clifford theory for profinite subgroups of~$G$. As our representations are all smooth, essentially this is Clifford theory for representations of finite groups.  Let~$J$ be a compact open subgroup of~$G$ and~$J^0$ be an open normal subgroup in~$J$.  Let~$\tau_0$ be an irreducible representation of~$J^0$ and~$N_G(\sigma_0)=\{j\in J:\sigma^j\simeq \sigma\}$. 
%
%\begin{lemma}
%\begin{enumerate}
%\item  An irreducible representation~$\sigma$ of~$J$ is a subquotient of~$\ind_{J^0}^J(\sigma_0)$ if and only if~$\Res^J_{J^0}(\sigma)$ has~$\sigma_0$ as a subquotient.
%\item If~$N_G(\sigma_0)=J^0$, then~$\sigma=\ind_{J^0}^J\sigma_0$ is irreducible.
%\end{enumerate}
%\end{lemma}
%

Suppose that~$J$ is a compact open subgroup of~$G$ containing
a compact open pro-$p$ subgroup~$J^1$ which is normal in~$J$ and
that~$\eta$ is an irreducible representation of~$J^1$ which extends to
an irreducible representation~$\k$ of~$J$.  Then we have the following
lemma, implicit in~\cite{vigneras} (\cf~\cite[Proposition~4.2]{VignerasLanglands2} and~\cite[Lemme~2.6]{VA2} for a proof).

\begin{lemma}\label{simpcrit3}
The functor~$\k\otimes -$ induces an equivalence of categories
between~$\RR_R(J/J^1)$ and the category~$\RR_R(J,\eta)$
of~$\eta$-isotypic representations of~$J$.
\end{lemma}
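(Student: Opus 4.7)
The plan is to construct an explicit quasi-inverse $F$ to $\k\otimes -$ and show that the unit and counit of the resulting adjunction are natural isomorphisms. I would set
\[F:\RR_R(J,\eta)\to\RR_R(J/J^1),\qquad F(\pi)=\Hom_{J^1}(\k,\pi),\]
with $J$ acting on $F(\pi)$ by $(j\cdot f)(v)=\pi(j)\,f(\k(j)^{-1}v)$. First I would verify that this $J$-action is trivial on $J^1$: for $j_1\in J^1$ and $f\in F(\pi)$, the $J^1$-equivariance of $f$ together with $\k|_{J^1}=\eta$ gives
\[(j_1\cdot f)(v)=\pi(j_1)\,f(\eta(j_1)^{-1}v)=\pi(j_1)\pi(j_1)^{-1}f(v)=f(v),\]
so $F$ does indeed land in $\RR_R(J/J^1)$.

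Next I would introduce the candidate counit $c_\pi:\k\otimes F(\pi)\to\pi$, $v\otimes f\mapsto f(v)$, and the candidate unit $u_\sigma:\sigma\to F(\k\otimes\sigma)$, $s\mapsto (v\mapsto v\otimes s)$; a routine check shows both are $J$-equivariant and natural in their argument. It then remains to show that $c_\pi$ and $u_\sigma$ are bijections.

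For $c_\pi$, I would restrict to $J^1$: since $\pi$ is $\eta$-isotypic, the canonical evaluation $\eta\otimes\Hom_{J^1}(\eta,\pi)\to\pi$ is an isomorphism of $J^1$-representations, and under the identification $\k|_{J^1}=\eta$ this map is exactly $c_\pi$. For $u_\sigma$, since $J^1$ acts trivially on $\sigma$ the restriction $(\k\otimes\sigma)|_{J^1}$ is simply a direct sum of copies of $\eta$, and Schur's lemma (using that $R$ is algebraically closed and that $\eta$ is finite-dimensional, being an irreducible smooth representation of the compact pro-$p$ group $J^1$) yields
\[F(\k\otimes\sigma)=\Hom_{J^1}(\eta,\eta\otimes\sigma)\cong\End_{J^1}(\eta)\otimes\sigma=R\otimes\sigma=\sigma,\]
and one checks that this identification is precisely $u_\sigma$.

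No genuine obstacle arises; this is the standard Clifford-theoretic equivalence, and the category $\RR_R(J,\eta)$ has been set up exactly so that every object decomposes, on restriction to $J^1$, as copies of $\eta$, which combined with Schur's lemma forces the equivalence. The only point worth emphasising is the finite-dimensionality of $\eta$, which ensures that $\Hom$ commutes with the tensor product in the identification above.
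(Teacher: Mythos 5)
Your proof is correct, and it is essentially the argument the paper relies on: the paper itself gives no proof of this lemma but cites Vign\'eras and M\'inguez--S\'echerre (\emph{cf.}~\cite[Lemme~2.6]{VA2}), where the equivalence is established exactly as you do, via the quasi-inverse $\Hom_{J^1}(\k,-)$ and the unit/counit computation, the key inputs being semisimplicity of $\RR_R(J^1)$ (since $J^1$ is pro-$p$ and $\ell\neq p$), finite-dimensionality of $\eta$, and Schur's lemma over the algebraically closed field $R$.
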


The following lemma is a mild abstraction of~\cite[Proposition~5.3.2]{BK93}.
\begin{lemma}\label{BK532}
Let~$X_1$ and~$X_2$ be subgroups of~$G$, and~$X_1^1$ (resp.~$X_2^1$) be a subgroup of~$X_1$ (resp.~$X_2$).  For~$i=1,2$, let~$\zeta_i$ be a representation of~$X_i$ trivial on~$X_i^1$, and let~$\mu_i$ be a representation of~$X_i$.  Suppose that
\[\Hom_{X_1\cap X_2}(\mu_1,\mu_2)=\Hom_{X^1_1\cap X^1_2}(\mu_1,\mu_2)\simeq R.\]
Then, for any non-zero~$S\in \Hom_{X_1\cap X_2}(\mu_1,\mu_2)$, the map
\begin{center}
\begin{tikzpicture}[>=stealth]
\matrix [matrix of math nodes, column sep=1cm, row sep=0.2cm,text height=1.5ex, text depth=0.25ex, minimum width=14em]
{
\node(row11) {\Hom_{X_1\cap X_2}(\zeta_1, \zeta_2)}; &\node(row12) {\Hom_{X_1\cap X_2}(\mu_1\otimes\zeta_1,\mu_2\otimes\zeta_2)}; \\
\node(row21) {T}; &\node(row22) {S\otimes T}; \\
};
\draw  [->]  (row11) -- (row12);
\draw  [|->]  (row21) -- (row22);
\end{tikzpicture}
\end{center}
is an isomorphism of vector spaces.  
\end{lemma}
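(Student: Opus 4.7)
The plan is to check that the map $T\mapsto S\otimes T$ is a linear bijection by verifying well-definedness, injectivity, and surjectivity separately. Well-definedness is a direct calculation: the tensor of $S$ with any $X_1\cap X_2$-intertwiner $T:\zeta_1\to\zeta_2$ remains an intertwiner of the tensor product representations. Injectivity is immediate because $S\neq 0$ forces $T=0$ whenever $S\otimes T=0$. All the content lies in the surjectivity statement.

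For surjectivity, let $W_i$ denote the underlying vector space of $\zeta_i$. Since $\zeta_i$ is trivial on $X_i^1$, the restriction of $\mu_i\otimes\zeta_i$ to $X_i^1$ is canonically $\mu_i|_{X_i^1}\otimes W_i$ with $X_i^1$ acting trivially on $W_i$. A routine computation with invariants of tensor products of representations then yields a natural identification
\[
\Hom_{X_1^1\cap X_2^1}(\mu_1\otimes\zeta_1,\mu_2\otimes\zeta_2)\;\simeq\;\Hom_{X_1^1\cap X_2^1}(\mu_1,\mu_2)\otimes_R \Hom_R(W_1,W_2).
\]
The hypothesis forces the first tensor factor on the right to equal the line $R\cdot S$, so every element of the left-hand side admits a unique decomposition $S\otimes T$ with $T\in\Hom_R(W_1,W_2)$. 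Applying this to any $\Phi\in \Hom_{X_1\cap X_2}(\mu_1\otimes\zeta_1,\mu_2\otimes\zeta_2)$ produces a unique $T\in\Hom_R(W_1,W_2)$ with $\Phi=S\otimes T$; the remaining task is to promote this $T$ to an element of $\Hom_{X_1\cap X_2}(\zeta_1,\zeta_2)$.

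For that promotion, fix $x\in X_1\cap X_2$, take $m$ in the space of $\mu_1$ and $v\in W_1$, and expand the identity $\Phi(\mu_1(x)m\otimes\zeta_1(x)v)=(\mu_2(x)\otimes\zeta_2(x))\Phi(m\otimes v)$. Using that $S$ is $X_1\cap X_2$-equivariant (this is where the first equality of the hypothesis is crucial, since $S$ was only produced as an $X_1^1\cap X_2^1$-intertwiner), both sides simplify and the identity rearranges to
\[
\mu_2(x)S(m)\otimes T(\zeta_1(x)v)\;=\;\mu_2(x)S(m)\otimes\zeta_2(x)T(v).
\]
Choosing $m$ with $S(m)\neq 0$, which exists because $S\neq 0$, and noting that $\mu_2(x)$ is invertible, the standard cancellation in a vector-space tensor product yields $T(\zeta_1(x)v)=\zeta_2(x)T(v)$, so $T\in\Hom_{X_1\cap X_2}(\zeta_1,\zeta_2)$ as required. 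The one subtle point is the identification of $X_1^1\cap X_2^1$-invariants in the tensor product by a product of invariants, which works cleanly precisely because $\zeta_i$ is trivial on $X_i^1$; without this triviality the tensor decomposition of the Hom space would fail and the argument would not get off the ground.
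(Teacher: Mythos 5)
Your proof is correct and follows essentially the same route as the paper: you first use the triviality of $\zeta_i$ on $X_i^1$ together with the one-dimensionality of $\Hom_{X_1^1\cap X_2^1}(\mu_1,\mu_2)$ to write any intertwiner as $S\otimes T$ with $T$ a priori only linear, and then exploit the equality $\Hom_{X_1\cap X_2}(\mu_1,\mu_2)=\Hom_{X_1^1\cap X_2^1}(\mu_1,\mu_2)$ to promote $T$ to an $X_1\cap X_2$-intertwiner, exactly as in the paper. The only (cosmetic) difference is that you phrase the first step as the identification $\Hom_{X_1^1\cap X_2^1}(\mu_1\otimes\zeta_1,\mu_2\otimes\zeta_2)\simeq\Hom_{X_1^1\cap X_2^1}(\mu_1,\mu_2)\otimes_R\Hom_R(W_1,W_2)$, where the paper instead expands the given map as a finite sum $\sum_k S_k\otimes T_k$ with the $T_k$ linearly independent; both versions rest on the same (harmless, in the applications) implicit finiteness of the $\zeta_i$.
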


\begin{proof} 
It is easy to check that the map is well-defined, and it is clearly
injective, so we need only check surjectivity.
Let~$f\in \Hom_{X_1\cap X_2}(\mu_1\otimes\zeta_1,\mu_2\otimes\zeta_2)$ be non-zero.
%Let~$\Vv_{\La},\Vv_\Y,\Ww_{\La}$ and~$\Ww_{\Y}$ denote the representation spaces
%of~$\k_{\La},\k_{\Y},\t_{\La}$ and~$\t_{\Y}$ respectively. 
Write~$f$ as a finite sum~$\sum_k S_k\otimes T_k$, with~$S_k\in\Hom_R(\mu_1,\mu_2)$ non-zero and~$T_k\in \Hom_R(\zeta_1,\zeta_2)$, such that~$\{T_k\}$ is linearly independent over~$R$. Let~$x\in X_1^1\cap X_2^1$; then~$f\circ \mu_1\otimes\zeta_1(x)=\mu_2\otimes\zeta_2(x)\circ f$.  Hence, as~$\zeta_1,\zeta_2$
are trivial on~$X_1^1\cap X_2^1$, we have
\[
\sum_{k}(S_k\mu_1(y)-\mu_2(y)S_k)\otimes T_k=0,
\]
for~$y\in X_1^1\cap X_2^1$.
Thus~$S_k\in\Hom_{X_1^1\cap X_2^1}(\mu_1,\mu_2)$, by the linear independence of~$\{T_k\}$.  The intertwining
spaces~$\Hom_{X_1^1\cap X_2^1}(\mu_1,\mu_2)$
and~$\Hom_{X_1\cap X_2}(\mu_1,\mu_2)$ are
one-dimensional and equal by our hypotheses.  Thus~$S_k$ is a scalar multiple of~$S$ and we can
write~$f=S\otimes T$ with~$T\in \Hom_R(\zeta_1,\zeta_2)$.  Furthermore,
\[
S\otimes T(\mu_1\otimes\zeta_1(y)v)=(\mu_2(y) S\otimes \zeta_2(y)T)(v)
\]
and
\[
S\otimes T(\mu_1\otimes\zeta_1(y)v)=(S\mu_1(y)\otimes T\zeta_1(y))(v)
=(\mu_2(y) S\otimes T\zeta_1(y))(v)
\]
for all~$y\in X_1\cap X_2$ and~$v$ in the space of~$\mu_1\otimes \zeta_1$.
Hence~$T\in\Hom_{X_1\cap X_2}(\zeta_1,\zeta_2)$ and, since~$f=S\otimes T$, our map is surjective. %  Note that, as~$\t_{\La}\mid_{J_{\La}^1\cap J_{\Y}^g}$ is in fact a multiple of the trivial representation on~$J_{\La}^1\cap J_{\Y}^g$, we have said nothing more than~$T\in\Hom_{J_{\La}^1\cap J_{\Y}^g}(1,\t_{\Y}^g)$.% so far.  However, when~$S\in \Hom_{J_{\La}\cap J_{\Y}^g}(\k_{\La},\k_{\Y}^g)$, by repeating the last part of the argument, but with~$h\in J_{\La}\cap J_{\Y}^g$, we have the second part of the lemma.
%we have a isomorphism of representations of~$(J_{\Lambda}\cap J_{\Y}^g)/(J^1_{\Lambda}\cap (J^1_{\Y})^g)$.
\end{proof}
\section{Asymmetric generalisations via~$\dagger$-constructions}
%%%%%%%%%%%%%%%%%%%%%%%%%%%%%%%%%%%%%%%%%%%%%%%%%%%
%%%%%%%%%%%%%%%%%%%%%%%%%%%%%%%%%%%%%%%%%%%%%%%%%%%

In this section we present a particularly useful construction: to
an~$\o_F$-lattice sequence~$\La$ in~$V$, we
associate a strict~$\o_F$-lattice sequence~$\La^\dag$ of
period~$e(\La)$ in a direct sum of~$e(\La)$ copies of~$V$, whose
associated hereditary order~$\AA(\La^\dag)$ is principal and such that
all the blocks~$\AA^{ii}(\La^\dag)=\AA(\La)$, for~$0\< i\<
e(\La)$. This construction becomes useful later when applied to two~$\o_F$-lattice sequences~$\La$ and~$\Y$ in~$V$, which, if necessary, after changing in their affine classes we assume~$e(\La)=e(\Y)$; in
this situation~$\AA(\La^\dag)$ and~$\AA(\Y^\dag)$ are principal orders
in~$V^\dag$ of the same block size, hence are conjugate, yet when we
restrict to a single block we find the not necessarily conjugate
orders~$\AA(\La)$ and~$\AA(\Y)$.   This construction originates in work of the second author with Broussous and S\'echerre in~\cite{BSS}.  The first part of this section is concerned with revisiting the construction of~[ibid.] and generalising it to semisimple strata.  Then we provide two new applications of~$\dag$: a generalisation of the semisimple intersection property of~\cite{St08} and an extension of the computation of the intertwining a semisimple character in~\cite{St05} to the case of two semisimple characters related by transfer.

%%%%%%%%%%%%%%%%%%%%%%%%%%%%%%%%%%%%%%%%%%%%%%%%%%%
\subsection{The~$\dagger$-construction}\label{daggerconst}
%%%%%%%%%%%%%%%%%%%%%%%%%%%%%%%%%%%%%%%%%%%%%%%%%%%

Let~$\La$ be an~$\o_F$-lattice sequence in~$V$ of~$\o_F$-period~$e(\La)$. Let~$V^{\dag}=V\oplus \cdots\oplus V$ ($e(\La)$ times).  Following~\cite[Section 2]{BSS}, we define an~$\o_F$-lattice sequence~$\La^\dag$ in~$V^\dag$ by
\[
\La^{\dag}(r)=\bigoplus_{k=0}^{e(\La)-1}\La(r+k),\text{ for all~$r\in\ZZ$.}
\]
  %We have 
%\[
%\left\lceil (r+k+1)/d\right\rceil=\begin{cases} \left\lceil (r+k)/d\right\rceil+1&\text{if }r+k=sd \text{ for some }s\in\ZZ;\\
%\left\lceil (r+k)/d\right\rceil&\text{otherwise.}
%\end{cases}
%\]
%Hence we have
Then, for all~$r\in \mathbb{Z}$,
%\begin{align*}
\[
\dim_{k_F}(\La^{\dag}(r)/\La^{\dag}(r+1))=\sum_{k=0}^{e(\La)-1}\dim_{k_F}(\La(r+k)/\La(r+k+1))
%&=\sum_{s=0}^{e-1}\dim_{k_F}(\La(s)/\La(s+1))\\
=\dim_F(V).
\]
%\end{align*}
Therefore,~$\La^{\dag}$ is a strict~$\o_F$-lattice sequence
in~$V^{\dag}$ of period~$e(\La)$ whose associated order~$\AA(\La^\dag)$ is
principal.

Let~$[\La,n,r,\b]$ be a semisimple stratum in~$A$ with associated splitting~$V=\bigoplus_{i\in I}V^i$, and~$e=e(\La)=e(\La_i)$.  For each~$i\in I$, let~$V^{i,\dag}=V^i\oplus \cdots\oplus V^i$ ($e(\La)$ times), and let~$\La_i^{\dag}$ be the~$\o_F$-lattice sequence in~$V^{i,\dag}$, defined as above.  Let~$V^\dag=\bigoplus_{i\in I}V^{i,\dag}$ and let~$\La^\dag$ be the~$\o_F$-lattice sequence in~$V'$ defined by~$\La^\dag=\bigoplus_{i\in I}\La^{i,\dag}$. Note that this is the same lattice sequence as that defined above (working directly with~$\La$ within~$V$). Let~$A^\dag=\End_F(V^\dag)$ and~$\tG^{\ddag}=\Aut_F(V^\dag)$.  

We recall that~$\beta=\sum_{i\in I}\beta_i$, where~$\beta_i=e_i\beta e_i$ and~$e_i:V\rightarrow V^i$ is the projection map with kernel~$\bigoplus_{j\neq i}V^j$.  Let~$\beta_i^\dag$ denote the image of~$\beta_i$ under the diagonal embedding of~$\End_F(V^i)$ into~$\End_F(V^{i,\dag})$, and~$\beta^\dag=\sum_{i\in I}\beta_i^\dag$.  Then~$\La^{i,\dag}$ is an~$\o_{E_i}$-lattice sequence, whose associated hereditary~$\o_F$-order~$\mathfrak{A}(\La^{i,\dag})$ is principal. Moreover, the stratum~$[\La^\dag,n,r,\b^\dag]$ in~$A^\dag$ is semisimple, with associated splitting~$V^\dag=\bigoplus_{i\in I}V^{i,\dag}$.

We recall also that~$\tL$ is the stabilizer in~$\tG$ of the decomposition~$V=\bigoplus_{i\in I} V^i$. Let~$\tQ=\tL\tU^+_Q$ be a parabolic subgroup of~$\tG$ with Levi component~$\tL$, and opposite parabolic~$\tQ^-=\tL\tU^-_Q$ with respect to~$\tL$. Then, for any~$m\ge 0$, the group~$\tH^{m+1}(\b,\La)$ has an Iwahori decomposition with respect to~$(\tL,\tQ)$ with
\begin{equation}\label{eqn:thetadecomp}
\tH^{m+1}(\b,\La)\cap\tL = \prod_{i\in I} \tH^{m+1}(\b_i,\La^i).
\end{equation}
Moreover, by~\cite[Lemma~3.15]{St05}, any semisimple character~$\tth\in\Cc(\b,m,\La)$ is trivial on the unipotent parts~$\tH^{m+1}(\b,\La)\cap\tU^{\pm}_Q$ and
\[
\tth|_{(\tH^{m+1}(\b,\La)\cap\tL)}=\bigotimes_{i\in I} \tth_i,
\]
with~$\tth_i\in\Cc(\b_i,m,\La^i)$ a simple character. Analogously, we have the Levi subgroup~$\tL^\dag$ which is the stabilizer of the decomposition~$V^\dag=\bigoplus_{i\in I} V^{i,\dag}$ and~$\tH^{m+1}(\b^\dag,\La^\dag)$ has an Iwahori decomposition with respect to any parabolic subgroup~$\tQ^\dag$ with Levi component~$\tL^\dag$, with
\[
\tH^{m+1}(\b^\dag,\La^\dag)\cap\tL^\dag = \prod_{i\in I} \tH^{m+1}(\b_i^\dag,\La^{i,\dag}).
\]

Let~$\Mm^\dag$ denote the Levi subalgebra of~$A^\dag$ which is the stabilizer of the splitting~$V^\dag=V\oplus\cdots\oplus V$, and let~$\tM^\ddag$ be its group of units. Let~$\Ga$ be the subgroup of~$\tM^\ddag$ consisting of elements with blocks~$\pm\Id$.  Let~$\tP^\ddag$ be any parabolic subgroup of~$\tG^\ddag$ with Levi factor~$\tM^\ddag$ and unipotent radical~$\tU^\ddag$, and let~$\tP^{-,\dag}$ denote the opposite parabolic of~$\tP^{\ddag}$ with respect to~$\tM^\ddag$, with Levi decomposition~$\tP^{-,\dag}=\tM^\ddag\ltimes\tU^{-,\dag}$. Similarly, for each~$i\in I$, we have a Levi subgroup~$\tM^{i,\dag}$ of~$\tG_i^\dag=\Aut_F(V^{i,\dag})$.

For all~$m\>0$, using~\cite[Proposition~5.2]{St08}, we have an Iwahori
decomposition
\begin{align}
\label{eqn:HIwa}
\tH^{m+1}(\b^\dag,\La^\dag)=(\tH^{m+1}(\b^\dag,\La^\dag)&\cap \widetilde{U}^{-,\dag})(\tH^{m+1}(\b^\dag,\La^\dag)\cap \tM^\ddag)(\tH^{m+1}(\b^\dag,\La^\dag)\cap \tU^\ddag),\\[5pt]
\notag \tH^{m+1}(\b^\dag,\La^\dag)\cap \tM^\ddag&=\tH^{m+1}(\b,\La)\times\cdots\times\tH^{m+1}(\b,\La).
\end{align}
There are similar decompositions for~$\tH^{m+1}(\b_i^\dag,\La^{i,\dag})$.

Let~$\tth\in\Cc(\b,m,\La)$ be a semisimple character, corresponding to simple characters~$\tth_i\in\Cc(\b_i,m,\La^i)$ as in~\eqref{eqn:thetadecomp}. Put~$\tth_i^\dag=\tau_{\Lambda^i,\La^{i,\dag},\beta_i,\beta_i^\dag}(\tth_i)$, the transfer of~$\tth_i$ to~$\Cc(\b_i^\dag,m,\La_i^\dag)$. By~\cite[Lemma 2.7]{BSS}, the restriction of~$\tth_i^\dag$ to~$\tH^{m+1}(\b_i^\dag,\La^{i,\dag})\cap\tM^{i,\dag}$ has the form~$\tth_i\otimes\cdots\otimes\tth_i$; moreover, for~$\tP^{i,\dag}=\tM^{i,\dag}\tU^{i,\dag}$ any parabolic subgroup of~$\tG^{i,\dag}$ with Levi component~$\tM^{i,\dag}$, the restriction of~$\tth_i^\dag$ to the unipotent part~$\tH^{m+1}(\b_i^\dag,\La^{i,\dag})\cap\tU^{i,\dag}$ is trivial.

\begin{lemma}\label{descthetadag}
There is a unique semisimple character~$\tth^\dag\in\Cc(\b^\dag,m,\La^\dag)$ such that
\[
\tth^\dag|_{(\tH^{m+1}(\b^\dag,\La^\dag)\cap\tL^\dag)} = \bigoplus_{i\in I}\tth_i^\dag;
\]
Moreover,~$\tth^{\ddag}$ is trivial on the unipotent parts
in~\eqref{eqn:HIwa}, and
\[
\tth^\dag\mid_{(\tH^{m+1}(\b^\dag,\La^\dag)\cap\tM^{\ddag})}=\tth\otimes\cdots\otimes\tth.
\]
\end{lemma}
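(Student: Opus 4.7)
The plan is to establish existence and uniqueness of $\tth^\dag$ using the general structure theory of semisimple characters, and then deduce the ``moreover'' statement by analysing how the two parabolic structures on $\tG^\ddag$ — the one with Levi $\tL^\dag$, for which $\tth^\dag$ is by construction a semisimple character, and the one with Levi $\tM^\ddag$ — interact.

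\emph{Uniqueness} is immediate from~\cite[Lemma~3.15]{St05}: any semisimple character in $\Cc(\b^\dag,m,\La^\dag)$ is trivial on the unipotent parts of an Iwahori decomposition with respect to $\tL^\dag$, so is determined by its restriction to $\tH^{m+1}(\b^\dag,\La^\dag)\cap\tL^\dag = \prod_{i\in I}\tH^{m+1}(\b_i^\dag,\La^{i,\dag})$. For \emph{existence}, I would appeal to the standard fact (a consequence of~\cite[Lemma~3.15]{St05} together with the Iwahori decomposition of the pro-$p$ group $\tH^{m+1}(\b^\dag,\La^\dag)$) that the restriction map $\Cc(\b^\dag,m,\La^\dag)\to\prod_{i}\Cc(\b_i^\dag,m,\La^{i,\dag})$ is a bijection; alternatively, define $\tth^\dag$ directly as $\bigotimes_i\tth_i^\dag$ on the Levi part and trivial on the unipotents, and check it is a character and that it lies in $\Cc(\b^\dag,m,\La^\dag)$ by verifying each simple-restriction condition and the inductive condition against a semisimple approximation of $[\La^\dag,n,0,\b^\dag]$.

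For the triviality on the unipotent parts of~\eqref{eqn:HIwa}, fix compatible parabolics so that, inside $\tL^\dag$, the Iwahori decomposition gives $\tU^\ddag = (\tU^\ddag\cap\tL^\dag)(\tU^\ddag\cap\tU_Q^\dag)$, where $\tU^\ddag\cap\tL^\dag = \prod_i\tU^{i,\dag}$ is the product of the unipotent radicals of the $\tP^{i,\dag}$. On the $\tH$-intersection with the first factor, $\tth^\dag|_{\tH\cap\tL^\dag}=\bigotimes_i\tth_i^\dag$ is trivial by~\cite[Lemma~2.7]{BSS} applied componentwise; on the intersection with $\tU_Q^\dag$, $\tth^\dag$ is trivial by~\cite[Lemma~3.15]{St05}. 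Combining, $\tth^\dag$ is trivial on $\tH^{m+1}(\b^\dag,\La^\dag)\cap\tU^\ddag$, and symmetrically on the opposite unipotent.

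To compute $\tth^\dag|_{\tH^{m+1}(\b^\dag,\La^\dag)\cap\tM^\ddag}$, apply the Iwahori decomposition inside $\tM^\ddag$ relative to the sub-Levi $\tL^\dag\cap\tM^\ddag = \prod_i\tM^{i,\dag}$, and observe that the unipotent radicals of this sub-Levi inside $\tM^\ddag$ lie in $\tU_Q^{\pm,\dag}$ (by direct inspection of the double index $(i,k)$ labelling the finest splitting $V^\dag = \bigoplus_{i,k}V^i$), hence $\tth^\dag$ is trivial on them. On the Levi $\tH\cap\prod_i\tM^{i,\dag}$, \cite[Lemma~2.7]{BSS} gives $\tth^\dag = \bigotimes_i(\tth_i\otimes\cdots\otimes\tth_i)$; reordering tensor factors rewrites this as $(\bigotimes_i\tth_i)^{\otimes e(\La)} = (\tth|_{\tH\cap\tL})^{\otimes e(\La)}$, which agrees with the analogous Iwahori-decomposition description of $\tth\otimes\cdots\otimes\tth$ on $\tH^{m+1}(\b,\La)^{e(\La)} = \tH^{m+1}(\b^\dag,\La^\dag)\cap\tM^\ddag$. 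The two characters therefore coincide. The principal obstacle throughout is the careful bookkeeping of how the unipotent pieces for the $\tM^\ddag$-Levi sit relative to those for the $\tL^\dag$-Levi; once one knows everything can be routed through the common refinement $\prod_i\tM^{i,\dag}$, the rest is clean Iwahori algebra.
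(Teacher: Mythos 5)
Your proof follows essentially the same route as the paper's: existence and uniqueness come from the inductive definition of semisimple characters via~\cite[Lemma~3.15]{St05}, and the second statement is obtained by combining triviality on the unipotent parts for the~$\tL^\dag$-splitting with the simple-block statement~\cite[Lemma~2.7]{BSS}, comparing through the common refinement~$\prod_{i\in I}\tM^{i,\dag}$; the paper merely compresses this comparison into a further appeal to the unicity in~\cite[Lemma~3.15]{St05}.

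One slip to correct: for a single choice of parabolic~$\tQ^\dag$, the factorization~$\tU^\ddag=(\tU^\ddag\cap\tL^\dag)(\tU^\ddag\cap\tU_Q^\dag)$ fails as soon as~$|I|\>2$ and~$e(\La)\>2$. Indeed, a root group sending the~$(i,k)$-block to the~$(i',k')$-block with~$k<k'$, $i\neq i'$, and the root group sending~$(i',k)$ to~$(i,k')$ both lie in~$\tU^\ddag$, but they restrict to opposite characters of the centre of~$\tL^\dag$, so they cannot both lie in one unipotent radical~$\tU_Q^\dag$. You need the three-fold decomposition of~$\tH^{m+1}(\b^\dag,\La^\dag)\cap\tU^\ddag$ involving~$\tU_Q^{-,\dag}$ as well (equivalently, use that triviality holds for \emph{every} parabolic with Levi~$\tL^\dag$, which is exactly how the paper phrases it). Since~$\tth^\dag$ is also trivial on~$\tH^{m+1}(\b^\dag,\La^\dag)\cap\tU_Q^{-,\dag}$ by~\cite[Lemma~3.15]{St05}, the repair is immediate and the rest of your argument, including the computation on~$\tH^{m+1}(\b^\dag,\La^\dag)\cap\tM^\ddag$, goes through unchanged.
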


\begin{proof}
The first part follows easily from the inductive definition of semisimple characters (see in particular~\cite[Lemma~3.15]{St05}). Moreover, for any parabolic subgroup~$\tQ^\dag=\tL^\dag\tU_Q^\dag$ with Levi component~$\tL^\dag$, the restriction of~$\tth^\dag$ to~$\tH^{m+1}(\b^\dag,\La^\dag)\cap\tU_Q^\dag$ is trivial; the second statement follows from this, the corresponding statement in the simple case (\cite[Lemma 2.7]{BSS}) and the unicity in~\cite[Lemma~3.15]{St05} again.
\end{proof}

For~$g\in \tG$, let~$g^\dag$ denote its diagonal embedding in~$\tG^\ddag$.
\begin{lemma}\label{dagints}
For~$i=1,2$, let~$\tth_i$ be semisimple characters in~$\mathcal{C}(\La,m,\beta_i)$.  If~$g$ intertwines~$\tth_1$ and~$\tth_2$, then~$g^\dag$ intertwines~$\tth_1^\dag$ and~$\tth_2^\dag$.
\end{lemma}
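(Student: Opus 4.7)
The plan is to exploit the Iwahori decomposition of $\tH^{m+1}(\beta_i^\dag, \La^\dag)$ with respect to the parabolic $\tP^\ddag = \tM^\ddag \tU^\ddag$ and its opposite $\tP^{-,\dag}$, together with the key observation that the diagonal embedding $g^\dag$ lies in the Levi $\tM^\ddag$ by construction.

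First I will note that, because $g^\dag \in \tM^\ddag$, conjugation by $g^\dag$ preserves each of $\tM^\ddag$, $\tU^\ddag$ and $\tU^{-,\dag}$. Hence $(g^\dag)^{-1}\tH^{m+1}(\beta_2^\dag,\La^\dag)g^\dag$ inherits an Iwahori decomposition with respect to $\tP^\ddag$, and its intersection with $\tH^{m+1}(\beta_1^\dag,\La^\dag)$ therefore carries a compatible Iwahori decomposition, each factor being the intersection of the corresponding factors of the two groups.

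Next, by Lemma \ref{descthetadag}, each $\tth_i^\dag$ is trivial on the unipotent parts $\tH^{m+1}(\beta_i^\dag,\La^\dag) \cap \tU^{\pm,\dag}$, and the same holds for $(\tth_2^\dag)^{g^\dag}$ on the conjugated unipotent pieces. Combined with the Iwahori decomposition from the previous paragraph, the desired intertwining equality reduces to checking
\[
\tth_1^\dag(x) = \tth_2^\dag\bigl(g^\dag x (g^\dag)^{-1}\bigr)
\]
for $x$ in the Levi intersection $\tH^{m+1}(\beta_1^\dag,\La^\dag) \cap \tM^\ddag \cap (g^\dag)^{-1}\tH^{m+1}(\beta_2^\dag,\La^\dag)g^\dag$.

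Finally, Lemma \ref{descthetadag} identifies, under $\tH^{m+1}(\beta_i^\dag,\La^\dag) \cap \tM^\ddag \cong \tH^{m+1}(\beta_i,\La)^{e(\La)}$, the restriction of $\tth_i^\dag$ with the $e(\La)$-fold tensor $\tth_i \otimes \cdots \otimes \tth_i$. Since $g^\dag$ acts on $\tM^\ddag \cong \tG \times \cdots \times \tG$ as $g$ simultaneously in each factor, the Levi intersection above is just the diagonal copy of $\tH^{m+1}(\beta_1,\La) \cap g^{-1}\tH^{m+1}(\beta_2,\La)g$ inside $\tH^{m+1}(\beta_1,\La)^{e(\La)}$. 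Accordingly, the required equality splits factor by factor into $e(\La)$ copies of the condition $\tth_1(y) = \tth_2(gyg^{-1})$, which is exactly the hypothesis that $g$ intertwines $\tth_1$ and $\tth_2$. The main but essentially formal obstacle will be the bookkeeping for the Iwahori decomposition under intersection with a Levi-conjugate group, which should be routine given the diagonal form of $g^\dag$.
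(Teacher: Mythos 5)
Your proposal follows essentially the same route as the paper: the paper's own proof is just a reference to the simple-character case in \cite{BSS} (Proposition 2.6, deduced there from Lemma 2.7), with Lemma \ref{descthetadag} replacing the latter, and what you write out is exactly that argument --- Iwahori decompositions with respect to $(\tM^\ddag,\tP^\ddag)$, the fact that $g^\dag\in\tM^\ddag$ normalises $\tU^{-,\dag}$, $\tM^\ddag$, $\tU^\ddag$ so that the intersection decomposes compatibly (by uniqueness of the $\tU^{-,\dag}\tM^\ddag\tU^\ddag$-factorisation), triviality of $\tth_1^\dag$ and $(\tth_2^\dag)^{g^\dag}$ on the unipotent parts, and the tensor-product form of $\tth_i^\dag$ on the Levi. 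One slip to correct: the Levi part of the intersection is not the \emph{diagonal} copy of $\tH^{m+1}(\beta_1,\La)\cap g^{-1}\tH^{m+1}(\beta_2,\La)g$ but the full $e(\La)$-fold direct product $\bigl(\tH^{m+1}(\beta_1,\La)\cap g^{-1}\tH^{m+1}(\beta_2,\La)g\bigr)^{e(\La)}$, sitting block-diagonally inside $\tH^{m+1}(\beta_1^\dag,\La^\dag)\cap\tM^\ddag\cong\tH^{m+1}(\beta_1,\La)^{e(\La)}$; as literally written you only check the character identity on the diagonal, which is a proper subgroup. The repair costs nothing and is the computation you already indicate: for $m=(m_1,\dots,m_{e(\La)})$ with independent components one has $\tth_1^\dag(m)=\prod_k\tth_1(m_k)$ and $\tth_2^\dag(g^\dag m(g^\dag)^{-1})=\prod_k\tth_2(gm_kg^{-1})$, and the intertwining hypothesis applies to each factor separately, so the argument is otherwise complete and matches the paper's.
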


\begin{proof}
For simple characters, it is shown in the proof of~\cite[Proposition~2.6]{BSS} that this follows from~\cite[Lemma~2.7]{BSS}.  The proof in the semisimple case follows  \emph{mutatis mutandis} using Lemma~\ref{descthetadag} in place of~\cite[Lemma~2.7]{BSS}.
\end{proof}
\subsection{Applications of~$\dagger$}
%%%%%%%%%%%%%%%%%%%%%%%%%%%%%%%%%%%%%%%%%%%%%%%%%%%
%%%%%%%%%%%%%%%%%%%%%%%%%%%%%%%%%%%%%%%%%%%%%%%%%%%

%The main application of the~$\dag$-construction will be to generalize
%various intertwining statements, for example to compute the
%intertwining of two semisimple characters related by the transfer map.

Let~$[\La,n_{\La},0,\b]$ and~$[\Y,n_{\Y},0,\b]$ be semisimple strata
in~$A$ with splitting~$V=\bigoplus_{i\in I}V^i$.  
 Let~$e_{\La}$
(resp.~$e_{\Y}$) denote the~$\o_F$-period of~$\La$ (resp.~$\Y$), and
hence of~$\La^i$ (resp.~$\Y^i$) for all~$i\in I$.  By changing~$[\La,n_{\La},0,\b]$ and~$[\Y,n_{\Y},0,\b]$ in their affine classes, we assume the~$e=e_{\La}=e_{\Y}$.  As remarked earlier, this does not change the objects (orders, groups, characters) associated to the semisimple strata.

For~$i\in I$, we apply the construction of Section~\ref{daggerconst} to~$\La^{i}$ and to~$\Y^i$.   Suppose that the~$\o_{E_i}$-period~$e_{E_i}$ of~$\La$, and hence of~$\Y$, is related to the~$\o_F$-period~$e$, by 
\[e_{E_i}=m_ie,\]
so that~$m_i$ is the ramification index of~$E_i/F$. Then, for all~$r\in\mathbb{Z}$,
\[
\dim_{k_{E_i}}(\La^{i,\dag}(r)/\La^{i,\dag}(r+1))=\sum_{k=0}^{m_ie_{E_i}-1}\dim_{k_{E_i}}(\La^i(r+k)/\La^i(r+k+1))
%&=\sum_{s=0}^{e-1}\dim_{k_F}(\La(s)/\La(s+1))\\
=m_i\dim_{E_i}(V^i).
\]
Hence, the lattice sequences~$\La^{i,\dag}$ and~$\Y^{i,\dag}$ are strict~$\o_{E_i}$-lattice sequences in~$V^{i,\dag}$ of~$\o_F$-period~$e$ (and~$\o_{E_i}$-period~$e_{E_i}$).  Furthermore, the associated hereditary~$\o_{E_i}$-orders~$\BB(\b_i,\La^{i,\dag})$and~$\BB(\b_i,\Y^{i,\dag})$ are principal~$\o_{E_i}$-orders with the same block size, hence there exist~$x_i\in C_{\tG^{\ddag}_i}(\b_i^\dag)$, such that
\[
\La^{i\ddag}(r)=x_i\cdot \Y^{i\ddag}(r),
\]
for all~$r\in\ZZ$.  Let~$x=\sum_{i\in I}x_i$; then~$x\in \tGE^\ddag$
and we have
\[
\La^{\dag}=x\cdot \Y^{\dag}.
\]
It follows that the data coming from the semisimple
strata~$[\La^\dag,n_{\La},0,\b^\dag]$ and~$[\Y^\dag,n_{\Y},0,\b^\dag]$
are conjugate in~$\tGE^\ddag$ and we get: 

\begin{lemma}\label{lem:conj}
In the situation above, there exists~$x\in \tGE^\ddag$ such that
\begin{enumerate}
\item~$\JJ(\b^\dag,\La^\dag)= \JJ(\b^\dag,\Y^\dag)^x$ and~$\HH(\b^\dag,\La^\dag)= \HH(\b^\dag,\Y^\dag)^x$;
\item~$\tJ(\b^\dag,\La^\dag)= \tJ(\b^\dag,\Y^\dag)^x$ and~$\tH(\b^\dag,\La^\dag)= \tH(\b^\dag,\Y^\dag)^x$;
\item conjugation by~$x$ defines a bijection~$\Cc(\b^\dag,0,\La^\dag)\to \Cc(\b^\dag,0,\Y^\dag)$.
\end{enumerate}
\end{lemma}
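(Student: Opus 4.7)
The discussion immediately preceding the lemma has already exhibited an element $y\in\tGE^\ddag$ with $\La^\dag(r)=y\Y^\dag(r)$ for every $r\in\ZZ$; after replacing $y$ by its inverse if necessary to match the paper's conjugation convention $U^x=x^{-1}Ux$, the plan is to show that a single such $x$ simultaneously conjugates all the data attached to $(\b^\dag,\La^\dag)$ onto the data attached to $(\b^\dag,\Y^\dag)$. The guiding principle is that the assignments $(\b,\La)\mapsto\AA(\La),\,\HH(\b,\La),\,\JJ(\b,\La),\,\Cc(\b,0,\La)$ are natural under the joint conjugation action of $\tG$; for elements that centralise $\b^\dag$ this reduces to naturality in $\La^\dag$ alone.

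The first step is the elementary observation that if $\Y^\dag=x\cdot\La^\dag$, then $\AA_k(\Y^\dag)=x\AA_k(\La^\dag)x^{-1}$ for every $k\in\ZZ$, hence $\AA_k(\La^\dag)=\AA_k(\Y^\dag)^x$ and the same conjugacy for $\tP^k$. For parts~(i) and~(ii), I would then induct on the critical exponent $k_0(\b^\dag,\La^\dag)$, following the recursive definition of $\HH(\b^\dag,\cdot)$ and $\JJ(\b^\dag,\cdot)$ in \cite[\S3.2]{St05}. At each stage of the induction one picks a semisimple approximation $\g$ for which $[\La^\dag,n,r+1,\g]$ has the required form; since $x\in\tGE^\ddag$ centralises $\b^\dag$ and conjugates $\AA(\Y^\dag)$ onto $\AA(\La^\dag)$, the element $x\g x^{-1}$ is a parallel approximation on the $\Y^\dag$ side, and the inductive hypothesis propagates the conjugacy. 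Intersecting with the respective parahorics then yields the stated conjugacies of $\tJ$ and $\tH$.

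For part~(iii), conjugation by $x$ yields a well-defined bijection between the character groups of $\tH^1(\b^\dag,\La^\dag)$ and $\tH^1(\b^\dag,\Y^\dag)$, and the only content is to show that this bijection restricts to one between $\Cc(\b^\dag,0,\La^\dag)$ and $\Cc(\b^\dag,0,\Y^\dag)$. I would verify this directly from the inductive definition of semisimple characters in \cite[Definition~3.13]{St05}: the two defining conditions refer only to simple characters obtained by restricting to the block groups $\tH^{m+1}(\b^\dag,\La^\dag)\cap\tG_i^\ddag$ and to the tempering character $\psi_{\b^\dag-\g}$, both of which transform compatibly under conjugation by $x\in\tGE^\ddag$. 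Combined with Lemma~\ref{descthetadag} and the classical naturality of simple characters under centraliser conjugation from \cite{BK93}, this gives the required inclusion; invertibility of conjugation gives the bijection. As a sanity check, one can invoke the uniqueness in Theorem~\ref{theorem22}(ii): because $x\in\tGE^\ddag$ intertwines each $\tth\in\Cc(\b^\dag,0,\La^\dag)$ with its $x$-conjugate, the latter must coincide with the transfer $\t_{\La^\dag,\Y^\dag,\b^\dag}(\tth)$.

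The principal obstacle is bookkeeping rather than substance: one must track carefully how the auxiliary choices at each inductive step — approximating simple strata, and splittings compatible with the block decomposition $V^\dag=\bigoplus_{i\in I}V^{i,\dag}$ — transform under conjugation by $x$, and verify that the inductive hypotheses carry through. No new ideas appear to be required: the $\tGE^\ddag$-equivariance needed is implicit in the constructions of \cite{St05}, and once phrased in the right generality the argument is essentially formal.
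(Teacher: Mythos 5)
Your proposal is correct and follows essentially the same route as the paper: the paper derives the lemma directly from the conjugacy $\La^\dag=x\cdot\Y^\dag$ with $x\in\tGE^\ddag$ established in the preceding discussion, together with the equivariance of the constructions $\HH,\JJ,\Cc$ under conjugation by elements centralising $\b^\dag$, which the paper treats as immediate and you merely spell out via the inductive definitions of~\cite{St05}. Your remark about adjusting $x$ versus $x^{-1}$ to fit the conjugation convention is a harmless bookkeeping point.
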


Throughout this section, ``applying the~$\dagger$-construction'' will mean applying it in the way just described.

%%%%%%%%%%%%%%%%%%%%%%%%%%%%%%%%%%%%%%%%%%%%%%%%%%%
\subsection{Semisimple intersection property}
%%%%%%%%%%%%%%%%%%%%%%%%%%%%%%%%%%%%%%%%%%%%%%%%%%%

In this section we generalise the semisimple intersection property
of~\cite[Lemma~$2.6$]{St08}.

\begin{lemma}\label{GLssintprop}
Let~$[\La,n_{\La},0,\b]$ and~$[\Y,n_{\Y},0,\b]$ be semisimple strata
in~$A$ and~$y\in \tGE$.  Then
\[
\tP^1(\Y)y\tP^1(\La)\cap \tGE=\tP^1(\Y_{E})y\tP^1(\La_{E}).
\]
\end{lemma}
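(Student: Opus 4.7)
The reverse inclusion $\supseteq$ is immediate. For the non-trivial direction, the plan is to apply the $\dagger$-construction so as to translate the asymmetric situation ($\La$ vs.\ $\Y$) into a symmetric one inside $\tG^\ddag$, invoke the single-lattice-sequence version of the intersection property there, and then descend to $\tG$ via an Iwahori decomposition.

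Concretely, suppose $g\in \tP^1(\Y)\,y\,\tP^1(\La)\cap\tGE$. The diagonal embedding $g\mapsto g^\dag$ sends $\tP^1(\La)$ into $\tP^1(\La^\dag)$ and $\tGE$ into $\tGE^\ddag$, so $g^\dag \in \tP^1(\Y^\dag)\,y^\dag\,\tP^1(\La^\dag)\cap\tGE^\ddag$. By Lemma~\ref{lem:conj} there exists $x\in\tGE^\ddag$ with $\tP^1(\La^\dag)=x\,\tP^1(\Y^\dag)\,x^{-1}$ and, because $x\in\tGE^\ddag$, also $\tP^1(\La_E^\dag)=x\,\tP^1(\Y_E^\dag)\,x^{-1}$. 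Setting $z=y^\dag x\in\tGE^\ddag$, the inclusion above rewrites as $g^\dag x \in \tP^1(\Y^\dag)\,z\,\tP^1(\Y^\dag)\cap\tGE^\ddag$.

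Applying~\cite[Lemma~2.6]{St08} to the semisimple stratum $[\Y^\dag,n_\Y,0,\b^\dag]$ in $A^\dag$ (the symmetric case, with both lattice sequences equal to $\Y^\dag$) yields $g^\dag x\in \tP^1(\Y_E^\dag)\,z\,\tP^1(\Y_E^\dag)$. Conjugating the right factor back by $x^{-1}$ and using $x\,\tP^1(\Y_E^\dag)\,x^{-1}=\tP^1(\La_E^\dag)$, we arrive at $g^\dag \in \tP^1(\Y_E^\dag)\,y^\dag\,\tP^1(\La_E^\dag)$.

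For the descent, the decomposition $V^\dag=V\oplus\dots\oplus V$ furnishes a Levi subgroup $\tM_E^\ddag\cong(\tGE)^e$ of $\tGE^\ddag$, and both $g^\dag$ and $y^\dag$ lie in $\tM_E^\ddag$ as diagonal elements. The direct-sum structure of the $\dagger$-construction ensures that $\tP^1(\Y_E^\dag)$ and $\tP^1(\La_E^\dag)$ admit Iwahori decompositions with respect to any parabolic of $\tGE^\ddag$ having Levi $\tM_E^\ddag$, with Levi parts $\tP^1(\Y_E)^e$ and $\tP^1(\La_E)^e$ respectively. The standard double-coset lemma for pro-$p$ subgroups with compatible Iwahori decompositions then gives
\[
\tP^1(\Y_E^\dag)\,y^\dag\,\tP^1(\La_E^\dag)\cap \tM_E^\ddag \;=\; \tP^1(\Y_E)^e\,y^\dag\,\tP^1(\La_E)^e,
\]
so that $g^\dag = m_1\,y^\dag\,m_2$ with $m_1,m_2$ block-diagonal in these products. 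Projecting onto any single block (using that $g^\dag$ and $y^\dag$ have identical diagonal entries) yields $g\in \tP^1(\Y_E)\,y\,\tP^1(\La_E)$, as required.

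The key conceptual step is the $\dagger$-reduction through $x\in\tGE^\ddag$, which converts the asymmetry into a symmetric double coset to which the existing single-lattice result applies. The remaining Iwahori descent is routine, but it depends on the Iwahori decompositions for $\tP^1(\Y_E^\dag)$ and $\tP^1(\La_E^\dag)$ being compatible with the diagonal Levi $\tM_E^\ddag$; this is exactly what is hard-wired into the direct-sum definition of $\La^\dag$ and $\Y^\dag$, and is essentially the content already exploited for characters in Lemma~\ref{descthetadag}.
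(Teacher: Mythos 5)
Up to the last step your argument is the paper's: apply the $\dagger$-construction, use Lemma~\ref{lem:conj} to produce $x\in\tGE^\ddag$ conjugating $\Y^\dag$ to $\La^\dag$, invoke the symmetric intersection property of~\cite[Lemma~2.6]{St08} in $\tG^\ddag$ for the double coset $\tP^1(\Y^\dag)y^\dag x\tP^1(\Y^\dag)$, and conjugate back to get $\tP^1(\Y^\dag)y^\dag\tP^1(\La^\dag)\cap\tGE^\ddag=\tP^1(\Y_E^\dag)y^\dag\tP^1(\La_E^\dag)$. That part is fine and is exactly the intended reduction.

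The gap is in the descent to a single block. You assert a ``standard double-coset lemma for pro-$p$ subgroups with compatible Iwahori decompositions'' giving $\tP^1(\Y_E^\dag)\,y^\dag\,\tP^1(\La_E^\dag)\cap\tM_E^\ddag=(\tP^1(\Y_E^\dag)\cap\tM_E^\ddag)\,y^\dag\,(\tP^1(\La_E^\dag)\cap\tM_E^\ddag)$, but no such general lemma is available: for arbitrary $m$ in the Levi, the existence of Iwahori decompositions of $K_1,K_2$ does not formally yield $K_1mK_2\cap M=(K_1\cap M)m(K_2\cap M)$. If you write $k_1\,y^\dag\,k_2\in\tM_E^\ddag$ and decompose $k_1=v_1m_1u_1$, $k_2=v_2m_2u_2$, the middle term $\bigl((y^\dag)^{-1}u_1y^\dag\bigr)v_2$ has an a priori uncontrolled Levi component, so uniqueness of the $U^-MU$ factorisation alone does not close the argument; an equality of this shape is itself an intersection property of the same nature as the lemma you are proving, and it needs a genuine input. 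The paper supplies it via Theorem~\ref{fixedpt1}\ref{fixedpt1:1}: the Levi $\tM^\ddag$ is the fixed-point group of the $2$-group $\Ga$ of block-$\pm\Id$ elements, and since $\tP^1(\Y_E^\dag)$, $\tP^1(\La_E^\dag)$ are pro-$p$ with $p$ odd, $H^1\bigl(\Ga,\,y^\dag\tP^1(\Y_E^\dag)(y^\dag)^{-1}\cap\tP^1(\La_E^\dag)\bigr)$ is trivial, which gives precisely the displayed equality of intersections with $\tM^\ddag$ (equivalently with $\tM_E^\ddag$, since everything lies in $\tGE^\ddag$). Replace your appeal to the unnamed lemma by this fixed-point argument and the proof is complete; the final block-projection step, using that $g^\dag$ and $y^\dag$ are diagonal and that $\tP^1(\Y_E^\dag)\cap\tM^\ddag$ and $\tP^1(\La_E^\dag)\cap\tM^\ddag$ are the product groups you describe, is correct as you wrote it.
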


\begin{proof}
Applying the~$\dagger$-construction, by Lemma~\ref{lem:conj}
we have~$x\in\tGE^\ddag$ such that
\[
\tP^1(\Y^\dag)y^\dag \tP^1(\La^\dag)=\tP^1(\Y^\dag)y^\dag x\tP^1(\Y^\dag)x^{-1}.
\]
By the semisimple intersection property in~$\tGE^\ddag$ (\cf~the proof
of~\cite[Lemma~$2.6$]{St08}), because~$x\in\tGE^\ddag$ we have
\[
\tP^1(\Y^\dag)y^\dag x\tP^1(\Y^\dag)\cap\tGE^\ddag=\tP^1(\Y_{E}^\dag)y^\dag x\tP^1(\Y_{E}^\dag).
\]
Hence
\[
\tP^1(\Y^\dag)y^\dag \tP^1(\La^\dag)\cap \tGE^\ddag=\tP^1(\Y_{E}^\dag)y^\dag \tP^1(\La_{E}^\dag).
\]
Recall,~$\tM^\ddag$  is the Levi subgroup of~$\tG^\ddag$ defined by the decomposition of~$V^\dag$ into a sum of copies of~$V$, and~$\Ga$ is the~$2$-subgroup of~$\tM^\ddag$ consisting of elements with blocks~$\pm \Id$.  Notice that,~$\tM^\ddag$ is equal to the fixed point subgroup of~$\tG^\ddag$ under the conjugation action of~$\Ga$. Hence, because~$\Ga$ is a~$2$-group and~$\tP^1(\La_{E}^\dag)$
and~$\tP^1(\Y_{E}^\dag)$ are pro-$p$ groups, with~$p$
odd,~$H^1(\Ga,y^\dag\tP^1(\Y_{E}^\dag)(y^\dag)^{-1}\cap
\tP^1(\La_{E}^\dag))=1$ and we can apply
Theorem~\ref{fixedpt1}\ref{fixedpt1:1} to find 
\[
\tP^1(\Y_{E}^\dag)y^\dag \tP^1(\La_{E}^\dag)\cap \tM^\ddag=
(\tP^1(\Y_{E}^\dag)\cap \tM^\ddag)y^\dag (\tP^1(\La_{E}^\dag)\cap \tM^\ddag).
\]
We have~$(\tP^1(\Y_{E}^\dag)\cap\tM^\ddag)=\prod_{i=1}^{d}
\tP^1(\Y_{E})$ and~$(\tP^1(\La_{E}^\dag)\cap\tM^\ddag)=\prod_{i=1}^{d}
\tP^1(\La_{E})$.  Thus, restricting to a single block in~$\tM^\ddag$ we
recover the result.
%\[
%\tP^1(\La)y\tP^1(\Y)\cap \tGE=\tP^1(\La_{E})y\tP^1(\Y_{E}).
%\]
\end{proof}

\begin{corollary}\label{ssintprop}
Let~$[\La,n_{\La},0,\b]$ and~$[\Y,n_{\Y},0,\b]$ be self-dual
semisimple strata in~$A$.  Then
\begin{align*}
&P^1(\Y)yP^1(\La)\cap \GE^+=P^1(\Y_{E})yP^1(\La_{E}), &&\text{for }y\in \GE^+;\\
&P^1(\Y)yP^1(\La)\cap \GE=P^1(\Y_{E})yP^1(\La_{E}), &&\text{for }y\in \GE.
\end{align*}
\end{corollary}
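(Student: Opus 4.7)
\textbf{Proof plan for Corollary \ref{ssintprop}.}  The corollary is a $\Sigma$-equivariant analogue of Lemma~\ref{GLssintprop}, and I would deduce it from the latter by taking fixed points under $\Sigma=\Gal(F/F_0)=\{1,\sigma\}$. The input for this is Theorem~\ref{fixedpt1}\ref{fixedpt1:1}, which requires the vanishing of the non-abelian cohomology $H^1(\Sigma,gU_1g^{-1}\cap U_2)$. Since $\La$ and $\Y$ are self-dual, the groups $\tP^1(\La)$ and $\tP^1(\Y)$ are $\sigma$-stable pro-$p$ groups; as $|\Sigma|=2$ and $p$ is odd, every intersection $gU_1g^{-1}\cap U_2$ built from these groups is a pro-$p$ group on which a group of coprime order acts, so the required $H^1$ vanishes.

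First I would treat the $G^+$-case. Fix $y\in \GE^+\subset\tG^\Sigma$. Because $y$ and the lattice sequences are $\sigma$-fixed, the double coset $\tP^1(\Y)y\tP^1(\La)$ is $\Sigma$-stable, and applying Theorem~\ref{fixedpt1}\ref{fixedpt1:1} gives
\[
\big(\tP^1(\Y)y\tP^1(\La)\big)^\Sigma=\tP^1(\Y)^\Sigma\, y\,\tP^1(\La)^\Sigma=P^1(\Y)\,y\,P^1(\La),
\]
where I used that $\tP^1(\La)^\Sigma=\tP^1(\La)\cap G^+=\tP^1(\La)\cap G=P^1(\La)$ since $\tP^1(\La)$ is pro-$p$ with $p$ odd (and similarly for $\Y$). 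Exactly the same argument applied inside $\tGE$ shows
\[
\big(\tP^1(\Y_E)y\tP^1(\La_E)\big)^\Sigma=P^1(\Y_E)\,y\,P^1(\La_E).
\]
Now intersect the identity of Lemma~\ref{GLssintprop} with $G^+$: using $\GE^+=\tGE\cap G^+$ and the two fixed-point identifications above,
\[
P^1(\Y)yP^1(\La)\cap\GE^+=\tP^1(\Y)y\tP^1(\La)\cap\tGE\cap G^+=\tP^1(\Y_E)y\tP^1(\La_E)\cap G^+=P^1(\Y_E)yP^1(\La_E),
\]
which is the first equality.

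The $\GE$-case follows immediately: if $y\in\GE\subset G$, then $P^1(\Y_E)yP^1(\La_E)$ lies in $G$, so intersecting the already-proved identity with the index-$\<2$ subgroup $G\subset G^+$ gives $P^1(\Y)yP^1(\La)\cap\GE=P^1(\Y_E)yP^1(\La_E)$.

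The only non-routine point is the cohomology vanishing invoked when applying Theorem~\ref{fixedpt1}\ref{fixedpt1:1}; but this is immediate from the pro-$p$ nature of $\tP^1(\La),\tP^1(\Y)$ together with $p$ odd and $|\Sigma|=2$, so no real obstacle appears. The main conceptual content has already been carried out in Lemma~\ref{GLssintprop} via the $\dagger$-construction; here we are simply descending the $\tG$-statement to $G^+$ and $G$ using Galois descent for pro-$p$ groups.
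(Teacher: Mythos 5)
Your proof is correct and takes essentially the same route as the paper: deduce the statement from Lemma~\ref{GLssintprop} by taking $\Sigma$-fixed points via Theorem~\ref{fixedpt1}\ref{fixedpt1:1}, the required cohomology vanishing being automatic since the groups are pro-$p$ with $p$ odd and $|\Sigma|=2$. The only (harmless) difference is that you invoke the fixed-point theorem twice, also for $\tP^1(\Y)y\tP^1(\La)$ in $\tG$, whereas the paper needs just one application inside $\tGE$ together with the trivial containments $P^1(\Y_E)yP^1(\La_E)\subseteq P^1(\Y)yP^1(\La)\cap\GE^+\subseteq\tP^1(\Y)y\tP^1(\La)\cap\GE^+$.
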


\begin{proof}
Applying Theorem~\ref{fixedpt1}\ref{fixedpt1:1}, under the fixed
points of the involution~$\s$, we have
\[
\tP^1(\Y_{E})y\tP^1(\La_{E})\cap \GE^+=
 (\tP^1(\Y_{E})\cap\GE^+)y(\tP^1(\La_{E})\cap\GE^+).
\]
Therefore, by Lemma~\ref{GLssintprop},~$P^1(\Y)yP^1(\La)\cap
\GE^+=P^1(\Y_{E})yP^1(\La_{E}).$  The second equality follows by
intersecting with~$G$, since~$P^1(\La_E)\subseteq\GE$.
\end{proof}

A simple application of the semisimple intersection property gives us the following bijection of double cosets, where we note that~$J_{\Y}\GE J_{\La}=J^1_{\Y}\GE J^1_{\La}$.

\begin{lemma}\label{doublecosets}
Let~$[\La,n_{\La},0,\b]$ and~$[\Y,n_{\Y},0,\b]$ be self-dual semisimple strata in~$A$.  Let~$J_{\Y}=J(\b,\La)$
and~$J_{\Y}=J(\b,\Y)$.  The following map is a bijection
\begin{center}
\begin{tikzpicture}[>=stealth]
\matrix [matrix of math nodes, column sep=1cm, row sep=0.2cm,text height=1.5ex, text depth=0.25ex, minimum width=9em]
{
\node(row11) {P(\Y_E)\backslash \GE/P(\La_E)}; &\node(row12) {J_{\Y}\backslash J_{\Y}\GE J_{\La}/J_{\La}}; \\
\node(row21) {X}; &\node(row22) {J_{\Y} X J_{\La}.}; \\
};
\draw  [->]  (row11) -- (row12);
\draw  [|->]  (row21) -- (row22);
\end{tikzpicture}
\end{center}
\end{lemma}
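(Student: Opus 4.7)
The proof plan is to directly verify well-definedness, surjectivity and injectivity of the indicated map, using that $J(\b,\La) = P(\La_E) J^1(\b,\La)$ (and similarly for $\Y$), together with the semisimple intersection property established in Corollary~\ref{ssintprop}.

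First I would observe that since $P(\La_E) \subseteq J_{\La}$ and $P(\Y_E) \subseteq J_{\Y}$ (both via the identity $J(\b,\La) = P(\La_E)J^1(\b,\La)$ in the background section), the map is well-defined: replacing $X$ by $p_1 X p_2$ with $p_1 \in P(\Y_E)$, $p_2 \in P(\La_E)$ does not change the double coset $J_{\Y} X J_{\La}$. Surjectivity is then immediate from the remark preceding the lemma, namely $J_{\Y}\GE J_{\La} = J_\Y^1 \GE J_\La^1$, so every double coset $J_{\Y} g J_{\La}$ with $g \in J_\Y \GE J_\La$ is represented by some $X \in \GE$.

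The only content is injectivity. Suppose $X_1, X_2 \in \GE$ satisfy $J_{\Y} X_1 J_{\La} = J_{\Y} X_2 J_{\La}$, so that $X_2 = j_1 X_1 j_2$ for some $j_1 \in J_{\Y}$, $j_2 \in J_{\La}$. Using the decompositions $J_{\Y} = P(\Y_E) J^1_{\Y}$ and $J_{\La} = J^1_{\La} P(\La_E)$, write $j_1 = p_1 u_1$ and $j_2 = u_2 p_2$ with $p_1 \in P(\Y_E)$, $p_2 \in P(\La_E)$, $u_1 \in J^1_{\Y}$, $u_2 \in J^1_{\La}$. Then
\[
p_1^{-1} X_2 p_2^{-1} = u_1 X_1 u_2 \in P^1(\Y) X_1 P^1(\La),
\]
since $J^1_\Y \subseteq P^1(\Y)$ and $J^1_\La \subseteq P^1(\La)$. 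Moreover $p_1^{-1} X_2 p_2^{-1} \in \GE$ as $p_1, p_2, X_1, X_2 \in \GE$, so Corollary~\ref{ssintprop} yields
\[
p_1^{-1} X_2 p_2^{-1} \in P^1(\Y_E) X_1 P^1(\La_E) \subseteq P(\Y_E) X_1 P(\La_E).
\]
Hence $X_2 \in P(\Y_E) X_1 P(\La_E)$, proving injectivity.

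There is no substantive obstacle: the semisimple intersection property has been set up precisely to make this kind of matching of double cosets routine, and the passage from $J$ to $P$ via $J = P(\cdot_E) J^1$ reduces everything to intersecting $P^1(\Y) y P^1(\La)$ with $\GE$, which is exactly the content of Corollary~\ref{ssintprop}.
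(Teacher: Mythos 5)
Your proof is correct and follows essentially the same route as the paper: both reduce the statement to the semisimple intersection property (Corollary~\ref{ssintprop}) via the decomposition $J=P(\cdot_E)J^1$ and the containments $J^1_\Y\subseteq P^1(\Y)$, $J^1_\La\subseteq P^1(\La)$. The only difference is cosmetic: you argue element-wise on two representatives $X_1,X_2\in\GE$, whereas the paper proves the set identity $P(\Y_E)gP(\La_E)=J^1_\Y(P(\Y_E)gP(\La_E))J^1_\La\cap\GE$.
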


\begin{proof}
Let~$g\in\GE$.  Considering~$\La$ and~$\Y$ as~$\o_F$-lattice sequences, we have containments~$J^1_{\Y}\subseteq P^1(\Y)$ and~$J^1_{\La}\subseteq P^1(\La)$.  Hence
\[
J_{\Y}^1(P(\Y_E) g P(\La_E))J_{\La}^1\cap \GE\subseteq 
P^1(\Y)(P(\Y_E) gP(\La_E))P^1(\La)\cap \GE.
\]
We choose a set of representatives for the finite double coset
space~$P^1(\Y)\backslash(P(\Y_E) gP(\La_E))/P^1(\La)$ and for each
representative we apply the simple intersection property,
Corollary~\ref{ssintprop}, to find
\[
P^1(\Y)(P(\Y_E) gP(\La_E))P^1(\La)\cap \GE=P(\Y_E) g P(\La_E).
\]
Therefore~$P(\Y_E) g P(\La_E)=J^1_{\Y}(P(\Y_E) g
P(\La_E))J^1_{\La}\cap \GE$ and the map is a bijection.
\end{proof}

%%%%%%%%%%%%%%%%%%%%%%%%%%%%%%%%%%%%%%%%%%%%%%%%%%%
\subsection{Intertwining of transfers}
%%%%%%%%%%%%%%%%%%%%%%%%%%%%%%%%%%%%%%%%%%%%%%%%%%%

Let~$[\La,n_{\La},0,\b]$ and~$[\Y,n_{\Y},0,\b]$ be semisimple
strata. Let~$\tth_{\La}\in\Cc(\Y,0,\b)$
and~$\tth_{\Y}=\t_{\La,\Y,\b}(\tth_{\Y})$.
We apply the~$\dag$-construction and abbreviate~$\tJ^1_{\La}=\tJ^1(\b,\La)$
and~$(\tJ_{\La}^\ddag)^1=\tJ^1(\b^\dag,\La^\dag)$, with similar notation
for~$\Y$, and also write~$\t=\t_{\La,\Y,\b}$
and~$\t^\dag=\t_{\La^\dag,\Y^\dag,\b^\dag}$.

\begin{thm}\label{intchars1}
We have
\[
I_{\tG}(\tth_{\La},\tth_{\Y})=\tJ^1_{\Y}\tGE \tJ^1_{\La}.
\]
\end{thm}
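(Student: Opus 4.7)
\textbf{The inclusion $\tJ^1_\Y \tGE \tJ^1_\La \subseteq I_{\tG}(\tth_\La,\tth_\Y)$} is formal. The defining property of the transfer map gives $\tGE \subseteq I_{\tG}(\tth_\La,\tth_\Y)$ (Theorem~\ref{theorem22}\ref{theorem222}), while Theorem~\ref{theorem22}\ref{theorem221} applied to each of $\tth_\La$ and $\tth_\Y$ provides $\tJ^1_\La \subseteq I_{\tG}(\tth_\La)$ and $\tJ^1_\Y \subseteq I_{\tG}(\tth_\Y)$. Composition of intertwining relations---the sandwich $I_{\tG}(\tth_\Y) \cdot I_{\tG}(\tth_\La,\tth_\Y) \cdot I_{\tG}(\tth_\La) \subseteq I_{\tG}(\tth_\La,\tth_\Y)$---closes out this inclusion.

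\textbf{The inclusion $I_{\tG}(\tth_\La,\tth_\Y) \subseteq \tJ^1_\Y \tGE \tJ^1_\La$} is the content, and the strategy is to use the $\dagger$-construction to reduce to Theorem~\ref{theorem22}\ref{theorem221} applied to a \emph{single} character $\tth_\La^\dag$ in the enlarged group $\tG^\ddag$. Given $g \in I_{\tG}(\tth_\La,\tth_\Y)$, I first show (a mild generalisation of Lemma~\ref{dagints} to distinct lattice sequences) that the diagonal embedding $g^\dag\in\tM^\ddag$ intertwines $\tth_\La^\dag$ with $\tth_\Y^\dag$; this follows from Lemma~\ref{descthetadag}, which identifies both $\dag$-characters on the Levi component with tensor powers of the original characters and makes them trivial on the unipotent parts of the Iwahori decomposition~\eqref{eqn:HIwa}. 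By Lemma~\ref{lem:conj} there exists $x\in\tGE^\ddag$ conjugating the $\dag$-data for $\Y$ to that for $\La$. Using the unicity in Theorem~\ref{theorem22}\ref{theorem222} together with the $\dagger$-compatibility of transfer (another application of Lemma~\ref{descthetadag}), the conjugate of $\tth_\La^\dag$ under $x$ coincides with $\tth_\Y^\dag$, so $x\in I_{\tG^\ddag}(\tth_\La^\dag,\tth_\Y^\dag)$.

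Combining this with $g^\dag$ via intertwining composition places $xg^\dag \in I_{\tG^\ddag}(\tth_\La^\dag)$. Theorem~\ref{theorem22}\ref{theorem221} applied in $\tG^\ddag$ yields
\[
xg^\dag \in (\tJ^\ddag_{\La^\dag})^1\,\tGE^\ddag\,(\tJ^\ddag_{\La^\dag})^1,
\]
and absorbing $x$ into $\tGE^\ddag$ via the conjugation identities of Lemma~\ref{lem:conj} gives
\[
g^\dag \in (\tJ^\ddag_{\Y^\dag})^1\,\tGE^\ddag\,(\tJ^\ddag_{\La^\dag})^1.
\]

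\textbf{Descent to $\tG$.} Let $\Ga$ be the finite $2$-group of block-diagonal $\pm\Id$ matrices in $\tM^\ddag$, so that $\tM^\ddag = (\tG^\ddag)^\Ga$ and $g^\dag$ is $\Ga$-fixed, and observe that $(\tJ^\ddag_{\La^\dag})^1$ and $(\tJ^\ddag_{\Y^\dag})^1$ are $\Ga$-stable pro-$p$ groups (recall $p$ is odd). The Iwahori decompositions of these groups with respect to $(\tM^\ddag,\tP^\ddag)$ have Levi parts $\prod_{k=1}^{e}\tJ^1_\La$ and $\prod_{k=1}^{e}\tJ^1_\Y$ respectively, and $\tGE^\ddag$ has a compatible Iwahori decomposition since $\b^\dag$ is block-diagonal in $\tM^\ddag$. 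Combining these decompositions with Theorem~\ref{fixedpt1} (whose cohomological hypothesis is met because $\Ga$ is a $2$-group and the intersections in question are pro-$p$) allows me to choose a factorisation $g^\dag = j_\Y z j_\La$ in which each factor is itself $\Ga$-fixed, hence diagonal in $\tM^\ddag$. Projecting onto a single block gives the desired factorisation $g \in \tJ^1_\Y \tGE \tJ^1_\La$. The main obstacle is this descent: one must orchestrate the Iwahori decompositions of all three factors simultaneously with the Galois-cohomological vanishing to ensure that a diagonal element of a $\dag$-level double coset admits a decomposition with diagonal factors; the secondary delicate point, the $\dag$-compatibility of transfer, reduces via Lemma~\ref{descthetadag} to the simple case already treated in~\cite{BSS}.
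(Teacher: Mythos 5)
Your proof is correct and takes essentially the same route as the paper: embed $g$ diagonally via the $\dagger$-construction, use Lemma~\ref{lem:conj} and the unicity of transfer (Theorem~\ref{theorem22}) to identify $\tth^\ddag_\Y$ with $({\tth}^\ddag_\La)^x$, compute $I_{\tG^\ddag}(\tth^\ddag_\La,\tth^\ddag_\Y)=(\tJ^\ddag_\Y)^1\tGE^\ddag(\tJ^\ddag_\La)^1$, and descend via the $2$-group $\Ga$ and Theorem~\ref{fixedpt1} before projecting to a single block. The only cosmetic difference is that the paper packages the descent as the set equality $I_{\tM^\ddag}({\tth}^\ddag_{\La}\mid_{\tM^\ddag},{\tth}^\ddag_{\Y}\mid_{\tM^\ddag})=I_{\tG}({\tth}^\ddag_{\La},{\tth}^\ddag_{\Y})\cap \tM^\ddag$ (using triviality on the unipotent parts from Lemma~\ref{descthetadag}), obtaining both inclusions simultaneously, whereas you handle the easy inclusion separately and argue element-by-element with $g^\dag$.
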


\begin{proof} 
Let~$g\in I_{\tG}(\tth_{\La},\tth_{\Y})$ and, as before, let~$g^\dag$ denote
the diagonal embedding of~$g$ in~$\tG^\ddag$. By Lemma~\ref{dagints}, we have~
\[g^\dag\in I_{\tG}(\tth^\ddag_{\La},\tth^\ddag_{\Y}).\]  Thus, as~$\GE\in I_{\tG}(\tth_{\La},\tth_{\Y})$ by Theorem ~\ref{theorem22} \ref{theorem222}, we have
\[\tGE^\ddag\in I_{\tG}(\tth^\ddag_{\La},\tth^\ddag_{\Y}),\]
hence~$\tth^\ddag_{\Y}=\t^\dag(\tth^\ddag_{\La})$, again by Theorem~\ref{theorem22} \ref{theorem222}. Moreover, taking~$x\in\tGE^\ddag$ such
that~$\La^\dag=x\cdot\Y^\dag$, as in Lemma~\ref{lem:conj}, we
have\[\tGE^\ddag\subseteq I_{\tG}(\tth^\ddag_{\La},({\tth}^\ddag_{\La})^x),\]
as~$\tGE^\ddag$ intertwines~$\tth^\ddag_{\La}$ by Theorem \ref{theorem22} \ref{theorem221}.  Since~$({\tth}^\ddag_{\La})^x\in\Cc(\Y,0,\b)$, we deduce that~$\tth^\ddag_{\Y}=({\tth}^\ddag_{\La})^x$ by the unicity
of the transfer in Theorem~\ref{theorem22} \ref{theorem222}.

By Theorem~\ref{theorem22} \ref{theorem221}, we 
have
\[I_{\tG}({\tth}^\ddag_{\La},{\tth}^\ddag_{\La})=(\tJ^{\ddag}_{\La})^1\tGE^{\ddag}(\tJ^{\ddag}_{\La})^1.\]
If~$y\in\tG^\ddag$ then~$y\in
I_{\tG}({\tth}^\ddag_{\La},{\tth}^\ddag_{\La})$ if and only if~$xy\in
I_{\tG}({\tth}^\ddag_{\La},({\tth}^\ddag_{\La})^x)$.
Therefore
\[
I_{\tG}({\tth}^\ddag_{\La},{\tth}^\ddag_{\Y})
=x^{-1}I_{\tG}({\tth}^\ddag_{\La},({\tth}^\ddag_{\La})^x)
=x^{-1}(\tJ_{\La}^{\ddag})^1\tGE^\ddag(\tJ^{\ddag}_{\La})^1 
=(\tJ^{\ddag}_{\Y})^1\tGE^\ddag(\tJ^{\ddag}_{\La})^1.
\]
Now, as in the proof of Lemma \ref{GLssintprop}, let~$\Ga$ be the group~$2$-subgroup of~$\tM^\ddag$ generated by blocks consisting of~$\Id$ and~$-\Id$.  Because~$\Ga$ is a~$2$-group and~$(\tJ^{\ddag}_{\La})^1$
and~$(\tJ^{\ddag}_{\Y})^1$ are pro-$p$ groups, with~$p$ odd, the
non-abelian cohomology pointed
set~$H^1(\Ga,g(\tJ^{\ddag}_{\Y})^1g^{-1}\cap (\tJ^{\ddag}_{\La})^1)$
is trivial, for
all~$g\in\tG$. Hence, by Theorem~\ref{fixedpt1},
\begin{align*}
((\tJ_{\Y}^{\ddag})^1 \tGE^\ddag(\tJ_{\La}^{\ddag})^1)\cap \tM^\ddag
&=((\tJ_{\Y}^{\ddag})^1 \cap \tM^\ddag)(\tGE^\ddag\cap \tM^\ddag)((\tJ_{\La}^{\ddag})^1\cap \tM^\ddag)\\
&=(\tJ_{\Y}^\ddag \cap \tM^\ddag)(\tGE^\ddag\cap \tM^\ddag)(\tJ_{\La}^\ddag\cap \tM^\ddag).
\end{align*}
Finally, for~$g^\dag\in I_{\tG}(\tth^\ddag_{\La},\tth^\ddag_{\Y})$, we have an Iwahori decomposition
\begin{align*}
\tH^1(\b^\dag,\La^\dag)\cap \tH^1(\b^\dag,\Y^\dag)^{g^\dag}&=(\tH^1(\b^\dag,\La^\dag)\cap \tH^1(\b^\dag,\Y^\dag)^{g^\dag}\cap  \widetilde{U}^{-,\dag})\\
&(\tH^1(\b^\dag,\La^\dag)\cap \tH^1(\b^\dag,\Y^\dag)^{g^\dag}\cap  \tM^\ddag)(\tH^1(\b^\dag,\La^\dag)\cap \tH^1(\b^\dag,\Y^\dag)^{g^\dag}\cap  \widetilde{U}^{-,\dag}),%\\
%(\tH^1(\b^\dag,\La^\dag)\cap \tH^1(\b^\dag,\Y^\dag)^{g^\dag}\cap  \tM^\ddag)&=(\tH^1(\b,\La)\cap \tH^1(\b,\Y)^{g^\dag})\times \cdots(\tH^1(\b,\La)\cap \tH^1(\b,\Y)^{g^\dag}),
\end{align*}
 and, by Lemma \ref{descthetadag},~${\tth}^\ddag_{\La},{\tth}^\ddag_{\Y}$ are trivial on the unipotent parts of this decomposition. Hence, we have
\[I_{\tM^\ddag}({\tth}^\ddag_{\La}\mid_{\tM^\ddag},{\tth}^\ddag_{\Y}\mid_{\tM^\ddag})=I_{\tG}({\tth}^\ddag_{\La},{\tth}^\ddag_{\Y})\cap \tM^\ddag.\] 
Therefore
\[
I_{\tM^\ddag}({\tth}^\ddag_{\La}\mid_{\tM^\ddag},{\tth}^\ddag_{\Y}\mid_{\tM^\ddag})
=(\tJ_{\Y}^\ddag \cap \tM^\ddag)(\tGE^\ddag\cap \tM^\ddag)(\tJ_{\La}^\ddag\cap \tM^\ddag).
\]
Restricting this equality to a single block in~$\tM^\ddag$ we
recover~$I_{\tG}(\tth_{\La},\tth_{\Y})=\tJ_{\Y}\tGE \tJ_{\La}$.
\end{proof}

Suppose further that~$[\La,n_{\La},0,\b]$ and~$[\Y,n_{\Y},0,\b]$ are
self-dual.  Let~$\th_{\La}\in\Cc_{-}(\La,0,\b)$
and~$\th_{\Y}=\t_{\La,\Y,\b}(\th_{\La})$. Let~$J_{\La}=J(\b,\La)$
and~$J_{\Y}=J(\b,\Y)$.

\begin{thm}\label{classthetaint}  We
  have~$I_G(\th_{\La},\th_{\Y})=J_{\Y}\GE J_{\La}$.
\end{thm}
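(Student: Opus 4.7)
The plan is to prove the two inclusions separately. The containment $J_{\Y}\GE J_{\La}\subseteq I_G(\th_\La,\th_\Y)$ is immediate from Theorem~\ref{theorem23}: part~(ii) gives $\GE\subseteq I_G(\th_\La,\th_\Y)$ by the very definition of the transfer, part~(i) gives $J^1_\La\subseteq I_G(\th_\La)$ and $J^1_\Y\subseteq I_G(\th_\Y)$, and since $J_\La=P(\La_E)J^1_\La$ with $P(\La_E)\subseteq\GE$ (similarly for $\Y$) one obtains $J_\Y\GE J_\La=J^1_\Y\GE J^1_\La\subseteq I_G(\th_\La,\th_\Y)$.

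For the reverse inclusion, the strategy is to lift to $\tG$, apply Theorem~\ref{intchars1}, and then descend via a $\s$-fixed-point argument. Choose a $\Si$-invariant lift $\tth_\La\in\Cc(\La,0,\b)^\Si$ of $\th_\La$ (this exists by the very definition of $\Cc_{-}$) and set $\tth_\Y=\t_{\La,\Y,\b}(\tth_\La)\in\Cc(\Y,0,\b)^\Si$, which is $\Si$-invariant since transfer commutes with $\s$ and restricts to $\th_\Y$ by the compatibility of the $\tG$- and $G$-level transfers (unicity in Theorem~\ref{theorem23}~(ii)).

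The key technical step is the claim $I_G(\th_\La,\th_\Y)\subseteq I_{\tG}(\tth_\La,\tth_\Y)$. Given $g\in I_G(\th_\La,\th_\Y)\subseteq G\subseteq G^+=\tG^\Si$, the element $g$ is $\s$-fixed, so $\tH^1_\La\cap(\tH^1_\Y)^g$ is $\s$-stable and $\tth_\La\cdot(\tth_\Y^g)^{-1}$ is a $\s$-invariant character on it. For such a character on a pro-$p$ abelian group with $p$ odd, the values on the $\s$-antisymmetric part are squared to $1$ and hence equal $1$ (being $p$-power roots of unity with $p$ odd), so the character is determined by its restriction to the $\s$-fixed subgroup. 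As $\tH^1$ is pro-$p$ with $p$ odd we have $\tH^1\cap G^+=\tH^1\cap G=H^1$, so the $\s$-fixed subgroup of $\tH^1_\La\cap(\tH^1_\Y)^g$ is $H^1_\La\cap(H^1_\Y)^g$, on which $\tth_\La\cdot(\tth_\Y^g)^{-1}=\th_\La\cdot(\th_\Y^g)^{-1}$ is trivial by hypothesis. Hence $g\in I_{\tG}(\tth_\La,\tth_\Y)=\tJ^1_\Y\tGE\tJ^1_\La$ by Theorem~\ref{intchars1}.

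To descend from $\tG$ to $G$, apply Theorem~\ref{fixedpt1}~(ii)(b) with $\Gamma=\Si$, pro-$p$ groups $U_1=\tJ^1_\La, U_2=\tJ^1_\Y$, and $H=\tGE$. The required double-coset hypothesis is the $\tJ^1$-analog of Lemma~\ref{GLssintprop}: for all $h\in\tGE$, $\tJ^1_\Y h\tJ^1_\La\cap\tGE=\tP^1(\Y_E)h\tP^1(\La_E)=(\tJ^1_\Y\cap\tGE)h(\tJ^1_\La\cap\tGE)$, which follows from Lemma~\ref{GLssintprop} combined with $\tP^1(\La_E)\subseteq\tJ^1_\La$ and $\tJ^1_\La\cap\tGE=\tP^1(\La_E)$ (and similarly for $\Y$). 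The conclusion is $(\tJ^1_\Y\tGE\tJ^1_\La)^\Si=J^1_\Y\GE^+J^1_\La$. Since $g\in G$ is $\s$-fixed and $J^1\subseteq G$ (pro-$p$ with $p$ odd), any decomposition $g=j_\Y\gamma j_\La$ with $j_\Y,j_\La\in J^1$ and $\gamma\in\GE^+$ forces $\gamma\in\GE^+\cap G=\GE$, so $g\in J^1_\Y\GE J^1_\La=J_\Y\GE J_\La$, as required. The main obstacle is the lifting-of-intertwiners step in the third paragraph, which uses the $p$-odd hypothesis crucially; the remaining ingredients are routine consequences of the results already assembled.
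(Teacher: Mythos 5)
Your proposal is correct and its skeleton is the same as the paper's: choose $\Si$-fixed lifts $\tth_\La$, $\tth_\Y=\t_{\La,\Y,\b}(\tth_\La)$, compute $I_{\tG}(\tth_\La,\tth_\Y)=\tJ^1_\Y\tGE\tJ^1_\La$ by Theorem~\ref{intchars1}, and descend by the coprime fixed-point result Theorem~\ref{fixedpt1} together with the semisimple intersection property. The one genuine divergence is the bridge between the two levels: the paper invokes the Glauberman correspondence and \cite[Corollary~2.5]{St00} to get the equality $I_G(\th_\La,\th_\Y)=I_{\tG}(\tth_\La,\tth_\Y)\cap G$ in one stroke, whereas you prove only the inclusion you need, $I_G(\th_\La,\th_\Y)\subseteq I_{\tG}(\tth_\La,\tth_\Y)$, by an elementary argument special to characters (a $\s$-invariant character killed on the $\s$-fixed points is trivial, since antisymmetric values are $p$-power roots of unity squaring to $1$), and you get the easy containment $J_\Y\GE J_\La\subseteq I_G(\th_\La,\th_\Y)$ separately from Theorem~\ref{theorem23}. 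Your route is more self-contained (no appeal to the Glauberman machinery, which for $1$-dimensional representations is overkill), at the cost of a small imprecision: $\tH^1_\La\cap(\tH^1_\Y)^g$ is not abelian, so the decomposition into symmetric and antisymmetric parts does not apply to the group itself. This is harmless but should be said: the character $\tth_\La\cdot(\tth_\Y^g)^{-1}$ factors through the abelianisation $K^{\mathrm{ab}}$ of $K=\tH^1_\La\cap(\tH^1_\Y)^g$, the fixed points $K^\Si$ surject onto $(K^{\mathrm{ab}})^\Si$ because $H^1(\Si,[K,K])$ is trivial ($K$ pro-$p$, $p$ odd), and then your splitting argument applies to $K^{\mathrm{ab}}=(K^{\mathrm{ab}})^++(K^{\mathrm{ab}})^-$. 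Your descent step is fine, and indeed your derivation of the double coset hypothesis of Theorem~\ref{fixedpt1} from Lemma~\ref{GLssintprop} together with $\tJ^1_\La\cap\tGE=\tP^1(\La_E)$ is exactly the bookkeeping the paper compresses into its citation of Corollary~\ref{ssintprop}.
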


\begin{proof}
Let~$\tth_{\La}\in\Cc(\La,0,\b)$ and~$\tth_{\Y}\in\Cc(\Y,0,\b)$ be
self-dual semisimple characters which restrict to~$\th_\La$
and~$\th_\Y$ respectively. Since~$\tth_{\Y}$ is the
unique~$\Sigma$-fixed semisimple character restricting to~$\th_\Y$, we
have~$\tth_{\Y}=\t(\tth_{\La})$.  Furthermore, letting~$\mathbf{g}$
denote the Glauberman correspondence (\cf~\cite[\S2]{St00} and
the references therein),~$\th_{\La}=\mathbf{g}(\tth_{\La})$
and~$\th_{\Y}=\mathbf{g}(\tth_\Y)$.
By~\cite[Corollary~$2.5$]{St00},~$I_g(\tth_{\La},\tth_{\Y})\neq 0$ if and only if~$I_g(\mathbf{g}(\tth_{\La}),\mathbf{g}(\tth_\Y))\neq
0$. Therefore,
\[
I_G(\th_{\La},\th_{\Y})=I_G(\tth_{\La},\tth_{\Y})\cap G.
\]
Furthermore,~$I_G(\tth_{\La},\tth_{\Y})=\tJ_{\Y}\tGE \tJ_{\La}$ by Theorem~\ref{intchars1}, and
$
(\tJ_{\Y}\tGE \tJ_{\La})\cap G=(\tJ^1_{\Y}\tGE \tJ^1_{\La})\cap G
=J^1_{\Y}\GE J^1_{\La}=J_{\Y}\GE J_{\La}
$
by Theorem~\ref{fixedpt1} and the semisimple intersection property
Corollary~\ref{ssintprop}.
\end{proof}

%%%%%%%%%%%%%%%%%%%%%%%%%%%%%%%%%%%%%%%%%%%%%%%%%%%
\subsection{Some exact sequences}
%%%%%%%%%%%%%%%%%%%%%%%%%%%%%%%%%%%%%%%%%%%%%%%%%%%

Let~$[\La,n_{\La},0,\b]$ be a semisimple stratum in~$A$. 
We denote by~$a_{\b}$ the adjoint map given by~$a_{\b}(x)=\b
x-x\b$ for~$x\in A$, and by~$s$ a \emph{tame corestriction} on~$A$
relative to~$F[\b]/F$ (cf. \cite[1.3]{BK93} and \cite[Proposition 3.31]{St05}).
%%%%%%%%%%%%%%%%%%%%%%%%%%%%%%%%%%%%%%%%%%%%%%%%%%%
%%%%%%%%%%%%%%%%%%%%%%%%%%%%%%%%%%%%%%%%%%%%%%%%%%%

\begin{lemma}\label{exactsequences1}
\begin{enumerate}
\item\label{exactsequences1:1}
Let~$[\La,n_{\La},0,\b]$ be a semisimple stratum in~$A$.  The sequence 
\begin{center}
\begin{tikzpicture}[>=stealth]
\matrix (m) [matrix of math nodes, column sep=2.5em, row sep=0.5em,text height=1.5ex, text depth=0.25ex]
{
 {0}&{\QQ(\b,\La)}& {\JJ^1(\b,\La)} & {{\HH^1(\b,\La)}^*}&{\BB(\b,\La)}&0\\
};
\path[overlay,->, font=\scriptsize,>=latex]
        (m-1-1) edge (m-1-2)
        (m-1-2) edge (m-1-3)
        (m-1-3) edge node[above] {$a_{\b}$} (m-1-4)
        (m-1-4) edge node[above] {$s$}  (m-1-5)
        (m-1-5) edge (m-1-6);
\end{tikzpicture}
\end{center}
is exact.
\item\label{exactsequences1:2} 
Let~$[\La,n_{\La},0,\b]$ and~$[\Y,n_{\Y},0,\b]$ be semisimple strata in~$A$ and~$y\in\tGE$.  The sequence 
\begin{center}
\begin{tikzpicture}[>=stealth,descr/.style={fill=white,inner sep=1.5pt}]
\matrix (m) [matrix of math nodes, column sep=2.5em, row sep=1.5em,text height=1.5ex, text depth=0.25ex, ]
{
 {0}& {\QQ(\b,\La)+(\QQ(\b,\Y))^y}& {\JJ^1(\b,\La)+(\JJ^1(\b,\Y))^y}\\
& {\HH^1(\b,\La)^*+(\HH^1(\b,\Y)^*)^y}&{\BB(\b,\La)+(\BB(\b,\Y))^y}& {0}\\
};
       \path[overlay,->, font=\scriptsize,>=latex]
        (m-1-1) edge (m-1-2)
        (m-1-2) edge (m-1-3)
        (m-1-3) edge[out=355,in=175]  node[descr] {$\a_{\b}$} (m-2-2)
        (m-2-2) edge node[above] {$s$} (m-2-3)
        (m-2-3) edge (m-2-4);
\end{tikzpicture}
\end{center}
is exact.
\end{enumerate}
\end{lemma}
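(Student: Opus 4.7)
Part (i) is the semisimple analogue of the exact sequence of~\cite[1.4.10]{BK93} (\cf~\cite[Proposition~3.31]{St05}), so the plan is to prove it by induction on the critical exponent $k_0(\b,\La)$, using the inductive description of $\HH^1(\b,\La)$ and $\JJ^1(\b,\La)$ in terms of a defining sequence of semisimple strata with strictly smaller critical exponent. Injectivity at the left is immediate from $\ker(a_\b)=B$ together with the definition $\QQ(\b,\La)=\JJ^1(\b,\La)\cap B$; surjectivity of $s$ at the right follows from the normalisation of a tame corestriction (which is the identity on $B$) combined with the defining inclusion $\HH^1(\b,\La)^*\supseteq \BB(\b,\La)$. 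The identity $s\circ a_\b=0$ gives $\im(a_\b)\subseteq\ker(s)$; the reverse containment is the substantive point, and is the content of the induction, which in the base case reduces by the block decomposition $V=\bigoplus_{i\in I}V^i$ to: (a)~the simple case~\cite[1.4.10]{BK93} on the diagonal blocks $A^{ii}$, and (b)~an analogous statement on the off-diagonal blocks $A^{ij}$, where $s$ vanishes and one must check that $a_\b$ is surjective from $\JJ^1(\b,\La)\cap A^{ij}$ onto $\HH^1(\b,\La)^*\cap A^{ij}$ modulo $\QQ(\b,\La)\cap A^{ij}$; this in turn is handled by the standard off-diagonal argument in~\cite[\S3.2]{St05}.

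\textbf{Strategy for (ii).} The plan is to combine part (i) applied to $[\La,n_\La,0,\b]$ with part (i) applied to $[\Y,n_\Y,0,\b]$ and then conjugated by $y$. The crucial input is that for $y\in\tGE$ conjugation commutes with both $a_\b$ (because $y\b y^{-1}=\b$) and with $s$ (because $s$ is a $(B,B)$-bimodule map and $y\in B^\times$); thus conjugating the sequence of part (i) for $\Y$ by $y$ yields an exact sequence
\[
0\to \QQ(\b,\Y)^y\to \JJ^1(\b,\Y)^y\xrightarrow{a_\b}(\HH^1(\b,\Y)^*)^y\xrightarrow{s}\BB(\b,\Y)^y\to 0,
\]
sitting inside the ambient $F$-algebra $A$ with the same maps as the sequence for~$\La$. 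The formal sum of two exact sequences of submodules of a common ambient module is exact at the outer ends (injectivity at the left is tautological; surjectivity of $s$ at the right is clear since each summand is already hit), so the issue reduces to exactness in the middle two places.

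\textbf{The hard part.} Exactness in the middle amounts to the two sum–intersection identities
\[
B\cap\bigl(\JJ^1(\b,\La)+\JJ^1(\b,\Y)^y\bigr)=\QQ(\b,\La)+\QQ(\b,\Y)^y,
\]
and
\[
\ker(s)\cap\bigl(\HH^1(\b,\La)^*+(\HH^1(\b,\Y)^*)^y\bigr)\subseteq a_\b\bigl(\JJ^1(\b,\La)+\JJ^1(\b,\Y)^y\bigr),
\]
neither of which is a formal consequence of the two exact sequences; this is the main obstacle. The plan to overcome it is to apply the $\dag$-construction of Section~\ref{daggerconst}. After changing $\La$ and $\Y$ in their affine classes so that $e(\La)=e(\Y)$, Lemma~\ref{lem:conj} provides $x\in\tGE^\ddag$ with $\La^\dag=x\cdot\Y^\dag$, so that $\JJ^1(\b^\dag,\La^\dag)$ and $(\JJ^1(\b^\dag,\Y^\dag))^{y^\dag}$ become two $\tGE^\ddag$-conjugates of the same object $\JJ^1(\b^\dag,\La^\dag)$, with $\AA(\La^\dag)$ a principal hereditary order. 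In this principal setting the required sum–intersection identities can be verified by the standard argument in~\cite[\S1.4]{BK93} (an Iwahori decomposition argument, using that the two conjugates sit inside a common parahoric coming from a common lattice flag in $V^\dag$). Finally, exactly as in the proofs of Lemma~\ref{GLssintprop} and Theorem~\ref{intchars1}, we restrict to the Levi $\tM^\ddag$ using the fixed-point lemma Theorem~\ref{fixedpt1}\ref{fixedpt1:1} applied to the $2$-group $\Ga$ (valid because the relevant groups are pro-$p$ with $p$ odd), and then project to a single diagonal block of $\tM^\ddag$ to recover the identities for $\La,\Y$ in $A$; combining these with the already-established exactness at the outer ends proves (ii).
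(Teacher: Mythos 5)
Your skeleton for (ii) is the paper's: pass to the $\dag$-construction, use Lemma~\ref{lem:conj} to make the data attached to $\La$ and $\Y$ conjugate under $\tGE^\ddag$, and then descend to a single block; you also correctly isolate the middle two spots as the only non-formal part. But the step you lean on at precisely that point is mis-justified. After passing to $\dag$, what is needed is exactness of the summed sequence for the \emph{single} semisimple stratum $[\La^\dag,n,0,\b^\dag]$ with conjugating element obtained by combining $x$ from Lemma~\ref{lem:conj} with the diagonal embedding $y^\dag$ --- an \emph{arbitrary} element of $\tGE^\ddag$. Your proposed verification ``by the standard argument in~\cite[\S1.4]{BK93}, using that the two conjugates sit inside a common parahoric coming from a common lattice flag in $V^\dag$'' does not go through: principality of $\AA(\La^\dag)$ buys conjugacy of the two hereditary orders, not a common parahoric containing both $\JJ^1(\b^\dag,\La^\dag)$ and its conjugate (for a general element of $\tGE^\ddag$ there is none), and in any case \cite[\S1.4]{BK93} treats simple strata only. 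The fact that closes this gap --- and the one the paper quotes --- is the symmetric-case exact sequence for \emph{semisimple} strata, \cite[Lemma~3.17]{St05} (cf.\ [\emph{op.\,cit.}, Proposition~3.31]), valid for a fixed stratum and any conjugating element of $\tGE$; applied to $[\La^\dag,n,0,\b^\dag]$ it gives exactly the sequence you need in $A^\dag$, and its proof is an inductive one, not an Iwahori-decomposition argument.

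Two smaller points. Theorem~\ref{fixedpt1} is a statement about double cosets $(U_1gU_2)^\Ga$ in a group and does not apply to these additive lattices; no fixed-point argument is needed at all, since the idempotents of the $\dag$-decomposition lie in $\BB(\b^\dag,\La^\dag)$ and in $(\BB(\b^\dag,\Y^\dag))^{y^\dag}$ and commute with $a_{\b^\dag}$ and with a suitably chosen tame corestriction, so cutting the $\dag$-sequence down to a single diagonal block (multiplying on both sides by the corresponding idempotent) is exact and yields (ii) directly. (If you prefer to pass through the Levi $\tM^\ddag$ first, the correct tool is averaging over $\Ga$, whose order is invertible since $p$ is odd, not the non-abelian cohomology statement.) Finally, your induction on $k_0(\b,\La)$ for (i) is unnecessary: (i) is the case $\La=\Y$, $y=1$ of the same quoted result of~\cite{St05}, or equivalently the special case $y=1$ of (ii).
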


\begin{proof}
When~$\La=\Y$, both parts follow from~\cite[Lemma~3.17]{St05}
(\cf~[\emph{op.\,cit.},~Proposition~3.31]). Passing to~$\dag$ we have
the second exact sequence for the semisimple
strata~$[\La^\dag,n_{\La},0,\b^\dag]$
and~$[\Y^\dag,n_{\La}^\dag,0,\b]$, by choosing~$x\in\tGE^\ddag$ as in
Lemma~\ref{lem:conj}, and replacing~$y$ by~$xy$ in the exact sequence
for~$\La^\dag$. Intersecting with a single block
we have~\ref{exactsequences1:2}, while~\ref{exactsequences1:1} is the
special case~$y=1$.
\end{proof}

When we have a self-dual semisimple stratum~$[\La,n_{\La},0,\b]$, we
may (and do) choose a tame corestriction~$s$ which commutes with the
anti-involution~$\s$ on~$A$  (cf. \cite[2.1.1]{St98}).  Then we get the self-dual analogue of Lemma~\ref{exactsequences1}.  
 
\begin{lemma}\label{exactsequences2}
\begin{enumerate}
\item\label{exactsequences2:1} 
Let~$[\La,n_{\La},0,\b]$ be a self-dual semisimple stratum
in~$A$.  The sequence
\begin{center}
\begin{tikzpicture}[>=stealth]
\matrix (m) [matrix of math nodes, column sep=2.5em, row sep=0.5em,text height=1.5ex, text depth=0.25ex]
{
 {0}&{\QQ_{\La}^{-}(\b,\La)}& {\JJ^1_{-}(\b,\La)} & {{\HH^1_{-}(\b,\La)}^*}&{\BB^{-}(\b,\La)}&0\\
};
\path[overlay,->, font=\scriptsize,>=latex]
        (m-1-1) edge (m-1-2)
        (m-1-2) edge (m-1-3)
        (m-1-3) edge node[above] {$a_{\b}$} (m-1-4)
        (m-1-4) edge node[above] {$s$}  (m-1-5)
        (m-1-5) edge (m-1-6);
\end{tikzpicture}
\end{center}
is exact.
\item\label{exactsequences2:2} 
Let~$[\La,n_{\La},0,\b]$ and~$[\Y,n_{\Y},0,\b]$ be self-dual
semisimple strata in~$A$ and~$y\in\GE^+$.  The sequence
\begin{center}
\begin{tikzpicture}[>=stealth,descr/.style={fill=white,inner sep=1.5pt}]
\matrix (m) [matrix of math nodes, column sep=2.5em, row sep=1.5em,text height=1.5ex, text depth=0.25ex, ]
{
 {0}& {\QQ^{-}(\b,\La)+(\QQ^{-}(\b,\Y))^y}& {\JJ^1_{-}(\b,\La)+(\JJ^1_{-}(\b,\Y))^y}\\
& {\HH^1_{-}(\b,\La)^*+(\HH^1_{-}(\b,\Y)^*)^y}&{\BB^{-}(\b,\La)+(\BB^{-}(\b,\Y))^y}& {0}\\
};
       \path[overlay,->, font=\scriptsize,>=latex]
        (m-1-1) edge (m-1-2)
        (m-1-2) edge (m-1-3)
        (m-1-3) edge[out=355,in=175]  node[descr] {$\a_{\b}$} (m-2-2)
        (m-2-2) edge node[above] {$s$} (m-2-3)
        (m-2-3) edge (m-2-4);
\end{tikzpicture}
\end{center}
is exact.
\end{enumerate}
\end{lemma}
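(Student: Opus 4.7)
The plan is to deduce both parts directly from the corresponding parts of Lemma~\ref{exactsequences1} by passing to~$\Si$-fixed points. First I would verify that all the data are~$\s$-equivariant: each of~$\QQ(\b,\La)$,~$\JJ^1(\b,\La)$,~$\HH^1(\b,\La)^*$,~$\BB(\b,\La)$ is~$\s$-stable, using the self-duality of~$[\La,n_\La,0,\b]$ together with the observation that~$\psi_F\circ\Tr_{A/F}$ behaves well under~$\overline{\phantom{w}}$, so that the duality~$S\mapsto S^*$ preserves~$\s$-stability. A direct computation using~$\overline{\b}=-\b$ gives~$\s\circ a_\b=a_\b\circ\s$, and the tame corestriction~$s$ has already been chosen to commute with~$\s$. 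Moreover, by the definition of the ``$-$'' subscript, and since~$A^-=A^\Si$, we have identifications~$\JJ^1_-(\b,\La)=\JJ^1(\b,\La)^\Si$ and likewise for the other three terms in~\ref{exactsequences2:1}.

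Next, since~$\Si$ has order~$2$ and~$2$ is a unit in~$\o_F$ (as~$p$ is odd), the projectors~$\tfrac{1}{2}(1\pm\s)$ decompose each~$\s$-stable~$\o_F$-module as a direct sum of its~$\pm 1$-eigenspaces, and any~$\s$-equivariant map respects this decomposition. The exact sequence of Lemma~\ref{exactsequences1}\ref{exactsequences1:1} therefore splits as a direct sum of two exact sequences indexed by the sign, and restricting to the~$(+1)$-eigenspace part yields exactly the sequence of~\ref{exactsequences2:1}.

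For~\ref{exactsequences2:2} I would apply the same strategy to the exact sequence of Lemma~\ref{exactsequences1}\ref{exactsequences1:2}, with one additional check: the sums~$M+M^y$ must interact nicely with~$\Si$-invariants. Here the hypothesis~$y\in\GE^+$ is crucial, because it gives~$\overline{y}=y^{-1}$, hence conjugation by~$y$ commutes with~$\s$; in particular~$M^y$ is~$\s$-stable whenever~$M$ is, and splitting into~$\pm 1$-eigenspaces yields~$(M+M^y)^\Si=M^\Si+(M^\Si)^y$ in each of the four positions. Applying this, together with the~$\s$-equivariance of~$a_\b$ and~$s$, recovers the required exact sequence.

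The argument is entirely formal; the only delicate checks are the~$\s$-equivariance of~$a_\b$ (immediate from~$\b\in A^-$) and the exactness of the~$\Si$-invariants functor in this setting (immediate from~$2\in\o_F^\times$), both routine. I do not anticipate any substantive obstacle, the whole proof amounting to extracting the~$(+1)$-eigenspace of an already-established exact sequence.
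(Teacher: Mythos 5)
Your proposal is correct and is essentially the paper's own argument: the paper simply notes that the tame corestriction may be chosen to commute with~$\s$ and then asserts the self-dual analogue, i.e.\ exactly your passage to~$\Si$-fixed points (the~$+1$-eigenspace of~$\s$, using that~$2$ is invertible and that~$a_\b$, $s$ and conjugation by~$y\in\GE^+$ are~$\s$-equivariant). The only detail worth noting is the routine identification of~$\HH^1_-(\b,\La)^*$ with~$(\HH^1(\b,\La)^*)^\Si$ via the orthogonality of~$A^+$ and~$A^-$ under~$\psi_A$, which you implicitly subsume under "likewise for the other three terms".
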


%%%%%%%%%%%%%%%%%%%%%%%%%%%%%%%%%%%%%%%%%%%%%%%%%%%
\section{Intertwining of Heisenberg representations}
%%%%%%%%%%%%%%%%%%%%%%%%%%%%%%%%%%%%%%%%%%%%%%%%%%%

While up to now, we have been generalising results for both~$\tG$
and~$G$ in this section we concern ourself only with representations
of~$G$. The same methods apply for representations of~$\tG$.

Let~$[\La,n_{\La},0,\b]$ and~$[\Y,n_{\Y},0,\b]$ be self-dual
semisimple strata in~$A$. In this section we will abbreviate lattices
in~$A^-$ without the superscript~${}^-$, to simplify the
notation. Thus we
write~$\QQ_{\La}=\QQ^{-}(\b,\La)$,~$\HH_{\La}=\HH^1_-(\b,\La)$,~$\JJ_{\La}=\JJ^1_-(\b,\La)$,
and~$\BB_{\La}=\BB^{-}(\b,\La)$, using analogous notation
for~$\Y$. (Note, in particular, that we are omitting the
superscript~${}^1$ here.) We
also write~$H^1_{\La}=H^1(\b,\La)$ and~$J^1_{\La}=J^1(\b,\La)$,
with~$H^1_\Y,J^1_\Y$ defined similarly.

Let~$\th_{\La}\in\Cc_{-}(\La,0,\b)$
and~$\th_{\Y}=\t_{\La,\Y,\b}(\th_{\La})$.  Let~$\eta_{\La}$ be the
unique Heisenberg representation containing~$\th_{\La}$
and~$\eta_{\Y}=\t_{\La,\Y,\b}(\eta_{\La})$ the unique Heisenberg
representation containing~$\th_{\Y}$.

\begin{thm}\label{intetas} 
The intertwining of~$\eta_{\La}$ and~$\eta_{\Y}$ in~$G$ is given by
\[
\dim_{R}(I_g(\eta_\La,\eta_\Y))=
\begin{cases} 
1&\text{if }g\in J_\Y\GE J_\La;\\ 
0&\text{otherwise.}
\end{cases}
\]
\end{thm}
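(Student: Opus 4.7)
The containment $I_G(\eta_\La,\eta_\Y)\subseteq I_G(\th_\La,\th_\Y)=J_\Y\GE J_\La$ is immediate from Theorem~\ref{classthetaint}, since any non-zero intertwiner of $\eta_\La$ with $\eta_\Y^g$ restricts to a non-zero intertwiner of $\th_\La$ with $\th_\Y^g$ on $H^1_\La\cap H^{1,g}_\Y$. So the task is to show the intertwining space is exactly one-dimensional for each $g\in J_\Y\GE J_\La$. The plan is to mirror the strategy of Theorem~\ref{intchars1}: first lift to $\tG$ and prove the analogous formula for the unique $\Si$-fixed Heisenberg representations $\teta_\La,\teta_\Y$ of $\tJ^1(\b,\La),\tJ^1(\b,\Y)$ (which exist by Theorem~\ref{theorem32} and satisfy $\teta_\Y=\t_{\La,\Y,\b}(\teta_\La)$ by uniqueness), and then descend to $G$ via the Glauberman correspondence.

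For the $\tG$-statement, apply the $\dag$-construction of Section~\ref{daggerconst}. By Lemma~\ref{lem:conj} there exists $x\in\tGE^\ddag$ with $\La^\dag=x\cdot\Y^\dag$; combining uniqueness of the transfer with uniqueness of the Heisenberg representation identifies $\teta^\dag_\La$ with the conjugate of $\teta^\dag_\Y$ by $x$. The single-lattice intertwining of $\teta^\dag_\Y$ in $\tG^\ddag$ is the analogue of~\cite[Proposition~3.31]{St05} (whose proof carries over to our setting since all groups involved are pro-$p$), namely $\dim I_h(\teta^\dag_\Y)=1$ for $h\in\tJ^1(\b^\dag,\Y^\dag)\tGE^\ddag\tJ^1(\b^\dag,\Y^\dag)$ and zero otherwise. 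Translating by $x$ yields
\[
\dim I_g(\teta^\dag_\La,\teta^\dag_\Y)=\begin{cases}1&\text{if }g\in\tJ^1(\b^\dag,\Y^\dag)\tGE^\ddag\tJ^1(\b^\dag,\La^\dag),\\ 0&\text{otherwise.}\end{cases}
\]

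To descend from $\tG^\ddag$ to $\tG$, observe that for $g\in\tG$ the diagonal image $g^\dag$ lies in the Levi $\tM^\ddag$; an analogue of Lemma~\ref{descthetadag} for Heisenberg representations (which follows from the lemma itself together with the uniqueness of $\teta^\dag$) shows that the restriction of $\teta^\dag_\La$ to $\tJ^1(\b^\dag,\La^\dag)\cap\tM^\ddag$ is an external tensor product of $e(\La)$ copies of $\teta_\La$. Combined with a Mackey--Iwahori analysis of the intertwining operators and the fixed-point argument at the end of the proof of Theorem~\ref{intchars1} (Theorem~\ref{fixedpt1} applied to the $2$-group $\Ga\subseteq\tM^\ddag$ of diagonal $\pm\Id$-blocks, valid since $p$ is odd), this yields the $\tG$-intertwining formula for $\teta_\La,\teta_\Y$. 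Finally, one descends to $G$: $\eta_\La=\mathbf{g}(\teta_\La)$ and $\eta_\Y=\mathbf{g}(\teta_\Y)$ for the Glauberman correspondence $\mathbf{g}$, which preserves intertwining dimensions at elements of $G$ (\cf~\cite[Corollary~2.5]{St00}), and $(\tJ^1_\Y\tGE\tJ^1_\La)^\Si=J_\Y\GE J_\La$ by Theorem~\ref{fixedpt1} and Corollary~\ref{ssintprop}, giving the claim. The main obstacle in this plan is the Levi-descent step: Heisenberg representations are not generally trivial on the unipotent parts of the Iwahori decomposition of $\tJ^1(\b^\dag,\cdot)$, so the reduction of intertwining at $g^\dag\in\tM^\ddag$ to intertwining of the Levi restrictions (and ultimately to a single block) is more subtle than the character case handled in Theorem~\ref{intchars1}.
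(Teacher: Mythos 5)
Your upper bound $I_G(\eta_\La,\eta_\Y)\subseteq I_G(\th_\La,\th_\Y)=J_\Y\GE J_\La$ is fine, but the substance of the theorem is the other direction together with the exact dimension, and that is precisely where your plan breaks down --- as you yourself flag. Concretely: (1) your intermediate claim that $\teta^\dag_\La$ restricted to $\tJ^1(\b^\dag,\La^\dag)\cap\tM^\ddag$ is the external tensor product of $e(\La)$ copies of $\teta_\La$ is false; comparing dimensions, $\dim\teta^\dag_\La=(\tJ^1(\b^\dag,\La^\dag):\tH^1(\b^\dag,\La^\dag))^{1/2}$ strictly exceeds $\dim(\teta_\La)^{\otimes e(\La)}$ because of the off-diagonal (unipotent) parts of the Iwahori decomposition, so the restriction is only a multiple of that tensor product. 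The correct statements involve the auxiliary representations $\teta_{\tP}$ of the groups $\tJ^1_{\tP}=\tH^1(\tJ^1\cap\tP)$ with $\teta=\ind_{\tJ^1_{\tP}}^{\tJ^1}\teta_{\tP}$ (the machinery of Section~\ref{skewcoverssect}, after \cite[\S5]{St08}), and redeveloping that machinery in the $\dag$-picture so as to pass from intertwining of $\teta^\dag$ at points of $\tM^\ddag$ to intertwining of a single block is exactly the unproved ``main obstacle'' you name; without it you have neither the non-vanishing nor the one-dimensionality on $J_\Y\GE J_\La$. (2) Even granting the $\tG$-statement, the final descent to $G$ is not covered by what you cite: \cite[Corollary~2.5]{St00} only gives equivalence of \emph{non-vanishing} of intertwining spaces under the Glauberman correspondence, not equality of dimensions, so the dimension-one assertion over $G$ would still need a separate argument.

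The paper takes a different, and much shorter, route which avoids any descent of Heisenberg representations through Levi subgroups: it imitates \cite[Proposition~5.1.8]{BK93}. Since $\ind_{H^1_\La}^{J^1_\La}\th_\La$ is $(J^1_\La:H^1_\La)^{1/2}$ copies of $\eta_\La$ (\cite[Corollary~3.29]{St05}), and likewise for $\Y$, Lemma~\ref{heckelemma}\ref{heckelemma:3} identifies the $g$-component of the Hecke bimodule for the induced characters with a sum of one-dimensional spaces $\Hh(G,\th_\La,\th_\Y)_h$ over those $H^1$-double cosets $h$ inside $J^1_\La g J^1_\Y$ which intertwine the characters; Theorem~\ref{classthetaint} determines which cosets contribute, and Lemma~\ref{counting} --- proved by the Haar-measure computation of Lemma~\ref{doublecosets1}, resting on the asymmetric exact sequences of Lemma~\ref{exactsequences2}, which is where the $\dag$-construction actually enters this theorem --- counts them as exactly $(J^1_\La:H^1_\La)^{1/2}(J^1_\Y:H^1_\Y)^{1/2}$. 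Dividing out the multiplicities pins $\dim_R I_g(\eta_\La,\eta_\Y)$ to $1$ on $J_\Y\GE J_\La$ and $0$ elsewhere in one stroke. If you want to pursue your route you would essentially have to rebuild the $\teta_{\tP}$-theory for the decomposition $V^\dag=V\oplus\cdots\oplus V$ and then still handle the Glauberman dimension issue; the counting argument bypasses both.
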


This theorem is an asymmetric generalisation
of~\cite[Proposition~5.1.8]{BK93} in the classical groups setting (see
also~\cite[Proposition~3.31]{St05}) and we imitate those proofs. 

\begin{lemma}\label{doublecosets1}
For any~$y\in\GE^+$, we have
\[
(J_\La^1: J_\La^1\cap yJ_\Y^1 y^{-1})(J_\Y^1: y^{-1}J_\La^1 y\cap J_\Y^1)
=
(H_\La^1: H_\La^1\cap yH_\Y^1 y^{-1})(H_\Y^1: y^{-1}H_\La^1 y\cap H_\Y^1).
\]
\end{lemma}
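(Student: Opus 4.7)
The plan is to reduce the lemma, via elementary manipulations of double-coset cardinalities, to a single identity, then translate to a statement about $\o_F$-lattices and use the exact sequences of Lemma~\ref{exactsequences2}.

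First, using the standard formula $|K_1 g K_2| = |K_1||K_2|/|K_1 \cap g K_2 g^{-1}|$ for compact open subgroups together with the equality $|y^{-1} J_\La^1 y \cap J_\Y^1| = |J_\La^1 \cap y J_\Y^1 y^{-1}|$ (and similarly for $H^1$), the left-hand side of the lemma equals $|J_\La^1| |J_\Y^1|/|J_\La^1 \cap y J_\Y^1 y^{-1}|^2$, and analogously for the right-hand side. Hence the lemma is equivalent to the single identity
\[
(J_\La^1 \cap y J_\Y^1 y^{-1} : H_\La^1 \cap y H_\Y^1 y^{-1})^2 = (J_\La^1 : H_\La^1)(J_\Y^1 : H_\Y^1). \qquad (\star)
\]

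Next, I would translate $(\star)$ into a lattice identity in $A^-$. Since $p$ is odd, the pro-$p$ exponential identifies the groups $J_\star^1, H_\star^1$ and their intersections with the corresponding $\o_F$-lattices $\JJ_\star, \HH_\star$ (in the notation at the start of Section~4) by a filtration argument using the~$\AA_k(\La)$; indices are preserved and conjugation corresponds to adjoint action. Thus $(\star)$ becomes
\[
(\JJ_\La \cap \JJ_\Y^y : \HH_\La \cap \HH_\Y^y)^2 = (\JJ_\La : \HH_\La)(\JJ_\Y : \HH_\Y).
\]
Now apply the standard lattice identity $(L_1 : M_1)(L_2 : M_2) = (L_1 + L_2 : M_1 + M_2)(L_1 \cap L_2 : M_1 \cap M_2)$ to $L_1=\JJ_\La, L_2=\JJ_\Y^y, M_i=\HH$-analogues, and use that conjugation preserves index: the goal reduces to
\[
(\JJ_\La + \JJ_\Y^y : \HH_\La + \HH_\Y^y) = (\JJ_\La \cap \JJ_\Y^y : \HH_\La \cap \HH_\Y^y). \qquad (\star\star)
\]

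Finally, $(\star\star)$ should follow from the exact sequences of Lemma~\ref{exactsequences2}. Part~(ii) supplies the exact sequence on the sum-lattices $\JJ_\La + \JJ_\Y^y$ and $\HH_\La^* + (\HH_\Y^*)^y$ (with maps $a_\beta$ and $s$), while part~(i) gives the individual sequences for $\La$ and $\Y$. Applying the $\psi_A$-duality $L \mapsto L^*$, which exchanges sums with intersections and preserves indices, converts the sum-index on the left of $(\star\star)$ into an intersection-index involving $\JJ^*, \HH^*$; together with the standard BK-type identifications between $\HH,\JJ,\QQ,\BB$ and their duals under this pairing, the two sides of $(\star\star)$ are then equal.

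The main obstacle is executing the duality step carefully in the asymmetric situation, since we must track how $L \mapsto L^*$ and the exact sequences interact when the two lattice sequences $\La$ and $\Y$ are \emph{not} conjugate in $\tG$. In the symmetric case $\La = \Y$ this is essentially \cite[Proposition~3.31]{St05} (the $\ell=0$ version) and its pro-$p$ proof goes through for general~$R$; the asymmetric version then follows by the same philosophy, with Lemma~\ref{exactsequences2}(ii) replacing the single-stratum exact sequence used in \emph{op.~cit}.
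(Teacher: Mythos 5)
Your reductions are fine, and they follow the paper's strategy: the passage to $(\star)$ uses only conjugation-invariance of indices, your lattice identity $(L_1:M_1)(L_2:M_2)=(L_1+L_2:M_1+M_2)(L_1\cap L_2:M_1\cap M_2)$ is correct (it comes from the exact sequence $0\to L_1\cap L_2\to L_1\oplus L_2\to L_1+L_2\to 0$), and transferring to the additive side (the paper uses the Cayley transform rather than the exponential) is exactly the last step of the paper's proof. Indeed your $(\star)$ is precisely the additive identity the paper establishes, so steps 1--3 are a mild streamlining of its ``compute the ratio, conjugate by $y$, multiply'' bookkeeping.

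The genuine gap is in your final step, which is where all the content of the lemma sits. Duality does convert the left-hand side of $(\star\star)$ into $(\HH_\La^*\cap(\HH_\Y^*)^y:\JJ_\La^*\cap(\JJ_\Y^*)^y)$, but no ``standard identification'' makes this literally equal to $(\JJ_\La\cap\JJ_\Y^y:\HH_\La\cap\HH_\Y^y)$: one has $(\HH^1_-)^*\neq\JJ^1_-$, and the discrepancy is exactly the centraliser contribution $\BB/\QQ$ entering through the maps $a_\b$ and $s$ in the four-term sequences, so a purely formal duality argument fails. What is needed -- and what the paper does -- is the quantitative input of~\cite[Lemma~5.1.3]{BK93}: for lattice resolutions of the fixed complex $0\to B^-\to A^-\xrightarrow{a_\b}A^-\xrightarrow{s}B^-\to 0$, the alternating product of measures is a constant $c$ independent of the lattices. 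One applies this both to the asymmetric (intersection) sequence -- whose exactness must itself be justified, since Lemma~\ref{exactsequences2}\ref{exactsequences2:2} literally gives the \emph{sum} sequence -- and to the two single-stratum sequences of part~\ref{exactsequences2:1}, and then cancels $c$ together with the $\BB,\QQ$ terms using $\mu(L)\mu(L^*)=\mathrm{const}$ (\cite[Lemma~5.1.5]{BK93}) and the dualities $(\HH_\La\cap\HH_\Y^y)^*=\HH_\La^*+(\HH_\Y^*)^y$ and, crucially on the centraliser side, $(\BB_\La\cap\BB_\Y^y)^*=\QQ_\La+\QQ_\Y^y$. Your closing remark that the asymmetric case should follow ``by the same philosophy'' as the case $\La=\Y$ is the right strategy, but the sentence ``the two sides of $(\star\star)$ are then equal'' replaces precisely this measure-comparison and cancellation argument, so as written the proof is not complete.
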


\begin{proof}
We begin by recalling the following from~\cite{BK93}: 
let~$0\to V_1\to V_2\to V_3\to V_4\to 0$ be an exact
sequence of finite-dimensional~$F$-vector spaces and, for~$1\le i\le
4$, let~$\mu_i$ be an~$F$-Haar measure on~$V_i$. 
By~\cite[Lemma~5.1.3]{BK93}, there is a constant~$c\in F^\times$ such
that, if the sequence restricts to an exact sequence~$0\to L_1\to
L_2\to L_3\to L_4\to 0$ of~$\o_F$-lattices~$L_i$ in~$V_i$, then
\[
\frac{\mu_1(L_1)\mu_3(L_3)}{\mu_2(L_2)\mu_4(L_4)}=c.
\]
Moreover,~$\mu_1(L_1)\mu_1(L_1^*)$ is also independent of 
the~$\o_F$-lattice~$L_1$, by~\cite[Lemma~5.1.5]{BK93}.

We have such an exact sequence
\begin{center}
\begin{tikzpicture}[>=stealth]
\matrix (m) [matrix of math nodes, column sep=2.5em, row sep=0.5em,text height=1.5ex, text depth=0.25ex]
{
 {0}&{B}& {A} & {A}&{B}&0,\\
};
\path[overlay,->, font=\scriptsize,>=latex]
        (m-1-1) edge (m-1-2)
        (m-1-2) edge node[above] {$s$} (m-1-3)
        (m-1-3) edge node[above] {$a_{\b}$} (m-1-4)
        (m-1-4) edge node[above] {$s$}  (m-1-5)
        (m-1-5) edge (m-1-6);
\end{tikzpicture}
\end{center}
and, choosing~$F$-Haar measures~$\mu_A$ on~$A$ and~$\mu_B$ on~$B$, we
denote by~$c\in F^\times$ the invariant given
by~\cite[Lemma~5.1.3]{BK93}, as above. Now we
apply this to the rows of the following giant commutative
diagram of~$\o_F$-lattices, which we get from
Lemma~\ref{exactsequences2}\ref{exactsequences2:2}.
\begin{center}
\begin{tikzpicture}[>=stealth]
\matrix [matrix of math nodes, column sep=1cm, row sep=1cm,text height=1.5ex, text depth=0.25ex]
{
\node(row11) {}; &\node(row12) {0};&\node(row13) {0}; &\node(row14) {0};&\node(row15) {0}; &\node(row16) {};\\
\node(row21) {0}; &\node(row22) {\QQ_\La\cap\QQ_\Y^y};&\node(row23) {\JJ_\La\cap\JJ_\Y^y}; &\node(row24) {\HH_\La^*\cap(\HH_\Y^*)^y};&\node(row25) {\BB_\La\cap\BB_\Y^y}; &\node(row26) {0};\\
\node(row31) {0}; &\node(row32) {\QQ_\La\oplus\QQ_\Y^y};&\node(row33) {\JJ_\La\oplus\JJ_\Y^y}; &\node(row34) {\HH_\La^*\oplus(\HH_\Y^*)^y};&\node(row35) {\BB_\La\oplus\BB_\Y^y}; &\node(row36) {0};\\
\node(row41) {0}; &\node(row42) {\QQ_\La+\QQ_\Y^y};&\node(row43) {\JJ_\La+\JJ_\Y^y}; &\node(row44) {\HH_\La^*+(\HH_\Y^*)^y};&\node(row45) {\BB_\La+\BB_\Y^y}; &\node(row46) {0};\\
\node(row51) {}; &\node(row52) {0};&\node(row53) {0}; &\node(row54) {0};&\node(row55) {0}; &\node(row56) {};\\
};
\draw [->] (row21) -- (row22);
\draw [->] (row22) -- (row23);
\draw [->] (row23) -- (row24);
\draw [->] (row24) -- (row25);
\draw [->] (row25) -- (row26);

\draw [->] (row31) -- (row32);
\draw [->] (row32) -- (row33);
\draw [->] (row33) -- (row34);
\draw [->] (row34) -- (row35);
\draw [->] (row35) -- (row36);

\draw [->] (row41) -- (row42);
\draw [->] (row42) -- (row43);
\draw [->] (row43) -- (row44);
\draw [->] (row44) -- (row45);
\draw [->] (row45) -- (row46);

\draw[->] (row12) -- (row22);
\draw[->] (row13) -- (row23);
\draw[->] (row14) -- (row24);
\draw[->] (row15) -- (row25);

\draw[->] (row22) -- (row32);
\draw[->] (row23) -- (row33);
\draw[->] (row24) -- (row34);
\draw[->] (row25) -- (row35);

\draw[->] (row32) -- (row42);
\draw[->] (row33) -- (row43);
\draw[->] (row34) -- (row44);
\draw[->] (row35) -- (row45);

\draw[->] (row42) -- (row52);
\draw[->] (row43) -- (row53);
\draw[->] (row44) -- (row54);
\draw[->] (row45) -- (row55);
\end{tikzpicture}
\end{center}
Using the first row, we get
\[
\frac{\mu_A(\HH_{\La}^*\cap(\HH_{\Y}^*)^y)}{\mu_A(\JJ_{\La}\cap\JJ_{\Y}^y)}=c\,\frac{\mu_B(\BB_{\La}\cap\BB_{\Y}^y)}{\mu_B(\QQ_{\La}\cap\QQ_{\Y}^y)}.
\]
we have~$\mu_B(\QQ_{\La}\cap\QQ_{\Y}^y)=\mu_B(\QQ_{\La})\mu_B(\QQ_{\Y})/\mu_B(\QQ_{\La}+\QQ_{\Y}^y)$, 
from the first column, and similarly
for~$\mu_A(\HH^*_\La\cap(\HH^*_\Y)^y)$, whence
\[
\frac{\mu_A(\HH^*_\La)\mu_A(\HH^*_{\Y})}{\mu_A(\JJ_{\La}\cap\JJ_{\Y}^y)\mu_A(\HH_{\La}^*+(\HH_{\Y}^*)^y)}=c\,\frac{\mu_B(\BB_{\La}\cap\BB_{\Y}^y)\mu_B(\QQ_{\La}+\QQ_{\Y}^y)}{\mu_B(\QQ_{\La})\mu_B(\QQ_{\Y})},
\]
Since~$(\HH_{\La}\cap\HH_{\Y}^y)^*=\HH_{\La}^*+(\HH_{\Y}^*)^y$,
from~\cite[Lemma~5.1.5]{BK93} we have
\[
\mu_A(\HH_{\La}^*+(\HH_{\Y}^*)^y)\mu_A(\HH_{\La}\cap\HH_{\Y}^y)=
(\mu_A(\HH_{\La})\mu_A(\HH_{\La}^*)\mu_A(\HH_{\Y})\mu_A(\HH_{\Y}^*))^{\frac{1}{2}},
\]
with a similar result
using~$(\BB_{\La}\cap\BB_{\Y}^y)^*=\QQ_{\La}+\QQ_{\Y}^y$. Substituting,
we get
\[
\frac{\mu_A(\HH_{\La}\cap\HH_{\Y}^y)}{\mu_A(\JJ_{\La}\cap\JJ_{\Y}^y)} 
\left(\frac{\mu_A(\HH^*_\La)\mu_A(\HH^*_\Y)}{\mu_A(\HH_\La)\mu_A(\HH_\Y)}\right)^{\frac 12}
= c \left(\frac{\mu_B(\BB_\La)\mu_B(\BB_\Y)}{\mu_B(\QQ_\La)\mu_B(\QQ_\Y)}\right)^{\frac 12}.
\]
Finally, from Lemma~\ref{exactsequences1}\ref{exactsequences1:1}, we
have
\[
\frac{\mu_A(\HH^*_\La)}{\mu_A(\JJ_{\La})}=c\,\frac{\mu_B(\BB_{\La})}{\mu_B(\QQ_{\La})},
\]
and similarly for~$\Y$, which gives
\[
\frac{\mu_A(\HH_{\La}\cap\HH_{\Y}^y)}{\mu_A(\JJ_{\La}\cap\JJ_{\Y}^y)}=
\left(\frac{\mu_A(\HH_{\La})\mu_A(\HH_{\Y})}{\mu_A(\JJ_{\La})\mu_A(\JJ_{\Y})}\right)^{\frac 12}.
\]
Conjugating by~$y$, we get the same formula
for~$\mu_A(\presuper{y}{\HH_{\La}}\cap\HH_{\Y})/\mu_A(\presuper{y}{\JJ_{\La}}\cap\JJ_{\Y})$. Multiplying
these and rearranging, we get
\[
\(\frac{\mu_A(\HH_{\La})}{\mu_A(\HH_{\La}\cap\HH_{\Y}^y)}\)\(\frac{\mu_A(\HH_{\Y})}{\mu_A(\presuper{y}{\HH_{\La}}\cap\HH_{\Y})}\)=\(\frac{\mu_A(\JJ_{\La})}{\mu_A(\JJ_{\La}\cap\JJ_{\Y}^y)}\)\(\frac{\mu_A(\JJ_{\Y})}{\mu_A(\presuper{y}{\JJ_{\La}}\cap\JJ_{\Y})}\).
\]
The result follows from this additive statement since~$H^1_\La$ is the
image under the Cayley transform of~$\HH_\La$, and similarly for the
other groups involved.
\end{proof}

\begin{lemma}\label{counting}
For any~$y\in\GE^+$, we have
\[
\left|H^1_\Y\backslash J_{\Y}^1 yJ_{\La}^1/H_{\La}^1\right|=
(J_{\La}^1:H_{\La}^1)^{\frac 12}(J_{\Y}^1:H_{\Y}^1)^{\frac 12}.
\]
\end{lemma}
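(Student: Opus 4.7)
The plan is to use the normality of $H^1_\Y$ in $J^1_\Y$ and $H^1_\La$ in $J^1_\La$ to show that every $(H^1_\Y,H^1_\La)$-double coset inside $J^1_\Y y J^1_\La$ has the same Haar measure, reducing the count to a ratio of measures; then Lemma \ref{doublecosets1} produces the desired square root.

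First I would observe that, by normality, for any $z=j_1 y j_2\in J^1_\Y y J^1_\La$ with $j_1\in J^1_\Y$, $j_2\in J^1_\La$,
\[
H^1_\Y z H^1_\La = H^1_\Y j_1 y j_2 H^1_\La = j_1\,(H^1_\Y y H^1_\La)\,j_2,
\]
so every such double coset is a left–right translate of $H^1_\Y y H^1_\La$ and therefore shares its Haar measure. Fixing a Haar measure $\mu$ on $G$ and applying the standard orbit–stabiliser identity $\mu(KgL)=\mu(K)\mu(L)/\mu(K\cap gLg^{-1})$ (for the action $(k,l)\cdot x = kxl^{-1}$ of $K\times L$ on $KgL$) to both $(J^1_\Y,J^1_\La)$ and $(H^1_\Y,H^1_\La)$, this gives
\[
|H^1_\Y\backslash J^1_\Y y J^1_\La/H^1_\La|
=\frac{\mu(J^1_\Y y J^1_\La)}{\mu(H^1_\Y y H^1_\La)}
=\frac{\mu(J^1_\Y)\mu(J^1_\La)}{\mu(H^1_\Y)\mu(H^1_\La)}\cdot\frac{\mu(H^1_\Y\cap yH^1_\La y^{-1})}{\mu(J^1_\Y\cap yJ^1_\La y^{-1})}.
\]

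To finish, I would apply Lemma \ref{doublecosets1} with $y^{-1}$ in place of $y$, which is legitimate since $\GE^+$ is a group. Because conjugation by $y$ preserves Haar measure, the two indices appearing on each side of Lemma \ref{doublecosets1} involve the same intersection group up to $y$-conjugation; combining them yields
\[
\frac{\mu(J^1_\Y)\mu(J^1_\La)}{\mu(J^1_\Y\cap yJ^1_\La y^{-1})^2}=\frac{\mu(H^1_\Y)\mu(H^1_\La)}{\mu(H^1_\Y\cap yH^1_\La y^{-1})^2}.
\]
Substituting this back into the displayed formula above produces precisely the square root $\bigl(\mu(J^1_\La)\mu(J^1_\Y)/\mu(H^1_\La)\mu(H^1_\Y)\bigr)^{1/2}=(J^1_\La:H^1_\La)^{1/2}(J^1_\Y:H^1_\Y)^{1/2}$.

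The main subtlety, rather than any real obstacle, is bookkeeping around conjugation directions: Lemma \ref{doublecosets1} is stated with intersections of the form $J^1_\La\cap yJ^1_\Y y^{-1}$, whereas the orbit–stabiliser calculation for $\mu(J^1_\Y y J^1_\La)$ produces $J^1_\Y\cap yJ^1_\La y^{-1}$. Applying Lemma \ref{doublecosets1} to $y^{-1}$ (instead of $y$) reverses the direction of conjugation and, after transporting by $y$, matches exactly the intersection appearing in the measure formula.
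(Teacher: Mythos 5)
Your argument is correct and is essentially the paper's own proof: both exploit normality to see that all $(H^1_\Y,H^1_\La)$-double cosets inside $J^1_\Y yJ^1_\La$ have the same Haar measure, express the count as a ratio of measures of the $J$- and $H$-double cosets, and then invoke Lemma~\ref{doublecosets1} (applied at $y^{-1}$, equivalently using conjugation-invariance of Haar measure) to produce the square root. The only cosmetic difference is that the paper multiplies the two single-coset decompositions to work with $\mu(\,\cdot\,)^2$ rather than using the identity $\mu(KgL)=\mu(K)\mu(L)/\mu(K\cap gLg^{-1})$ directly, and your handling of the conjugation bookkeeping is exactly right.
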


\begin{proof}
Fix a~$F$-Haar measure~$\mu$ on~$G$. Decomposing~$J^1_\Y yJ^1_\La$ by
right~$J^1_\Y$-cosets, and by left~$J^1_\La$-cosets, and then
multiplying, we have 
\[
\mu(J^1_{\Y}yJ^1_{\La})^2=\mu(J_{\La}^1)\mu(J_{\Y}^1)(J_{\La}^1: J_{\La}^1\cap y^{-1}J_{\Y}^1 y)(J_{\Y}^1: yJ_{\La}^1y^{-1}\cap J_{\Y}^1).
\]
By normality of~$H_\La^1$ in~$J_\La^1$ and~$H_\Y^1$ in~$J_\Y^1$, for any~$y'\in
J_{\Y}^1yJ_{\La}^1$ we similarly have
\[
\mu(H^1_{\Y}y'H^1_{\La})^2=\mu(H_{\La}^1)\mu(H_{\Y}^1)(H_{\La}^1: H_{\La}^1\cap y^{-1}H_{\Y}^1 y)(H_{\Y}^1: yH_{\La}^1y^{-1}\cap H_{\Y}^1).
\]
Therefore, we have~$\left|H^1_{\La}\backslash J_{\La}^1
  gJ_{\Y}^1/H_{\Y}^1\right|=(J_{\La}^1:H_{\La}^1)^{\frac 12}
(J_{\Y}^1:H_{\Y}^1)^{\frac 12}$, by Lemma~\ref{doublecosets1}.
\end{proof}

\begin{proof}[Proof of Theorem {\ref{intetas}}]
By~\cite[Corollary~3.29]{St05} the induced
representation~$\ind_{H_{\La}^1}^{J_{\La}^1}(\th_{\La})$ is a multiple
of~$\eta_\La$, that multiple being~$(J_{\La}^1:H_{\La}^1)^{\frac 12}$,
and analogously for~$\th_{\Y}$. Thus   
\[
\dim_{R}(I_g(\ind_{H_{\La}^1}^{J_{\La}^1}\th_{\La},\ind_{H_{\Y}^1}^{J_{\Y}^1}\th_{\Y}))=
(J_{\La}^1:H_{\La}^1)^{\frac 12}(J_{\Y}^1:H_{\Y}^1)^{\frac 12}\dim_{R}(I_g(\eta_{\La},\eta_{\Y})).
\]
By Lemma~\ref{heckelemma}\ref{heckelemma:3},
\[
\Hh(G,\ind_{H_{\La}^1}^{J_{\La}^1}\th_{\La},\ind_{H_{\Y}^1}^{J_{\Y}^1}\th_{\Y})_g\simeq \coprod_{\substack{h\in H_{\La}^1\backslash G/H_{\Y}^1\\J_{\La}^1hJ_{\Y}^1=J_{\La}^1gJ_{\Y}^1}}\Hh(G,\th_{\La},\th_{\Y})_h.
\]
Therefore, by Theorem~\ref{classthetaint} and Lemma~\ref{counting}, we have
\[
\dim_{R}(I_g(\ind_{H_{\La}^1}^{J_{\La}^1}\th_{\La},\ind_{H_{\Y}^1}^{J_{\Y}^1}\th_{\Y}))=\begin{cases}(J_{\La}^1:H_{\La}^1)^{\frac
    12}(J_{\Y}^1:H_{\Y}^1)^{\frac 12} &\text{if }g\in J_{\Y}\GE J_{\La};\\
0&\text{otherwise,}
\end{cases}
\]
whence the result.
\end{proof}

\begin{remark}
In the setting of Theorem~\ref{classthetaint}, we also
have~$I_{G^+}(\th_{\La},\th_{\Y})=J_{\Y}^1\GE^+ J_{\La}^1$ by
intersecting the intertwining of~$I_{\tG}(\tth_{\La},\tth_{\Y})$
with~$G^+$ rather than~$G$.  Moreover, in the setting of
Theorem~\ref{intetas} the same proof shows that the intertwining
of~$\eta_{\La}$ and~$\eta_{\Y}$ in~$G^+$ is given by
\[ 
\dim_{R}(I_g(\eta_\La,\eta_\Y))=
\begin{cases} 1&\text{if }g\in J_\Y^1\GE^+ J_\La^1;\\ 0&\text{otherwise.}\end{cases}
\]
\end{remark}

We will also make use of the following lemma of~\cite{St08}.

\begin{lemma}[{\cite[Lemma 3.6]{St08}}]\label{dimetascom}
We have~${\dim(\eta_{\La})}/{\dim(\eta_{\Y})}={(J_{\La}^1:J_{\Y}^1)}/{(P^1(\La_E):P^1(\Y_E))}$.
\end{lemma}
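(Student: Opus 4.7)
The plan is to convert the claim into a constancy statement for Haar measure and then reduce to the Lie algebra, where the exact sequences of Lemma~\ref{exactsequences2} can be exploited. As recalled in the proof of Theorem~\ref{intetas}, the Heisenberg representation satisfies $\dim(\eta_\La)^2=(J^1_\La:H^1_\La)$, and analogously for~$\Y$. Squaring the claimed identity, clearing denominators, and interpreting the generalised indices as ratios of Haar measures, one sees that the claim is equivalent to showing that, for any fixed Haar measure~$\mu$ on~$G$, the quantity
\[
\frac{\mu(H^1(\b,\La))\,\mu(J^1(\b,\La))}{\mu(P^1(\La_E))^2}
\]
is independent of~$\La$.

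The Cayley transform identifies these pro-$p$ groups bijectively with the $\o_F$-lattices $\HH^1_-(\b,\La)$, $\JJ^1_-(\b,\La)$ and $\QQ^-(\b,\La)$, and relative Haar measures are preserved up to an overall constant. So, after fixing Haar measures $\mu_{A^-}$ on~$A^-$ and $\mu_{B^-}$ on~$B^-$, it suffices to prove the analogous constancy of
\[
\mu_{A^-}(\HH^1_-(\b,\La))\,\mu_{A^-}(\JJ^1_-(\b,\La))/\mu_{B^-}(\QQ^-(\b,\La))^2.
\]
I would apply~\cite[Lemma~5.1.3]{BK93} to the exact sequence from Lemma~\ref{exactsequences2}\ref{exactsequences2:1}, namely
\[
0\to\QQ^-(\b,\La)\to\JJ^1_-(\b,\La)\xrightarrow{a_\b}\HH^1_-(\b,\La)^*\xrightarrow{s}\BB^-(\b,\La)\to 0,
\]
obtaining a constant $c\in F^\times$, depending only on the chosen Haar measures and not on~$\La$, with
\[
\frac{\mu_{A^-}(\HH^1_-(\b,\La)^*)}{\mu_{A^-}(\JJ^1_-(\b,\La))}=c\,\frac{\mu_{B^-}(\BB^-(\b,\La))}{\mu_{B^-}(\QQ^-(\b,\La))}.
\]

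To eliminate the starred lattice, I would invoke~\cite[Lemma~5.1.5]{BK93}: the product $\mu_{A^-}(\HH^1_-(\b,\La))\,\mu_{A^-}(\HH^1_-(\b,\La)^*)$ is independent of~$\La$, and analogously on~$B^-$ the product $\mu_{B^-}(\BB^-(\b,\La))\,\mu_{B^-}(\QQ^-(\b,\La))$ is independent of~$\La$, using that $\BB^-(\b,\La)^*=\QQ^-(\b,\La)$ relative to the bilinear form on~$B^-$ induced by a $\s$-equivariant tame corestriction. Substituting these into the previous identity and rearranging yields the required constancy, and taking positive square roots recovers the unsquared identity in the statement. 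The main delicate point is verifying the compatibility of the duality with the restriction to the $\s$-anti-fixed subspaces~$A^-$ and~$B^-$, together with the measure-compatibility of the Cayley transform; but these are standard, and are essentially the same ingredients already used in the proof of Lemma~\ref{doublecosets1} above.
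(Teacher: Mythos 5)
Your argument is correct, and it is essentially the intended one: the paper does not reprove this statement but quotes it from~\cite[Lemma~3.6]{St08}, and your computation — reducing, via $\dim(\eta_\La)^2=(J^1_\La:H^1_\La)$, to the constancy in~$\La$ of $\mu_{A^-}(\HH^1_-(\b,\La))\,\mu_{A^-}(\JJ^1_-(\b,\La))/\mu_{B^-}(\QQ^-(\b,\La))^2$, proved from the exact sequence of Lemma~\ref{exactsequences2}\ref{exactsequences2:1} together with \cite[Lemmas~5.1.3 and~5.1.5]{BK93} and the Cayley transform — uses exactly the ingredients the paper itself deploys in the proof of Lemma~\ref{doublecosets1}. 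The only cosmetic point is that in your first display $\mu(P^1(\La_E))$ must be taken with respect to a Haar measure on~$\GE$ rather than on~$G$ (the generalised index $(P^1(\La_E):P^1(\Y_E))$ is a ratio of $\GE$-measures), which your subsequent passage to~$\mu_{B^-}$ handles correctly in any case.
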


Conjugating if necessary, we assume that~$\BB(\La)$ and~$\BB(\Y)$
contain a common minimal self-dual hereditary order~$\BB(\Ga)$
corresponding to an~$\o_E$-lattice sequence~$\Ga$ in~$V$;
thus~$P^\circ(\Ga_E)$ is an Iwahori subgroup
of~$\GE$. Let~$\th_{\Ga}=\t_{\La,\Ga,\b}(\th_{\La})=\t_{\Y,\Ga,\b}(\th_{\Y})\in\Cc_{-}(\Ga,0,\b)$.
Let~$\eta_{\Ga}$ be the unique Heisenberg representation
containing~$\th_{\Ga}$ and let~$J_{\Ga}=J(\b,\Ga)$.
Since~$P^1(\Ga_E)$ normalises~$J^1_{\La}$ and~$J^1_{\Y}$ we can form
the groups~$J^1_{\Ga,\La}=P^1(\Ga_E)J^1_{\La}$
and~$J^1_{\Ga,\Y}=P^1(\Ga_E)J^1_{\Y}$.

\begin{lemma}[{\cite[Proposition 3.7]{St08}}]\label{compatetas}
There exist unique irreducible representations~$\eta_{\Ga,\La}$ of~$J^1_{\Ga,\La}$ and~$\eta_{\Ga,\Y}$ of~$J^1_{\Ga,\Y}$ such that
\begin{enumerate}
\item~$\eta_{\Ga,\La}\mid_{J^1_{\La}}=\eta_{\La}$
  and~$\eta_{\Ga,\Y}\mid_{J^1_{\Y}}=\eta_{\Y}$; 
\item~$\eta_{\Ga,\La}$,~$\eta_{\Ga,\Y}$ and~$\eta_{\Ga}$ induce
  equivalent irreducible representations of~$P^1(\Ga)$. 
\end{enumerate}
\end{lemma}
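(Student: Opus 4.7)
The plan is to construct $\eta_{\Ga,\La}$ as the unique extension of $\eta_\La$ to $J^1_{\Ga,\La} = P^1(\Ga_E)J^1_\La$, construct $\eta_{\Ga,\Y}$ analogously, and then deduce condition~(ii) from Theorem~\ref{intetas}.

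For existence, I would use that $P^1(\Ga_E)\subseteq\GE$ normalises $(J^1_\La,\eta_\La)$: the former by hypothesis, the latter because $P^1(\Ga_E)\subseteq I_G(\eta_\La,\eta_\La) = J^1_\La\GE J^1_\La$ by Theorem~\ref{intetas}, which moreover gives one-dimensional intertwining at each element. Adapting the extension argument of~\cite[Proposition~3.7]{St08}, I would construct an extension $\eta_{\Ga,\La}$ via the intermediate semisimple character $\th_\Ga$ on $H^1_\Ga\subseteq J^1_{\Ga,\La}$, using the Iwahori decomposition $H^1_\Ga = P^1(\Ga_E)H^1_\La$ (with respect to a parabolic of $\GE$ compatible with $\BB(\Ga)\subset\BB(\La)$) and the transfer compatibility $\th_\Ga|_{H^1_\La} = \th_\La$. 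The restriction of $\eta_{\Ga,\La}$ to $J^1_\La$ then contains $\th_\La$, hence equals $\eta_\La$ by Theorem~\ref{theorem33}. Uniqueness subject to (i) alone is immediate: two extensions differ by a character of $J^1_{\Ga,\La}/J^1_\La \cong P^1(\Ga_E)/(P^1(\Ga_E)\cap J^1_\La)$, which is a pro-$p$ group, and any $R$-valued character of a pro-$p$ group is trivial since $R$ has characteristic different from $p$.

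For condition~(ii), I would show that each of $\ind_{J^1_{\Ga,\La}}^{P^1(\Ga)}\eta_{\Ga,\La}$, $\ind_{J^1_{\Ga,\Y}}^{P^1(\Ga)}\eta_{\Ga,\Y}$, and $\ind_{J^1_\Ga}^{P^1(\Ga)}\eta_\Ga$ is irreducible and that they pairwise intertwine. For irreducibility I would apply Lemma~\ref{simpcrit1}: by Lemma~\ref{heckelemma}, the endomorphism ring is a Hecke algebra whose support is the intertwining of the respective Heisenberg extension in $P^1(\Ga)$; by Theorem~\ref{intetas} combined with the semisimple intersection property of Corollary~\ref{ssintprop}, this support reduces to the single double coset $J^1_{\Ga,\La}\cdot 1\cdot J^1_{\Ga,\La}$ (and analogously for the other two), giving a one-dimensional endomorphism ring. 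A non-zero $P^1(\Ga)$-intertwiner between any two of the induced representations then comes from a non-zero element of the corresponding bimodule Hecke algebra via Lemma~\ref{heckelemma} and Theorem~\ref{intetas}; combined with irreducibility, this forces equivalence.

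The main obstacle is the existence of the extension $\eta_{\Ga,\La}$ in the modular setting: the Clifford-theoretic $2$-cocycle obstruction on the pro-$p$ quotient $P^1(\Ga_E)/(P^1(\Ga_E)\cap J^1_\La)$ need not vanish abstractly, since pro-$p$ groups can have nontrivial Schur multiplier over $R$ of characteristic $\ell\neq p$. One must therefore exhibit the extension explicitly, and the concrete Heisenberg realisation via the semisimple character $\th_\Ga$ on $H^1_\Ga$ is precisely what provides this, side-stepping the abstract obstruction.
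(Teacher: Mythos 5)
Your uniqueness argument rests on a false statement: over an algebraically closed field $R$ of characteristic $\ell\neq p$ it is \emph{not} true that every smooth $R$-valued character of a pro-$p$ group is trivial --- that holds in characteristic $p$, whereas here $R^\times$ contains nontrivial $p$-power roots of unity (the whole theory of semisimple characters depends on this). The quotient $J^1_{\Ga,\La}/J^1_\La\cong P^1(\Ga_E)/(P^1(\Ga_E)\cap J^1_\La)=P^1(\Ga_E)/P^1(\La_E)$ is in general a nontrivial finite $p$-group (the unipotent radical of a proper parabolic in $M(\La_E)$ whenever $P^\circ(\La_E)$ is not an Iwahori), so $\eta_\La$ admits many extensions to $J^1_{\Ga,\La}$ satisfying (i), differing by nontrivial characters of this quotient. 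The uniqueness in the lemma is therefore only relative to (i) \emph{and} (ii) jointly, and the real content of the proof is to single out the one extension compatible with $\eta_\Ga$; your plan never does this. The same problem sinks your argument for (ii): Theorem~\ref{intetas} gives one-dimensional intertwining between the Heisenberg representations $\eta_\La$ and $\eta_\Ga$ of the $J^1$-groups, but a nonzero element of $\Hh(P^1(\Ga),\eta_{\Ga,\La},\eta_\Ga)$ requires that (unique up to scalar) operator to be equivariant for the extra elements of $P^1(\Ga_E)$, and whether it is depends precisely on which extension of $\eta_\La$ you took. Since your step ``intertwining plus irreducibility forces equivalence'' uses nothing specific about the extension constructed, it would show that \emph{every} extension satisfying (i) induces to $\ind_{J^1_\Ga}^{P^1(\Ga)}\eta_\Ga$, contradicting the uniqueness you are trying to prove. (Your final paragraph, which correctly notes that pro-$p$ groups have nontrivial cohomology over $R$, is inconsistent with the character claim.)

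For comparison, the paper offers no new argument here: it quotes \cite[Proposition~3.7]{St08}, whose proof applies verbatim because all the groups involved ($H^1$, $J^1$, $P^1(\Ga)$) are pro-$p$, hence of invertible pro-order in $R$, so their smooth $R$-representation theory behaves as in the complex case. In that argument the compatible extensions are not built by hand and then checked: one shows $\ind_{J^1_\Ga}^{P^1(\Ga)}\eta_\Ga$ is irreducible and extracts $\eta_{\Ga,\La}$ from its restriction to $J^1_{\Ga,\La}$ as the constituent lying over $\eta_\La$, the dimension comparison of Lemma~\ref{dimetascom} (quoted immediately before the lemma for exactly this purpose) and the one-dimensionality of the intertwining spaces guaranteeing that it restricts to $\eta_\La$ itself and is unique; property (ii) is then automatic by construction. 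Incidentally, the auxiliary facts you invoke for existence --- the decomposition $H^1_\Ga=P^1(\Ga_E)H^1_\La$ and the restriction property $\th_\Ga|_{H^1_\La}=\th_\La$ --- are not available off the shelf either, since only the orders $\BB(\Ga)\subseteq\BB(\La)$ in $B$, and not the $\o_F$-orders $\AA(\Ga)$, $\AA(\La)$, are assumed comparable; but this is secondary to the two gaps above.
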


We can now extend the intertwining result of~\cite[Proposition
3.7]{St08}.  The proof is essentially the same as that of~\cite[Proposition
5.1.19]{BK93}. 

\begin{lemma}\label{intetas2}
The intertwining of~$\eta_{\Ga,\La}$ and~$\eta_{\Ga,\Y}$ in~$G$ is given by
\[
\dim_R (I_g(\eta_{\Ga,\La},\eta_{\Ga,\Y}))=
\begin{cases} 1&\text{if }g\in J^1_{\Ga,\Y}\GE J^1_{\Ga,\La};\\ 0&\text{otherwise.}\end{cases}
\]
\end{lemma}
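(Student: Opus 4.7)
The plan is to mirror the template of \cite[Proposition~5.1.19]{BK93}, combining Theorem~\ref{intetas} (which computes the intertwining of the pair $(\eta_\La,\eta_\Y)$) with Lemma~\ref{compatetas} (which identifies the three Heisenberg extensions $\eta_{\Ga,\La}$, $\eta_{\Ga,\Y}$ and $\eta_\Ga$ after induction to $P^1(\Ga)$).

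For the upper bound, I will argue that any nonzero $f\in I_g(\eta_{\Ga,\La},\eta_{\Ga,\Y})$ restricts, via Lemma~\ref{compatetas}(i), to a nonzero element of $I_g(\eta_\La,\eta_\Y)$, since $\eta_{\Ga,\La}|_{J^1_\La}=\eta_\La$ and $\eta_{\Ga,\Y}|_{J^1_\Y}=\eta_\Y$. By Theorem~\ref{intetas}, this forces
\[
g\in J_\Y\GE J_\La=J^1_\Y\GE J^1_\La\subseteq J^1_{\Ga,\Y}\GE J^1_{\Ga,\La},
\]
and the injectivity of the restriction map, combined with Theorem~\ref{intetas} again, gives $\dim_R I_g(\eta_{\Ga,\La},\eta_{\Ga,\Y})\leq 1$.

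For the lower bound, I will use Lemma~\ref{compatetas}(ii): the three representations $\ind_{J^1_{\Ga,\La}}^{P^1(\Ga)}\eta_{\Ga,\La}$, $\ind_{J^1_{\Ga,\Y}}^{P^1(\Ga)}\eta_{\Ga,\Y}$ and $\ind_{J^1_\Ga}^{P^1(\Ga)}\eta_\Ga$ are (isomorphic) irreducible representations of $P^1(\Ga)$, whose common class I denote $\Pi$. Applying $\ind_{P^1(\Ga)}^G$ and invoking Lemma~\ref{heckelemma}, I obtain an isomorphism chain of Hecke bimodules
\[
\Hh(G,\eta_{\Ga,\La},\eta_{\Ga,\Y})\simeq \End_G\bigl(\ind_{P^1(\Ga)}^G\Pi\bigr)\simeq \Hh(G,\eta_\Ga).
\]
Using $J^1(\b,\cdot)\cap\GE=P^1(\cdot_E)\subseteq P^1(\Ga_E)$, one deduces the equalities $J^1_{\Ga,\La}\cap\GE=J^1_{\Ga,\Y}\cap\GE=J^1_\Ga\cap\GE=P^1(\Ga_E)$, so that both the double coset space $J^1_{\Ga,\Y}\backslash J^1_{\Ga,\Y}\GE J^1_{\Ga,\La}/J^1_{\Ga,\La}$ and $J^1_\Ga\backslash J^1_\Ga\GE J^1_\Ga/J^1_\Ga$ are canonically identified with $P^1(\Ga_E)\backslash\GE/P^1(\Ga_E)$. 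A careful tracking of the isomorphism above shows that $\Hh(G,\eta_{\Ga,\La},\eta_{\Ga,\Y})_{y}$ maps to $\Hh(G,\eta_\Ga)_{y}$ for each $y\in\GE$. Since Theorem~\ref{intetas} applied with $\La=\Y=\Ga$ gives $\dim I_y(\eta_\Ga,\eta_\Ga)=1$ for each $y\in\GE$, the support-by-support identification then forces $\dim I_y(\eta_{\Ga,\La},\eta_{\Ga,\Y})=1$ for each $y\in\GE$. The reduction $I_{k_2yk_1}\simeq I_y$ for $k_2\in J^1_{\Ga,\Y}$, $k_1\in J^1_{\Ga,\La}$ then completes the proof.

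The main obstacle is verifying the support-preservation of the Hecke bimodule isomorphism. This amounts to tracing the canonical identifications of Lemma~\ref{compatetas}(ii) through induction-in-stages $\ind_J^G=\ind_{P^1(\Ga)}^G\circ\ind_J^{P^1(\Ga)}$ and the decomposition of Lemma~\ref{heckelemma}(iii). In view of the inclusions $J^1_{\Ga,\La}, J^1_{\Ga,\Y}, J^1_\Ga\subseteq P^1(\Ga)$, the three double cosets $J^1_{\Ga,\Y}yJ^1_{\Ga,\La}$, $J^1_\Ga yJ^1_\Ga$ and $P^1(\Ga)yP^1(\Ga)$ (for $y\in\GE$) fit into a common $(P^1(\Ga),P^1(\Ga))$-coset, and a function of the relevant Hecke bimodule supported on this common coset must decompose compatibly under the finer double-coset filtrations on each side.
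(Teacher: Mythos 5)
Your upper bound is fine and coincides with the paper's first step: since $\eta_{\Ga,\La}|_{J^1_\La}=\eta_\La$ and $\eta_{\Ga,\Y}|_{J^1_\Y}=\eta_\Y$, the inclusion $J^1_\La\cap(J^1_\Y)^g\subseteq J^1_{\Ga,\La}\cap(J^1_{\Ga,\Y})^g$ embeds $I_g(\eta_{\Ga,\La},\eta_{\Ga,\Y})$ into $I_g(\eta_\La,\eta_\Y)$, and Theorem~\ref{intetas} gives both the bound $\dim\le 1$ and the containment of the support in $J^1_\Y\GE J^1_\La=J^1_{\Ga,\Y}\GE J^1_{\Ga,\La}$. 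The genuine gap is in the lower bound, precisely at the step you call ``the main obstacle''. The chain $\Hh(G,\eta_{\Ga,\La},\eta_{\Ga,\Y})\simeq\Hh(G,\Pi)\simeq\Hh(G,\eta_\Ga)$ obtained from Lemma~\ref{heckelemma}\ref{heckelemma:3} only preserves the \emph{coarse} support, i.e.\ the $(P^1(\Ga),P^1(\Ga))$-double coset: inside a fixed coset $P^1(\Ga)yP^1(\Ga)$ the two refinements (by $(J^1_{\Ga,\Y},J^1_{\Ga,\La})$-cosets on one side and $(J^1_\Ga,J^1_\Ga)$-cosets on the other) are a priori unrelated, so ``fitting into a common coset'' does not show that $\Hh(G,\eta_{\Ga,\La},\eta_{\Ga,\Y})_y$ maps to $\Hh(G,\eta_\Ga)_y$. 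Likewise your ``canonical identification'' of both double-coset spaces with $P^1(\Ga_E)\backslash\GE/P^1(\Ga_E)$ is not a formal consequence of $J^1_{\Ga,\La}\cap\GE=J^1_{\Ga,\Y}\cap\GE=P^1(\Ga_E)$: injectivity needs $J^1_{\Ga,\Y}\,g\,J^1_{\Ga,\La}\cap\GE=P^1(\Ga_E)\,g\,P^1(\Ga_E)$, which is exactly the semisimple intersection property, and you never invoke it.

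That intersection property (Corollary~\ref{ssintprop}) is the missing ingredient, and with it your route can be closed \emph{without} fine support preservation: for $y\in\GE$ one has $P^1(\Ga)yP^1(\Ga)\cap\GE=P^1(\Ga_E)yP^1(\Ga_E)$, and since $P^1(\Ga_E)$ is contained in $J^1_\Ga$, $J^1_{\Ga,\La}$ and $J^1_{\Ga,\Y}$, at most one refined coset inside $P^1(\Ga)yP^1(\Ga)$ can meet $\GE$ on either side; Theorem~\ref{intetas} (applied with $\La=\Y=\Ga$) then gives that the coarse piece has dimension exactly one on the $\eta_\Ga$ side, and the coarse-support-preserving isomorphism forces $\dim I_y(\eta_{\Ga,\La},\eta_{\Ga,\Y})=1$, concentrated on $J^1_{\Ga,\Y}yJ^1_{\Ga,\La}$. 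The paper avoids the Hecke-bimodule detour: by Lemma~\ref{compatetas} each $x\in\GE$ intertwines the inductions to $P^1(\Ga)$, so $uxv\in I_G(\eta_{\Ga,\La},\eta_{\Ga,\Y})$ for some $u,v\in P^1(\Ga)$; combining the containment of this intertwining set in $J^1_\Y\GE J^1_\La$ with Corollary~\ref{ssintprop}, it replaces $u,v$ by elements of $P^1(\Ga_E)\subseteq J^1_{\Ga,\Y}$ and $P^1(\La_E)\subseteq J^1_{\Ga,\La}$, whence $x$ itself intertwines. Either way, Corollary~\ref{ssintprop} is indispensable and is absent from your argument, so as written the lower bound does not go through.
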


We remark that~$J^1_{\Ga,\Y}\GE J^1_{\Ga,\La}=J^1_{\Y}\GE J^1_{\La}$,
and that we have a similar result for the intertwining in~$G^+$.

\begin{proof}
We have~$I_G(\eta_{\Ga,\La},\eta_{\Ga,\Y})\subseteq
I_G(\eta_\La,\eta_\Y)=J^1_{\Y}\GE J^1_{\La}$ and the non-zero
intertwining spaces are one-dimensional by
Lemma~\ref{intetas}. If~$x\in \GE$ then~$x\in I_G(\eta_{\Ga})$, by
Theorem~\ref{intetas}, so~$x\in
I_G(\Ind_{J^1_{\Ga}}^{P^1(\Ga)}(\eta_{\Ga}))$.  Thus~$x\in
I_G(\Ind_{J^1_{\Ga,\La}}^{P^1(\Ga)}(\eta_{\Ga,\La}),
\Ind_{J^1_{\Ga,\Y}}^{P^1(\Ga)}(\eta_{\Ga,\Y}))$ by
Lemma~\ref{compatetas}.  Therefore there exist~$u,v\in P^1(\Ga)$ such
that~$uxv\in I_G(\eta_{\Ga,\La}, \eta_{\Ga,\Y})$; since this
intertwining set is contained in~$J^1_{\Y}\GE J^1_{\La}$, there exist~$j_{\La}\in J^1_{\La}$ and~$j_{\Y}\in J^1_{\Y}$ such that~$j_{\Y}uxvj_{\La}\in \GE$.  By Corollary~\ref{ssintprop},~$P^1(\Ga)xP^1(\La)\cap \GE=P^1(\Ga_E)xP^1(\La_E)$.  
Therefore, we can find~$u'\in P^1(\Ga_E)$ and~$v'\in P^1(\La_E)$ such
that~$u'xv'=j_1uxvj_2$, whence~$x\in I_G(\eta_{\Ga,\La}, \eta_{\Ga,\Y})$.
\end{proof}

%%%%%%%%%%%%%%%%%%%%%%%%%%%%%%%%%%%%%%%%%%%%%%%%%%%
\section{$\b$-extensions}\label{betasects}
%%%%%%%%%%%%%%%%%%%%%%%%%%%%%%%%%%%%%%%%%%%%%%%%%%%

We generalise the definition of~$\b$-extensions for classical groups,
as defined by the second author when~$R=\CC$ in~\cite{St08}.  As
the~$J$ groups are not pro-$p$, the proofs of the corresponding
statements need to be adapted in characteristic~$\ell$.  However, as
the~$J^1$ groups are pro-$p$, these modifications are relatively
simple.

Let~$[\La,n_{\La},0,\b]$ be a self-dual semisimple
stratum,~$\th_{\La}\in\Cc_{-}(\La,0,\b)$ and~$\eta_{\La}$ the unique
Heisenberg representation containing~$\th_{\La}$. We will
write~$\BB(\La_E)=\BB(\b,\La)$ for the hereditary~$\o_E$-order in~$B$
determined by the lattice sequence~$\La$, and will
abbreviate~$J_\La^+=J^+(\b,\La)$, etc.

\begin{thm}\label{maxbetas}
Let~$\Ga$ be any self-dual~$\o_E$-lattice sequence such
that~$\BB(\Ga_E)$ is a minimal self-dual~$\o_E$-order in~$B$ contained
in~$\BB(\La_E)$.  There exists a representation~$\k_{\La}^+$
of~$J_{\La}^+$ extending~$\eta_{\Ga,\La}$.  Moreover, any two such
extensions differ by a character of~$P^+(\La_E)/P^1(\La_E)$ which is
trivial on the subgroup generated by all its unipotent subgroups.
\end{thm}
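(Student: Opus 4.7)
The argument will follow the strategy of \cite[Proposition~4.3 and Corollary~4.4]{St08}, modified for the $\ell$-modular setting. The key structural observation making the adaptation routine is that although~$J_\La^+/J_\La^1\cong M^+(\La_E)$ is not a $p$-group, the further quotient~$J_\La^+/J^1_{\Ga,\La}$ has order coprime to~$p$: since~$\BB(\Ga_E)$ is a minimal self-dual~$\o_E$-order contained in~$\BB(\La_E)$, the image of~$P^1(\Ga_E)/P^1(\La_E)$ in~$M^+(\La_E)$ is a Sylow $p$-subgroup, namely the unipotent radical of a Borel of the connected component~$M^\circ(\La_E)$. Extending across a $p'$-quotient is the well-behaved direction even when~$R$ is modular.

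\textbf{Existence.} First I would treat the case~$\La=\Ga$, constructing an extension~$\k_\Ga^+$ of~$\eta_\Ga$ to~$J_\Ga^+$. Since~$J_\Ga^+$ stabilises~$\th_\Ga$, it stabilises~$\eta_\Ga$ by the unicity in Theorem~\ref{theorem33}, so we are extending an invariant representation across a $p'$-quotient. In the complex case, \cite{St08} realises~$\k_\Ga^+$ by transfer from a Bushnell--Kutzko $\b$-extension in the ambient group~$\tG$; I would argue the same way using its $\ell$-modular counterpart due to M\'inguez--S\'echerre~\cite{VA2}, the descent from~$\tG$ to~$G$ being formally identical to the complex case since the only inputs are that~$p$ is odd and that certain pro-$p$ cohomology vanishes. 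For general~$\La$, I propagate~$\k_\Ga^+$ to an extension of~$\eta_{\Ga,\La}$: Lemma~\ref{compatetas} combined with Lemma~\ref{simpcrit3} yields an extension on the intermediate subgroup~$P^+(\Ga_E)J^1_{\Ga,\La}$, and a final extension across the residual $p'$-quotient up to~$J_\La^+$ is performed as in \cite[Corollary~4.4]{St08}.

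\textbf{Uniqueness.} Two such extensions~$\k_1,\k_2$ agree on~$J^1_{\Ga,\La}$, so their ratio is a character~$\chi$ of~$J_\La^+$ trivial on~$J^1_{\Ga,\La}$. Since~$J_\La^1\subseteq J^1_{\Ga,\La}$, this~$\chi$ factors through~$J_\La^+/J_\La^1\cong M^+(\La_E)=P^+(\La_E)/P^1(\La_E)$, and is trivial on the image of~$J^1_{\Ga,\La}$, a Sylow $p$-subgroup of~$M^+(\La_E)$. Since characters are constant on conjugacy classes,~$\chi$ vanishes on every $M^+(\La_E)$-conjugate of this Sylow subgroup --- in particular on every unipotent subgroup --- and therefore on the subgroup they together generate.

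\textbf{Main obstacle.} The principal technical point is the base case~$\La=\Ga$: one must check that the $\ell$-modular descent of a $\b$-extension from~$\tG$ to~$G$ really produces a representation whose restriction to~$J_\Ga^1$ is~$\eta_\Ga$ on the nose, rather than an unrelated multiple or twist. Because the relevant unicity arguments take place inside pro-$p$ subgroups and depend only on~$p$ being odd, they remain valid verbatim over~$R$; what requires care is tracking the various unicity statements (Theorems~\ref{theorem22}--\ref{theorem33} and Lemma~\ref{compatetas}) over a general coefficient field of characteristic~$\ne p$ rather than over~$\CC$.
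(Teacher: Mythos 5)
Your uniqueness argument is essentially the correct one, up to a point of phrasing: the ``ratio'' character must be produced by the Gallagher-type argument applied to the normal subgroup $J^1_{\La}$ of $J^+_{\La}$ (the group $J^1_{\Ga,\La}$ is \emph{not} normal in $J^+_{\La}$); once the character is known to factor through $M^+(\La_E)$ and to be trivial on the image of $P^1(\Ga_E)$, conjugation-invariance gives triviality on the subgroup generated by all unipotent subgroups, as you say. The existence half, however, has a genuine gap, and it sits exactly where you declare the matter routine. First, the base case is misattributed: in~\cite{St08} the extension $\k^+$ is \emph{not} obtained by transferring a Bushnell--Kutzko $\b$-extension from $\tG$ and ``descending'' to $G$, and no such descent mechanism exists --- the Glauberman correspondence is only available for the pro-$p$ groups $\tH^1,\tJ^1$ (coprimality fails for $\tJ$, whose reductive quotient has even order), and simply restricting a $\b$-extension of $\tJ(\b,\La)$ to $J^+(\b,\La)$ cannot recover $\eta$, since already $\Res_{J^1(\b,\La)}\tilde\eta$ is much larger than $\eta$. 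So invoking the M\'inguez--S\'echerre $\ell$-modular $\b$-extensions for $\tG$ does not address the problem: the construction being adapted is intrinsic to the classical group.

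Second, and more seriously, the passage from $\Ga$ to $\La$ is not an ``extension across a residual $p'$-quotient'': neither $J^1_{\Ga,\La}$ nor $P^+(\Ga_E)J^1_{\La}$ is normal in $J^+_{\La}$ (their images in $M^+(\La_E)$ are the unipotent radical of a Borel, respectively a Borel-type subgroup), so there is no quotient group, and extending an irreducible representation from a non-normal subgroup of $p'$-index is not a formal operation. This non-normal extension problem is precisely the content of the proof of~\cite[Theorem~4.1]{St08}, which is a genuine argument resting on the intertwining computations for $\eta_{\Ga,\La}$, the structure of $M^+(\La_E)$, and the fact that $\dim\eta$ is a power of $p$; your ``as in Corollary~4.4'' does not supply it (and the numbering you cite is not the relevant statement). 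The paper's own proof is exactly the observation that this argument of~\cite{St08} carries over \emph{mutatis mutandis}: the inputs that could be sensitive to the coefficient field --- unicity of $\eta$, one-dimensionality of intertwining spaces, vanishing of obstructions against the divisible group $R^\times$ --- all concern pro-$p$ groups and hence survive in characteristic $\ell\neq p$. As written, your proposal establishes uniqueness but not existence.
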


\begin{proof}
The proof follows \emph{mutatis mutandis} the proof of~\cite[Theorem~4.1]{St08}.
\end{proof}

If~$\BB(\La_E)$ is a maximal self-dual~$\o_E$-order in~$B$, we call an
extension~$\k_{\La}^+$ of~$\eta_{\La}$, as constructed in
Theorem~\ref{maxbetas}, a \emph{$\b$-extension}.  In the case
where~$\BB(\La_E)$ is not maximal, while Theorem~\ref{maxbetas}, gives
a collection of extensions of~$\eta_{\La}$ it gives too many such
extensions.  As in the complex case, we define~$\b$-extensions in the
non-maximal case by compatibility with~$\b$-extensions in the maximal
case.   Let~$[\Y,n_{\Y},0,\b]$ be a self-dual semisimple stratum such
that~$\BB(\Y_E)$ is maximal and~$\BB(\La_E)\subseteq \BB(\Y_E)$;
let~$\th_{\Y}=\t_{\La,\Y,\b}(\th_{\La})$
and~$\eta_{\Y}=\t_{\La,\Y,\b}(\eta_{\La})$.
Let~$J^1_{\La,\Y}=P^1(\La_E)J^1_\Y$
and~$J^+_{\La,\Y}=P^+(\La_E)J^1_\Y$.

\begin{thm}\label{bijectionbetas}
There is a canonical bijection 
\[
b_{\La,\Y}:\{\text{extensions~$\k_{\La}^+$ of~$\eta_{\La}$ to~$J^+_\La$}\}\to \{\text{extensions~$\k^+_{\La,\Y}$ of~$\eta_{\Y}$ to~$J^+_{\La,\Y}$}\}.
\]
Furthermore, if~$\AA(\La)\subseteq \AA(\Y)$
then~$b_{\La,\Y}(\k^+_{\La})$ is the unique extension of~$\eta_{\Y}$
such that~$\k^+_\La$ and~$b_{\La,\Y}(\k^+_{\La})$
induce to equivalent irreducible representations of~$P^+(\La_E)P^1(\La)$.
%\[
%\ind_{J_{\La,\Y}^+}^{P^+(\La_E)P^1(\La)}b_{\La,\Y}(\k_{\La}^+)
%\simeq
%\ind_{J^+_\La}^{P^+(\La_E)P^1(\La)}\k_{\La}^+.
%\]
\end{thm}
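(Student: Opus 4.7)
The plan is to adapt the argument of \cite[Theorem~4.5]{St08} to the modular setting, using Lemma~\ref{compatetas} and the intertwining result of Lemma~\ref{intetas2} as the main inputs. All Heisenberg gluings happen across pro-$p$ subgroups (the various $J^1$'s and $P^1$'s), so the core uniqueness arguments from \cite{St08} carry over to any coefficient field of characteristic $\neq p$.

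To construct $b_{\La,\Y}$, choose an auxiliary self-dual $\o_E$-lattice sequence $\Ga$ with $\BB(\Ga_E)$ a minimal self-dual $\o_E$-order contained in both $\BB(\La_E)$ and $\BB(\Y_E)$, so that $P^\circ(\Ga_E)$ is an Iwahori subgroup of $\GE$. Given an extension $\k_\La^+$ of $\eta_\La$ to $J_\La^+$, first glue $\k_\La^+$ with the unique Heisenberg extension $\eta_{\Ga,\La}$ from Lemma~\ref{compatetas} along their common restriction to $J^1_\La$: since $P^+(\La_E)$ normalises both $\eta_\La$ and (by uniqueness in Lemma~\ref{compatetas}) $\eta_{\Ga,\La}$, this yields a unique irreducible extension $\tk_{\Ga,\La}^+$ of $\eta_{\Ga,\La}$ to $P^+(\La_E)J^1_{\Ga,\La}$ whose restriction to $J_\La^+$ is $\k_\La^+$. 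Next, induce $\tk_{\Ga,\La}^+$ to $P^+(\La_E)P^1(\Ga)$; by a standard intertwining computation based on Lemma~\ref{intetas2} this induction is irreducible, and by the equivalence $\Ind_{J^1_{\Ga,\La}}^{P^1(\Ga)}\eta_{\Ga,\La}\simeq\Ind_{J^1_{\Ga,\Y}}^{P^1(\Ga)}\eta_{\Ga,\Y}$ from Lemma~\ref{compatetas} it coincides with the analogous induction built from the $\Y$-side. Inverting this construction produces a unique extension $\tk_{\Ga,\Y}^+$ of $\eta_{\Ga,\Y}$ to $P^+(\La_E)J^1_{\Ga,\Y}$, and I set $b_{\La,\Y}(\k_\La^+)=\tk_{\Ga,\Y}^+|_{J^+_{\La,\Y}}$; independence of the choice of $\Ga$ is proved by interpolating between two choices via an intermediate lattice sequence.

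By Theorem~\ref{maxbetas}, both the source and the target of $b_{\La,\Y}$ are torsors over the abelian group $X$ of characters of $P^+(\La_E)/P^1(\La_E)$ trivial on all unipotent subgroups, and the construction above is manifestly $X$-equivariant, so $b_{\La,\Y}$ is a bijection as soon as the target is non-empty (which follows from Theorem~\ref{maxbetas} applied to $\Y$ and restriction). For the canonical property when $\AA(\La)\subseteq\AA(\Y)$, the group $P^+(\La_E)P^1(\La)$ contains both $J_\La^+$ and $J^+_{\La,\Y}$; running the construction above with $\La$ in place of $\Ga$ and applying Lemma~\ref{compatetas} to $\La$ gives
\[
\Ind_{J_\La^+}^{P^+(\La_E)P^1(\La)}\k_\La^+ \simeq \Ind_{J^+_{\La,\Y}}^{P^+(\La_E)P^1(\La)} b_{\La,\Y}(\k_\La^+),
\]
and uniqueness of the extension with this induction property follows because twisting by a non-trivial element of $X$ strictly alters the restriction of the induced representation to $P^+(\La_E)$.

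The main obstacle is the bookkeeping around independence of the auxiliary $\Ga$ and tracking $X$-equivariance through the chain of inductions and restrictions. The only genuinely modular subtlety is that $P^+(\La_E)/P^1(\La_E)$, a product of finite reductive groups, may have order divisible by $\ell$ and its representation category is not semisimple; but Theorem~\ref{maxbetas} parametrises extensions by one-dimensional characters (whose torsor structure is insensitive to non-semisimplicity), and Lemma~\ref{simpcrit3} handles the $\eta$-isotypic passage across the pro-$p$ groups $J^1$, so this issue never obstructs the argument.
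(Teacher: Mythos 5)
Your overall strategy---transporting an extension across the two groups using the compatibility of Heisenberg representations (Lemma~\ref{compatetas}) and the multiplicity-one statement (Lemma~\ref{intetas2})---is the right circle of ideas, but the route through the Iwahori point~$\Ga$ breaks down at its central step. First, $P^+(\La_E)P^1(\Ga)$ is not a group in general: $P^+(\La_E)$ does not normalise the pro-unipotent radical~$P^1(\Ga)$ of the smaller parahoric of~$G$, so ``inducing $\tk^+_{\Ga,\La}$ to $P^+(\La_E)P^1(\Ga)$'' is undefined and the subsequent transfer to the $\Y$-side and ``inversion'' have nothing to rest on. Second, since $P^1(\Ga_E)\subseteq P^{\circ}(\La_E)$, the groups $P^+(\La_E)J^1_{\Ga,\La}$ and $P^+(\La_E)J^1_{\Ga,\Y}$ are simply $J^+_\La$ and $J^+_{\La,\Y}$; so your ``gluing'' step is really the assertion that \emph{every} extension of~$\eta_\La$ to~$J^+_\La$ restricts on~$J^1_{\Ga,\La}$ to the distinguished representation~$\eta_{\Ga,\La}$. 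That is not automatic: two extensions of~$\eta_\La$ differ by an arbitrary character of $J^+_\La/J^1_\La\simeq P^+(\La_E)/P^1(\La_E)$, which need not be trivial on the image of~$P^1(\Ga_E)$ (a maximal unipotent subgroup of the finite quotient); and your justification via ``uniqueness in Lemma~\ref{compatetas}'' does not give $P^+(\La_E)$-invariance of~$\eta_{\Ga,\La}$, because the characterising property there involves~$\eta_\Ga$ and~$P^1(\Ga)$, neither of which is preserved by~$P^+(\La_E)$ (that invariance is essentially the content of Theorem~\ref{maxbetas}, not of Lemma~\ref{compatetas}).

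The bijectivity argument also has a gap: Theorem~\ref{maxbetas} concerns extensions of~$\eta_{\Ga,\La}$ to~$J^+_\La$ and says nothing about the target set of extensions of~$\eta_\Y$ to~$J^+_{\La,\Y}$. The source of~$b_{\La,\Y}$ is a torsor under \emph{all} characters of $P^+(\La_E)/P^1(\La_E)$, and the target under characters of $J^+_{\La,\Y}/J^1_\Y\simeq P^+(\La_E)/P^1(\Y_E)$, so ``both are $X$-torsors'' is unproved; identifying these character groups is essentially part of what must be shown. You also never verify that your candidate restricts irreducibly to~$J^1_\Y$, i.e.\ that it is an extension of~$\eta_\Y$ rather than merely containing it. The paper's proof avoids all of this: assuming first $\AA(\La)\subseteq\AA(\Y)$, it induces $\k^+_\La$ to the genuine group $P^+(\La_E)P^1(\La)$ (which $P^+(\La_E)$ does normalise), identifies the restriction to the pro-$p$ group $P^1(\La)$ with $\ind_{J^1_{\La,\Y}}^{P^1(\La)}\eta_{\La,\Y}$ via Lemma~\ref{compatetas}, extracts the unique quotient containing~$\eta_{\La,\Y}$ using the multiplicity one of Lemma~\ref{intetas2}, and then uses the dimension comparison of Lemma~\ref{dimetascom} to see that this quotient actually extends~$\eta_{\La,\Y}$; the construction is symmetric, hence bijective, and the uniqueness clause together with the general case follow as in~\cite[Lemma~4.3]{St08}. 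If you want to salvage your approach, replace the auxiliary~$\Ga$ by~$\La$ itself, so that the overgroup becomes $P^+(\La_E)P^1(\La)$---at which point you will have reproduced the paper's argument, and you will still need Lemma~\ref{dimetascom} (or a substitute) for the extension property.
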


\begin{proof}
Assume that~$\AA(\La)\subseteq \AA(\Y)$ and, as in the proof
of~\cite[Lemma~4.3, Case (i)]{St08}, we follow the argument
of~\cite[Proposition~5.2.5]{BK93}. 
Let~$\k_\La^+$ be an extension of~$\eta_\La^+$ to~$J^+_\La$ and put
\[
\l=\ind_{J_{\La}^+}^{P^+(\La_E)P^1(\La)}(\k_{\La}^+).
\]
By Mackey Theory,
\[
\Res^{P^+(\La_E)P^1(\La)}_{P^1(\La)}(\l)
\simeq \ind_{J^1_{\La}}^{P^1(\La)}(\eta_{\La}),
\]
which is irreducible, since~$I_G(\eta_\La)\cap P^1(\La)=J^1_\La$; in
particular,~$\l$ is irreducible. Moreover, by Lemma~\ref{compatetas},
\[
\Res^{P^+(\La_E)P^1(\La)}_{P^1(\La)}(\l)
\simeq \ind_{J^1_{\La,\Y}}^{P^1(\La)}(\eta_{\La,\Y}),
\]
so there is an irreducible quotient~$\k_{\La,\Y}^+$
of~$\Res^{P^+(\La_E)P^1(\La)}_{J_{\La,\Y}^+}\l$ which
contains~$\eta_{\La,\Y}$; indeed, there is a unique such quotient,
since~$\eta_{\La,\Y}$ appears with multiplicity~$1$
in~$\Res^{P^1(\La)}_{J^1(\La,\Y)}\ind_{J^1_{\La,\Y}}^{P^1(\La)}(\eta_{\La,\Y})$,
by Lemma~\ref{intetas2}. Now put
\[
\l'=\ind_{J_{\La,\Y}^+}^{P^+(\La_E)P^1(\La)}\k_{\La,\Y}^+.
\]
Then, as above,
\[
\Res^{P^+(\La_E)P^1(\La)}_{P^1(\La)}(\l')
\simeq \ind_{J^1_{\La,\Y}}^{P^1(\La)}(\eta_{\La,\Y}),
\]
so that~$\l'$ is also irreducible, and hence equivalent
to~$\l$. Comparing dimensions, using Lemma~\ref{dimetascom}, we see
that~$\k_{\La,\Y}^+$ extends~$\eta_{\La,\Y}$ as required. 

The argument is reversible, giving the required bijection, and the
remainder of the proof follows from this special
case~$\AA(\La)\subseteq \AA(\Y)$, exactly as in the proof
of~\cite[Lemma~4.3]{St08}.
\end{proof}

An extension~$\k_{\La}^+$ of~$\eta_{\La}$ to~$J^+_{\La}$ is called a
\emph{$\b$-extension} if there exist a self-dual semisimple
stratum~$[\Y,n_{\Y},0,\b]$ such that~$\BB(\Y_E)$ is a maximal
self-dual~$\o_E$-order containing~$\BB(\La_E)$ and
a~$\b$-extension~$\k_{\Y}^+$ of~$\eta_{\Y}=\t_{\La,\Y,\b}(\eta_{\La})$
such that~$b_{\La,\Y}(\k_{\La}^+)=\Res_{J^+_{\La,\Y}}^{J^+_{\Y}}(\k_{\Y}^+)$. 
More precisely, we say that such a representation~$\k_\La^+$ is a
\emph{$\b$-extension relative to~$\Y$.}

There is a standard (non-canonical) choice for the
self-dual~$\o_E$-lattice sequence~$\Y$. Let
\begin{equation*}
\MM_{\La}^i(2r+s)=\begin{cases}
\p_{E_i}^r\La^i(0)&\text{if }i\in I_+;\\
\p_{E_i}^r\La^i(s)&\text{if }i\in I_0;\\
\p_{E_i}^r\La^i(1)&\text{if }i\in I_-.
\end{cases}
\end{equation*}
Then~$\MM_{\La}=\bigoplus_{i\in I}\MM^i_{\La}$ is a
self-dual~$\o_E$-lattice sequence in~$V$ such that~$\AA(\MM_{\La})\cap
B$ is a maximal self-dual hereditary~$\o_E$-order in~$B$.  A
representation~$\k_{\La}^+$ of~$J_{\La}^+$ is called a
\emph{standard~$\b$-extension} of~$\eta_{\La}$ if it is
a~$\b$-extension relative to~$\MM_{\La}$.

If~$\k_\La^+$ is a standard~$\beta$ extension and~$[\Y,n_\Y,0,\b]$ is
another self-dual semisimple stratum with~$\MM_{\Y}=\MM_{\La}$, we say
that the standard~$\b$-extension~$\k_\Y^+$ of~$J_\Y^+$ is
\emph{compatible~$\k_\La^+$} if they correspond to the
same~$\b$-extension of~$J_{\MM_\La}^+$. In the case
that~$\AA(\La)\subseteq\AA(\Y)$, this is equivalent to saying
that~$\k_\La^+$ and~$\Res^{J^+_\Y}_{J^+_{\La,\Y}}\k_\Y^+$ induce to
equivalent (irreducible) representations of~$P^+(\La_E)P^1(\La)$.

 We also call the restriction from~$J^+_{\La}$ to~$J_{\La}$
 (resp.~$J^{\circ}_{\La}$) of a (standard)~$\b$-extension a
 (standard)~$\b$-extension and denote the restriction of~$\k^+_{\La}$
 to~$J_{\La}$ (resp.~$J^{\circ}_{\La}$) by~$\k_{\La}$
 (resp.~$\k^{\circ}_{\La}$), and speak of compatibility for these
 standard~$\b$-extensions.
 
\begin{remark}
Being smooth representations of a compact group, all~$\Ql$-beta
extensions are integral. When~$\BB(\La_E)$ is a maximal
self-dual~$\o_E$-order in~$B$, it is straightforward to check that
reduction modulo-$\ell$ defines a surjective map from the set
of~$\Ql$-beta extensions to the set of~$\Fl$-beta
extensions. Moreover, the bijections~$b_{\La,\Y}$, for~$\Ql$-representation
and~$\Fl$-representations, defined by Theorem~\ref{bijectionbetas}
commute with reduction modulo-$\ell$; thus reduction modulo-$\ell$
defines a surjective map from the set of~$\Ql$-beta extensions to the
set of~$\Fl$-beta extensions in all cases.  Moreover, the reduction
modulo-$\ell$ of a standard~$\Ql$-beta extension is a
standard~$\Fl$-beta extension.
\end{remark}

%%%%%%%%%%%%%%%%%%%%%%%%%%%%%%%%%%%%%%%%%%%%%%%%%%%
\subsection{Induction functors for classical groups}\label{Functors}
%%%%%%%%%%%%%%%%%%%%%%%%%%%%%%%%%%%%%%%%%%%%%%%%%%%

Now suppose that~$[\La,n,0,\b]$ is a skew semisimple stratum in~$A$.
Let~$\th\in\Cc_{-}(\La,0,\b)$, let~$\eta$ be the unique Heisenberg
extension of~$\th$ to~$J^1(\b,\La)$ and~$\k$ be a~$\b$-extension
of~$\eta$ to~$J(\b,\La)$. Recall that we have an exact sequence
\[
1\to J^1(\b,\La)\to J(\b,\La) \to M(\La_E) \to 1,
\]
with~$M(\La_E)$ a (possibly disconnected) finite reductive group.

We have a functor~$\I_{\k}:\RR_R(M(\La_E))\to \RR_R(G)$, which we
call~$\k$-induction, given by
\[
\I_{\k}(-)=\ind_{J(\b,\La)}^{G}(\k\otimes\infl_{M(\La_E)}^{J(\b,\La)}(-))
\]
where~$\infl_{M(\La_E)}^{J(\b,\La)}:\RR_R(M(\La_E))\to
\RR_R(J(\b,\La))$ is the functor defined by trivial inflation
to~$J^1(\b,\La)$. The functor~$\I_{\k}$ possesses a right
adjoint~$\R_{\k}:\RR_R(G)\to \RR_R(M(\La_E))$, which we
call~$\k$-restriction, given by
\[
\R_{\k}(-)=\Hom_{J^1(\b,\La)}(\k,-).
\]
If~$\pi$ is a smooth representation of~$G$, the action of~$M(\La_E)$
on~$\R_{\k}(\pi)$ is given as follows: if~$f\in\R_{\k}(\pi)$,~$m\in
M(\La_E)$ and~$j\in J(\b,\La)$ is any representative for~$m$,
then~$m\cdot f=\pi(j)\circ f\circ \k(j^{-1})$. The functors
of~$\k$-induction and~$\k$-restriction are exact functors
as~$J^1(\b,\La)$ is pro-$p$.

Now let~$[\Y,n_\Y,0,\b]$ be another self-dual semisimple stratum
with~$\MM_{\Y}=\MM_{\La}$ and~$\AA(\La)\subseteq\AA(\Y)$,
and let~$\th_\Y$ be the transfer
of~$\th$. Let~$\k$ be a~$\b$-extension and let~$\k_\Y$ be a
compatible~$\b$-extension of~$J(\b,\Y)$. 
Set~$P_{\La,\Y}^E=P(\La_E)/P^1(\Y_E)$, a parabolic subgroup
of~$M(\Y_E)$ with Levi factor~$M(\La_E)$; we
write~$i^{M(\Y_E)}_{P_{\La,\Y}^E}$ for the parabolic induction functor
and~$r^{M(\Y_E)}_{P_{\La,\Y}^E}$ for its adjoint. 
By transitivity of induction, an exercise shows that
we have isomorphisms of functors
\[
\I_{\k_\Y}\circ \,i^{M(\Y_E)}_{P_{\La,\Y}^E}\simeq \I_{\k}
\text{  and  } r^{M(\Y_E)}_{P_{\La,\Y}^E}\circ \R_{\k_\Y}\simeq \R_{\k},
\]
where the latter follows from the former by unicity of the adjoint.

We also have the special case of these functors when the stratum is
zero, which we can apply in~$\GE$. Thus, since~$\La_E$ is
an~$\o_E$-lattice chain, we have a level zero \emph{parahoric induction}
functor~$\I_{\La_E}:\RR_R(M(\La_E))\to \RR_R(\GE)$ attached
to~$[\La,n,0,\b]$ given by
\[
\I_{\La_E}(-)=\ind_{P(\La_E)}^{\GE}(\infl_{M(\La_E)}^{P(\La_E)}(-))
\]
where~$\infl_{M(\La_E)}^{P(\La_E)}:\RR_R(M(\La_E))\to\RR_R(P(\La_E))$
is the functor defined by trivial inflation to~$P^1(\La_E)$.  The
functor~$\I_{\La_E}$ possesses a right adjoint, which we call level
zero \emph{parahoric restriction},~$\R_{\La_E}:\RR_R(\GE)\to \RR_R(M(\La_E))$
given by the functor of~$P^1(\La_E)$-invariants 
\[
\R_{\La_E}(-)=(-)^{P^1(\La_E)},
\]
with the group~$P(\La_E)/P^1(\La_E)\simeq M(\La_E)$ acting
naturally. Level zero parahoric induction and restriction are exact
functors.

%%%%%%%%%%%%%%%%%%%%%%%%%%%%%%%%%%%%%%%%%%%%%%%%%%%
%%%%%%%%%%%%%%%%%%%%%%%%%%%%%%%%%%%%%%%%%%%%%%%%%%%
\section{Level zero interlude}\label{Levzerosection}
%%%%%%%%%%%%%%%%%%%%%%%%%%%%%%%%%%%%%%%%%%%%%%%%%%%
%%%%%%%%%%%%%%%%%%%%%%%%%%%%%%%%%%%%%%%%%%%%%%%%%%%%%%%%%%%%%%%%%%%%%%%%%%%%%%%%%%%%%%%%

In this section we recall some results of Morris~\cite{Morris93} and Vign\'eras~\cite{vigneras} on level zero representations of~$G$ (cf. also \cite[\S 4]{MR1950479}). Later, we will apply them to~$\GE$, which will be a product of groups like~$G$ over extensions of~$F$.  The results of this section apply in the greater generality of~\cite{Morris93}, and we retain the notation of [ibid.] as it is much more convenient here, as such, the notation of this section is independent of that of the rest of the paper.  We recall this notation briefly below and explain how to translate to our notation in the rest of the paper.

Let~$\mathbb{G}$ be a connected reductive group over~$F$,~$\mathbb{T}$ be a maximal~$F$-split torus in~$\mathbb{G}$, and~$\mathbb{N}=N_{\mathbb{G}}(\mathbb{T})$.  We write~$G=\mathbb{G}(F)$,~$T=\mathbb{T}(F)$, and~$N=\mathbb{N}(F)$ for the respective groups of~$F$-points. Let~$B$ be an Iwahori subgroup of~$G$.  Following [ibid.],~$(G,B,N)$ is called a \emph{generalised affine~$BN$-pair}, and, associated to this data, we have a \emph{generalised affine Weyl group}~$W=N/B\cap N$.   According to [ibid.], we have a decomposition~$W=\Omega\ltimes W'$ with~$W'$ the affine Weyl group of some split affine root system.  Let~$S$ be a set of fundamental reflections in~$W'$.

If~$J\subset S$ is a proper subset of~$S$, we let~$W_J$ be the subgroup of~$W$ generated by the reflections in~$J$.  The standard parahoric subgroups of~$G$ correspond to proper subsets of~$S$, via~$J\subset S$ maps to~$P_J=B N_J B$ for~$N_J$ any set of representatives of~$W_J$ in~$G$.  Given a parahoric subgroup~$P_J$, we write~$U_J$ for its pro-$p$ unipotent radical and~$M_J=P_J/U_J$ the points of a connected reductive group over a finite field. We write~$U_B$ for the pro-$p$ unipotent radical of~$B=P_{\emptyset}$.  %Note that,~$M_J$ is not necessarily connected, however it does contain the connected reductive group~$M^{\circ}_J$ with finite index, thus we use~\cite[Variant~3.22]{Morris93}.   We refrain from calling the subgroups~$P_J$ \emph{parahoric subgroups}, as this is usually reserved for the subgroup of~$P_J$ equal to the preimage of~$M^{\circ}_J$ under the projection map.  Similarly, we do not call~$B$ an \emph{Iwahori subgroup}. 

Let~$J,K$ be proper subsets of~$S$.  A set of double coset representatives~$\overline{D}_{J,K}$ for~$W_J\backslash W/W_K$ is called \emph{distinguished} if each representative has minimal length in its double coset, (cf. [ibid., \S 3.10]).  A set of double coset representatives~$D_{J,K}$ for~$P_J\backslash G/P_K$ is called \emph{distinguished} if its projection to~$W$ is a set of distinguished double coset representatives for~$W_J\backslash W/W_K$.  Let~$D_{J,K}$ be a set of distinguished set of double coset representatives for~$P_J\backslash G/P_K$.  Let~$d\in D_{J,K}$ and~$w$ be its projection in~$W$. By~[ibid., Lemma 3.19, Corollary 3.20, Lemma 3.21], we have
\begin{enumerate}
\item\label{Part1levzero}~$P_{J\cap wK}=U_J(P_J\cap \presuper{n}P_K)$ with unipotent radical~$U_{J\cap wK}=U_J(P_J\cap \presuper{n}U_K)$.
\item~$P^J_{J\cap wK}=P_{J\cap wK}/U_J$ is a parabolic subgroup of~$M_J=P_J\cap U_J$.
\end{enumerate}
We can form the following lattice of groups:
\begin{center}
\begin{tikzpicture}
\matrix [matrix of math nodes, column sep=1cm, row sep=1cm,text height=2ex, text depth=0.25ex]
{
\node(top1) {1}; &\node(top2) {U_J};&\node(top3) {P_J}; &\node(top4) {M_J};&\node(top5) {1}; \\
\node(mid1) {1}; &\node(mid2) {U_J};&\node(mid3) {P_{J\cap wK}}; &\node(mid4) {P^J_{J\cap wK}};&\node(mid5) {1}; \\
\node(topp1) {1}; &\node(topp2) {U_J(P_J\cap \presuper{n}U_K)};&\node(topp3) {P_{J\cap wK}}; &\node(topp4) {M_{J\cap wK}};&\node(topp5) {1}; \\
};
\draw [->] (top1) -- (top2);
\draw [->] (top2) -- (top3);
\draw [->] (top3) -- (top4);
\draw [->] (top4) -- (top5);
\draw [->] (mid1) -- (mid2);
\draw [->] (mid2) -- (mid3);
\draw [->] (mid3) -- (mid4);
\draw [->] (mid4) -- (mid5);
\draw [->] (topp1) -- (topp2);
\draw [->] (topp2) -- (topp3);
\draw [->] (topp3) -- (topp4);
\draw [->] (topp4) -- (topp5);
\draw[double distance = 1.5pt] (top2) -- (mid2);
\draw[double distance = 1.5pt] (mid3) -- (topp3);
\draw[right hook->] (mid3) -- (top3);
\draw[right hook->] (mid4) -- (top4);
\draw[right hook->] (topp4) -- (mid4);
\draw[right hook->] (mid2) -- (topp2);
\end{tikzpicture}
\end{center}

Furthermore, as~$D_{J,K}^{-1}$ is a set of  distinguished double coset representatives for~$P_K\backslash G/P_J$, the group~$P^K_{w^{-1}J\cap K}$ is a parabolic subgroup of~$M_K$ and we can form an analogous diagram for~$P^K_{w^{-1}J\cap K}$.  Note also that~$M_{J\cap wK}= (M_{w^{-1}J\cap K})^{w}$.  

This section collects results based upon the following theorem of Vign\'eras.  Before we state it, we must recall the parahoric induction/restriction functors in this notation; let~$I_J:\mathfrak{R}_R(M_J)\rightarrow \mathfrak{R}_R(G)$ denote the parahoric induction functor
\[I_J(-)=\ind_{P_J}^G(\infl_{M_J}^{P_J}(-)),\]
and~$R_J:\mathfrak{R}_R(G)\rightarrow\mathfrak{R}_R(M_J)$ denote, its right adjoint, the parahoric restriction functor
\[R_J(-)=(-)^{U_J}.\]
The normaliser~$N_G(P_J)$ of~$P_J$ in~$G$ normalises~$U_J$, and~$M_J^+=N_G(P_J)/U_J$ contains~$M_J$ as a normal subgroup.  We write~$I_J^+:\mathfrak{R}_R(M_J^+)\rightarrow \mathfrak{R}_R(G)$ for the functor
\[I_J^+(-)=\ind_{N_G(P_J)}^G(-),\]
and~$R_J^+:\mathfrak{R}_R(G)\rightarrow\mathfrak{R}_R(M^+_J)$ its right adjoint, again given by~$U_J$-invariants.

\begin{thm}[{\cite[Basic decomposition~5.1]{vigneras}}]\label{vigneraslevzero}
We have an isomorphism of functors
\[
\R_{J}\circ\I_{K}\simeq \bigoplus_{w\in \overline{D}_{J,K}} i^{M_J}_{P^J_{J\cap wK}}\circ \( r^{M_K}_{P^K_{w^{-1}J\cap K}}\)^w.
\]
\end{thm}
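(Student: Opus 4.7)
The plan is to carry out the standard Mackey decomposition, then take $U_J$-invariants; the level-zero setting then lets us convert the result into parabolic induction on $M_J$ of parabolic restriction on $M_K$.

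First, let $\sigma$ be a smooth $R$-representation of $M_K$ and write $\tilde\sigma = \infl_{M_K}^{P_K}\sigma$, which is trivial on $U_K$. I would begin by applying the Mackey decomposition to $\Res^G_{P_J}(\ind_{P_K}^G \tilde\sigma)$: since $D_{J,K}$ is a set of distinguished double coset representatives for $P_J\backslash G/P_K$ whose image in $W$ is $\overline{D}_{J,K}$,
\[
\Res^G_{P_J}(\ind_{P_K}^G \tilde\sigma) \simeq \bigoplus_{d \in D_{J,K}} \ind_{P_J \cap P_K^d}^{P_J} (\tilde\sigma)^d.
\]
The group $U_J$ is pro-$p$ and $\ell \ne p$, so $(-)^{U_J}$ is exact and commutes with direct sums; taking invariants on both sides gives
\[
\R_J \I_K(\sigma) \simeq \bigoplus_{d \in D_{J,K}} \left( \ind_{P_J \cap P_K^d}^{P_J} (\tilde\sigma)^d \right)^{U_J}.
\]

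Next, I would identify each summand via the Iwahori structure in part (\ref{Part1levzero}). Fix $d \in D_{J,K}$ with image $w \in \overline{D}_{J,K}$: one has $P_{J \cap wK} = U_J(P_J \cap P_K^d)$ with unipotent radical $U_{J \cap wK} = U_J(P_J \cap \presuper{d}U_K)$, and $P^J_{J\cap wK} = P_{J \cap wK}/U_J$ is a parabolic of $M_J$ with Levi $M_{J\cap wK}$. Since $\tilde\sigma$ is trivial on $U_K$, the conjugate $(\tilde\sigma)^d$ is trivial on $\presuper{d}U_K$, hence on $P_J \cap \presuper{d}U_K$, so $(\tilde\sigma)^d|_{P_J \cap P_K^d}$ descends to a representation $\rho_d$ of the quotient $(P_J \cap P_K^d)/(P_J \cap \presuper{d}U_K) \simeq M_{J \cap wK}$. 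The conjugation isomorphism $M_{w^{-1}J\cap K} \xrightarrow{\sim} M_{J\cap wK}$, together with the analogous Iwahori decomposition on the $K$-side, identifies $\rho_d$ with $(r^{M_K}_{P^K_{w^{-1}J\cap K}}\sigma)^w$; in particular $\rho_d$ depends only on $w$.

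To assemble, I would use induction in stages,
\[
\ind_{P_J \cap P_K^d}^{P_J} (\tilde\sigma)^d \simeq \ind_{P_{J\cap wK}}^{P_J} \ind_{P_J \cap P_K^d}^{P_{J\cap wK}} (\tilde\sigma)^d,
\]
and show that the inner induction, after taking $U_J$-invariants, collapses to the inflation of $\rho_d$ from $M_{J\cap wK}$ to $P^J_{J\cap wK}$, while the outer induction, using $U_J \lhd P_J$, becomes parabolic induction $i^{M_J}_{P^J_{J\cap wK}}$. Summing over $d \in D_{J,K}$, equivalently over $w \in \overline{D}_{J,K}$, yields the desired isomorphism of functors.

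The hardest step will be the identification of the $U_J$-invariants of the inner induction with the inflation of $\rho_d$: one must verify that no extra cosets in $U_J$ contribute to the invariants, using the precise structure $P_{J\cap wK} = U_J(P_J \cap P_K^d)$, the triviality of $(\tilde\sigma)^d$ on $P_J \cap \presuper{d}U_K$, and exactness of the $U_J$-invariants functor. The other computations are essentially bookkeeping in the Iwahori framework.
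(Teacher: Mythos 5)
Your plan --- Mackey decomposition of $\Res^G_{P_J}\circ\ind_{P_K}^G$ over distinguished double cosets, exactness of $(-)^{U_J}$, and Morris's parahoric structure theory --- is the standard route (the paper gives no proof of this statement, quoting it from Vign\'eras, whose argument has exactly this shape). But your crucial identification is wrong as stated, in a way that hides where $r^{M_K}$ actually comes from. The quotient $(P_J\cap\presuper{d}{P_K})/(P_J\cap\presuper{d}{U_K})$ is \emph{not} $M_{J\cap wK}$: the kernel of $P_J\cap\presuper{d}{P_K}\twoheadrightarrow M_{J\cap wK}$ also contains $U_J\cap\presuper{d}{P_K}$, whose image modulo $\presuper{d}{U_K}$ is in general a nontrivial unipotent group. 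After transport by $d$, that quotient is the \emph{parabolic} $P^K_{w^{-1}J\cap K}$ of $M_K$, so the representation $\rho_d$ you define (the descent of $\presuper{d}{\tilde\sigma}$ restricted to $P_J\cap\presuper{d}{P_K}$, modulo $P_J\cap\presuper{d}{U_K}$) is the plain restriction of $\sigma$ to $P^K_{w^{-1}J\cap K}$ transported by $w$, not the Jacquet module $(r^{M_K}_{P^K_{w^{-1}J\cap K}}\sigma)^w$. If the inner induction ``collapsed to the inflation of $\rho_d$'' with this $\rho_d$, the summand would be too big (of size $\dim\sigma$ rather than $\dim\sigma^{U^K_{w^{-1}J\cap K}}$); with the other reading of $\rho_d$, the descent claim is false. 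Either way, the step where the Jacquet functor is supposed to appear is not justified by what you have listed.

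The missing ingredient is the $K$-side analogue of the decomposition you do quote, namely $U_{w^{-1}J\cap K}=U_K(P_K\cap\presuper{d^{-1}}{U_J})$ for the distinguished representative $d^{-1}\in D_{K,J}$: modulo $\presuper{d}{U_K}$, the group $U_J\cap\presuper{d}{P_K}$ surjects onto the unipotent radical $U^K_{w^{-1}J\cap K}$. Concretely, after induction in stages and using that $U_J$-invariants commute with induction from subgroups containing the normal subgroup $U_J$, one reduces to $(\ind_{P_J\cap\presuper{d}{P_K}}^{P_{J\cap wK}}\presuper{d}{\tilde\sigma})^{U_J}\simeq(\presuper{d}{\tilde\sigma})^{U_J\cap\presuper{d}{P_K}}$ (Frobenius reciprocity together with $P_{J\cap wK}=U_J(P_J\cap\presuper{d}{P_K})$), and it is precisely invariance under $U_J\cap\presuper{d}{P_K}$, via the surjection above, that produces $(r^{M_K}_{P^K_{w^{-1}J\cap K}}\sigma)^w$, the residual action of $P^J_{J\cap wK}$ then factoring through its Levi $M_{J\cap wK}$. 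With this inserted (and with the conjugation conventions made consistent: for $d$ running over $D_{J,K}$ the Mackey summands involve $\presuper{d}{P_K}=dP_Kd^{-1}$ and $\presuper{d}{\tilde\sigma}$, not $P_K^d$), your argument closes up and coincides with the cited proof of Vign\'eras.
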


\begin{corollary}\label{levelzerocorollaries}
Let~$\t$ be a cuspidal~$R$-representation of~$M_K$.
\begin{enumerate}
\item\label{part1levelzerocorollaries} The representation~$\R_{K}\circ\I_{K}(\t)$ is a direct sum of conjugates of~$\t$. 
\[\R_K\circ I_K(\t)\simeq \bigoplus_{\substack{w\in W_{K,K}\\ wK=K}}\t^w.\]
Moreover, if~$P_K$ is a maximal and~$\tau^+$ is an irreducible~$R$-representation of~$M_K^+$ with cuspidal restriction to~$M_K$, then~\[\R^+_{K}\circ\I^+_{K}(\t^+)=\t^+.\] 
\item\label{part2levelzerocorollaries} Suppose that~$P_K$ is maximal and~$P_J$ is not conjugate to~$P_K$ in~$G$. Then~\[\R_{J}\circ\I_{K}(\t)=0.\]
\end{enumerate}
\end{corollary}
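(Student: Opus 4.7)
The proof will be an essentially immediate application of Theorem~\ref{vigneraslevzero}, once one combines it with cuspidality and the geometric constraints imposed by maximality of~$P_K$. Here is the plan.

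For part \ref{part1levelzerocorollaries}, the plan is to specialise Theorem~\ref{vigneraslevzero} to $J=K$. For each distinguished double coset representative $w\in \overline{D}_{K,K}$, the corresponding summand is
\[
i^{M_K}_{P^K_{K\cap wK}}\circ \bigl(r^{M_K}_{P^K_{w^{-1}K\cap K}}\bigr)^w(\t).
\]
Because $\t$ is cuspidal, the Jacquet functor $r^{M_K}_{P^K_{w^{-1}K\cap K}}$ kills $\t$ unless the corresponding parabolic subgroup of $M_K$ is the whole of $M_K$, i.e.\ unless $w^{-1}K\cap K=K$. Since this forces an inclusion between two subsets of $S$ of the same cardinality (as $w$ permutes the simple reflections in a length-preserving way on distinguished representatives), this is equivalent to $wK=K$. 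In that case the parabolic induction functor $i^{M_K}_{P^K_{K\cap wK}}$ is also the identity, so the summand reduces to $\t^w$. This gives the claimed direct sum. For the $M_K^+$-variant, maximality of $P_K$ identifies $M_K^+$ with $N_G(P_K)/U_K$, so that~$\I_K^+$ and~$\R_K^+$ are genuine adjoints with $\I_K^+\t^+=\ind_{N_G(P_K)}^G\infl\t^+$; I would use that $\tau^+$ has cuspidal restriction to $M_K$ together with the Mackey-type computation for the index-finite normal inclusion $M_K\vartriangleleft M_K^+$, reducing to part~(1) applied to $\Res\tau^+$ and noting that only the trivial double coset contributes, as the other $w$ with $wK=K$ lie in $N_G(P_K)$ and hence act trivially after restriction to the $\tau^+$-isotypic piece.

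For part \ref{part2levelzerocorollaries}, I would again apply Theorem~\ref{vigneraslevzero}. Cuspidality of $\t$ implies that a summand
\[
i^{M_J}_{P^J_{J\cap wK}}\circ \bigl(r^{M_K}_{P^K_{w^{-1}J\cap K}}\bigr)^w(\t)
\]
is zero unless $P^K_{w^{-1}J\cap K}=M_K$, equivalently $w^{-1}J\cap K=K$, equivalently $wK\subseteq J$. But $P_K$ maximal means $|K|=|S|-1$, so $|J|\geqslant |wK|=|K|=|S|-1$, forcing $|J|=|K|$ (as $J\subsetneq S$) and hence $wK=J$. By the description of standard parahoric subgroups, this equality of subsets of $S$ together with $w$ being a product of an element of $\Omega$ and a suitable Weyl-group element gives $\presuper{w}P_K=P_J$ up to the action of $\Omega$, contradicting the hypothesis that $P_J$ is not conjugate to $P_K$. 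Hence every summand vanishes, proving $\R_J\circ\I_K(\t)=0$.

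The only potentially subtle step is the length-and-cardinality bookkeeping needed to pass from $w^{-1}K\cap K=K$ to $wK=K$ in part (1), and from $wK\subseteq J$ to $wK=J$ together with conjugacy of the corresponding parahorics in part (2); these are standard consequences of the theory of distinguished double coset representatives (Morris, \emph{loc.\,cit.}, Lemma~3.19--3.21) rather than real obstacles, so the whole argument is a short direct application of Theorem~\ref{vigneraslevzero}.
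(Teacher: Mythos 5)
Your proposal is correct and follows essentially the same route as the paper: both parts are read off from the decomposition in Theorem~\ref{vigneraslevzero}, with cuspidality killing every summand whose parabolic~$P^K_{w^{-1}J\cap K}$ is proper, and maximality of~$P_K$ (plus the standard facts on distinguished representatives from~\cite{Morris93}) forcing~$wK=K$, resp.\ $wK=J$ and hence conjugacy of~$P_J$ and~$P_K$. The only difference is cosmetic: for part~(i) the paper simply cites \cite[Corollaries~5.2 \& 5.3]{vigneras}, whereas you rederive those corollaries directly from the basic decomposition.
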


\begin{proof}
All statements are straightforward applications of the theorem.  Part \ref{part1levelzerocorollaries} is \cite[Corollaries 5.2 \& 5.3]{vigneras}, and part \ref{part2levelzerocorollaries} follows as if~$P_J$ is not conjugate to~$P_K$, then~$P^K_{w^{-1}J\cap K}$ is a proper parabolic subgroup of~$M_K$, for any~$w\in W_{J,K}$, and hence~$r^{M_K}_{P^K_{w^{-1}J\cap K}}(\tau^w)=0$ by cuspidality.
\end{proof}

\begin{remark}
In case (i) of Lemma \ref{levelzerocorollaries}, the direct sum can be infinite.  Indeed this is the case when~$K$ is empty (and the building of~$G$ is not a point).
\end{remark}

Finally, we will need the following variant of~\cite[Proposition 4.13]{Morris93}, (cf.~\cite[Lemma 1.1]{St08}), which requires a different proof in our setting.
\begin{lemma}\label{Lemma11}
Let~$J,K$ be proper subsets of~$S$, and~$D$ be a set of distinguished double coset representatives for~$P_K\backslash G/P_J$.  Let~$\tau$ be a representation of~$M_J$ with cuspidal restriction to~$M_J^{\circ}$, and let~$n\in D$.  If~$n$ lies in the support of~$\mathcal{H}(G,\tau\mid_{U_B})$, i.e. 
\[\Hom_{U_B\cap U_B^n}(\tau,\tau^n)\neq 0,\]
then~$wK=J$, where~$w\in W$ is the projection of~$n$.
\end{lemma}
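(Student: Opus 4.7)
The plan is to prove the contrapositive: assume $wK\neq J$ and show that $\Hom_{U_B\cap U_B^n}(\tau|_{U_B},\tau^n|_{U_B^n})=0$. The strategy is to push the intertwining down to the finite reductive quotient $M_J$ and to kill the resulting map by cuspidality.

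Writing $\pi\colon P_J\twoheadrightarrow M_J$ for the quotient by $U_J\subset U_B$, my first step is to observe that any $\phi$ in the Hom space factors as a map from the $\pi(U_B\cap U_J^n)$-coinvariants of $V_\tau$ into the $\pi(\presuper{n}{U_J}\cap U_B)$-invariants. Indeed, for $h\in U_B\cap U_J^n$ one has $nhn^{-1}\in U_J$, so $\tau^n(h)=\tau(\pi(nhn^{-1}))=1$, and the intertwining relation $\phi\circ\tau(\pi h)=\tau^n(h)\circ\phi$ forces the coinvariant condition on the source; dually, for $h\in U_J\cap U_B^n$ one has $\tau(\pi h)=1$ while $nhn^{-1}\in\presuper{n}{U_J}\cap U_B$, forcing the invariant condition on the target.

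The key step is then to identify these image subgroups of $\overline{U}:=\pi(U_B)$ with unipotent radicals of parabolics in $M_J$ coming from the Morris structure. Since $U_B\subset B\subset P_K$, we have $U_B^n\subset P_K^n$; using the decompositions $P_{J\cap wK}=U_J(P_J\cap\presuper{n}{P_K})$ with unipotent radical $U_{J\cap wK}=U_J(P_J\cap\presuper{n}{U_K})$ from the results cited just before Theorem~\ref{vigneraslevzero}, together with their analogues after transport by $n$, I expect to identify $\pi(\presuper{n}{U_J}\cap U_B)$ as containing the unipotent radical of the standard parabolic $P^J_{J\cap wK}$ of $M_J$, and $\pi(U_B\cap U_J^n)$ as containing the unipotent radical of the parabolic of $M_J$ corresponding (via $\operatorname{Ad}_n$) to $P^K_{w^{-1}J\cap K}$ in $M_K$. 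Since $P^J_{J\cap wK}=M_J$ iff $J\subset wK$ and $P^K_{w^{-1}J\cap K}=M_K$ iff $K\subset w^{-1}J$, both equalities together are equivalent to $wK=J$.

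If $wK\neq J$, one of the two parabolics above is proper; cuspidality of $\tau|_{M_J^\circ}$ then kills the coinvariants on the source, while cuspidality of the $n$-conjugate $\tau^n$ on the corresponding Levi kills the invariants on the target, so $\phi=0$ in either case. The main obstacle will be the identification step, since Morris' framework naturally describes intersections $P_J\cap\presuper{n}{P_K}$, whereas the groups $U_B\cap U_J^n$ and $\presuper{n}{U_J}\cap U_B$ appearing above naturally involve $P_J\cap\presuper{n}{P_J}$. Bridging this gap should rely on the ``cross-sided'' inclusions $U_J,U_J^n\subset U_B$ (which hold because $B$ is contained in every parahoric) combined with the Iwahori-type decomposition of $U_B\cap U_B^n$ forced by the distinguished minimality of $n$ in its $(P_K,P_J)$-double coset.
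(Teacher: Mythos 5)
Your reduction step is fine: an element of $\Hom_{U_B\cap U_B^n}(\tau,\tau^n)$ does factor through the coinvariants of $\tau$ under $U_B\cap U_J^n$ and land in the invariants under $nU_Jn^{-1}\cap U_B$, and killing such a map by cuspidality is indeed the right idea (and is what the paper does). The genuine gap is exactly the step you defer: the identification of these subgroups with unipotent radicals is not just "the main obstacle", it is the whole content of the lemma, and as you state it it is false. The groups you work with are intersections of $U_B$ with conjugates of $U_J$ itself, whereas Morris' structure theory (Lemma 3.21 of \cite{Morris93}, quoted before Theorem \ref{vigneraslevzero}) controls intersections with the conjugate of the \emph{other} radical, namely $P_J\cap U_K^n$. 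These are genuinely different: already for a rank-one group with $J=K$ maximal and $n$ a translation, one of the two groups $\pi(nU_Jn^{-1}\cap U_B)$, $\pi(U_B\cap U_J^n)$ has trivial image in $M_J$ (which direction depends on $n$), so your claim that \emph{both} contain the relevant unipotent radicals cannot hold; at best one side works, and proving that some side always works when $wK\neq J$ is an affine root-combinatorial statement you have not supplied. Two further concrete slips: conjugation by $n$ does not carry $M_K$ into $M_J$, so "the parabolic of $M_J$ corresponding via $\operatorname{Ad}_n$ to $P^K_{w^{-1}J\cap K}$" is not well defined (what is true is $M_{J\cap wK}=(M_{w^{-1}J\cap K})^w$, which is not the same thing); and the inclusion $U_J^n\subset U_B$ you invoke at the end is false in general --- $U_J^n\subset U_B^n$, and $P_J^n$ is not a standard parahoric, so containing $B$ is not available.

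For comparison, the paper's argument is short precisely because it intersects with the other radical: by Morris one has $P_J\cap U_K^n\subseteq U_{J\cap wK}\subseteq U_B$, and since $U_K^n\subseteq U_B^n$ this subgroup sits inside $U_B\cap U_B^n$; on it $\tau^n$ is a multiple of the trivial character, and its image in $M_J$ is the unipotent radical of the parabolic $P^J_{J\cap wK}$. A nonzero intertwining operator then produces nonzero (co)invariants of the cuspidal $\tau$ under that unipotent radical, forcing $P^J_{J\cap wK}=M_J$ and hence $wK=J$. If you want to salvage your $U_J$-based route, you would have to prove the bridging statement you allude to (that minimality of $n$ forces $\pi(U_B\cap U_J^n)$ or $\pi(nU_Jn^{-1}\cap U_B)$ to contain a full proper unipotent radical whenever $wK\neq J$), and that is essentially equivalent to re-deriving the Morris lemma the paper simply cites; as written, the proposal does not constitute a proof.
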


\begin{proof}
By~\cite[Lemma 3.21]{Morris93}, we have~$P_J\cap U_K^n\subseteq U_{J\cap wK}\subseteq U_B$.  Hence, as~$U_K^n\subseteq U_B^n$, we have
\[\Hom_{U_B\cap U_B^n}(\tau,\tau^n)\subseteq \Hom_{P_J\cap U_K^n}(\tau,\tau^n)=\Hom_{P_J\cap U_K^n}(\tau,\dim(\tau)1) .\]
But, by [ibid.],~$P_J\cap U_K^n$ is the unipotent radical of the parabolic subgroup~$P_{J\cap wK}/U_J$ of~$M_J$.  Hence by cuspidality of~$\tau$, we must have~$wK=J$. 
\end{proof}

\subsection{Level zero Hecke algebras}
%%%%%%%%%%%%%%%%%%%%%%%%%%%%%%%%%%%%%%%%%%%%%%%%%%%

Let~$P^{\circ}(\Y)$ be a parahoric subgroup of~$G$ associated to the~$\o_F$-lattice sequence~$\Y$ with pro-$p$ unipotent
radical~$P^1(\Y)$ and connected finite reductive
quotient~$M^{\circ}(\Y)$.  

\begin{remark}
By conjugating if necessary, we the parahoric subgroup $P^{\circ}(\Y)$ will be equal to a standard parahoric subgroup $P_J$ considered above, and we will interchange notations 
freely.
\end{remark}

Let~$Q^{\circ}(\La)$ be a parabolic
subgroup of~$M^{\circ}(\Y)$ with Levi
decomposition~$Q^{\circ}(\La)=M^{\circ}(\La)\ltimes U^{\circ}(\La)$,
and denote by~$P^{\circ}(\La)$ the parahoric subgroup which is the
preimage of~$Q^{\circ}(\La)$ under the projection
map~$P^{\circ}(\Y)\to M^{\circ}(\Y)$.  Thus the quotient
of~$P^{\circ}(\La)$ by its pro-$p$ unipotent radical~$P^1(\La)$
is~$M^{\circ}(\La)$.  Let~$\t$ be an irreducible cuspidal
representation of~$M^{\circ}(\La)$ and~$\widetilde{\t}$ denote both
its inflation to~$Q^{\circ}(\La)$ and to~$P^{\circ}(\La)$.  The
following Lemma follows immediately from the definitions.

\begin{lemma}
We have a support preserving isomorphism of Hecke
algebras~$\Hh(M^{\circ}(\Y),\widetilde{\t})\simeq \Hh(P^{\circ}(\La),
\widetilde{\t})$: if~$f\in\Hh(M^{\circ}(\Y),\widetilde{\t})$ is
supported on~$Q^{\circ}(\La)yQ^{\circ}(\La)$ for~$y\in M^{\circ}(\Y)$
then the corresponding element~$f'\in\Hh(P^{\circ}(\La),
\widetilde{\t})$ is supported on~$P^{\circ}(\La)yP^{\circ}(\La)$.
\end{lemma}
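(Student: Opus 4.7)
The plan is to realise both Hecke algebras as different descriptions of the same object via the quotient map $\pi\colon P^\circ(\Y)\twoheadrightarrow M^\circ(\Y)$, whose kernel is $P^1(\Y)$. Because $P^\circ(\La)$ is, by definition, the preimage $\pi^{-1}(Q^\circ(\La))$, we have $P^1(\Y)\subseteq P^\circ(\La)$, and $\pi$ restricts to a surjection $P^\circ(\La)\twoheadrightarrow Q^\circ(\La)$ with the same kernel. The correspondence theorem then gives compatible bijections of quotient and double-coset spaces
\[
P^\circ(\Y)/P^\circ(\La)\;\xrightarrow{\ \sim\ }\;M^\circ(\Y)/Q^\circ(\La),\qquad P^\circ(\La)\backslash P^\circ(\Y)/P^\circ(\La)\;\xrightarrow{\ \sim\ }\;Q^\circ(\La)\backslash M^\circ(\Y)/Q^\circ(\La),
\]
each matching $P^\circ(\La)gP^\circ(\La)$ with $Q^\circ(\La)\pi(g)Q^\circ(\La)$.

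Next I would verify that the two uses of $\widetilde{\tau}$ are intertwined by $\pi$. The pro-$p$ unipotent radical $P^1(\La)$ of $P^\circ(\La)$ contains $P^1(\Y)$, with quotient $P^1(\La)/P^1(\Y)=U^\circ(\La)$, so the inflation of $\tau$ from $M^\circ(\La)=P^\circ(\La)/P^1(\La)$ to $P^\circ(\La)$ is trivial on $P^1(\Y)$ and factors, via $\pi|_{P^\circ(\La)}$, as the inflation first to $Q^\circ(\La)=P^\circ(\La)/P^1(\Y)$ and then to $M^\circ(\La)$. This is the compatibility that makes pullback along $\pi$ well-defined between the two Hecke algebras.

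It follows that $f\mapsto f':=f\circ\pi$ defines a linear map $\Hh(M^\circ(\Y),\widetilde{\tau})\to\Hh(P^\circ(\La),\widetilde{\tau})$ (the latter understood, in accordance with the support statement, as the Hecke algebra built in $P^\circ(\Y)$ with respect to the subgroup $P^\circ(\La)$), with $\supp f'=\pi^{-1}(\supp f)$, so the support-preservation claim is the bijection of double-coset spaces above. The inverse is descent: any $f'\in\Hh(P^\circ(\La),\widetilde{\tau})$ is bi-invariant under $P^1(\Y)$ (because $\widetilde{\tau}$ is trivial on $P^1(\Y)\subseteq P^\circ(\La)$), hence factors through $\pi$. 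Finally, convolution is preserved by the direct calculation
\[
(f'_1\star f'_2)(h)=\sum_{g\in P^\circ(\Y)/P^\circ(\La)}f'_1(g)f'_2(g^{-1}h)=\sum_{\bar g\in M^\circ(\Y)/Q^\circ(\La)}f_1(\bar g)f_2(\bar g^{-1}\pi(h))=(f_1\star f_2)(\pi(h)),
\]
using the bijection $P^\circ(\Y)/P^\circ(\La)\cong M^\circ(\Y)/Q^\circ(\La)$ and the fact that each summand is well-defined on $P^1(\Y)$-cosets. There is no real obstacle; as remarked, everything is formal once the three groups $P^1(\Y)\subseteq P^1(\La)\subseteq P^\circ(\La)$ and the two inflations of $\tau$ are properly identified.
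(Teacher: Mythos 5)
Your proposal is correct and coincides with the paper's intended argument: the paper offers no further proof beyond the remark that the lemma ``follows immediately from the definitions,'' and your pullback along the projection $P^{\circ}(\Y)\twoheadrightarrow M^{\circ}(\Y)$ with kernel $P^1(\Y)\subseteq P^{\circ}(\La)$, together with the identification of the two inflations of $\t$ and the induced bijection of double cosets, is exactly that unwinding. Your reading of $\Hh(P^{\circ}(\La),\widetilde{\t})$ as the Hecke algebra of the pair $(P^{\circ}(\La),\widetilde{\t})$ inside $P^{\circ}(\Y)$ is also the right interpretation of the statement's support condition.
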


Let~$W(M^{\circ}(\La),\t)$ denote the \emph{inertia group} of~$\t$,
that is, the elements of the relative Weyl group of~$M^{\circ}(\La)$
in~$M^{\circ}(\Y)$ which normalize~$\t$
(see~\cite[Proposition~4.2.11]{MR2799052}).
We can give a presentation of the
algebra~$\Hh(M^{\circ}(\Y),\widetilde{\t})$ due to
Howlett--Lehrer~\cite{HL80} when~$R=\CC$ and to
Geck--Hiss--Malle~\cite{GHM96} in general.

\begin{thm}[{\cite[Theorem~4.2.12]{MR2799052}}]\label{finiteheckedescription}
There are a Coxeter system~$(W_1,S_1)$ and a finite group~$\Omega$
acting on~$(W_1,S_1)$ such that~$W(M^{\circ}(\La),\t)\simeq
\Omega\ltimes W_1$; furthermore~$\Hh(M^{\circ}(\Y),\widetilde{\t})$
has a basis~$\{T_w:w\in W(M^{\circ}(\La),\t)\}$ which gives a
presentation of the algebra with the following rules for
multiplication:
\begin{enumerate}
\item for all~$w\in W$ and~$w'\in\Omega$,
\[
T_w\star T_{w'}=\mu(w,w') T_{ww'}\quad\text{and}\quad 
T_{w'}\star T_{w}=\mu(w',w) T_{w'w},
\]
for some~$2$-cocycle~$\mu: W(M^{\circ}(\La),\t)\times
W(M^{\circ}(\La),\t)\to R^\times$;
\item for~$s\in S_1$, there are~$p_s\in R\backslash \{0,1\}$, such that, 
\[
T_s\star T_w=\begin{cases}T_{sw}&\text{if } l_1(sw)>l_1(w),\\
p_s T_{sw} +(p_s-1)T_w&\text{if } l_1(sw)<l_1(w),
\end{cases}
\]
for all~$s\in S_1$ and~$w\in W_1$, where~$l_1$ is the length function on~$W_1$.
\end{enumerate}
\end{thm}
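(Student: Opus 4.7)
The theorem is cited verbatim from~\cite[Theorem~4.2.12]{MR2799052}, which extends the Howlett--Lehrer presentation from~\cite{HL80} to arbitrary algebraically closed coefficient fields of characteristic different from~$p$, the characteristic-$\ell$ part being due to Geck--Hiss--Malle~\cite{GHM96}. The plan below indicates the structural argument one would follow if writing out a self-contained proof.

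\textbf{Support computation.} By Mackey theory and Frobenius reciprocity, $\Hh(M^{\circ}(\Y),\widetilde{\t})$ decomposes as a direct sum of~$w$-intertwining spaces indexed by a set of distinguished double coset representatives for~$Q^{\circ}(\La)\backslash M^{\circ}(\Y)/Q^{\circ}(\La)$, which are in bijection with the relative Weyl group~$W(M^{\circ}(\La))$ of~$M^{\circ}(\La)$ in~$M^{\circ}(\Y)$. Cuspidality of~$\t$, combined with the parabolic (Bruhat-type) decomposition of~$Q^{\circ}(\La)\cap\presuper{w}Q^{\circ}(\La)$ in the finite reductive group~$M^{\circ}(\Y)$ (cf.\ Lemma~\ref{Lemma11} in the setting here), forces the~$w$-intertwining space to vanish unless~$w$ preserves the isomorphism class of~$\t$, i.e. unless~$w\in W(M^{\circ}(\La),\t)$; and in that case the space is one-dimensional. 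This furnishes the basis~$\{T_w\}$ indexed by the inertia group.

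\textbf{Inertia group structure and multiplication rules.} The inertia group splits canonically as~$\Omega\ltimes W_1$, where~$W_1$ is generated by the ``good'' reflections~$s$ for~$\t$ — those simple reflections in the relative Weyl group for which the rank one Hecke subalgebra over~$\t$ is two-dimensional, giving a non-trivial quadratic relation on~$T_s$ with parameter~$p_s\neq 0,1$ — while~$\Omega$ normalises~$W_1$ and consists of elements that extend~$\t$ only projectively, which accounts for the~$2$-cocycle~$\mu$. The braid and quadratic relations for~$s\in S_1$ are then proved by a rank one reduction to the Iwahori--Hecke presentation of Hecke algebras of finite groups of Lie type, and the cross relations with~$\Omega$ follow from the normalising action together with associativity of convolution.

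\textbf{Main obstacle.} The principal issue in positive characteristic~$\ell$ is verifying that the parameters~$p_s$ remain non-zero and distinct from~$1$ in~$R$: the quadratic relation must not degenerate. This is the core content of~\cite{GHM96}, handled by realising~$\Hh(M^{\circ}(\Y),\widetilde{\t})$ as the specialisation at~$v=q_F$ of a~$\ZZ[v^{\pm 1}]$-form of the algebra; since~$q_F$ is a power of~$p$ coprime to~$\ell$, the specialisations remain invertible. A secondary subtlety — less delicate but necessary for the precise form of the statement — is choosing the fundamental reflections~$S_1$ so that the length function~$l_1$ on~$W_1$ governs the quadratic relation on the nose, which is done combinatorially from the~$W(M^{\circ}(\La))$-orbit structure on cuspidal pairs.
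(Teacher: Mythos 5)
Your proposal matches the paper exactly in approach: the paper gives no proof of this statement, simply importing it from~\cite[Theorem~4.2.12]{MR2799052} and attributing it to Howlett--Lehrer~\cite{HL80} for~$R=\CC$ and Geck--Hiss--Malle~\cite{GHM96} in general, which is precisely what you do. Your additional outline of the Howlett--Lehrer/Geck--Hiss--Malle argument is consistent with the standard treatment and raises no issues relative to the paper's use of the result.
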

\section{Reduction to level zero}\label{Redlevzerosect}
%%%%%%%%%%%%%%%%%%%%%%%%%%%%%%%%%%%%%%%%%%%%%%%%%%%
%%%%%%%%%%%%%%%%%%%%%%%%%%%%%%%%%%%%%%%%%%%%%%%%%%%

Let~$[\Y,n_{\Y},0,\b]$ and~$[\La,n_{\La},0,\b]$ be self-dual
semisimple strata in~$A$.  By conjugating by an element of~$\GE$, if
necessary, we assume that~$\Y_E$ and~$\La_E$ lie in the closure of a
common chamber in the building of~$\GE$, corresponding to an~$\mathfrak{o}_E$-lattice sequence~$\Gamma_E$ in~$V$.  As before,
let~$\th_{\Y}\in\Cc_{-}(\Y,0,\b)$
and~$\th_{\La}=\t_{\Y,\La,\b}(\th_{\Y})$.  Let~$\eta_{\Y}$ be the
unique Heisenberg representation containing~$\th_{\Y}$
and~$\eta_{\La}=\t_{\Y,\La,\b}(\eta_{\Y})$ the unique Heisenberg
representation containing~$\th_{\La}$.  Let~$\k_{\Y}$ be a
standard~$\b$-extension of~$\eta_{\Y}$ and~$\k_{\La}$ be a
standard~$\b$-extension of~$\eta_{\La}$.% both chosen relative to the
%same minimal~$\o_E$ order~$\AA(\Ga_E)$ associated to
%the~$\o_E$-lattice sequence~$\Ga$, \ie~$\k_{\Y}$
%extends~$\eta_{\Ga,\Y}$ and~$\k_{\La}$ extends~$\eta_{\Ga,\La}$.
%%%%%%%%%%%%%%%%%%%%%%%%%%%%%%%%%%%%%%%%%%%%%%%%%%%
%\todo[color=red!40]{I don't understand why this condition is here --
 % in fact, is it a condition? (I don't think so).} 
%%%%%%%%%%%%%%%%%%%%%%%%%%%%%%%%%%%%%%%%%%%%%%%%%%%

We will abbreviate~$J_{\Y}=J(\b,\Y)$, and also~$P_{\Y}=P(\Y_E)$
and~$M_{\Y}=M(\Y_E)$, with analogous notation for~$\La$ and~$\Ga$. We
also write~$J^1_{\Ga,\Y}=P^1_\Ga J^1_\Y$, etc.

\begin{lemma}\label{intetakappa}
The intertwining of~$\eta_{\La}$ and~$\k_{\Y}$ in~$G$ is given by 
\[
\dim_R (\Hom_{J^1_{\La}\cap J_{\Y}^g}(\k_{\La},\k_{\Y}^g))=
\begin{cases} 1&\text{if }g\in J^1_{\Y}\GE J^1_{\La};\\ 
0&\text{otherwise.}\end{cases}
\]
\end{lemma}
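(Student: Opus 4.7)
The plan is to establish the upper bound and vanishing by restriction from the $\k$'s to the $\eta$'s, and then to construct a non-zero intertwiner in the existence case.

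First I would observe that any $\Phi\in\Hom_{J^1_\La\cap J_\Y^g}(\k_\La,\k_\Y^g)$ is automatically equivariant for the smaller subgroup $J^1_\La\cap(J^1_\Y)^g$, and that $\k_\La$, $\k_\Y^g$ restrict to $\eta_\La$, $\eta_\Y^g$ on $J^1_\La$ and $(J^1_\Y)^g$ respectively. Hence there is an inclusion of $R$-vector spaces
\[
\Hom_{J^1_\La\cap J_\Y^g}(\k_\La,\k_\Y^g)\hookrightarrow\Hom_{J^1_\La\cap(J^1_\Y)^g}(\eta_\La,\eta_\Y^g),
\]
and by Theorem \ref{intetas} the right-hand side is at most one-dimensional and vanishes whenever $g\notin J_\Y\GE J_\La=J^1_\Y\GE J^1_\La$. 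This establishes the dimension bound and the vanishing statement.

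For existence when $g\in J^1_\Y\GE J^1_\La$, I would first reduce to the case $g\in\GE$ by a twisting argument: writing $g=j_\Y h j_\La$ with $j_\Y\in J^1_\Y$, $h\in\GE$, $j_\La\in J^1_\La$, normality of $J^1_\La$ in $J_\La$ and of $J^1_\Y$ in $J_\Y$ yields a linear isomorphism $\Phi\mapsto\k_\Y(j_\Y)\circ\Phi\circ\k_\La(j_\La)$ from $\Hom_{J^1_\La\cap J_\Y^h}(\k_\La,\k_\Y^h)$ to $\Hom_{J^1_\La\cap J_\Y^g}(\k_\La,\k_\Y^g)$, as one checks by a direct calculation using $j_\La x j_\La^{-1}\in J^1_\La\cap J_\Y^h$ for $x\in J^1_\La\cap J_\Y^g$. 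For $g\in\GE$, I would apply Lemma \ref{intetas2} to obtain a non-zero $\Psi\in\Hom_{J^1_{\Ga,\La}\cap(J^1_{\Ga,\Y})^g}(\eta_{\Ga,\La},\eta_{\Ga,\Y}^g)$. The restriction $\k_\La|_{J^1_{\Ga,\La}}$ is an irreducible extension of $\eta_\La$ (any $J^1_{\Ga,\La}$-invariant subspace is $J^1_\La$-invariant, hence trivial by irreducibility of $\eta_\La$), and so by the uniqueness in Lemma \ref{compatetas} it equals $\eta_{\Ga,\La}$; similarly $\k_\Y|_{J^1_{\Ga,\Y}}=\eta_{\Ga,\Y}$. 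Restricting $\Psi$ to $J^1_\La\cap J_\Y^g$ would then yield the desired non-zero intertwiner.

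The main obstacle is the containment $J^1_\La\cap J_\Y^g\subseteq J^1_{\Ga,\La}\cap(J^1_{\Ga,\Y})^g$. While $J^1_\La\subseteq J^1_{\Ga,\La}$ is immediate, showing $J^1_\La\cap J_\Y^g\subseteq (J^1_{\Ga,\Y})^g$ requires that any pro-$p$ element of $J_\Y^g$ lying in $J^1_\La$ already sits in $(J^1_{\Ga,\Y})^g$. Since $J^1_\La$ is pro-$p$, each such $x$ has $p$-power image in $M(\Y_E^g)=J_\Y^g/(J^1_\Y)^g$; the hypothesis that $\BB(\Ga)$ is a common minimal self-dual hereditary order inside both $\BB(\La)$ and $\BB(\Y)$ identifies the image of $P^1(\Ga_E)$ in $M(\Y_E)$ with the unipotent radical of a Borel, i.e., a Sylow $p$-subgroup of $M^\circ(\Y_E)$. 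A careful Bruhat--Tits analysis exploiting how $g\in\GE$ acts on the chambers through $\Y_E$, together with the freedom to replace $\Psi$ by an element in its $(J^1_{\Ga,\Y})^g$-orbit, should then place each such $x$ inside $(J^1_{\Ga,\Y})^g$ and complete the argument.
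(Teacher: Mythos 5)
Your upper bound and vanishing argument is fine, and your overall strategy (restrict to the Heisenberg representations and use Theorem~\ref{intetas}, then produce a non-zero intertwiner via the compatible representations $\eta_{\Ga,\La}$, $\eta_{\Ga,\Y}$ and Lemma~\ref{intetas2}) is the same as the paper's. But the existence step has a genuine gap exactly where you flag it: the containment $J^1_\La\cap J_\Y^g\subseteq (J^1_{\Ga,\Y})^g$ is simply false in general, and no amount of Bruhat--Tits analysis will force the pro-$p$ group $J^1_\La\cap J_\Y^g$ into that \emph{particular} Sylow pro-$p$ subgroup of $J_\Y^g$; what is true is only that it lies in \emph{some} Sylow pro-$p$ subgroup. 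The paper's proof turns this observation into the whole argument: since $J^1_\La\cap J_\Y^g$ is pro-$p$, it is contained in $J^1_\La\cap K^g$ for a Sylow pro-$p$ subgroup $K$ of $J_\Y$, and all such are $J_\Y$-conjugate to $J^1_{\Ga,\Y}$, so $K=(J^1_{\Ga,\Y})^j$ with $j\in J_\Y$. Because $\k_\Y$ restricts on $J^1_{\Ga,\Y}$ to $\eta_{\Ga,\Y}$, one gets $\Hom_{J^1_\La\cap J_\Y^g}(\k_\La,\k_\Y^g)\simeq\Hom_{J^1_\La\cap(J^1_{\Ga,\Y})^{jg}}(\eta_\La,\eta_{\Ga,\Y}^{jg})$, and since $jg$ still lies in $J^1_\Y\GE J^1_\La=J^1_{\Ga,\Y}\GE J^1_{\Ga,\La}$ (using $J_\Y\GE J_\La=J^1_\Y\GE J^1_\La$), Lemma~\ref{intetas2} together with Theorem~\ref{intetas} gives the one-dimensionality. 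With this conjugation built in, your preliminary reduction to $g\in\GE$ becomes unnecessary.

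A secondary point: your identification $\k_\La|_{J^1_{\Ga,\La}}=\eta_{\Ga,\La}$ is not justified by ``uniqueness in Lemma~\ref{compatetas}''. The uniqueness there requires \emph{both} conditions of that lemma, and mere irreducible extensions of $\eta_\La$ to $J^1_{\Ga,\La}$ are not unique: the quotient $J^1_{\Ga,\La}/J^1_\La$ is a finite $p$-group, which has non-trivial $R$-valued characters since $\ell\neq p$, so extensions can be twisted. The correct justification is that a ($\beta$-)extension is, by its construction in Theorem~\ref{maxbetas} and the compatibility machinery of Theorem~\ref{bijectionbetas}, already an extension of $\eta_{\Ga,\La}$ (and likewise $\k_\Y$ of $\eta_{\Ga,\Y}$); this is precisely what the paper invokes when it writes $\Res^{J_\Y}_{K}\k_\Y\simeq\eta_{\Ga,\Y}^{\,j}$.
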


\begin{proof}
We have~$J^1_{\La}\cap J^g_{\Y}\subseteq J^1_{\La}\cap K^g$ for some
Sylow~$p$-subgroup~$K$ of~$J_{\Y}$.  All Sylow~$p$-subgroups
of~$J_{\Y}$ are conjugate to~$J^1_{\Ga,\Y}$ so~$K=(J^1_{\Ga,\Y})^j$ for
some~$j\in J_{\Y}$. Thus~$\Res^{J_\Y}_{K}\k_{\Y}\simeq
\eta_{\Ga,\Y}^j$ and, as vector spaces, we have
\[
\Hom_{J^1_{\La}\cap J_{\Y}^g}(\k_{\La},\k_{\Y}^g)\simeq
\Hom_{J^1_{\La}\cap
  (J^1_{\Ga,\Y})^{jg}}(\eta_{\La},\eta_{\Ga,\Y}^{jg}).
\]
As~$\eta_{\Ga,\Y}$ extends~$\eta_{\Y}$, the result now follows by
applying Lemma~\ref{intetas2} and Theorem~\ref{intetas}.
\end{proof}
Let~$\t$ be a representation of~$M_{\Y}$ which we
identify with a representation of~$J_{\Y}$ trivial on~$J^1_{\Y}$ and
with a representation of~$P_\Y$ trivial on~$P^1_\Y$.  By Mackey's restriction-induction formula and exactness
of~$\k_{\La}$-restriction we have the following lemma.  

\begin{lemma}\label{lemmaMackey}
We have isomorphisms of representations of~$M_\La$
\begin{align*}
\R_{\k_{\La}}\circ\I_{\k_{\Y}}(\t)&\simeq 
\bigoplus_{J_{\Y}\backslash G/J_{\La}}\Hom_{J_{\La}^1}\(\k_{\La},\ind_{J_{\La}\cap J_{\Y}^g}^{J_{\La}}((\k_{\Y}\otimes\t)^g)\);\\
\R^E_{{\La}}\circ \I^E_{{\Y}}(\t)&\simeq  
\bigoplus_{P_{\Y}\backslash \GE/P_{\La}}\Hom_{P_\La^1}\(1,\ind_{P_\La\cap P_\Y^g}^{P_\La}(\t^g)\).
\end{align*}
\end{lemma}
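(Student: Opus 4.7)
The plan is to unravel both formulas as direct applications of Mackey's restriction-induction decomposition, combined with the concrete descriptions of the functors $\I_\k, \R_\k, \I^E, \R^E$ recalled in Section~\ref{Functors}. Both identities are formally the same phenomenon: an induced representation restricted to a parahoric-like subgroup decomposes into a direct sum over a double-coset space, and then one applies either $\Hom_{J^1_\La}(\k_\La, -)$ or $(-)^{P^1_\La}$ term-by-term.

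For the first isomorphism, I would start by writing
\[
\R_{\k_\La}\circ\I_{\k_\Y}(\tau)
= \Hom_{J^1_\La}\bigl(\k_\La,\ \Res^{G}_{J_\La}\ind_{J_\Y}^{G}(\k_\Y\otimes\tau)\bigr),
\]
then apply Mackey's formula to the restriction inside (double cosets $J_\Y\backslash G/J_\La$) to get
\[
\Res^{G}_{J_\La}\ind_{J_\Y}^{G}(\k_\Y\otimes\tau)
\;\simeq\;
\bigoplus_{g\in J_\Y\backslash G/J_\La}
\ind_{J_\La\cap J_\Y^g}^{J_\La}\bigl((\k_\Y\otimes\tau)^g\bigr).
\]
Since $\k_\La$ is a finite-dimensional (hence finitely generated) representation of the compact group $J^1_\La$, the functor $\Hom_{J^1_\La}(\k_\La,-)$ commutes with arbitrary direct sums; moreover this functor is exact because $J^1_\La$ is pro-$p$ and $\k_\La$ restricts to $\eta_\La$ there, so one may pass the $\Hom$ through the sum, yielding the claimed formula. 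The $M_\La$-equivariance is built in: on both sides the action of $M_\La = J_\La/J^1_\La$ comes from conjugation by lifts in $J_\La$, and this is manifestly preserved by the Mackey decomposition and the Hom functor.

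The second isomorphism is a level-zero shadow of the first. Writing
\[
\R^E_\La\circ\I^E_\Y(\tau)
= \bigl(\ind_{P_\Y}^{\GE}(\tau)\bigr)^{P^1_\La},
\]
I would apply Mackey inside $\GE$ with respect to $P_\La$ and $P_\Y$ to obtain
\[
\Res^{\GE}_{P_\La}\ind_{P_\Y}^{\GE}(\tau)
\;\simeq\;
\bigoplus_{g\in P_\Y\backslash\GE/P_\La}
\ind_{P_\La\cap P_\Y^g}^{P_\La}(\tau^g),
\]
and then take $P^1_\La$-invariants term by term. Identifying $(-)^{P^1_\La}$ with $\Hom_{P^1_\La}(\mathbf 1,-)$ gives the stated form, and the $M_\La$-action is again the natural one coming from $P_\La/P^1_\La$.

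The only real subtlety is the commuting of $\Hom_{J^1_\La}(\k_\La,-)$ (respectively $(-)^{P^1_\La}$) with a potentially infinite direct sum; I expect this to be the main ``soft'' point to confirm rather than a genuine obstacle, since $\k_\La$ is finite-dimensional and $P^1_\La$ is a compact open pro-$p$ subgroup so $(-)^{P^1_\La}$ is exact and preserves direct sums on smooth representations. Everything else is bookkeeping: choosing the double-coset representatives compatibly on the two sides and checking that the $M_\La$-actions match, which follows directly from the definitions in Section~\ref{Functors}.
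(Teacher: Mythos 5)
Your proposal is correct and is exactly the argument the paper intends: the lemma is stated there with the one-line justification ``by Mackey's restriction--induction formula and exactness of $\k_{\La}$-restriction'', which is precisely your unravelling of $\R_{\k_\La}$ (resp.\ $\R^E_\La$) applied term-by-term to the Mackey decomposition over $J_\Y\backslash G/J_\La$ (resp.\ $P_\Y\backslash\GE/P_\La$). Your extra care about passing $\Hom_{J^1_\La}(\k_\La,-)$ through a possibly infinite direct sum (using that $\k_\La|_{J^1_\La}=\eta_\La$ is finite dimensional and $J^1_\La$, $P^1_\La$ are pro-$p$) is a valid filling-in of the detail the paper leaves implicit.
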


\begin{lemma}\label{zeroif}
Let~$g\in G$.  If~$\Hom_{J_{\La}^1}\(\k_{\La},\ind_{J_{\La}\cap
  J_{\Y}^g}^{J_{\La}}(\k_{\Y}\otimes\t)^g\)\neq0$ then~$g\in J_{\Y}^1\GE J_{\La}^1$.
\end{lemma}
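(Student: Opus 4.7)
The plan is to reduce the statement, by Mackey and Frobenius reciprocity, to an intertwining statement for the Heisenberg representations $\eta_\La$ and $\eta_\Y$, to which we can apply Theorem~\ref{intetas}.

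First, since $J_\La^1$ is normal in $J_\La$, I apply Mackey's formula to decompose the restriction of $\ind_{J_\La \cap J_\Y^g}^{J_\La}(\k_\Y \otimes \t)^g$ to $J_\La^1$. The double coset space $(J_\La \cap J_\Y^g)\backslash J_\La/J_\La^1$ indexes the summands, and because $J_\La^1$ is normal, the stabilizers appearing are of the form $J_\La^1 \cap J_\Y^{gx}$ for $x \in J_\La$. After applying Frobenius reciprocity to each summand, the nonvanishing hypothesis becomes the existence of $x \in J_\La$ with
\[
\Hom_{J_\La^1 \cap J_\Y^{gx}}\bigl(\k_\La, (\k_\Y \otimes \t)^{gx}\bigr) \neq 0.
\]
Set $h = gx$.

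Next, I restrict the equivariance of such a nonzero morphism from $J_\La^1 \cap J_\Y^h$ down to the smaller pro-$p$ subgroup $J_\La^1 \cap (J_\Y^1)^h$. The point of this restriction is that $\t$, being a representation of $M_\Y = J_\Y/J_\Y^1$ inflated to $J_\Y$, is trivial on $J_\Y^1$; hence $(\k_\Y \otimes \t)^h$ restricted to $(J_\Y^1)^h$ is a sum of $\dim(\t)$ copies of $\eta_\Y^h$. Similarly $\k_\La$ restricts to $\eta_\La$ on $J_\La^1$. The nonvanishing therefore propagates to
\[
\Hom_{J_\La^1 \cap (J_\Y^1)^h}(\eta_\La, \eta_\Y^h) \neq 0,
\]
that is, $h$ intertwines $\eta_\La$ with $\eta_\Y$. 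By Theorem~\ref{intetas}, $h \in J_\Y \GE J_\La$, which equals $J_\Y^1 \GE J_\La^1$ as already noted in the text before Lemma~\ref{doublecosets}.

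Finally, to recover $g$: since $x \in J_\La$, we have $g = h x^{-1} \in (J_\Y^1 \GE J_\La^1) \cdot J_\La = J_\Y \GE J_\La = J_\Y^1 \GE J_\La^1$, where we used $J_\La^1 J_\La = J_\La$ and again the equality $J_\Y \GE J_\La = J_\Y^1 \GE J_\La^1$.

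I do not foresee any serious obstacle: the argument is a formal consequence of Theorem~\ref{intetas} together with the fact that $\tau$ is trivial on $J_\Y^1$, once one is careful about the Mackey decomposition (which is clean because $J_\La^1 \trianglelefteq J_\La$).
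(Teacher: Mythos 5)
Your argument is correct and is essentially the paper's own proof: the same Mackey--Frobenius decomposition over $(J_\La\cap J_\Y^g)\backslash J_\La/J_\La^1$, the same restriction to the pro-$p$ group $J_\La^1\cap(J_\Y^1)^{gx}$ where $\t$ becomes trivial and $\k_\Y$ becomes $\eta_\Y$, and the same appeal to Theorem~\ref{intetas} followed by absorbing $x\in J_\La$ using $J_\Y\GE J_\La=J_\Y^1\GE J_\La^1$. No gaps.
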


\begin{proof}
Consider~$\Hom_{J_{\La}^1}\(\k_{\La},\ind_{J_{\La}\cap J_{\Y}^g}^{J_{\La}}(\k_{\Y}\otimes\t)^g\)$ as an abstract~$R$-vector space.  We have
\[
\Hom_{J_{\La}^1}\(\Res^{J_{\La}}_{J_{\La}^1}\k_{\La},\Res^{J_{\La}}_{J_{\La}^1}\circ\ind_{J_{\La}\cap J_{\Y}^g}^{J_{\La}}(\k_{\Y}\otimes\t)^g\)\simeq 
\bigoplus_{h\in(J_{\La}\cap J_{\Y}^g)\backslash J_{\La}/J_{\La}^1}\Hom_{J_{\La}^1\cap J_{\Y}^{gh}}\(\eta_\La,(\k_{\Y}\otimes\t)^{gh}\)
\]
by Mackey's restriction-induction formula and Frobenius reciprocity.  We have an injection of vector spaces
\[
\Hom_{J_{\La}^1\cap J_{\Y}^{gh}}\(\eta_{\La},(\k_{\Y}\otimes\t)^{gh}\)
\hookrightarrow 
\Hom_{J_{\La}^1\cap (J_{\Y}^1)^{gh}}\(\eta_{\La},(\k_{\Y}\otimes\t)^{gh}\)
\]
and on~$(J_{\Y}^1)^{gh}$ we have~$\k_{\Y}^{gh}=\eta^{gh}_{\Y}$
and~$\t^{gh}$ is a multiple of the trivial representation. Thus~$gh\in\I_G(\eta_{\La},\eta_{\Y})=J^1_{\Y}\GE J^1_{\La}$,
by Theorem~\ref{intetas}, and we deduce 
that~$g\in J^1_{\Y}\GE J_{\La}= J^1_{\Y}\GE J^1_{\La}$.
\end{proof}

\begin{lemma}\label{lemmaifzerothen}
\begin{enumerate}
\item Let~$g\in \GE$. If~$\Hom_{P^1_{\Lambda}}(1,\ind^{P_{\Lambda}}_{P_{\Lambda}\cap P_{\Upsilon}^g}(\tau^g))=0$, then~$\Hom_{J_{\Lambda}^1}(\kappa_{\Lambda},\ind^{J_{\Lambda}}_{J_{\Lambda}\cap J_{\Upsilon}^g}(\kappa_{\Upsilon}\otimes\tau)^g)=0$.
%\item As representations of~$M(\Y_E)$, we have an isomorphism %\todo{We want to rewrite this in a more general setting whenever intertwining extends to~$\kappa$.}
%\[\Hom_{J^1_{\Y}}(\kappa_{\Y},\kappa_{\Y}\otimes\t)\simeq \Hom_{P^1_{\Y}}(1,\t).\]
\item  As representations of~$M(\Y_E)$, we have isomorphisms
\[\Hom_{J^1_{\Y}}(\kappa_{\Y},\kappa_{\Upsilon}\otimes\tau)\simeq \Hom_{P^1_{\Y}}(1,\t)\simeq \t.\]
\end{enumerate}
\end{lemma}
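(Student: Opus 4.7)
Part~(ii) is essentially a direct calculation of the inverse of the equivalence of categories in Lemma~\ref{simpcrit3}. Since $\t$ is trivial on $J^1_\Y$, the restriction of $\k_\Y\otimes\t$ to $J^1_\Y$ is a direct sum of $\dim\t$ copies of $\eta_\Y$; because $\End_{J^1_\Y}(\eta_\Y)=R$, the map $w\mapsto(v\mapsto v\otimes w)$ gives an isomorphism of vector spaces $V_\t\simeq\Hom_{J^1_\Y}(\k_\Y,\k_\Y\otimes\t)$. A direct check from the definition $m\cdot f=(\k_\Y\otimes\t)(j)\circ f\circ \k_\Y(j)^{-1}$ shows this is $M(\Y_E)$-equivariant when $V_\t$ carries the action of $\t$. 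The second isomorphism is immediate, since $\t$ is trivial on $P^1_\Y$ and $M(\Y_E)=P_\Y/P^1_\Y$ acts on $V_\t^{P^1_\Y}=V_\t$ by $\t$.

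For part~(i), Frobenius reciprocity and Mackey's restriction-induction formula, as used in the proof of Lemma~\ref{zeroif}, give
\[
\Hom_{J^1_\La}\!\left(\k_\La,\ind^{J_\La}_{J_\La\cap J_\Y^g}(\k_\Y\otimes\t)^g\right)\simeq\bigoplus_{h}\Hom_{J^1_\La\cap J_\Y^{gh}}\!\left(\k_\La,(\k_\Y\otimes\t)^{gh}\right),
\]
summed over $h\in(J_\La\cap J_\Y^g)\backslash J_\La/J^1_\La$. Fix such an $h$ and set $\widetilde g=gh$. Since $g\in\GE$ and $J_\La=P(\La_E)J^1_\La\subseteq\GE J^1_\La$, we have $\widetilde g\in\GE J^1_\La\subseteq J^1_\Y\GE J^1_\La$, so by Theorem~\ref{intetas} the space $\Hom_{J^1_\La\cap(J^1_\Y)^{\widetilde g}}(\eta_\La,\eta_\Y^{\widetilde g})$ is one-dimensional. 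The plan is to apply Lemma~\ref{BK532} with $X_1=X_1^1=J^1_\La$, $X_2=J_\Y^{\widetilde g}$, $X_2^1=(J^1_\Y)^{\widetilde g}$, $\mu_1=\eta_\La$, $\mu_2=\k_\Y^{\widetilde g}$, $\zeta_1=1$, and $\zeta_2=\t^{\widetilde g}$, producing an isomorphism
\[
\Hom_{J^1_\La\cap J_\Y^{\widetilde g}}\!\left(\k_\La,(\k_\Y\otimes\t)^{\widetilde g}\right)\simeq\Hom_{J^1_\La\cap J_\Y^{\widetilde g}}\!\left(1,\t^{\widetilde g}\right).
\]

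The main technical point is verifying the hypothesis of Lemma~\ref{BK532}, namely the equality
\[
\Hom_{J^1_\La\cap J_\Y^{\widetilde g}}(\eta_\La,\k_\Y^{\widetilde g})=\Hom_{J^1_\La\cap(J^1_\Y)^{\widetilde g}}(\eta_\La,\eta_\Y^{\widetilde g});
\]
one must show that the one-dimensional intertwining space on the right is fixed by the conjugation action of the quotient $(J^1_\La\cap J_\Y^{\widetilde g})/(J^1_\La\cap(J^1_\Y)^{\widetilde g})$. After reducing to $\widetilde g\in\GE$, this quotient identifies via the semisimple intersection property (Corollary~\ref{ssintprop}) with a subgroup of $P(\Y_E)^{\widetilde g}/P^1(\Y_E)^{\widetilde g}\subseteq M(\Y_E)^{\widetilde g}$ arising from $G_E$, and stability of the intertwining space will follow from the compatibility of $\k_\Y$ with the parahoric structure built into the definition of a $\b$-extension. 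I expect this verification to be the principal obstacle. Granted the hypothesis, the triviality of $\t^{\widetilde g}$ on $(J^1_\Y)^{\widetilde g}$ identifies $\Hom_{J^1_\La\cap J_\Y^{\widetilde g}}(1,\t^{\widetilde g})$ with $\Hom_{P^1(\La_E)\cap P(\Y_E)^{\widetilde g}}(1,\t^{\widetilde g})$, which is precisely the corresponding summand in the analogous Mackey decomposition of $\Hom_{P^1_\La}(1,\ind^{P_\La}_{P_\La\cap P_\Y^g}(\t^g))$, the sets of double cosets being matched via Lemma~\ref{doublecosets}. Vanishing of this level-zero Hom space therefore forces each summand on the $\k$-level to vanish, yielding part~(i).
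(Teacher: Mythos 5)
Your part~(ii) is fine, and for part~(i) your overall strategy (Mackey decomposition, then Lemma~\ref{BK532} to strip off the Heisenberg factor, then comparison with the level-zero Mackey decomposition) is the same as the paper's. But there is a genuine gap at exactly the point you flag as ``the principal obstacle'': you never verify the hypothesis of Lemma~\ref{BK532}, namely that
\[
\Hom_{J^1_\La\cap J_\Y^{gh}}\bigl(\eta_\La,\k_\Y^{gh}\bigr)=\Hom_{J^1_\La\cap(J^1_\Y)^{gh}}\bigl(\eta_\La,\eta_\Y^{gh}\bigr)\simeq R .
\]
Your proposed route --- showing the one-dimensional space on the right is stable under the conjugation action of $(J^1_\La\cap J_\Y^{gh})/(J^1_\La\cap(J^1_\Y)^{gh})$ by appealing to ``compatibility of $\k_\Y$ with the parahoric structure built into the definition of a $\b$-extension'' --- is not an argument; nothing in the definition of a $\b$-extension hands you this stability directly, and developing it would amount to redoing nontrivial work. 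The paper closes this gap with no extra effort by quoting Lemma~\ref{intetakappa}: since $g\in\GE$ and $h\in J_\La$, one has $gh\in\GE J_\La=\GE J^1_\La\cdot P(\La_E)\subseteq J^1_\Y\GE J^1_\La$, so Lemma~\ref{intetakappa} says $\Hom_{J^1_\La\cap J_\Y^{gh}}(\k_\La,\k_\Y^{gh})$ (which is the left-hand space above, as $\k_\La|_{J^1_\La}=\eta_\La$) is one-dimensional; it injects into the one-dimensional space $\Hom_{J^1_\La\cap(J^1_\Y)^{gh}}(\eta_\La,\eta_\Y^{gh})$ of Theorem~\ref{intetas}, hence the two coincide. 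The content you are missing is precisely Lemma~\ref{intetakappa}, whose proof reduces $\k_\Y$ on a Sylow pro-$p$ subgroup of $J_\Y$ to $\eta_{\Ga,\Y}$ and then invokes Lemma~\ref{intetas2}; without it (or an equivalent substitute) your application of Lemma~\ref{BK532} is unjustified.

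Two smaller remarks. First, you claim an \emph{identification} of $\Hom_{J^1_\La\cap J_\Y^{gh}}(1,\t^{gh})$ with $\Hom_{P^1(\La_E)\cap P(\Y_E)^{gh}}(1,\t^{gh})$; what is available (and all that is needed for the vanishing statement) is the \emph{inclusion} coming from $P^1_\La\cap P_\Y^{gh}\subseteq J^1_\La\cap J_\Y^{gh}$, together with the matching of double cosets, as in the paper. Second, you only need the dimension-one statements for the relevant cosets $gh$, not a resolution of which $n$ actually intertwine $\k_\Y$; the paper deliberately leaves that finer question aside.
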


\begin{proof}
As an abstract vector space, by Mackey theory, we have
\begin{align*}
\Hom_{J_{\Lambda}^1}(\kappa_{\Lambda},\ind^{J_{\Lambda}}_{J_{\Lambda}\cap J_{\Upsilon}^g}(\kappa_{\Upsilon}\otimes\tau)^g)&\simeq \bigoplus_{h\in (J_{\La}\cap J_{\Y}^g)\backslash J_{\La}/J_{\La}^1}\Hom_{J^1_{\La}\cap J_{\Y}^{gh}}(\eta_{\La},(\kappa_{\Y}\otimes\t)^{gh}).%;\\
%\Hom_{P_{\Lambda}^1}(1,\ind^{P_{\Lambda}}_{P_{\Lambda}\cap P_{\Upsilon}^g}(\tau)^g)&\simeq \bigoplus_{(P_{\La}\cap P_{\Y}^g)\backslash P_{\La}/P_{\La}^1}\Hom_{P^1_{\La}\cap P_{\Y}^{gh}}(1,\t^{gh}).
\end{align*}
By Lemma \ref{intetakappa}~$gh$ intertwines~$\eta_{\La}$ with~$\kappa_\Y$ for every~$h\in J_{\La}$.  Hence by Lemma \ref{BK532} (applied with~$X_1=X_1^1=J_{\La}^1$,~$X_2=J_{\Y}^{gh}$,~$X_2^1=(J_\Y^1)^{gh}$,~$\mu_1=\eta_{\La}$,~$\mu_2=\kappa^{gh}$,~$\zeta_1=1$, and~$\zeta_2=\tau^{gh}$)  for each summand, we have an isomorphism of vector spaces  
\[\Hom_{J^1_{\La}\cap J_{\Y}^{gh}}\left(\eta_{\La},(\kappa_{\Y}\otimes\t)^{gh}\right)\simeq \Hom_{J_{\La}^1\cap J_{\Y}^{gh}}(1, \t^{gh}).\]
Moreover, as~$J_{\La}^1\cap J_{\Y}^{gh}$ contains~$P^1_{\La}\cap P_{\Y}^{gh}$, we have
\begin{align*}
\Hom_{J_{\La}^1\cap J_{\Y}^{gh}}(1, \t^{gh})\subseteq \Hom_{P^1_{\La}\cap P_{\Y}^{gh}}(1,\t^{gh})
\end{align*}
But, the right hand side is isomorphic as a vector space to a direct summand of the representation
\[\Hom_{P_{\Lambda}^1}(1,\ind^{P_{\Lambda}}_{P_{\Lambda}\cap P_{\Upsilon}^g}(\tau)^g)\simeq \bigoplus_{h'\in(P_{\La}\cap P_{\Y}^g)\backslash P_{\La}/P_{\La}^1}\Hom_{P^1_{\La}\cap P_{\Y}^{gh'}}(1,\t^{gh'}),\]
where the above decomposition is again an isomorphism of abstract vector spaces obtained by Mackey theory.  However, by our hypotheses~$\Hom_{P^1_{\Lambda}}(1,\ind^{P_{\Lambda}}_{P_{\Lambda}\cap P_{\Upsilon}^g}(\tau^g))$ is trivial, whence all the summands~$\Hom_{J^1_{\La}\cap J_{\Y}^{gh}}(\eta_{\La},(\kappa_{\Y}\otimes\t)^{gh})$ are trivial and, thus, so is~$\Hom_{J_{\Lambda}^1}(\kappa_{\Lambda},\ind^{J_{\Lambda}}_{J_{\Lambda}\cap J_{\Upsilon}^g}(\kappa_{\Upsilon}\otimes\tau)^g)$ and we have shown case (i).

For the second part, we can take~$S\in\Hom_{J_{\Y}}(\k_{\Y},\k_{\Y})$ to be the identity element.  By Lemma~\ref{BK532} (applied with~$X_1=X_1^1=X_2=X_2^1=J_{\Y}^1$,~$\mu_1=\mu_2=\eta_{\Y}$,~$\zeta_1=1$, and~$\zeta_2=\tau\mid_{J^1}=\dim(\tau) 1$) we have an isomorphism of vector spaces~$\Hom_{J_{\Y}^1}(1,\tau)\to \Hom_{J_{\Y}^1}(\k_{\Y},\k_{\Y}\otimes \tau)$ given by~$T\mapsto S\otimes T$.  The action of~$M(\Y_E)$ on~$\Hom_{J_{\Y}^1}(\k_{\Y},\k_{\Y}\otimes \s)$ induced from the action of~$M(\Y_E)$ on~$\R_{\k_{\Y}}\circ \I_{\k_{\Y}}(\tau)$ is given by~$m\cdot \Phi=\k_\Y\otimes\t(j)\circ \Phi\circ\k_\Y(j^{-1})$, for~$m\in M(\Y_E)$,~$\Phi\in \Hom_{J_{\Y}^1}(\k_{\Y},\k_{\Y}\otimes \tau)$ and~$j$ any representative of~$m$ in~$J$.  Thus, we have
\begin{align*}
m\cdot S\otimes T&=\k\otimes\tau(j)\circ(S\otimes T)\circ \k(j^{-1})\\
&=\k(j)\circ S\circ\k(j^{-1})\otimes \tau(j)\circ T.
\end{align*}
However, as~$S\in\Hom_{J}(\k_{\Y},\k_{\Y}^g)$, whence~$\k_{\Y}(j)\circ S\circ \k_{\Y}(j^{-1})= S$.  Therefore, we have~$m\cdot S\otimes T=S\otimes m\cdot T$, the isomorphism of vector spaces is an isomorphism of representations of~$M(\Y_E)$.  Moreover,
\[\Hom_{J^1_{\Y}}(1,\t)\simeq \Hom_{P^1_{\Y}}(1,\t)\simeq \t.\]%observe that by {\color{red}\cite[3.1.15]{BK93},\cite[Lemma 2.10]{St05}}\todo{Really want something different starting with the assumption~$wJ=J$ for a subset of the affine Weyl group corresponding to the parahoric. Will think about this tomorrow.}, the group~$J_{\Upsilon}$ is normalised by~$N_{G_E}(P_{\Upsilon})$.  Hence, for~$g\in N_{G_E}(P_{\Upsilon})$,
%\[\Hom_{J^1_{\Y}}(\kappa_{\Y},\ind^{J_{\Upsilon}}_{J_{\Upsilon}\cap J_{\Upsilon}^g}(\kappa_{\Upsilon}\otimes\tau)^g)=\Hom_{J^1_{\Y}}(\kappa_{\Y},\kappa_{\Upsilon}^g\otimes\tau^g).\]
%
%By Lemma \ref{intetas} 
\end{proof}
%
%{\color{red}\begin{remark}
%With the same methods, it is probably possible to show the following: (by using that level zero stuff implies associativity of the parahorics and the hypotheses)
%\begin{lemma}
%Suppose that~$g$ normalises~$J^1$ and intertwines~$\kappa$, and that~$\tau$ is cuspidal and~$\Hom_{P^1_{\Lambda}}(1,\ind^{P_{\Lambda}}_{P_{\Lambda}\cap P_{\Upsilon}^g}(\tau^g))\neq 0$.  Then~$\Hom_{P^1_{\Lambda}}(1,\ind^{P_{\Lambda}}_{P_{\Lambda}\cap P_{\Upsilon}^g}(\tau^g))\simeq\Hom_{J_{\Lambda}^1}(\kappa_{\Lambda},\ind^{J_{\Lambda}}_{J_{\Lambda}\cap J_{\Upsilon}^g}(\kappa_{\Upsilon}\otimes\tau)^g)$ as representations of~$J/J^1$.
%\end{lemma}
%(or at least that the right hand side is a subrepresentation of the left hand side.)  I am still struggling to prove that~$\R_{\kappa}\circ\I_{\kappa}(\tau)$ is a direct sum of representations isomorphic to~$\tau$ in general.  The proof of the analogous result in Vincent and Alberto's paper tries to do it by restriction to~$J^1$ (similarly to what I originally tried) however I do not see how to check the action even though you have an isomorphism of spaces.  Other than that, there is the possibility of somehow using your computations of the intertwining of~$\lambda_P^{\circ}$, as we really only want a result like this on the covers, or by trying to find a slightly different proof that the types are quasi-projective which is the aim to show.  I will think about this some more...
%\end{remark}}

\begin{corollary}\label{maincorollary}
Let~$\t$ be a representation of~$M_{\Y}$.
\begin{enumerate}
\item\label{maincorollarypart1} If~$\R_{{\La}}^E\circ \I_{{\Y}}^E(\t)$ is trivial then so is~$\R_{\k_{\La}}\circ\I_{\k_{\Y}}(\t)$.
\item\label{maincorollarypart2} Suppose~$\t$ is irreducible with cuspidal restriction to~$M^{\circ}_{\Y}$.  If~$\GE$ has compact centre and~$P^{\circ}(\Y_E)$ is a maximal parahoric subgroup of~$\GE$ then
\[\R_{\k_{\Y}}\circ\I_{\k_{\Y}}(\t)\simeq  \t.\]
%{\color{red}\item Suppose~$\t$ is cuspidal. Then~$\R_{\k_{\Y}}\circ\I_{\k_{\Y}}(\t)$ is a direct sum of representations all isomorphic to~$\t$.}
%\item In all cases where~$\I_{G}(\kappa_{\La},\kappa_{\Y})=\I_{G}(\eta_{\La},\eta_{\Y})$, we have an isomorphism of representations~$\R_{\k_{\La}}\circ\I_{\k_{\Y}}(\tau)\simeq \R^E_{\La}\circ\I^E_{\Y}(\tau)$.
\end{enumerate}
\end{corollary}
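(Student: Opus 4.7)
The plan is to decompose both $\R_{\k_\La}\circ\I_{\k_\Y}(\t)$ and $\R^E_\La\circ\I^E_\Y(\t)$ via the Mackey formula of Lemma~\ref{lemmaMackey} and compare them double-coset by double-coset. Three preparatory results do most of the work: Lemma~\ref{zeroif} identifies which double cosets can contribute nontrivially on the $\k$-side, Lemma~\ref{doublecosets} puts the surviving $J$-double cosets in bijection with the level zero $P$-double cosets indexing $\R^E_\La\circ\I^E_\Y(\t)$, and Lemma~\ref{lemmaifzerothen} provides the precise bridge between a $\k$-summand and its level zero counterpart.

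For part~\ref{maincorollarypart1}, I would expand $\R_{\k_\La}\circ\I_{\k_\Y}(\t)$ as a direct sum indexed by $J_\Y\backslash G/J_\La$ using Lemma~\ref{lemmaMackey}. By Lemma~\ref{zeroif}, only representatives $g\in J_\Y\GE J_\La$ can contribute, and for each such double coset we may choose a representative in $\GE$ using Lemma~\ref{doublecosets}. The assumption $\R^E_\La\circ\I^E_\Y(\t)=0$ means every summand $\Hom_{P^1_\La}(1,\ind^{P_\La}_{P_\La\cap P_\Y^g}(\t^g))$ vanishes, and Lemma~\ref{lemmaifzerothen}(i) then forces the corresponding $\k$-summand to vanish.

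For part~\ref{maincorollarypart2}, I would take $\La=\Y$ in the Mackey decomposition. The trivial double coset contributes $\Hom_{J^1_\Y}(\k_\Y,\k_\Y\otimes\t)$, which by Lemma~\ref{lemmaifzerothen}(ii) is isomorphic to $\t$ as a representation of $M(\Y_E)$. For the remaining nontrivial cosets, the argument of part~\ref{maincorollarypart1} applies as soon as we know $\R^E_\Y\circ\I^E_\Y(\t)\simeq\t$. But this is exactly Corollary~\ref{levelzerocorollaries}(i) applied to $\GE$: the compactness of the centre of $\GE$ guarantees that $P(\Y_E)$ is a compact open subgroup of $\GE$ and lets us identify the triple $(\GE,P(\Y_E),M(\Y_E))$ with the ``$+$'' setup $(G,N_G(P_K),M_K^+)$ of Section~\ref{Levzerosection}; maximality of $P^\circ(\Y_E)$ together with cuspidality of $\t|_{M^\circ(\Y_E)}$ then supply the required hypotheses. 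Combining, all nontrivial summands vanish and $\R_{\k_\Y}\circ\I_{\k_\Y}(\t)\simeq \t$.

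The main obstacle has in fact already been absorbed into the preparatory lemmas: Lemma~\ref{lemmaifzerothen}(i), whose proof combines Lemma~\ref{BK532} with the intertwining computation of Lemma~\ref{intetakappa}, is what allows the translation from a level zero Hom-space to a $\k$-intertwining space. Once that is in place the remaining argument is essentially bookkeeping, and the only subtle point is the matching of our parahoric data inside $\GE$ with the axiomatic level zero framework of Section~\ref{Levzerosection}, which is precisely where the compactness hypothesis on the centre of $\GE$ is used.
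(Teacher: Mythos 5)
Your proposal is correct and takes essentially the same route as the paper: the Mackey decompositions of Lemma~\ref{lemmaMackey}, the support restriction via Lemmas~\ref{zeroif} and~\ref{doublecosets}, and the summand-by-summand comparison through Lemma~\ref{lemmaifzerothen}, with the level zero input supplied by Corollary~\ref{levelzerocorollaries}. The only (cosmetic) divergence is in part~(ii): the paper kills the nontrivial cosets one by one (cuspidality forces $wK=K$, and maximality together with Morris's appendix gives $N_{\GE}(P^{\circ}(\Y_E))=P(\Y_E)$), whereas you invoke the ``$+$''-statement of Corollary~\ref{levelzerocorollaries}(i) and the irreducibility of $\tau$ to conclude that only the trivial coset survives --- an argument resting on exactly the same identification $P(\Y_E)=N_{\GE}(P^{\circ}(\Y_E))$, which you correctly flag as the point where compact centre and maximality are used.
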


\begin{proof}
By Lemmas \ref{lemmaMackey} and \ref{zeroif}, we have isomorphisms of representations of~$M_\La$
\begin{align*}
\R_{\k_{\La}}\circ\I_{\k_{\Y}}(\t)&\simeq 
\bigoplus_{J_{\Y}\backslash J_{\Y}^1\GE J_{\La}^1/J_{\La}}\Hom_{J_{\La}^1}\(\k_{\La},\ind_{J_{\La}\cap J_{\Y}^g}^{J_{\La}}((\k_{\Y}\otimes\t)^g)\);\\
\R^E_{{\La}}\circ \I^E_{{\Y}}(\t)&\simeq  
\bigoplus_{P_{\Y}\backslash \GE/P_{\La}}\Hom_{P_\La^1}\(1,\ind_{P_\La\cap P_\Y^g}^{P_\La}(\t^g)\).
\end{align*}
We choose a set of distinguished double coset representatives for~$P_{\Y}\backslash \GE/P_{\La}$, which by with the bijection of Lemma \ref{doublecosets}, fixes a set of double coset representatives of~$J_{\Y}\backslash J_{\Y}^1\GE J_{\La}^1/J_{\La}$ in~$\GE$.  We can now compare the summands of both isomorphisms on the right.  Part (i) follows from Lemma~\ref{lemmaifzerothen} part (i), and Lemma \ref{levelzerocorollaries} Part (ii). 
 For Part (ii) notice by Lemma~\ref{lemmaifzerothen} parts (i) and (ii), and Lemma \ref{levelzerocorollaries} Part (i), that the only summands which contribute correspond to distinguished double cosets~$P_{\Y}n P_{\Y}$ where~$n$ has projection~$w$ in the extended affine Weyl group satisfying~$wK=K$ for~$K$ the proper subset of fundamental reflections of the affine Weyl group corresponding to~$P^{\circ}_{\Y}$.  However, as~$P^{\circ}_{\Y}$ is maximal~$wK=K$ implies that~$n\in N_{G_E}(P^{\circ}_{\Y})=P_{\Y}$ by \cite[Appendix]{Morris93}.  Thus, Part (ii) follows from Lemma~\ref{lemmaifzerothen} Part (ii). 
\end{proof}

\section{Skew covers}\label{skewcoverssect}
%%%%%%%%%%%%%%%%%%%%%%%%%%%%%%%%%%%%%%%%%%%%%%%%%%%
%%%%%%%%%%%%%%%%%%%%%%%%%%%%%%%%%%%%%%%%%%%%%%%%%%%

This section is concerned with revisiting and making the necessary changes to the second authors construction of covers in~\cite{St08} so that the \emph{same} construction works in positive characteristic~$\ell$.  The construction follows \emph{mutatis mutandis} the constructions of the second author for complex representations and rather than go through all the proofs, which are lengthy, we introduce all the the notation of \emph{op. cit.} and indicate where changes need to be made to the proofs.

%%%%%%%%%%%%%%%%%%%%%%%%%%%%%%%%%%%%%%%%%%%%%%%%%%%
\subsection{Iwahori decompositions}
%%%%%%%%%%%%%%%%%%%%%%%%%%%%%%%%%%%%%%%%%%%%%%%%%%%

Let~$[\La,n,0,\b]$ be a semisimple stratum with associated splitting~$V=\bigoplus_{i\in I} V^i$.  A decomposition~$V=\bigoplus_{j=1}^m W^{(j)}$ of~$V$ is called \emph{subordinate} to~$[\La,n,0,\b]$ if \begin{enumerate}
\item each~$W^{(j)}\cap V^i$ is an~$E_i$-subspace of~$V^i$;
\item~$W^{(j)}=\bigoplus_{i\in I}(W^{(j)}\cap V^i)$;
\item~$\La(r)=\bigoplus_{j=1}^m(\La(r)\cap W^{(j)})$, for all~$r\in \ZZ$;
\end{enumerate}
It is called \emph{properly subordinate} to~$[\La,n,0,\b]$ if it is subordinate and, also,
\begin{enumerate}\setcounter{enumi}{3}
\item for each~$r\in \ZZ$ and~$i\in I$, there is at most one~$j$ such that
\[
(\La(r)\cap W^{(j)}\cap V^i))\supsetneq (\La(r+1)\cap W^{(j)}j\cap V^i).
\]
\end{enumerate}

If~$[\La,n,0,\b]$ is a semisimple stratum and~$V=\bigoplus_{j=1}^m W^{(j)}$ is a decomposition which is subordinate to~$[\La,n,0,\b]$ then we put~$\La^{(j)}$ to be the~$\o_{E}$-lattice sequence in~$W^{(j)}$ given by~$\La^{(j)}(r)=\La(r)\cap W_j$ and put~$\b^{(j)}=e^{(j)}\b e^{(j)}$ where~$e^{(j)}$ is the orthogonal projection~$V\to W_j$.  Then there is an integer~$n^{(j)}$ such that~$[\La^{(j)},n^{(j)},0,\b^{(j)}]$ is a semisimple stratum in~$A^{(j)}=\End_F(W^{(j)})$ with splitting~$W^{(j)}=\bigoplus_{i\in I}(W^{(j)}\cap V^i)$.  We put~$B^{(j)}=C_{A^{(j)}}(\b^{(j)})$.  

Let~$\tM$ denote the Levi subgroup of~$\tG$ equal to the stabiliser of the decomposition~$V=\bigoplus_{j=1}^m W^{(j)}$ and let~$\tP$ be any parabolic subgroup of~$\tG$ with Levi factor~$\tM$ and Levi decomposition~$\tP=\tM\ltimes \tU$.

\begin{lemma}[{\cite[Propositions~$5.2$ and~$5.4$]{St08}}]
If~$V=\bigoplus_{j=1}^m W^{(j)}$ is subordinate to~$[\La,n,0,\b]$ then~$\tJ^1(\b,\La)$ and~$\tH^1(\b,\La)$ have Iwahori decompositions with respect to~$(\tM,\tP)$.  Moreover
\[
\tH^1(\b,\La)\cap \tM=\prod_{j=1}^m\tH^1(\b^{(j)},\La^{(j)}),
\]
there is a similar decomposition for~$\tJ^1(\b,\La)\cap\tM$, and we can form the groups 
\[
\tH^1_{\tP}=\tH^1(\b,\La)(\tJ^1(\b,\La)\cap\tU),\quad \tJ^1_{\tP}=\tH^1(\b,\La)(\tJ^1(\b,\La)\cap\tP)
\]
which have Iwahori decompositions with respect to any parabolic subgroup with Levi factor~$\tM$. If the decomposition~$V=\bigoplus_{j=1}^m W^{(j)}$ is properly subordinate to~$[\La,n,0,\b]$ then~$\tJ(\b,\La)$ also has an Iwahori decomposition with respect to~$(\tM,\tP)$, we also have
\[
\tJ(\b,\La)\cap \tM=\prod_{j=1}^m\tJ(\b^{(j)},\La^{(j)}),
\]
and we can form the group~$\tJ_{\tP}=\tH^1(\b,\La)(\tJ(\b,\La)\cap\tP)$ which has an Iwahori decomposition with respect to any parabolic subgroup with Levi factor~$\tM$.
\end{lemma}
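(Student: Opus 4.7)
The plan is to follow the structure of \cite[Propositions~5.2 and 5.4]{St08} more or less verbatim. The key observation is that all the statements concern only the structure of the pro-$p$ groups $\tH^1(\b,\La)$, $\tJ^1(\b,\La)$ and the compact open group $\tJ(\b,\La)$, with no representation-theoretic input at all; consequently the proof, being purely group-theoretic, is insensitive to the characteristic of~$R$ and goes through unchanged. I would proceed by induction on the critical exponent~$k_F(\b)$.

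For the base case, either $\b=0$ and the groups coincide with filtration pieces of the parahoric $\tP(\La)$, or the stratum is simple and minimal; in both situations the Iwahori decomposition of~$\tP(\La)$ with respect to a Levi $\tM$ is standard whenever the lattice sequence~$\La$ is compatible with the direct sum, which is exactly condition~(iii) of subordination. For the inductive step, fix an approximating semisimple stratum $[\La,n,0,\g]$ with $k_F(\g)<k_F(\b)$ (cf.~Proposition~3.4 of \cite{St05}); then $\tH^1(\b,\La)$ and $\tJ^1(\b,\La)$ are built in a two-step fashion from the corresponding groups for~$\g$ together with filtration pieces of $\AA(\La)$ and $\BB(\g,\La)$. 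Each ingredient respects the decomposition $V=\bigoplus W^{(j)}$ by subordination, and the exact sequences of Lemma~\ref{exactsequences1}\ref{exactsequences1:1} allow one to lift additive decompositions to multiplicative ones via the Cayley transform. The product formula $\tH^1(\b,\La)\cap\tM=\prod_{j}\tH^1(\b^{(j)},\La^{(j)})$ follows directly, because both the inductive construction and the splitting commute with intersecting with~$\tM$.

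The groups $\tH^1_{\tP}$ and $\tJ^1_{\tP}$ are defined to display an Iwahori decomposition on one side; the content to check is that they are in fact subgroups, which reduces to commutator computations between $\tJ^1(\b,\La)\cap \tU$ and $\tH^1(\b,\La)$, using the already-established Iwahori decomposition of the latter and the fact that everything is pro-$p$. Independence of the parabolic with Levi $\tM$ then follows by applying the same argument to the opposite parabolic and intersecting.

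Finally, for the statement on $\tJ(\b,\La)$ under proper subordination, I would use the identity $\tJ(\b,\La)=\tP(\La_E)\tJ^1(\b,\La)$ together with the Iwahori decomposition already known for $\tJ^1$. The extra hypothesis~(iv) is precisely what ensures that the $\o_E$-lattice sequence~$\La_E$ splits block-by-block in a way compatible with the Levi $\tM\cap \tGE$, so that the parahoric $\tP(\La_E)$ of $\tGE$ itself acquires an Iwahori decomposition with respect to any parabolic of $\tGE$ containing $\tM\cap \tGE$; multiplying then gives the result, and yields the product formula for $\tJ(\b,\La)\cap\tM$ block-by-block. The main obstacle here is essentially bookkeeping the inductive construction and verifying that condition~(iv) is the right one to control $P(\La_E)$; no new idea beyond \cite{St08} is required.
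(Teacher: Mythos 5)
Your proposal is correct and matches the paper's treatment: the statement is purely group-theoretic, so the paper simply quotes \cite[Propositions~5.2 and~5.4]{St08}, whose proofs (by induction along the defining sequence of the stratum, with the orders $\HH(\b,\La)$, $\JJ(\b,\La)$ decomposed by the idempotents of the splitting, and $\tJ(\b,\La)=\tP(\La_E)\tJ^1(\b,\La)$ for the properly subordinate case) apply verbatim since no representation-theoretic input, and hence no dependence on the characteristic of $R$, is involved. Your sketch of the St08 argument is consistent with this, so no further justification is needed here.
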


Let~$[\La,n,0,\b]$ be a self-dual semisimple stratum.  A decomposition~$V=\bigoplus_{j=-m}^m W^{(j)}$ is called self-dual if, for~$-m\< j\< m$, the orthogonal complement of~$W^{(j)}$ is~$\bigoplus_{k\neq \pm j} W^{(k)}$.  Put~$M=\tM\cap G$ a Levi subgroup of~$G$ and~$M^+=\tM\cap G^+$ a Levi subgroup of~$G^+$. Choosing a~$\s$-stable parabolic subgroup~$\tP$ of~$G$ with Levi factor~$\tM$, we have~$P=\tP\cap G$ a parabolic subgroup of~$G$ with Levi factor~$M$ and~$P^+=\tP\cap G^+$ a parabolic subgroup of~$G^+$ with Levi factor~$M^+$.

\begin{lemma}[{\cite[Corollaries~5.10 and~5.11]{St08} (\cf~\cite[Fait~8.10]{Finitude})}]\label{lemma72}
If~$V=\bigoplus_{j=-m}^m W^{(j)}$ is a self-dual subordinate decomposition to~$[\La,n,0,\b]$, then the groups~$H^1(\b,\La)$ and~$J^1(\b,\La)$ have Iwahori decompositions with respect to~$(M,P)$,
\[
H^1(\b,\La)\cap M\simeq H^1(\b^{(0)},\La^{(0)})\times\prod_{j=1}^m\tH^1(\b^{(j)},\La^{(j)}),
\]
there is a similar decomposition for~$J^1(\b,\La)$, and we can form the groups 
\[
H^1_{P}=H^1(\b,\La)(J^1(\b,\La)\cap U),\quad J^1_{P}=H^1(\b,\La)(J^1(\b,\La)\cap P).
\]
Moreover, if the decomposition is properly subordinate to~$[\La,n,0,\b]$ then~$J^+(\b,\La)$ has an Iwahori decomposition with respect to~$(M^+,P^+)$,~$J(\b,\La)$ and~$J^{\circ}(\b,\La)$ have Iwahori decompositions with respect to~$(M,P)$,
\[
J(\b,\La)\cap M\simeq J(\b^{(0)},\La^{(0)})\times\prod_{j=1}^m\tJ(\b^{(j)},\La^{(j)}),
\]
there are similar decompositions for~$J^+(\b,\La)\cap M^+$ and~$J^{\circ}(\b,\La)\cap M$, and we can form the groups
\[
J^+_{P}=H^1(\b,\La)(J^+(\b,\La)\cap  P),\quad J_{P}=H^1(\b,\La)(J(\b,\La)\cap  P),\quad J^{\circ}_{P}=H^1(\b,\La)(J^{\circ}(\b,\La)\cap  P).
\] 
\end{lemma}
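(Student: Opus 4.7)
The plan is to deduce this lemma from its $\tG$-analogue in the preceding lemma by taking $\Sigma$-fixed points. Since both the stratum $[\La,n,0,\b]$ and the decomposition $V=\bigoplus_{j=-m}^m W^{(j)}$ are self-dual, $\sigma$ permutes the summands via $j\mapsto -j$ and preserves all the $\tG$-level groups involved, namely $\tH^1(\b,\La)$, $\tJ^1(\b,\La)$, $\tJ(\b,\La)$ and $\tJ^+(\b,\La)$, as well as the chosen $\sigma$-stable parabolic $\tP=\tM\tU$, its opposite $\tP^-=\tM\tU^-$, and $\tM$ (so in particular both $\tU$ and $\tU^-$ are $\sigma$-stable). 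The groups in the statement then arise as fixed points: $H^1=(\tH^1)^\Sigma$, $J^1=(\tJ^1)^\Sigma$, $J^+=\tJ^\Sigma$ and $J=J^+\cap G$, with $M^+=\tM^\Sigma$, $P^+=\tP^\Sigma$, $M=M^+\cap G$ and $P=P^+\cap G$.

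The essential tool will be Theorem~\ref{fixedpt1}(ii)(b); its hypotheses are met since $\Sigma=\langle\sigma\rangle$ is cyclic of order $2$ and $p$ is odd. For the subordinate part I would apply it with $U_1=\tH^1\cap\tU^-$, $U_2=\tH^1\cap\tU$ (both pro-$p$ and $\Sigma$-stable, being subgroups of the pro-$p$ group $\tJ^1$) and $H=\tH^1\cap\tM$; the required identity $U_1 h U_2\cap H=(U_1\cap H)h(U_2\cap H)=\{h\}$ for $h\in H$ follows from the sharpness of the Iwahori decomposition in $\tG$, since $\tM\cap\tU^\pm=1$. Taking fixed points then yields the Iwahori decomposition of $H^1$ with respect to $(M,P)$, and the same recipe applies verbatim to $\tJ^1$, $\tH^1_{\tP}$ and $\tJ^1_{\tP}$. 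The explicit Levi piece follows from the product decomposition $\tH^1\cap\tM=\prod_j\tH^1(\b^{(j)},\La^{(j)})$: for $j\>1$, $\sigma$ exchanges $\tH^1(\b^{(j)},\La^{(j)})$ and $\tH^1(\b^{(-j)},\La^{(-j)})$, so the fixed subgroup of the paired product is the graph of that isomorphism, a single copy of $\tH^1(\b^{(j)},\La^{(j)})$; the $j=0$ summand contributes $(\tH^1(\b^{(0)},\La^{(0)}))^\Sigma=H^1(\b^{(0)},\La^{(0)})$, as $W^{(0)}$ is itself self-dual.

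For the properly subordinate half the strategy is the same; the new technical point is that $\tJ$, $\tJ^+$ and $\tJ^\circ$ are no longer pro-$p$, but their unipotent pieces $\tJ\cap\tU^{\pm}$ still are, since they are compact subgroups of the unipotent radical $\tU^{\pm}$ and every unipotent element of $\tG$ has $p$-power order. This is the main (and rather modest) obstacle on which the argument hinges. Once granted, Theorem~\ref{fixedpt1}(ii)(b) again applies with $U_1=\tJ\cap\tU^-$, $U_2=\tJ\cap\tU$ and $H=\tJ\cap\tM$, giving the Iwahori decomposition of $J^+=\tJ^\Sigma$ with Levi piece $J^+(\b^{(0)},\La^{(0)})\times\prod_{j=1}^m\tJ(\b^{(j)},\La^{(j)})$. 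The statements for $J$, $J^\circ$ and the $P$-decorated groups $J^+_P$, $J_P$, $J^\circ_P$ then follow by further intersecting with $G$ (respectively with the inverse image of the connected parahoric $P^\circ(\La_E)$), which only modifies the $j=0$ factor, replacing $J^+(\b^{(0)},\La^{(0)})$ by $J(\b^{(0)},\La^{(0)})$ or $J^\circ(\b^{(0)},\La^{(0)})$ as appropriate.
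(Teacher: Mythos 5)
Your proposal is correct and is essentially the intended argument: the paper itself gives no independent proof of Lemma~\ref{lemma72}, quoting it from \cite[Corollaries~5.10, 5.11]{St08}, where it is obtained exactly as you do — by descending the $\tG$-level Iwahori decompositions to $\Sigma$-fixed points via the coprime-order fixed-point machinery (here Theorem~\ref{fixedpt1}), using that the intersections with the unipotent radicals are pro-$p$ and that $\sigma$ pairs the blocks $j\leftrightarrow -j$, leaving only the $j=0$ factor genuinely ``classical''. Your observation that $\tJ\cap\tU^{\pm}$ is pro-$p$ even though $\tJ$ is not is precisely the point that makes the properly subordinate case go through.
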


Let~$\tth\in\Cc(\La,n,0,\b)$ and~$\teta$ be the unique Heisenberg representation of~$\tJ^1(\b,\La)$ containing~$\tth$.  By Lemma~\cite[Lemma~5.6]{St08}, we can define a character of~$\tH^1_{\tP}$ by
\[
\tth_{\tP}(hj)=\tth(h),
\]
for~$h\in\tH^1(\b,\La)$ and~$j\in\tJ^1(\b,\La)\cap \tU$.  

\begin{lemma}[{\cite[Corollary~5.7 and Lemma~5.8]{St08}}]
There exists a unique irreducible representation of~$\tJ^1_{\tP}$ containing~$\tth_{\tP}$.  Moreover~$\teta=\ind_{\tJ^1_{\tP}}^{\tJ^1(\b,\La)}(\teta_{\tP})$ and for each~$y\in\tGE$, there is a unique~$(\tJ^1_{\tP},\tJ^1_{\tP})$-double coset in~$\tJ^1(\b,\La)y\tJ^1(\b,\La)$ which intertwines~$\teta_{\tP}$ and~$I_{\tG}(\tth_{\tP})=I_{\tG}(\teta_{\tP})=\tJ^1_{\tP}\tGE\tJ^1_{\tP}$.
\end{lemma}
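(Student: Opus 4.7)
The plan is to adapt the classical Heisenberg-type argument of~\cite[\S 5.1]{BK93}, as already carried out over~$\CC$ in~\cite[Corollary~5.7 and Lemma~5.8]{St08}. Every group appearing in the statement is pro-$p$, so~$R$-representations of these groups (with the characteristic of~$R$ different from~$p$) behave as in the complex case: Clifford theory, Mackey theory and dimension counts all transfer, the only change being that dimensions live in~$R$, but since the relevant integers are powers of~$p$, they are invertible in~$R$.

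The first step is to set up the Heisenberg structure: the commutator pairing on~$\tJ^1/\tH^1$, twisted by~$\tth$, defines a non-degenerate alternating form, as is built into the construction of~$\teta$ via Theorem~\ref{theorem32}. A direct computation using the Iwahori decomposition of Lemma~\ref{lemma72} and the vanishing of~$\tth_{\tP}$ on~$\tJ^1\cap \tU$ shows that the image of~$\tJ^1_{\tP}/\tH^1_{\tP}$ in~$\tJ^1/\tH^1$ is a Lagrangian; equivalently,~$[\tJ^1_{\tP}:\tH^1_{\tP}]^2=[\tJ^1:\tH^1]$. The standard Heisenberg argument then yields a unique irreducible~$\teta_{\tP}$ of~$\tJ^1_{\tP}$ containing~$\tth_{\tP}$, and it has dimension~$[\tJ^1_{\tP}:\tH^1_{\tP}]$.

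The equality~$\teta=\ind_{\tJ^1_{\tP}}^{\tJ^1(\b,\La)}\teta_{\tP}$ then follows from the dimension count~$[\tJ^1:\tJ^1_{\tP}]\cdot[\tJ^1_{\tP}:\tH^1_{\tP}]=[\tJ^1:\tH^1]^{1/2}=\dim\teta$, combined with the observation that the restriction of the induction to~$\tH^1$ contains~$\tth$, so that the uniqueness in Theorem~\ref{theorem32} forces the identification (and, en route, the irreducibility of the induced representation). For the intertwining, the inclusion~$I_{\tG}(\tth_{\tP})\subseteq I_{\tG}(\tth)=\tJ^1\tGE\tJ^1$ is immediate by restriction, using Theorem~\ref{theorem22}; conversely, any~$y\in\tGE$ normalises the Iwahori decomposition of~$\tH^1$ arising from the splitting and intertwines~$\tth$, so combined with the fact that~$\tth_{\tP}$ is trivial on the unipotent parts one checks directly that~$y\in I_{\tG}(\tth_{\tP})$. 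Since~$\tJ^1_{\tP}\subseteq \tJ^1\subseteq I_{\tG}(\tth)$, we obtain~$I_{\tG}(\tth_{\tP})\supseteq \tJ^1_{\tP}\tGE\tJ^1_{\tP}$, and the equality~$I_{\tG}(\teta_{\tP})=I_{\tG}(\tth_{\tP})$ follows since~$\teta_{\tP}$ is the unique irreducible extension.

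Finally, for the unique double coset statement, the strategy is to use that the total intertwining dimension of~$\teta$ at each~$y\in\tGE$ is one (by Theorem~\ref{theorem22} together with~\cite[Corollary~3.29]{St05}); Mackey's formula applied to~$\teta=\ind_{\tJ^1_{\tP}}^{\tJ^1}\teta_{\tP}$ then decomposes~$I_g(\teta,\teta)$ as a sum, over~$(\tJ^1_{\tP},\tJ^1_{\tP})$-double cosets in~$\tJ^1 y\tJ^1$, of non-negative integers whose total equals one. This forces exactly one double coset (visibly~$\tJ^1_{\tP} y\tJ^1_{\tP}$, since~$y$ already intertwines~$\teta_{\tP}$) to contribute. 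The main delicacy is the Lagrangian property and the verification that the~$\tGE$-intertwining is compatible with the Iwahori decomposition; both are combinatorial facts established in~\cite{St08} depending only on the lattice-sequence structure and the pro-$p$ nature of the groups, and so transfer to characteristic~$\ell$ without change.
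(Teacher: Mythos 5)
Your overall strategy is the one the paper itself relies on: the paper offers no new argument for this lemma, but imports the proof of \cite[Corollary~5.7, Lemma~5.8]{St08} wholesale, on the grounds that $\tH^1_{\tP}\subseteq\tJ^1_{\tP}\subseteq\tJ^1(\b,\La)$ are pro-$p$, so the complex Clifford/Mackey/counting arguments are valid over~$R$. Your last paragraph (multiplicity one of the intertwining of~$\teta$ at~$y\in\tGE$, then Mackey applied to~$\teta=\ind_{\tJ^1_{\tP}}^{\tJ^1(\b,\La)}\teta_{\tP}$ forcing exactly one contributing double coset) is exactly in this spirit.

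However, the central structural claim in your second paragraph is false, and the dimension count you build on it fails. The image of~$\tJ^1_{\tP}/\tH^1_{\tP}$ in~$\tJ^1(\b,\La)/\tH^1(\b,\La)$ is not a Lagrangian: by the Iwahori decompositions one has $\tJ^1_{\tP}/\tH^1_{\tP}\simeq(\tJ^1(\b,\La)\cap\tM)/(\tH^1(\b,\La)\cap\tM)\simeq\prod_j\tJ^1(\b^{(j)},\La^{(j)})/\tH^1(\b^{(j)},\La^{(j)})$, on which the pairing induced by~$\tth_{\tP}$ is \emph{non-degenerate}; it is this non-degeneracy (not isotropy) that gives existence and uniqueness of~$\teta_{\tP}$, with $\dim\teta_{\tP}=[\tJ^1_{\tP}:\tH^1_{\tP}]^{1/2}$, not $[\tJ^1_{\tP}:\tH^1_{\tP}]$. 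Consequently $[\tJ^1_{\tP}:\tH^1_{\tP}]^2=[\tJ^1:\tH^1]$ is wrong in general: the correct bookkeeping is $[\tJ^1:\tJ^1_{\tP}]=[\tJ^1\cap\tU^-:\tH^1\cap\tU^-]$ and $[\tJ^1:\tH^1]=[\tJ^1_{\tP}:\tH^1_{\tP}]\cdot[\tJ^1\cap\tU:\tH^1\cap\tU]^2$, whence $[\tJ^1:\tJ^1_{\tP}]\cdot[\tJ^1_{\tP}:\tH^1_{\tP}]^{1/2}=[\tJ^1:\tH^1]^{1/2}=\dim\teta$; the polarisation role is played by the isotropic pieces $(\tJ^1\cap\tU^{\pm})\tH^1/\tH^1$, which are in duality with each other. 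A second, smaller, gap: your assertion that every $y\in\tGE$ visibly intertwines~$\tth_{\tP}$ because it ``normalises the Iwahori decomposition'' is too quick, since $\tGE$ is not contained in~$\tM$ (only $\tGE\cap\tM$ is a Levi subgroup of~$\tGE$); the containment $\tGE\subseteq I_{\tG}(\tth_{\tP})$, and the location of the unique intertwining double coset, are precisely the non-trivial points of \cite[\S5.1]{St08}, proved there by intertwining computations rather than by inspection. The statement is true and the modular transfer is as the paper asserts, but as written your Lagrangian step, the dimension of~$\teta_{\tP}$, and hence your identification $\teta=\ind_{\tJ^1_{\tP}}^{\tJ^1(\b,\La)}\teta_{\tP}$ do not go through and need to be replaced by the argument just indicated.
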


Let~$\th\in\Cc_{-}(\La,0,\b)$ and~$\eta$ be the unique Heisenberg representation of~$J^1(\b,\La)$ containing~$\th$.  We can define a character~$\th_P$ of~$H^1_P$ by 
\[
\th_P(hj)=\th(h),
\]
for~$h\in H^1(\b,\La)$ and~$j\in J^1(\b,\La)\cap U$.  Then~$\th_P=\mathbf{g}(\tth_{\tP})$ is the Glauberman transfer of~$\tth_{\tP}$ (as~$\tth_{\tP}$ is a character the Glauberman transfer here is just restriction to~$H^1_P$). 

We let~$\eta_P=\mathbf{g}(\teta_{\tP})$. Using properties of the Glauberman correspondence the following Lemma is proved in~\cite{St08}.

\begin{lemma}
The representation~$\eta_P$ is the unique irreducible representation of~$J^1_P$ which contains~$\th_P$,~$\eta=\ind_{J^1_P}^{J^1(\b,\La)}(\eta_P)$.  Moreover for each~$y\in\GE$, there is a unique~$(J^1_P,J^1_P)$-double coset in~$J^1(\b,\La)yJ^1(\b,\La)$ which intertwines~$\eta_P$ and~$\dim_R(I_g(\eta_P))$ is~$1$ if~$g\in J^1_P \GE^+ J^1_P$ and~$0$ otherwise.
\end{lemma}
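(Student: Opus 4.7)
The plan is to deduce all four assertions from the corresponding statements for $\teta_{\tP}$ in the preceding lemma by applying the Glauberman correspondence with respect to $\Sigma=\langle\sigma\rangle$, closely following the strategy of the proof of Theorem~\ref{classthetaint}.

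First I would verify that $\tth_{\tP}$ is $\sigma$-fixed, being constructed from the $\sigma$-fixed character $\tth$ and trivial on the $\sigma$-stable unipotent part $\tJ^1(\b,\La)\cap\tU$; by the uniqueness in the preceding lemma, $\teta_{\tP}$ is then also $\sigma$-stable. Combining the Iwahori decomposition of Lemma~\ref{lemma72} with Theorem~\ref{fixedpt1} (the relevant non-abelian pro-$p$ cohomology is trivial since $|\Sigma|=2$ and $p$ is odd) identifies $(\tJ^1_{\tP})^{\Sigma}=J^1_P$ and $(\tH^1_{\tP})^{\Sigma}=H^1_P$. Setting $\eta_P=\mathbf{g}(\teta_{\tP})$ then yields an irreducible $R$-representation of $J^1_P$. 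Since $\teta_{\tP}|_{\tH^1_{\tP}}$ is a multiple of $\tth_{\tP}$, the further restriction $\eta_P|_{H^1_P}$ is a multiple of $\tth_{\tP}|_{H^1_P}=\th_P$, so $\eta_P$ contains $\th_P$. For uniqueness, any irreducible $\rho$ of $J^1_P$ containing $\th_P$ is the Glauberman image of a $\sigma$-stable irreducible representation of $\tJ^1_{\tP}$ containing $\tth_{\tP}$, which the preceding lemma identifies as $\teta_{\tP}$, forcing $\rho=\eta_P$.

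For the intertwining, \cite[Corollary~2.5]{St00} gives $I_G(\eta_P)=I_{\tG}(\teta_{\tP})\cap G=(\tJ^1_{\tP}\tGE\tJ^1_{\tP})\cap G$. Arguing via Theorem~\ref{fixedpt1} and the pro-$p$ cohomology vanishing, exactly as in Theorem~\ref{intchars1}, this intersection equals $J^1_P\GE^+J^1_P$. The unique intertwining $(\tJ^1_{\tP},\tJ^1_{\tP})$-double coset in $\tJ^1yJ^1$ is $\sigma$-stable by its uniqueness, and taking $\Sigma$-fixed points yields a unique $(J^1_P,J^1_P)$-double coset in $J^1yJ^1$ intertwining $\eta_P$; the corresponding intertwining space remains one-dimensional by \cite[Corollary~2.5]{St00}. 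Finally, $\ind_{J^1_P}^{J^1(\b,\La)}(\eta_P)$ contains $\th$ on restriction to $H^1(\b,\La)$ by Frobenius reciprocity, and by Lemma~\ref{heckelemma} together with the one-dimensional intertwining just established its endomorphism algebra is one-dimensional; hence the induced representation is irreducible, and by the uniqueness in Theorem~\ref{theorem33} it must be $\eta$.

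The main obstacle is the irreducibility of $\ind_{J^1_P}^{J^1(\b,\La)}(\eta_P)$ in the $\ell$-modular setting, where a direct dimension count is unavailable; this has to be reduced to the one-dimensional intertwining computation above, itself dependent on the Glauberman-compatibility of intertwining spaces. A secondary delicate point is the availability of the Glauberman correspondence for higher-dimensional $\ell$-modular representations of a pro-$p$ group, which goes through the arguments of \cite{St00} unchanged since $|\Sigma|=2$ is both invertible in $R$ and coprime to $p$.
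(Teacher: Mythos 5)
Your proposal is correct and takes essentially the same route as the paper, which simply defines $\eta_P=\mathbf{g}(\teta_{\tP})$ and invokes the Glauberman correspondence (available here because the groups at the $J^1$-level are pro-$p$, $|\Sigma|=2$ and $\operatorname{char}R\neq p$) to transfer the statements about $\teta_{\tP}$, exactly as in the proof in~\cite{St08}. One small remark: the obstacle you single out is milder than you suggest, since dimension counts remain valid for representations of pro-$p$ groups in characteristic $\ell\neq p$ and $\RR_R(J^1(\b,\La))$ is semisimple, so your endomorphism-algebra argument (which implicitly uses $J^1(\b,\La)\cap J^1_P\GE^+J^1_P=J^1_P$ and this semisimplicity) is equivalent to the classical dimension-count argument.
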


Let~$\k^+$ be a standard~$\b$-extension of~$\eta$.  We can form the natural representation~$\k_P^+$ of~$J^+_P$ on the space of~$(J^1\cap U)$-fixed vectors in~$\k^+$ by normality.  Then~$\Res^{J^+_P}_{J^1_P}(\k^+_P)=\eta_P$, hence~$\k_P$ is irreducible.  The Mackey restriction formula as in~\cite[Proposition 5.13]{St08} shows that~$\ind_{J^+_P}^{J^+}(\k_P^+)\simeq \k^{+}$.     We can also define representations of~$\k_P$ of~$J_P$ and~$\k^{\circ}_P$ of~$J^{\circ}_P$, for which analogous statements hold and~$\Res^{J^+_P}_{J_P}(\k_P^+)=\k_P$,~$\Res^{J^+_P}_{J^{\circ}_P}(\k_P^+)=\k_P^{\circ}$. 

In the next Lemma we identify~$H^1(\b,\La)\cap M$ with~$H^1(\b^{(0)},\La^{(0)})\times\prod_{j=1}^m\tH^1(\b^{(j)},\La^{(j)})$ using Lemma~\ref{lemma72}, and use the similar identifications for~$J^1(\b,\La)\cap M$ and~$J(\b,\La)\cap M$.

\begin{lemma}[{\cite[Section~5]{St08}}]\label{lemma75}
If~$V=\bigoplus_{j=-m}^m W^{(j)}$ is a self-dual subordinate decomposition, then
\[
\th_P\mid_{H^1(\b,\La)\cap M}=\th^{(0)}\otimes\bigotimes_{j=1}^m\(\tth^{(j)}\)^2,
\]
with~$\th^{(0)}\in\Cc_{-}(\La^{(0)},0,\b^{(0)})$ and~$\tth^{(j)}\in\Cc(\La^{(j)},0,\b^{(j)})$.  Similarly we have 
\[
\eta_P\mid_{J^1(\b,\La)\cap M}=\eta^{(0)}\otimes\bigotimes_{j=1}^m\teta^{(j)},
\]
where~$\eta^{(0)}$ is the unique irreducible representation of~$J^1(\b^{(0)},\La^{(0)})$ containing~$\th^{(0)}$ and~$\teta^{(j)}$ is the unique irreducible representation of~$\tJ^1(\b^{(j)},\La^{(j)})$ containing~$\(\tth^{(j)}\)^2$.  Moreover, if~$V=\bigoplus_{j=-m}^m W^{(j)}$ is a self-dual properly subordinate decomposition,
\[
\k_P\mid_{J(\b,\La)\cap M}=\k^+_{(0)}\otimes\bigotimes_{j=1}^m\tk_{(j)},
\]
with~$\k^+_{(0)}$ an extension of~$\eta^{(0)}$ to~$J^+(\b^{(0)},\La^{(0)})$ and~$\tk_{(j)}$ an extension of~$\teta_{(j)}$ to~$\tJ(\b^{(j)},\La^{(j)})$.
\end{lemma}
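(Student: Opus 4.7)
The plan is to follow the argument of~\cite[Section~5]{St08} essentially verbatim, verifying that each step survives replacing~$\CC$ by~$R$. Since~$H^1(\b,\La)$ and~$J^1(\b,\La)$ are pro-$p$, all the Mackey/Clifford arguments used to analyse~$\th_P$ and~$\eta_P$ --- in particular the Glauberman correspondence and the uniqueness of the Heisenberg extension (Theorem~\ref{theorem33}) --- go through unchanged over an arbitrary algebraically closed field of characteristic~$\ne p$. The only step requiring modular input is the one involving~$\b$-extensions, and for this we have Theorem~\ref{bijectionbetas} and the other results of Section~\ref{betasects}.

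First, for~$\th_P$: since~$M \cap U = \{1\}$ we have~$H^1_P \cap M = H^1(\b,\La) \cap M$, so~$\th_P$ restricted to this intersection equals~$\th$. The inductive decomposition~\eqref{eqn:thetadecomp}, extended to a subordinate decomposition via~\cite[Lemma~3.15]{St05}, yields~$\tth\mid_{\tH^1(\b,\La)\cap\tM} = \bigotimes_{j=-m}^m \tth^{(j)}$ with~$\tth^{(j)} \in \Cc(\La^{(j)},0,\b^{(j)})$. Self-duality of the decomposition combined with~$\s$-invariance of~$\tth$ forces~$\tth^{(-j)} \circ \s = \tth^{(j)}$ for~$j>0$, after the natural identification of~$\tH^1(\b^{(-j)},\La^{(-j)})$ with~$\s(\tH^1(\b^{(j)},\La^{(j)}))$. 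Taking~$\s$-fixed points, the anti-diagonal~$\{(x,\s(x)) : x \in \tH^1(\b^{(j)},\La^{(j)})\} \cong \tH^1(\b^{(j)},\La^{(j)})$ appears in each pair of blocks indexed by~$\pm j$; restricting the character to it evaluates at~$x$ as~$\tth^{(j)}(x)\tth^{(-j)}(\s(x)) = \tth^{(j)}(x)^2$, while the middle block contributes~$\th^{(0)} := \mathbf{g}(\tth^{(0)}) \in \Cc_-(\La^{(0)},0,\b^{(0)})$ via Glauberman, which for a character is just restriction.

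Next, for~$\eta_P$ and~$\k_P$: the tensor product~$\eta^{(0)} \otimes \bigotimes_{j=1}^m \teta^{(j)}$ is irreducible (its factors act on independent variables) and, by the character computation above, contains~$\th_P$ restricted to~$J^1(\b,\La) \cap M$. Inflating trivially along~$J^1(\b,\La) \cap U$ and inducing to~$J^1(\b,\La)$ produces an irreducible representation containing~$\th$, which by uniqueness (Theorem~\ref{theorem33}) must be~$\eta$; this forces the product form for~$\eta_P\mid_{J^1(\b,\La) \cap M}$. For~$\k_P$, I combine the Iwahori decomposition in the properly subordinate case (Lemma~\ref{lemma72}) with the definition of~$\k_P$ as the natural representation on~$(J^1(\b,\La) \cap U)$-fixed vectors: its restriction to~$J(\b,\La) \cap M$ is a canonical extension of~$\eta_P\mid_{J^1(\b,\La)\cap M}$ compatible with the block decomposition. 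The main obstacle, requiring careful tracking of the standard~$\b$-extension construction across a properly subordinate decomposition, is to show this extension factors as~$\k^+_{(0)} \otimes \bigotimes_{j=1}^m \tk_{(j)}$; this follows from the uniqueness of compatible~$\b$-extensions at each block (Theorem~\ref{bijectionbetas} on the self-dual block, and the Bushnell--Kutzko analogue on the general linear blocks), mirroring the corresponding step in~\cite[Section~5]{St08}, which is formal and transfers directly to the modular setting.
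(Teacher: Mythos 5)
Your proposal matches the paper's own treatment: Lemma~\ref{lemma75} is simply quoted from~\cite[Section~5]{St08}, the stated policy of Section~\ref{skewcoverssect} being that those proofs transfer \emph{mutatis mutandis} once one notes that $H^1(\b,\La)$ and $J^1(\b,\La)$ are pro-$p$ (so the Glauberman and Heisenberg-uniqueness arguments are unchanged over $R$) and that the $\b$-extension input is supplied by Section~\ref{betasects} --- exactly the checks you perform. One cosmetic point: for the $\k_P$ statement you do not actually need uniqueness of compatible $\b$-extensions, since the lemma only claims the block factors are \emph{extensions} of $\eta^{(0)}$ and $\teta^{(j)}$, which follows formally from the external tensor factorization of the irreducible restriction to $J(\b,\La)\cap M$ together with the $\eta_P$ statement; identifying them as standard $\b$-extensions is the content of the later, exactly subordinate, lemma.
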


\begin{lemma}[{\cite[Lemma~6.1]{St08}}]\label{Lemma61St}
Let~$K$ be a compact open subgroup of~$J^+(\b,\La)$ containing~$J^1(\b,\La)$ which has an Iwahori decomposition with respect to~$(M^+,P^+)$ with~$K\cap M^+=K^{(0)}\times\prod\tK^{(j)}$.  Let~$\rho$ be the inflation to~$K$ of an irreducible representation of~$K/J^1(\b,\La)$,~$\l=\Res^{J^+(\b,\La)}_K(\k^+)\otimes \rho$ and~$\l_P$ the representation of~$K_P=H^1(\b,\La)(K\cap P)$ on the space of~$J^1(\b,\La)\cap U$-fixed vectors in~$\l$.  Then
\begin{enumerate}
\item~$\l_P$ is irreducible and~$\l=\ind_{K_P}^K\l_P$.\label{Lem6.1part1}
\item~$\l_P\simeq \k_P\otimes \rho$ considering~$\rho$ as a representation of~$K_P/J^1_P\simeq K/J^1(\b,\La)$.\label{Lem6.1part2}
\item~$\l_P\mid_{K\cap M}=\l^{(0)}\otimes \bigotimes_{j=1}^m\tl^{(j)}$, where~$\tl^{(0)}=\k_{(0)}\mid_{K^{(0)}}\otimes\rho^{(0)}$ is a representation of~$K^{(0)}$ and~$\tl^{(j)}=\tk^+_{(j)}\mid_{\tK^{(j)}}\otimes \trho^{(j)}$ is a representation of~$\tK^{(j)}$, for~$1\< j\< m$.\label{Lem6.1part3}
\item There is a support preserving algebra homomorphism~$\Hh(G^+,\l_P)\simeq \Hh(G^+,\l)$; if~$\phi\in\Hh(G^+,\l)$ has support~$KyK$ for some~$y\in \GE^+$ then the corresponding~$\phi_P\in\Hh(G^+,\l_P)$ has support~$K_PyK_P$.
\end{enumerate}
\end{lemma}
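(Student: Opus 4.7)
The plan is to follow the proof of \cite[Lemma 6.1]{St08}, verifying that every step survives passage to a coefficient field $R$ of characteristic $\ell\neq p$. The enabling observation is that $J^1(\beta,\Lambda)\cap U$ is pro-$p$, so restriction of any smooth $R$-representation of $K$ to $J^1\cap U$ is semisimple; the functor of $(J^1\cap U)$-invariants is therefore exact, and $(J^1\cap U)$-fixed vectors always split off as a canonical direct summand. This is the sole modification needed beyond Stevens's complex argument.

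I would address part \ref{Lem6.1part2} first. Since $\rho$ is inflated from $K/J^1(\beta,\Lambda)$ and $J^1\cap U\subseteq J^1(\beta,\Lambda)$, it is trivial on $J^1\cap U$; taking $(J^1\cap U)$-invariants therefore commutes with tensoring by $\rho$, so $\lambda_P=\kappa_P\otimes\rho$ once one matches $K_P/J^1_P\simeq K/J^1(\beta,\Lambda)$ via the Iwahori decompositions of Lemma \ref{lemma72} (both are isomorphic to $(K\cap M^+)/(J^1\cap M^+)$). Part \ref{Lem6.1part1} I would then obtain by starting from $\kappa^+\simeq\ind_{J^+_P}^{J^+}\kappa_P$, established just before Lemma \ref{lemma75}, and applying Mackey restriction to $K$ along $J^+=(J^+\cap U^-)J^+_P$ and $K=(K\cap U^-)K_P$. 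The crucial identity $K\cap U^-=J^+\cap U^-$ holds because both groups coincide with $J^1\cap U^-$: the group $J^+\cap U^-$ is pro-$p$ (as $U^-$ is pro-unipotent) and so lies in the pro-$p$ radical $J^1$, which in turn lies in $K$. Thus the Mackey decomposition collapses to a single term, giving $\Res^{J^+}_K\kappa^+\simeq\ind_{K_P}^K(\kappa_P|_{K_P})$; tensoring with $\rho$ yields $\lambda\simeq\ind_{K_P}^K\lambda_P$. Irreducibility of $\lambda_P$ then follows from Lemma \ref{simpcrit3} applied to $K_P\supseteq J^1_P$ with extension $\kappa_P$, since $\rho$ is irreducible as a representation of the quotient $K_P/J^1_P$.

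Parts \ref{Lem6.1part3} and (iv) then follow from \ref{Lem6.1part1} and \ref{Lem6.1part2} by routine arguments. For \ref{Lem6.1part3}, restricting the factorisation of Lemma \ref{lemma75} for $\kappa_P|_{J(\beta,\Lambda)\cap M}$ to $K\cap M\simeq K^{(0)}\times\prod_{j=1}^m\widetilde{K}^{(j)}$ and tensoring with the corresponding factorisation of $\rho|_{K\cap M}$ yields the stated decomposition of $\lambda_P|_{K\cap M}$. For (iv), the isomorphism $\lambda\simeq\ind_{K_P}^K\lambda_P$ of \ref{Lem6.1part1} combined with Lemma \ref{heckelemma}\ref{heckelemma:3} produces a support-preserving algebra isomorphism $\Hh(G^+,\lambda)\simeq\Hh(G^+,\lambda_P)$; the double-coset bookkeeping is standard, with $K_PyK_P$ being the unique $(K_P,K_P)$-double coset inside $KyK$ intertwining $\lambda_P$, by the asymmetric intertwining results for $\eta_P$ established earlier in this section.

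The main obstacle I anticipate is the clean verification of the identity $K\cap U^-=J^1\cap U^-$ (and its analogue for $U$), upon which the collapse of the Mackey sum in \ref{Lem6.1part1} depends. This ultimately rests on identifying the pro-$p$ parts of all the Iwahori-decomposed groups in play, and while it is implicit in the complex case, one must be careful to state it precisely here because the characteristic-$\ell$ setting offers no semisimplicity fallback for finite reductive quotients. Once this is in place, the proof is formally parallel to \cite[Lemma 6.1]{St08}, with pro-$p$-ness of $J^1\cap U$ substituting for the semisimplicity arguments implicit in the complex case.
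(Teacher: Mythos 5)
Your proposal follows essentially the same route as the paper: the paper's proof of Lemma~\ref{Lemma61St} is precisely ``follow \cite[Lemma~6.1]{St08} \emph{mutatis mutandis}, using the results quoted in this section and Lemma~\ref{simpcrit3} for parts \ref{Lem6.1part1}--\ref{Lem6.1part3}'', and your sketch supplies exactly these ingredients (pro-$p$-ness of $J^1(\b,\La)\cap U$ in place of semisimplicity, Lemma~\ref{simpcrit3} for the irreducibility of $\l_P$, Lemma~\ref{lemma75} for part \ref{Lem6.1part3}, and Lemma~\ref{heckelemma}\ref{heckelemma:3} plus the intertwining results for part (iv)).

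One justification in your write-up is faulty, although the conclusion you need is true: you argue that $J^+(\b,\La)\cap U^-$ lies in $J^1(\b,\La)$ ``because it is pro-$p$ and hence contained in the pro-$p$ radical''. A pro-$p$ subgroup of $J^+$ need only be contained in \emph{some} Sylow pro-$p$ subgroup, and these are strictly larger than $J^1$ in general (compare the groups $J^1_{\Ga,\La}$), so pro-$p$-ness alone does not place $J^+\cap U^-$ inside $J^1$. The correct source of the identity $K\cap U^{\pm}=J^1(\b,\La)\cap U^{\pm}$ is the structure theory for (properly) subordinate decompositions quoted in this section: the Iwahori decompositions of Lemma~\ref{lemma72} together with the fact that $P^+(\La_E)\cap U^{\pm}\subseteq P^1(\La_E)$ for a properly subordinate decomposition (this is what ``properly subordinate'' is designed to guarantee, cf.\ \cite[\S5]{St08}), which gives $J^+(\b,\La)\cap U^{\pm}=J^1(\b,\La)\cap U^{\pm}$ and hence the collapse of your Mackey sum. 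With that substitution your argument goes through and agrees with the paper's intended proof.
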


\begin{proof}
The proof follows \emph{mutatis mutandis} the proof of~\cite[Lemma 6.1]{St08}, making use of the results quoted in this section and Lemma~\ref{simpcrit3} for parts~\ref{Lem6.1part1}, \ref{Lem6.1part2}, and \ref{Lem6.1part3}.
\end{proof}
 
The self-dual decomposition~$V=\bigoplus_{j=-m}^m W^{(j)}$ is \emph{exactly subordinate} to~$[\La,n,0,\b]$, in the sense of~\cite[Definition 6.5]{St08}, if~$P^{\circ}(\La_E)\cap M$ is a maximal parahoric subgroup of~$\GE\cap M$ and, for each~$j\neq 0$, there is an~$i$ such that~$W^{(j)}$ is contained in~$V^i$ and~$\AA(\La^{(j)})\cap B^{(j)}$ is a maximal~$\o_{E^i}$-order in~$B^{(j)}$, or equivalently, if it is minimal amongst all self-dual decompositions which are properly subordinate to~$[\La,n,0,\b]$.
 
For the rest of this section, we suppose that the self-dual decomposition~$V=\bigoplus_{j=-m}^m W^{(j)}$ is \emph{exactly subordinate} to~$[\La,n,0,\b]$. For~$j,k>0$, in~\cite[Section 6.2]{St08} a collection of Weyl group element~$s_{j,k} ,s_j,$ and~$s_j^{\varpi}$, all of which lie in~$\GE^+$, of~$G$ is defined.  The element~$s_{j,k}$ exchanges the blocks~$e^{(j)}Ae^{(j)}$ and~$e^{(k)}Ae^{(k)}$, and the blocks~$e^{(-j)}Ae^{(-j)}$ and~$e^{(-k)}Ae^{(-k)}$.  The elements~$s_j$ and~$s_j^{\varpi}$ exchange the blocks~$e^{(j)}Ae^{(j)}$ and~$e^{(-j)}Ae^{(-j)}$.  Let~$\La^M$ be a~$\o_E$-lattice sequence in~$V$ such that~$\AA(\La^M_E)$ is a maximal~$\o_E$-order containing~$\AA(\La_E)$.  For~$j,k>0$,~$W^{(j)}$ and~$W^{(k)}$ are called \emph{companion with respect to~$\La^M$} if~$s_{j,k}\in P^+(\La^M_E)$, while~$W^{(j)}$ and~$W^{(-j)}$ are called \emph{companion with respect to~$\La^M$} if~$s_{j}$ or~$s_j^{\varpi}$ lies in~$P^+(\La^M_E)$.  Following these definitions in \emph{op. cit.} an involution~$\s_j$ is defined on~$\tG_j=\{(\ov{g}^{-1},g)\in \tG^{(-j)}\times \tG^{(j)}\}$ by conjugation by~$s_j$.  Furthermore, by~\cite[Lemma 6.9]{St08}, the group~$\tJ(\b^{(j)},\La^{(j)})$ is stable under~$\s_j$, and , if~$1\< j<k\< m$ and~$W^{(j)}\simeq W^{(k)}$ as~$E_i$-spaces for some~$i$, then conjugation by~$s_{j,k}$ induces an isomorphism~$\tJ(\b^{(j)},\La^{(j)})\simeq \tJ(\b^{(k)},\La^{(k)})$.

\begin{lemma}[{\cite[Proposition 6.3, Corollary 6.10]{St08}}]
Suppose the self-dual decomposition~$V=\bigoplus_{j=-m}^m W^{(j)}$ is \emph{exactly subordinate} to~$[\La,n,0,\b]$.  Then~$\k^+_{(0)}$ is a standard~$\b^{(0)}$-extension of~$\eta^{(0)}$ to~$J^+(\b^{(0)},\La^{(0)})$ and~$\tk_{(j)}$ is a standard~$2\b^{(j)}$-extension of~$\teta^{(j)}$ to~$\tJ(\b^{(j)},\La^{(j)})$.  Furthermore, for~$1\< j\< m$, conjugation by~$s_j$ induces and equivalence~$\tk_{(j)}\circ \s_j\simeq \tk_{(j)}$, and, if~$1\< j<k\< m$ and~$W^{(j)}\simeq W^{(k)}$ as~$E_i$-spaces for some~$i$, then conjugation by~$s_{j,k}$ induces an equivalence~$\tk_{(j)}\simeq\tk_{(k)}$.
\end{lemma}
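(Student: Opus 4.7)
The plan is to follow the argument of~\cite[Proposition~6.3 and Corollary~6.10]{St08}, which carries over \emph{mutatis mutandis} to the~$\ell$-modular setting: the key technical inputs — the tensor factorisation of Lemma~\ref{Lemma61St} (valid via Lemma~\ref{simpcrit3} because~$J^1$ is pro-$p$) and the standard Mackey analyses — continue to hold in characteristic~$\ell\neq p$. The overall strategy is to reduce to the maximal case via the definition of a standard~$\b$-extension, and then to read off the block-by-block statements from the tensor decomposition of Lemma~\ref{lemma75}.

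Concretely, since the decomposition is exactly subordinate, the maximal self-dual~$\o_E$-lattice sequence~$\MM_\La$ used in the definition of standardness decomposes compatibly with the splitting as~$\MM_\La\cap W^{(j)} = \MM_{\La^{(j)}}$. Let~$\k^{+,\MM}$ be the~$\b$-extension on~$J^+(\b,\MM_\La)$ paired with~$\k^+$ under the bijection~$b_{\La,\MM_\La}$ of Theorem~\ref{bijectionbetas}. Applying Lemma~\ref{lemma75} both to~$\k^{+,\MM}$ and to~$\k^+$, and invoking the uniqueness clauses in Theorems~\ref{maxbetas} and~\ref{bijectionbetas} factor-by-factor, the tensor components of~$\k^+$ are identified with the images of the corresponding components of~$\k^{+,\MM}$ under the block-level bijections~$b_{\La^{(j)},\MM_{\La^{(j)}}}$. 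Since each tensor factor of~$\k^{+,\MM}$ is, by construction, a~$\b$-extension at the maximal level (and, under the identification~$\tG_j\simeq \tG^{(j)}$ for~$j\neq 0$, becomes a~$2\b^{(j)}$-extension in the sense of Lemma~\ref{lemma75}), it follows that~$\k^+_{(0)}$ and each~$\tk_{(j)}$ is a standard~$\b$-extension as claimed.

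For the Weyl element equivalences, note that by the definition of companion blocks, the elements~$s_j$ (or~$s_j^{\w}$) and~$s_{j,k}$ all lie in~$P^+(\La^M_E)$ for a suitable self-dual~$\o_E$-lattice sequence~$\La^M$ with maximal associated order. By Theorem~\ref{maxbetas}, the~$\b$-extension~$\k^{+,M}$ of~$\eta_{\La^M}$ to~$J^+(\b,\La^M)$ is determined up to a character of~$P^+(\La^M_E)/P^1(\La^M_E)$ trivial on the subgroup generated by the unipotent subgroups; since the images of~$s_j$ and~$s_{j,k}$ in this quotient are killed by any such character, conjugation by~$s_j$ and~$s_{j,k}$ preserves~$\k^{+,M}$ up to equivalence. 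Reading off the Levi-block decomposition of Lemma~\ref{lemma75} applied at the level of~$\La^M$, one reads off~$\tk^{M}_{(j)}\circ\s_j\simeq \tk^{M}_{(j)}$ and, when~$W^{(j)}\simeq W^{(k)}$ as~$E_i$-spaces,~$\tk^{M}_{(j)}\simeq \tk^{M}_{(k)}$ via~$s_{j,k}$; these equivalences are preserved by the block-level transfer bijections~$b_{\La^{(j)},\MM_{\La^{(j)}}}$ back to level~$\La$.

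The main obstacle lies in the bookkeeping that the transfer bijection~$b_{\La,\MM_\La}$ respects the tensor decomposition block-by-block, i.e.\ that it is built out of the block-level bijections~$b_{\La^{(j)},\MM_{\La^{(j)}}}$. This is precisely the content of~\cite[Proposition~6.3]{St08}; because its proof uses only the uniqueness clauses in Theorems~\ref{maxbetas} and~\ref{bijectionbetas}, Mackey theory, and Lemma~\ref{simpcrit3}, the argument transports without change to characteristic~$\ell\neq p$.
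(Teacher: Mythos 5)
Your proposal matches the paper's treatment: the paper gives no independent argument for this lemma, quoting it directly from~\cite[Proposition~6.3, Corollary~6.10]{St08} under the blanket remark at the start of Section~\ref{skewcoverssect} that the constructions of~\cite{St08} carry over \emph{mutatis mutandis} to characteristic~$\ell\neq p$, the only genuinely modular inputs being exactly those you invoke (the pro-$p$ nature of the~$J^1$ groups, Lemma~\ref{simpcrit3}, and the tensor decompositions of Lemmas~\ref{lemma75} and~\ref{Lemma61St}). Your sketch of the reduction to the maximal level and of the conjugation by the Weyl elements~$s_j$,~$s_{j,k}$ (which lie in~$P^+(\La^M_E)\subseteq J^+(\b,\La^M)$, so conjugation by them preserves the equivalence class of~$\k^{+,M}$ directly, without needing the character ambiguity of Theorem~\ref{maxbetas}) is consistent with the argument being transported, so this is essentially the same route as the paper.
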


This lemma together with the comparison of~$\b$-extensions leads to the following observation, as in \emph{op. cit.}  Let~$\La^M,\La^{M'}$ be self-dual~$\o_E$-lattice sequences such that the associated~$\o_E$-orders are maximal and contain~$\AA(\La_E)$.  Let~$\k$ be a~$\b$-extension of~$\eta$ relative to~$\La^M$ and~$\k'$ be a~$\b$-extension of~$\eta$ relative to~$\La^{M'}$.  There are~$\s_i$-invariant characters~$\tchi^{(j)}$ of~$k_{E_i}^\times$ and a character~$\chi^{(0)}$ of~$M^\circ(\La_E)$ such that, setting~$\chi=\chi^{(0)}\otimes\bigotimes_{j=1}^m\tchi^{(j)}\circ\det^{(j)}$, we have
\[\k'=\ind_{J_P^\circ}^{J^{\circ}(\b,\La)}(\k_P\otimes \chi).\]

%%%%%%%%%%%%%%%%%%%%%%%%%%%%%%%%%%%%%%%%%%%%%%%%%%%
\subsection{$\k_P$-induction and restriction}
%%%%%%%%%%%%%%%%%%%%%%%%%%%%%%%%%%%%%%%%%%%%%%%%%%%

We have functors~$\I_{\k_P}:\RR_R(M^{\circ}(\La_E))\to \RR_R(G)$ and~$\I_{\k_L}:\RR_R(M^{\circ}(\La_E))\to \RR_R(L)$ with right adjoint functors~$\R_{\k_P}:\RR_R(G)\to \RR_R(M^{\circ}(\La_E))$ and~$\R_{\k_L}:\RR_R(L)\to \RR_R(M^{\circ}(\La_E))$; defined analogously to~$\I_{\k}$ and~$\R_{\k}$ in Section~\ref{Functors}.  In fact, as~$\ind_{J_P}^J(\k_P)\simeq \k$, we have natural isomorphisms of functors~$\I_{\k}\simeq \I_{\k_P}$ and~$\R_{\k}\simeq \R_{\k_P}$.

%%%%%%%%%%%%%%%%%%%%%%%%%%%%%%%%%%%%%%%%%%%%%%%%%%%
\subsection{Bounding~$I_G(\k_P)$}
%%%%%%%%%%%%%%%%%%%%%%%%%%%%%%%%%%%%%%%%%%%%%%%%%%%

Suppose~$P^{\circ}(\La_E)$ is not maximal.  Let~$N_E$ denote the normaliser in~$\GE$ of the product of maximal~$E_i$-split tori~$T_{E_i}$ in~${\GE}_i$, chosen relative to a certain~$E_i$-basis of~$V^i$ as in~\cite[Section 6]{St08}.  Let~$N_{\La}=\{w\in N_E:w\text{ normalises } P^{\circ}(\La_E)\cap M\}$ and~$N_{\La}(\rho)=\{n\in N_{\La}: \rho^n\simeq \rho\}$.

\begin{lemma}[{\cite[Corollary~6.16]{St08}}]\label{Cor616St}
The intertwining of~$\k_P^\circ$ is given by 
\[
I_G(\k_P^\circ)\supseteq J_P^\circ N_{\La}(\rho) J_P^{\circ},
\]
and the intertwining of~$\l_P^{\circ}=\l_P\mid_{J^{\circ}_P}$ is given by 
\[
I_G(\l_P^{\circ})= J^{\circ}_P N_{\La}(\rho) J^{\circ}_P.
\]
\end{lemma}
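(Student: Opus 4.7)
The proof will follow the structure of \cite[Corollary 6.16]{St08}, and the main task is to verify each step carries over to the modular setting. I sketch the three steps.

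First, to show $I_G(\kappa_P^\circ) \supseteq J_P^\circ N_\Lambda J_P^\circ$: given $n \in N_\Lambda$, I would write $n$ as a product of the standard generators of $N_\Lambda$ — block-permutation elements $s_{j,k}$, sign/flip elements $s_j$ and $s_j^\varpi$, and elements of $P^\circ(\Lambda_E) \cap M$. Using Lemma \ref{lemma75} to factor $\kappa_P|_{J \cap M}$ as $\kappa_{(0)}^+ \otimes \bigotimes_{j=1}^m \tilde\kappa_{(j)}$ over the Levi, I would assemble an intertwiner at $n$ from the equivalences $\tilde\kappa_{(j)} \simeq \tilde\kappa_{(k)}$ and $\tilde\kappa_{(j)} \circ \sigma_j \simeq \tilde\kappa_{(j)}$ provided by the companion lemma immediately following Lemma \ref{lemma75}. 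Because the Heisenberg intertwining is one-dimensional by Lemma \ref{intetas2} and all of the underlying combinatorics is controlled by pro-$p$ data, these constructions transfer to the modular setting unchanged.

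For $I_G(\lambda_P^\circ)$, the inclusion $\supseteq$ is then immediate by tensoring: for $n \in N_\Lambda(\rho)$ I would combine the intertwiner of $\kappa_P^\circ$ at $n$ from the first step with an intertwiner of $\rho$ at $n$ (which exists by definition of $N_\Lambda(\rho)$), formalised via Lemma \ref{BK532} with $\mu_i = \kappa_P^\circ$ and $\zeta_i = \rho|_{J_P^\circ}$. The reverse inclusion begins from $\lambda_P^\circ|_{J_P^1} = \eta_P^{\oplus \dim \rho}$, which gives $I_G(\lambda_P^\circ) \subseteq I_G(\eta_P) = J_P^1 G_E^+ J_P^1$, reducing us to analysing elements $g \in G_E^+$ that intertwine $\lambda_P^\circ$. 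The support-preserving algebra isomorphism of Lemma \ref{Lemma61St}(iv) converts this to a computation on $\Hh(G^+, \lambda)$; using the decomposition $\lambda = \kappa \otimes \rho$ together with a converse application of Lemma \ref{BK532}, the intertwining condition separates into one for $\kappa$ (which, via the first step and the semisimple intersection property of Corollary \ref{ssintprop}, confines $g$ to $J_P^\circ N_\Lambda J_P^\circ$ modulo $J_P^1$) and one for $\rho$ (which, as a representation of the finite quotient $K/J^1(\beta,\Lambda)$, confines the $N_\Lambda$-representative to $N_\Lambda(\rho)$).

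The main obstacle is verifying this last separation in the modular setting. Over $\mathbb{C}$ the argument leans on the Howlett--Lehrer description of the relevant finite-group Hecke algebra, but here the representations of the finite reductive quotient $M^\circ(\Lambda_E)$ are not in general semisimple, so the usual cuspidal-support arguments are unavailable. We circumvent this by working at the level of $\Hh(G^+, \lambda)$ directly: the support analysis of Theorem \ref{vigneraslevzero}, applied within $G_E^+$ (which is a product of groups of the same type as $G$ over extensions of $F$), controls the support of any intertwiner of $\lambda_P^\circ$, yielding the confinement to $J_P^\circ N_\Lambda J_P^\circ$; the $N_\Lambda(\rho)$-refinement then follows by restricting the resulting finite-group intertwiner to the $\rho$-factor, using only that a nonzero intertwiner of $\rho$ at $n$ forces $\rho^n \simeq \rho$ (which is characteristic-free, as $\rho$ is irreducible over the algebraically closed $R$).
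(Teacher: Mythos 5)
Your skeleton---the lower bound via the Weyl-element equivalences for the~$\tk_{(j)}$ coming from \cite[Proposition~6.3, Corollary~6.10]{St08}, and the upper bound via~$I_G(\l_P^\circ)\subseteq I_G(\eta_P)$ followed by separating the~$\k$- and~$\rho$-parts through Lemma~\ref{BK532}---is the same as the proof of \cite[Corollary~6.16]{St08} that the paper simply imports. The gap lies at the single point where the modular setting genuinely needs a new input, and there you have both misdiagnosed the difficulty and substituted a tool that does not supply it. The paper's proof is: exactly as in \cite{St08}, except that the appeal to \cite[Proposition~1.1]{St08} is replaced by Lemma~\ref{Lemma11}, i.e.\ the statement that if a distinguished double coset representative~$n$, with projection~$w$, satisfies~$\Hom_{U_B\cap U_B^n}(\rho,\rho^n)\neq 0$ for~$\rho$ with cuspidal restriction, then~$wK=K$; it is this statement that confines an intertwiner of~$\l_P^\circ$ lying over~$\GE^+$ to~$J_P^\circ N_{\La} J_P^\circ$, before the refinement to~$N_{\La}(\rho)$. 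The Howlett--Lehrer/Geck--Hiss--Malle description (Theorem~\ref{finiteheckedescription}) is not what fails here: it is needed later, for the Hecke-algebra structure in the cover construction (Theorem~\ref{skewcovers}), not for this lemma.

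Your proposed circumvention via Theorem~\ref{vigneraslevzero} does not close this hole. That theorem is a Mackey decomposition of~$\R_J\circ\I_K$ for full parahoric induction and restriction, so it only sees invariants under the pro-unipotent radicals~$U_J$,~$U_K$; the spaces you must control here are of Iwahori type,~$\Hom_{U_B\cap U_B^n}(\rho,\rho^n)$, because the pro-$p$ groups~$J^1_P$ and their conjugates, not full parahoric radicals, are what you retain after restricting an intertwiner of~$\l_P^\circ$. Moreover, translating intertwining of~$\l_P^\circ$ into a statement about~$\R_J\circ\I_K$ inside~$\GE^+$ would itself require a reduction-to-level-zero argument for the pairs~$(J_P^\circ,\l_P^\circ)$, which Section~\ref{Redlevzerosect} does not provide (that machinery is built for~$(J,\k\otimes\t)$) and which you do not construct. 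The missing ingredient is precisely Lemma~\ref{Lemma11}: by \cite[Lemma~3.21]{Morris93} the group~$P_J\cap{}^nU_K$ is the unipotent radical of a parabolic subgroup of~$M_J$, so cuspidality of~$\rho$ forces~$wK=K$ whenever the Hom space is nonzero---a short, characteristic-free argument, but it has to be stated and invoked; citing Theorem~\ref{vigneraslevzero} leaves the confinement unproved. Finally, note that your first step claims all of~$N_{\La}$ intertwines~$\k_P^\circ$, whereas the paper deliberately asserts only the containment of~$J_P^\circ N_{\La}(\rho)J_P^\circ$ (compare the caution expressed after Lemma~\ref{quasprojlemma}); the stronger claim is not needed and would require an argument you have not given.
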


The proof follows exactly as in \emph{op. cit.} with one caveat: we
replace the use of~\cite[Proposition~1.1]{St08} with 
Lemma \ref{Lemma11}. 

%%%%%%%%%%%%%%%%%%%%%%%%%%%%%%%%%%%%%%%%%%%%%%%%%%%
\subsubsection{A Hecke algebra injection}
%%%%%%%%%%%%%%%%%%%%%%%%%%%%%%%%%%%%%%%%%%%%%%%%%%%

Let~$[\La,n,0,\b]$ and~$[\La',n',0,\b]$ be skew semisimple strata with~$\AA(\La_E)\subseteq \AA(\La'_E)$.  Let~$\th\in \Cc_{-}(\La,0,\b)$ and~$\th'=\t_{\La,\La',\b}(\th)$ be semisimple characters,~$\k$ and~$\k'$ compatible~$\b$-extensions of~$\th$ and~$\th'$, and~$\rho$ denote the inflation of an irreducible cuspidal representation~$\ov{\rho}$ of~$M^{\circ}(\b,\La)$ to the groups~$J^\circ(\b,\La)$,~$J^{\circ}_{\La,\La'}$ and~$P^{\circ}(\La_E)$.  We put~$\l=\k\otimes \rho$ and~$\l'=\k'\mid_{J^{\circ}_{\La,\La'}}\otimes\rho$.  We have a canonical support preserving isomorphism~$\Hh(G,\l)\simeq\Hh(G,\l')$ as in~\cite[Proposition~7.1]{St08}, this follows essentially by transitivity of induction and our results on~$\b$-extensions.  Exactly as in \emph{op. cit.} Proposition 7.2, we have a support preserving isomorphism of algebras~$\Hh(J(\b,\La'),\l')\simeq \Hh(P(\La'_E),\rho)$.  The composition of these isomorphisms with the natural injection~$\Hh(J(\b,\La'),\l')\hookrightarrow \Hh(G,\l')$, gives us an injective map
\[
\Hh(P(\La'_E),\rho)\hookrightarrow \Hh(G,\l),
\]
which preserves support; if~$\phi\in \Hh(P(\La'_E),\rho)$ has support~$P^\circ(\La_E)yP^{\circ}(\La_E)$ for~$y\in P(\La'_E)$ then the corresponding~$\phi_G\in  \Hh(G,\l)$ has support~$J^{\circ}_PyJ^{\circ}_P$.

\subsubsection{Skew covers}
Let~$\pi$ be an irreducible cuspidal representation of~$G$, and consider the set of all such pairs~$([\La,n,0,\b],\th)$ such that~$[\La,n,0,\b]$ is a skew semisimple strata,~$\th\in \Cc_{-}(\La,0,\b)$ and~$\pi$ contains~$\th$.  Choose a pair in this set whose parahoric subgroup~$P^{\circ}(\La_E)$ is minimal under containment relative to all other pairs in the set.  Since there is a unique irreducible representation~$\eta$ of~$J^1(\b,\La)$ containing~$\th$,~$\pi$ must also contain~$\eta$.  Hence, by Lemma~\ref{simpcrit3},~$\pi$ contains a representation~$\l=\k^{\circ}\otimes \rho$ of~$J^{\circ}(\b,\La)$ where~$\k^{\circ}$ is a standard~$\b$-extension of~$\eta$ and~$\rho$ is an irreducible representation of~$J^{\circ}(\b,\La)/J^1(\b,\La)$.  As~$P^{\circ}(\La_E)$ is minimal, it follows that~$\rho$ is cuspidal (\cf~\cite[Lemma~7.4]{St08}). 

Suppose that either~$P^{\circ}(\La_E)$ is not a maximal parahoric subgroup in~$\GE$ or~$\GE$ does not have compact centre.  

\begin{thm}[{\cite[Proposition~7.13]{St08} (\cf~\cite[Appendix~A]{MiSt})}]\label{skewcovers}
There exists an exactly subordinate self-dual decomposition~$V=\bigoplus_{j=-m}^mW^{(j)}$ to~$[\La,n,0,\b]$ such that the pair~$(J^{\circ}_P, \l_P^{\circ})$ is a~$G$-cover of~$(J^{\circ}_P\cap M,\l_P\mid_M)$, where~$J^x{\circ}_P$ is as constructed in Lemma~\ref{lemma72} and~$\k_P$ as in Lemma~\ref{lemma75}.
\end{thm}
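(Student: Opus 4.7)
The plan is to follow the strategy of~\cite[Proposition~7.13]{St08} (see also the appendix of~\cite{MiSt}), verifying that each input admits an $\ell$-modular analogue supplied by the earlier sections. The argument decomposes into three parts: (i) producing an exactly subordinate self-dual decomposition; (ii) verifying the Iwahori and restriction axioms for a cover; and (iii) constructing a strongly positive element of $\Hh(G,\l_P^\circ)$ that is invertible.

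For (i), the hypothesis that $P^\circ(\La_E)$ is non-maximal or $\GE$ has non-compact centre means that $M^\circ(\La_E)$ embeds as a proper $\s$-stable Levi in a larger finite reductive quotient $M^\circ(\Y_E)$ for a self-dual $\o_E$-lattice sequence $\Y$ with $\AA(\La_E)\subsetneq\AA(\Y_E)$. This produces a non-trivial self-dual decomposition $V=\bigoplus_{j=-m}^m W^{(j)}$ subordinate to $[\La,n,0,\b]$; refining it on each companion block so that each $\AA(\La^{(j)})\cap B^{(j)}$ is maximal yields an exactly subordinate decomposition whose stabilizer Levi is the desired $M$. Step (ii) is then essentially formal: the Iwahori decomposition of $J^\circ_P$ with respect to $(M,P)$ is given by Lemma~\ref{lemma72}, while Lemmas~\ref{lemma75} and~\ref{Lemma61St} together give the compatibility $\l_P^\circ\mid_{J^\circ_P\cap M}=\l_P\mid_{J^\circ_P\cap M}$ and the triviality of $\l_P^\circ$ on $J^1(\b,\La)\cap U^\pm$. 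Since all the groups involved are pro-$p$ or finite reductive, non-semisimplicity causes no difficulty at this stage.

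The main obstacle is (iii). I would choose $\La'$ with $\AA(\La_E)\subsetneq\AA(\La'_E)$ and $\AA(\La'_E)\cap B$ a maximal self-dual $\o_E$-order; combining the support-preserving injection $\Hh(P(\La'_E),\rho)\hookrightarrow\Hh(G,\l)$ established just above the statement with the support-preserving isomorphism $\Hh(G,\l_P^\circ)\simeq\Hh(G,\l)$ coming from Lemma~\ref{Lemma61St}, the problem reduces to invertibility of the corresponding element in a level zero Hecke algebra in $\GE$. By Theorem~\ref{finiteheckedescription} and the level zero analysis of Section~\ref{Levzerosection}, this algebra has a presentation whose quadratic relations have coefficients $p_s$ that are powers of $q_F$, and therefore units in $R$ since $\ell\neq p$; so the invertible positive element constructed over $\CC$ in~\cite{St08} persists over $R$. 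The support computations of Theorem~\ref{intetas} together with Lemmas~\ref{intetakappa} and~\ref{Cor616St}, and the cuspidal reduction in Corollary~\ref{maincorollary}, are then used to identify its preimage in $\Hh(G,\l_P^\circ)$ as supported on a strongly positive double coset of $\GE$ in $G$. The delicate $\ell$-modular point is precisely that reduction modulo $\ell$ must not turn this element into a zero divisor, and the unit property of the coefficients $p_s$ is what prevents this.
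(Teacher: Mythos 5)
Your route is the same as the paper's: Theorem~\ref{skewcovers} is obtained by running the construction of~\cite[Proposition~7.13]{St08} essentially unchanged, the genuinely $\ell$-modular inputs being the $\beta$-extension results of Section~\ref{betasects} together with Lemma~\ref{simpcrit3}, and the Geck--Hiss--Malle presentation of Theorem~\ref{finiteheckedescription} in place of Morris's complex description of $\Hh(P(\MM),\rho^{\circ})$. Your steps (i)--(iii) track this, and your key point in (iii) --- that the quadratic relations have nonzero parameters, so the relevant strongly positive Hecke elements remain invertible over $R$ --- is exactly what Theorem~\ref{finiteheckedescription} is invoked for; note that the statement only guarantees $p_s\in R\setminus\{0,1\}$, which is all that is needed, so you should not lean on the stronger claim that the $p_s$ are powers of $q_F$.

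There is, however, one concrete omission in your checklist of modular inputs. The construction in~\cite[\S7]{St08} also requires the existence of covers in the general linear factors of the Levi $M=G^{(0)}\times\prod_{j=1}^m\tG^{(j)}$, namely~\cite[Proposition~6.7]{SeSt3}, and this is not supplied by the reduction you describe to invertibility in a level zero Hecke algebra of~$\GE$: the invertible elements attached to strongly positive elements whose nontrivial components lie in the $\tG^{(j)}$ blocks come from the $\GL$-side theory, not from Theorem~\ref{finiteheckedescription}. In the $\ell$-modular setting one must therefore also cite the analogue for general linear groups proved in~\cite[Remarque~2.25]{VA2}; with that added (and the cosmetic adjustment that ``component'' should read ``quotient'' in the definition of \emph{lies over}), your argument matches the paper's proof.
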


The construction follows \emph{mutatis mutandis} that of \emph{op. cit.}, noting that:

\begin{enumerate}
\item We use the results for~$\b$-extensions in positive characteristic from Section~\ref{betasects}, and use Lemma~\ref{simpcrit3} (the characteristic zero version of which is obvious).
\item In the construction of \emph{op. cit.} for a parahoric subgroup~$P^{\circ}(\MM)$ containing~$P^{\circ}(\La_E)$, the proof requires knowledge of the structure of~$\Hh(P(\MM),\rho^{\circ})$ (\cf~Section~$7.2.2$ of \emph{op. cit.}) given by the results of~\cite{Morris93}.  Here we must appeal to Geck--Hiss--Malle's generalisation of the description of the structure of~$\Hh(P(\MM),\rho^{\circ})$ to positive characteristic (see Lemma~\ref{finiteheckedescription}).
\item The proof of \emph{op. cit.} requires the construction of covers
  in general linear groups, namely it
  uses~\cite[Proposition~6.7]{SeSt3}.  For general linear groups, the
  analogous proposition holds in positive characteristic
  (see~\cite[Remarque~2.25]{VA2}).
\item In the definition of \emph{lies over} (cf. \cite[Definition 7.6]{St08}), the use of the word component should be replaced with quotient.
%%%%%%%%%%%%%%%%%%%%%%%%%%%%%%%%%%%%%%%%%%%%%
%%%%%%%%%%%%%%%%%%%%%%%%%%%%%%%%%%%%%%%%%%%%%%%%%%%
\end{enumerate}

%%%%%%%%%%%%%%%%%%%%%%%%%%%%%%%%%%%%%%%%%%%%%%%%%%%
%%%%%%%%%%%%%%%%%%%%%%%%%%%%%%%%%%%%%%%%%%%%%%%%%%%

%%%%%%%%%%%%%%%%%%%%%%%%%%%%%%%%%%%%%%%%%%%%%%%%%%%
%%%%%%%%%%%%%%%%%%%%%%%%%%%%%%%%%%%%%%%%%%%%%%%%%%%
\section{Self-dual and pro-$p$ covers}\label{selfdualcovers}
%%%%%%%%%%%%%%%%%%%%%%%%%%%%%%%%%%%%%%%%%%%%%%%%%%%
%%%%%%%%%%%%%%%%%%%%%%%%%%%%%%%%%%%%%%%%%%%%%%%%%%%
This section generalises the construction of covers we have give for skew strata to semisimple strata, following~\cite{MiSt}.  Also, inspired by~\cite[Lemme~$5.19$]{VA3}, we define pro-$p$ covers at the level of the~$J^1$ groups.  These results will not be used in the rest of the paper, and are included with future work in mind.

Let~$M$ be a Levi subgroup of~$G$ which is the stabiliser of the self-dual decomposition~$V=\bigoplus_{j=-m}^mW^{(j)}$.  Letting~$\tG^{(j)}=\Aut_F(W^{(j)})$ and~$G^{(0)}=\Aut_F(W^{(0)})\cap G$ we have~$M=G^{(0)}\times \prod_{j=1}^m \tG^{(j)}$.  Let~$\t=\t^{(0)}\otimes\bigotimes_{j=1}^m\ttau^{(j)}$ be a cuspidal irreducible representation of~$M$.  Let~$\Mm$ denote the stabiliser of~$V=\bigoplus_{j=-m}^mW^{(j)}$ in~$A$.

\begin{lemma}[{\cite[Proposition~8.10]{Finitude},~\cite[Proposition 5.1]{MiSt}}]
There are a self-dual semisimple stratum~$[\La,n,0,\b]$ with~$\b\in \Mm$ and a self-dual semisimple character~$\th$ of~$H^1(\b,\La)$ such that~$V=\bigoplus_{j=-m}^mW^{(j)}$ is properly subordinate to~$[\La,n,0,\b]$ and 
\[
\th\mid_{H^1(\b,\La)\cap M}=\th^{(0)}\otimes\bigotimes_{j=1}^m \(\tth^{(j)}\)^2,
\]
with~$\th^{(0)}$ contained in~$\t^{(0)}$ and, for each~$j>0$,~$\(\tth^{(j)}\)^2$ contained in~$\ttau_j$ where we have identified~$H^1(\b,\La)\cap M$ with~$H^1(\b^{(0)},\La^{(0)})\times\prod_{j=1}^m\tH^1(\b^{(j)},\La^{(j)})$ as in Lemma~\ref{lemma75}.
\end{lemma}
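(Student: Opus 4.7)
The plan is to build the desired datum component-by-component from the Levi factors and then glue, using the existing exhaustion of cuspidal representations in each factor together with the $\dag$-construction ideas of Section~3 to ensure compatibility.

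First, I would invoke the existence of semisimple characters inside cuspidal representations on each block. For the block $j=0$, the representation $\tau^{(0)}$ is a cuspidal representation of the smaller classical group $G^{(0)} = \Aut_F(W^{(0)}) \cap G$, so by Theorem~A (applied inductively, or by the analogous statement for semisimple strata of smaller total rank) it contains a self-dual semisimple character $\th^{(0)} \in \Cc_{-}(\La^{(0)},0,\b^{(0)})$ arising from some self-dual semisimple stratum $[\La^{(0)},n^{(0)},0,\b^{(0)}]$ in $\End_F(W^{(0)})$. For each $j>0$, the representation $\ttau^{(j)}$ is a cuspidal representation of the general linear group $\tG^{(j)} = \Aut_F(W^{(j)})$, and by the exhaustion of cuspidals via simple/semisimple characters in the GL case (Bushnell--Kutzko for complex coefficients, M\'inguez--S\'echerre in the modular setting) it contains a semisimple character $\tth^{(j)} \in \Cc(\La^{(j)},0,\b^{(j)})$ coming from some semisimple stratum $[\La^{(j)}, n^{(j)}, 0, \b^{(j)}]$.

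Second, I would glue these pieces into a single self-dual semisimple stratum for $G$. By passing within affine classes, I can arrange that all the periods $e(\La^{(j)})$ are equal; the intrinsic data attached to the strata and characters are unchanged by this rescaling, as remarked after Theorem~\ref{theorem22}. For $j<0$ I set $W^{(j)}=\overline{W^{(-j)}}$ (the orthogonal partner), transport the data by the involution $\s$, and put $\La^{(-j)}(r) = \La^{(j)}(1-r)^{\sharp}$, $\b^{(-j)} = -\ov{\b^{(j)}}$. Then $\La = \bigoplus_{j=-m}^{m} \La^{(j)}$ is a self-dual $\o_F$-lattice sequence in $V$ and $\b = \sum \b^{(j)} \in \Mm \cap A^{-}$. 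That $[\La,n,0,\b]$ is semisimple is straightforward from the splitting: each block is already simple or semisimple, and no pair of distinct blocks can give a stratum equivalent to a simple one because the underlying $E_i$-fields are (arranged to be) distinct or the blocks sit in orthogonal factors; the standard verification (cf.~\cite[Proposition~8.10]{Finitude}) applies.

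Third, I need to check the proper subordination condition and produce the desired $\th$. Proper subordination requires that, for each $r$ and $i$, at most one $W^{(j)}$ sees a drop in $\La(r)\cap V^i$; by a further translation within the affine class of each $\La^{(j)}$ (shifting the indexing parameter on each block independently), I can stagger the jumps to achieve this, exactly as in~\cite[\S2.5]{St08}. Once the stratum is in hand, the self-dual semisimple character $\th$ is built by prescribing its restriction to the Iwahori factors: on $H^1(\b,\La) \cap M$ it is forced to be $\th^{(0)} \otimes \bigotimes_{j=1}^{m}(\tth^{(j)})^{2}$ (using $\sigma$ to identify $H^1$ of the $+j$ and $-j$ blocks), and on the unipotent parts of the Iwahori decomposition (Lemma~\ref{lemma72}) it is trivial. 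That this prescription determines a bona fide self-dual semisimple character in $\Cc_{-}(\La,0,\b)$ follows from the inductive definition of the sets $\Cc$ (cf.~\cite[Lemma~3.15]{St05}) and the analogous construction in Lemma~\ref{descthetadag} for the $\dag$-construction.

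The main obstacle is the gluing step: one must simultaneously arrange that the individual lattice sequences can be rescaled and shifted to become properly subordinate \emph{and} that the resulting composite stratum remains semisimple (so no unexpected equivalences arise between blocks after rescaling). In our setting these two requirements are compatible because the rescaling freedom is within affine classes and thus preserves all of the relevant data ($\HH, \JJ$, character sets, transfers). This is precisely the content of~\cite[Proposition~5.1]{MiSt} in the semisimple case, and the argument given there applies verbatim once the inputs above are assembled.
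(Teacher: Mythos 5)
Your overall strategy --- extract a (self-dual, resp.\ simple) character from each factor of the cuspidal representation $\t^{(0)}\otimes\bigotimes_j\ttau^{(j)}$, glue the data into one stratum with $\b\in\Mm$, and then adjust within affine classes to force proper subordination --- is indeed the strategy behind the cited results; note that the paper itself gives no argument for this lemma and simply quotes \cite[Proposition~8.10]{Finitude} and \cite[Proposition~5.1]{MiSt}. However, your sketch has a genuine gap exactly where those references do real work: the semisimplicity of the glued stratum. You assert that no pair of distinct blocks can combine to a stratum equivalent to a simple one because ``the underlying $E_i$-fields are (arranged to be) distinct or the blocks sit in orthogonal factors''. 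Neither escape is available. The simple characters occurring in a given cuspidal representation form a single conjugacy class, so their endo-class is an invariant you cannot rearrange; if, say, $\ttau^{(1)}$ and $\ttau^{(2)}$ are the same cuspidal representation on isomorphic blocks, or a GL-block character shares its endo-class with its own $\s$-twist on the $(-j)$-block or with a factor of the character contained in $\t^{(0)}$, then the direct-sum stratum with the $W$-splitting is \emph{not} semisimple; and orthogonality of the blocks is irrelevant to condition (ii) in the definition of a semisimple stratum. The correct resolution, which is the substance of \cite[Proposition~5.1]{MiSt} and \cite[Proposition~8.10]{Finitude}, is to \emph{merge} such blocks into a single block $V^i$ of the splitting, using transfer/endo-class theory to realise a common simple element on the merged space, compatibly with the involution $\s$ so that the resulting stratum is self-dual; the decomposition $V=\bigoplus_{j=-m}^m W^{(j)}$ is then only \emph{subordinate} to the stratum, not equal to its splitting. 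Your argument conflates the subordinate decomposition with the splitting and therefore breaks down at this point.

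A second, smaller, problem is a factor of $2$. The statement requires $\bigl(\tth^{(j)}\bigr)^2$ --- which, with $\tth^{(j)}\in\Cc(\La^{(j)},0,\b^{(j)})$, is a simple character attached to $2\b^{(j)}$ --- to be contained in $\ttau^{(j)}$. You choose $\b^{(j)}$ so that $\tth^{(j)}$ itself is contained in $\ttau^{(j)}$; the glued self-dual character then restricts on the $j$-th factor of $H^1(\b,\La)\cap M$ to the \emph{square} of the character you started from, which in general does not occur in $\ttau^{(j)}$. The fix is to start from a simple stratum element $c_j$ whose simple character is contained in $\ttau^{(j)}$ and set $\b^{(j)}=c_j/2$ (legitimate since $p$ is odd, and squaring identifies the character sets for $\b^{(j)}$ and $2\b^{(j)}$), so that the square of the associated $\tth^{(j)}$ is the character actually occurring in $\ttau^{(j)}$; this adjustment, like the merging above, is part of what the cited propositions carry out and cannot simply be declared ``verbatim''.
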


Let~$\rho$ be an irreducible cuspidal representation of~$M^{\circ}(\La_E)=J^{\circ}_P/J^1_P\simeq J^\circ_L/J^1_L$.  We can form the representations~$\l_P^{\circ}=\k_P\otimes\rho$ of~$J_P^{\circ}$ and~$\l_L^{\circ}=\k_L\otimes\rho$ of~$J_L^{\circ}$ by inflation.   

\begin{thm}[{\cite[Theorem~$4.3$]{MiSt} (\cf~also~\cite[Proposition~$7.13$]{St08})}]\label{Gcovers}
The pair~$(J_P^\circ,\l_P^\circ)$ is a~$G$-cover of~$(J_L^\circ,\l_L^\circ)$ relative to~$P$.
\end{thm}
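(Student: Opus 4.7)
The plan is to verify the axioms defining a $G$-cover, adapting \cite[Theorem~4.3]{MiSt} to positive characteristic by systematically invoking the modular generalisations of the auxiliary results already established in this paper.

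First I would dispose of the structural axioms of a cover. The equality $J_P^\circ \cap L = J_L^\circ$ together with the restriction formula $\lambda_P^\circ|_{J_L^\circ} = \lambda_L^\circ$ follows from Lemma \ref{lemma72} and Lemma \ref{lemma75}. The Iwahori decomposition of $J_P^\circ$ with respect to $(L,P)$ likewise comes from Lemma \ref{lemma72}, and the triviality of $\lambda_P^\circ$ on $J_P^\circ \cap U$ and $J_P^\circ \cap U^-$ follows from the defining property of $\kappa_P$ (living on $(J^1 \cap U)$-fixed vectors of $\kappa$) together with the fact that $\rho$ is inflated from $M^\circ(\Lambda_E)$ and hence trivial on $J_P^1$.

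The main task is the Hecke algebra axiom: the existence of a strongly $(P, J_P^\circ)$-positive element $z$ in the centre of $L$ for which the corresponding element $T_z \in \mathcal{H}(G, \lambda_P^\circ)$ is invertible. My plan is to reduce this to two known cases. The Levi $L$ stabilises the self-dual decomposition $V = \bigoplus_{j=-m}^m W^{(j)}$ and so decomposes as $G^{(0)} \times \prod_{j=1}^m \widetilde{G}^{(j)}$; by Lemma \ref{lemma75} the representation $\lambda_L^\circ$ factors accordingly, with the $\widetilde{G}^{(j)}$ blocks being general linear factors whose cover theory is handled by M\'inguez--S\'echerre \cite{VA2}, while the $G^{(0)}$ block is classical. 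I would refine the decomposition to a self-dual decomposition which is exactly subordinate to $[\Lambda,n,0,\beta]$ and apply Theorem \ref{skewcovers} to obtain a skew cover at that finer level; transitivity of covers, made rigorous by the support-preserving Hecke algebra injections of Lemma \ref{Lemma61St} and the isomorphism $\mathcal{H}(G,\lambda) \simeq \mathcal{H}(G,\lambda')$ preceding Theorem \ref{skewcovers}, then assembles these pieces into a cover relative to $L$ itself.

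The main obstacle is the non-semisimplicity of smooth representations of compact open subgroups in characteristic $\ell$, which prevents the direct translation of several steps of \cite{MiSt}. This is circumvented by pushing all intertwining and decomposition arguments through the pro-$p$ subgroups $J_P^1$ and $J_L^1$, where semisimplicity is automatic, and transferring the results back to $J_P^\circ, J_L^\circ$ via Lemma \ref{simpcrit3}, as was done throughout Section \ref{skewcoverssect}. A secondary subtlety is the structure of the spherical Hecke algebra $\mathcal{H}(P(\mathfrak{M}),\rho^\circ)$ attached to parahoric subgroups $P^\circ(\mathfrak{M}) \supseteq P^\circ(\Lambda_E)$ that enters in the reduction; here one appeals to the Geck--Hiss--Malle description recorded in Theorem \ref{finiteheckedescription} in place of the Howlett--Lehrer result used in the complex case, which ensures that the invertibility of $T_z$ can still be checked by a positive-characteristic parameter computation.
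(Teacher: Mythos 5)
Your structural steps (the Iwahori decompositions, $J_P^\circ\cap L=J_L^\circ$, the restriction $\lambda_P^\circ|_{J_L^\circ}=\lambda_L^\circ$, triviality on the unipotent parts) and your list of modular adaptations (pro-$p$ reductions plus Lemma~\ref{simpcrit3}, Theorem~\ref{finiteheckedescription} in place of Howlett--Lehrer, the modular GL covers of~\cite{VA2}) are fine, and the adaptations are exactly the ones the paper records; but the paper's proof of Theorem~\ref{Gcovers} is simply a deferral to the proof of \cite[Theorem~4.3]{MiSt} with those adaptations, whereas your sketch tries to rebuild the key Hecke-algebra step, and that step has a genuine gap.

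The gap is your appeal to Theorem~\ref{skewcovers}. That theorem is proved only for \emph{skew} semisimple strata (and, as stated, only for $\rho$ arising from an irreducible cuspidal representation of $G$ with a minimally chosen parahoric, under the hypothesis that $P^{\circ}(\Lambda_E)$ is non-maximal or $\GE$ has non-compact centre). In Section~\ref{selfdualcovers} the stratum $[\Lambda,n,0,\beta]$ is merely \emph{self-dual}: in general $I_+\neq\emptyset$, so $\GE$ has general linear factors, and refining the decomposition $V=\bigoplus_j W^{(j)}$ to an exactly subordinate one does not change this --- skewness is a property of the stratum, not of the subordinate decomposition --- so there is no ``skew cover at the finer level'' to quote. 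Handling precisely these non-skew indices is the new content of \cite[Theorem~4.3]{MiSt}, which is what Theorem~\ref{Gcovers} asserts; your reduction therefore begs the question at the decisive point. (The general linear groups you do account for are the blocks $\widetilde{G}^{(j)}$ of $L$; the problematic ones are the GL factors of $\GE$ attached to $I\setminus I_0$, which your ``GL blocks plus one classical block'' splitting never sees, and the covers needed there are those of \cite{SeSt3}/\cite{VA2} inside the classical group, not covers internal to the factors of $L$.) There is also a secondary mismatch in the assembly step: a $G$-cover obtained at a finer Levi $M'\subset L$ has underlying pair $(J_{P'}^\circ,\lambda_{P'}^\circ)$ with $J_{P'}^\circ\subseteq J_P^\circ$, and Bushnell--Kutzko transitivity then shows that \emph{this} pair is a $G$-cover of its intersection with $L$; it does not show that $(J_P^\circ,\lambda_P^\circ)$ is a $G$-cover of $(J_L^\circ,\lambda_L^\circ)$. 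Converting one statement into the other requires the support-preserving Hecke algebra comparisons and transfer of invertible elements (the material around Lemma~\ref{Lemma61St}), i.e.\ essentially re-running the argument of \cite{MiSt} and \cite{St08} rather than citing it, so ``transitivity of covers assembles these pieces'' is not yet a proof.
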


The proof generalises to positive characteristic with the same adaptions as commented on the proof of Theorem~\ref{skewcovers}.

\begin{thm}\label{Gcovers2}
The pair~$(J^1_P,\eta_P)$ is a~$G$-cover of~$(J^1_L,\eta_L)$ relative to~$P$.
\end{thm}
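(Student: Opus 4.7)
The plan is to verify that $(J^1_P, \eta_P)$ satisfies the defining conditions of a $G$-cover of $(J^1_L, \eta_L)$ relative to~$P$. The Iwahori decomposition of $J^1_P$ with respect to $(M, P)$ is supplied by Lemma~\ref{lemma72}, the triviality of $\eta_P$ on $J^1(\b,\La)\cap U$ is built into its construction, and the restriction identity $\eta_P\mid_{J^1_L}=\eta_L$ is a special case of Lemma~\ref{lemma75}. The required behaviour on the unipotent radical of the opposite parabolic is extracted from the analogous construction of $\eta_{P^-}$, together with the unicity of~$\eta_L$ appearing in both $\eta_P$ and $\eta_{P^-}$.

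The crux is to produce a strongly $(P, J^1_P)$-positive element $z \in Z(M)$ together with an invertible element $\Psi \in \Hh(G, \eta_P)$ supported on $J^1_P z J^1_P$. The plan is to import such a datum from Theorem~\ref{Gcovers}. Fix any irreducible cuspidal $R$-representation $\rho_0$ of the finite reductive quotient $J^{\circ}_P/J^1_P$ (such $\rho_0$ exists for any finite reductive group, in any characteristic); Theorem~\ref{Gcovers} then yields an element $z \in Z(M)$ that is strongly $(P, J^{\circ}_P)$-positive (hence \emph{a fortiori} strongly $(P, J^1_P)$-positive, as $J^1_P \subseteq J^{\circ}_P$), together with an invertible $\Phi \in \Hh(G, \k^{\circ}_P\otimes \rho_0)$ supported on $J^{\circ}_P z J^{\circ}_P$.

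To transfer invertibility from the $J^{\circ}_P$-level to the $J^1_P$-level, I would compare the two Hecke algebras via the intermediate representation $\ind_{J^1_P}^{J^{\circ}_P}\eta_P$. By Lemma~\ref{simpcrit3} together with the construction of~$\k_P$, this induced representation is isomorphic to $\k^{\circ}_P \otimes R[J^{\circ}_P/J^1_P]$ as a representation of~$J^{\circ}_P$; Lemma~\ref{heckelemma}\ref{heckelemma:3} then furnishes a support-preserving algebra isomorphism $\Hh(G, \ind_{J^1_P}^{J^{\circ}_P}\eta_P) \simeq \Hh(G, \eta_P)$. Cutting by the idempotent of $R[J^{\circ}_P/J^1_P]$ that projects onto the $\rho_0$-isotypic summand, $\Phi$ transports to an element $\Psi \in \Hh(G, \eta_P)$ supported on $J^1_P z J^1_P$ whose invertibility can then be read off from that of~$\Phi$ by the same idempotent formalism.

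The main obstacle will be making this last transfer rigorous in the modular setting, where $R[J^{\circ}_P/J^1_P]$ may fail to be semisimple and the projector formalism is more delicate. The intertwining computations of Theorem~\ref{intetas} and Lemma~\ref{intetakappa}, which force every element of $\Hh(G, \eta_P)$ with given double-coset support to be a scalar multiple of a canonical representative, should reduce the verification to a finite-dimensional check inside the level-zero Hecke algebra of~$\GE$, mirroring the strategy already used in the proof of Theorem~\ref{skewcovers}.
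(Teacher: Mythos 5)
Your proposal takes a genuinely different route from the paper --- a direct verification of the Bushnell--Kutzko conditions, with the invertible Hecke element imported from a single cuspidal type at the level of~$J^\circ_P$ --- but the crucial transfer step does not work, and it is exactly the point you flag as "the main obstacle" without resolving it. Via Lemma~\ref{simpcrit3} and Lemma~\ref{heckelemma}\ref{heckelemma:3} one has~$\Hh(G,\eta_P)\simeq\Hh\bigl(G,\k^{\circ}_P\otimes R[J^{\circ}_P/J^1_P]\bigr)$, so invertibility in~$\Hh(G,\eta_P)$ is governed by the \emph{whole} group algebra~$R[M^{\circ}(\La_E)]$, not by its~$\rho_0$-part. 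In the modular case, when~$\ell$ divides the order of~$M^{\circ}(\La_E)$ (precisely the new case this paper treats), that group algebra is not semisimple, there is no idempotent projecting onto the~$\rho_0$-isotypic summand, and even within a block the~$\rho_0$-isotypic subspace need not be a direct summand; the element~$\Phi\in\Hh(G,\k^{\circ}_P\otimes\rho_0)$ therefore only controls a proper quotient of~$\ind_{J^1_P}^G\eta_P$, and its invertibility cannot be promoted to invertibility of a lift in~$\Hh(G,\eta_P)$. More structurally, a single cuspidal~$\rho_0$ cannot suffice: the simple subquotients of~$R[M^{\circ}(\La_E)]$ realise every irreducible representation of the finite quotient, with every possible cuspidal support, so any argument must handle all pairs~$(\ov{L},\ov{\tau})$. (A smaller point: the support-preserving isomorphism of Lemma~\ref{heckelemma}\ref{heckelemma:3} matches a function supported on~$J^{\circ}_P z J^{\circ}_P$ with a \emph{sum} of functions supported on the several~$J^1_P$-double cosets inside it, so even a successful transfer would not by itself produce an element supported on the single coset~$J^1_P z J^1_P$.)

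The paper avoids pro-$p$-level Hecke invertibility altogether. By the criterion of~\cite[p.~246, (0.5)]{Blondel}, it suffices to prove that for every smooth~$\pi$ the map~$\R_{\k_P}(\pi)\to\R_{\k_L}(r^G_P(\pi))$, $f\mapsto r^G_P\circ f$, is injective. If its kernel~$\om$ were non-zero, one takes an irreducible subrepresentation with cuspidal support~$(\ov{L},\ov{\tau})$ in~$M^{\circ}(\La_E)$, passes to a smaller stratum~$\La'$ whose parahoric is the preimage of the parabolic~$\ov{P}$ attached to~$\ov{L}$, and uses exactness of the finite Jacquet functor~$r^{M(\La_E)}_{\ov{P}}$, the compatibility~$r^{M(\La_E)}_{\ov{P}}\circ\R_{\k_P}\simeq\R_{\k'_P}$ of~$\b$-extensions, and adjunction to produce a non-zero element in the kernel of~$\Hom_{J'_P}(\k'_P\otimes\ov{\tau},\pi)\to\Hom_{J'_L}(\k'_L\otimes\ov{\tau},r^G_P(\pi))$, contradicting Theorem~\ref{Gcovers} applied at level~$\La'$ to the cuspidal pair~$(\ov{L},\ov{\tau})$. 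This variation of the stratum with the cuspidal support is exactly the device that replaces your single-$\rho_0$ reduction, and it is the ingredient your proposal is missing.
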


\begin{proof}
By~\cite[Page~246,~(0.5)]{Blondel}, it is equivalent to show that; for all smooth~$R$-representations~$\pi$ of~$G$ the map of vector spaces
\[
\Phi: \R_{\k_P}(\pi)\to \R_{\k_L}(r^G_P(\pi)),
\]
given by~$\Phi(f)=r^G_P\circ f$ for~$f\in \R_{\k_P}(\pi)$, is injective.  This map is easily checked to be a homomorphism of representations of~$M^{\circ}(\La_E)$. Assume~$\ker(\Phi)$, the kernel of~$\Phi$, is non-zero and let~$\phi$ be an irreducible subrepresentation of~$\ker(\Phi)$.  Let~$(\ov{\tau},\ov{L})$ be in the cuspidal support of~$\phi$, here we mean that~$\phi$ is a quotient of~$i^G_P(\ov{\tau})$.

Thus~$\ov{L}$ is a Levi subgroup of~$M^{\circ}(\La_E)$ (we allow the case~$\ov{L}=M^{\circ}(\La_E)$).  Let~$\ov{P}$ be the standard parabolic subgroup of~$M^{\circ}(\La_E)$ containing~$\ov{L}$ with Levi decomposition~$\ov{P}=\ov{L}\,\ov{U}$.  Choose a self-dual~$\o_E$-lattice sequence~$\La'$ such that~$P^{\circ}(\La'_E)$ is equal to the preimage of~$\ov{P}$ under the projection~$P^{\circ}(\La_E)\to M^{\circ}_P$ and such that~$P^{\circ}(\La)\supseteq P^{\circ}(\La')$ (considering~$\La$ and~$\La'$ as~$\o_F$-lattice sequences), this is possible by~\cite[Lemma~2.8]{St08}.  Let~$\k'=b_{\La,\La'}(\k)$.  The decomposition of~$V=\bigoplus_{j=-m}^mW_j$ is exactly subordinate to the~$[\La',n',0,\b]$.  Hence we can form the groups 
\[
J'_P=H^1(\b,\La')(J^{\circ}(\b,\La')\cap P),\quad J'_L=J'_P\cap L
\]
and the representations~$\k'_P$ of~$J'_P$ (the natural representation on the~$(J^{\circ}(\b,\La')\cap U)$-fixed vectors of~$\k'$) and~$\k'_L=\k'_P\mid_{J'_ L}$.   

We have the left exact sequence
\[
0\to \om\to\R_{\k_P}(\pi)\to \R_{\k_L}(r^G_P(\pi)).
\]
We apply the Jacquet functor~$r^{M(\La_E)}_{\ov{P}}$ (which is exact) and have
\[
0\to r^{M(\La_E)}_{\ov{P}}(\om)\to\R_{\k'_P}(\pi)\to \R_{\k'_L}(r^G_P(\pi)),
\]
as~$r^{M(\La_E)}_{\ov{P}}\circ \R_{\k_P}(\pi)   \simeq \R_{\k'_P}(\pi)$ and~$r^{M(\La_E)}_{\ov{P}}\circ\R_{\k_L}(r^G_P(\pi))\simeq \R_{\k'_L}(r^G_P(\pi))$ by compatibility of~$\k$ and~$\k'$. Then, taking the~$\ov{\tau}$-isotypic components (which is a left exact functor) we have an exact sequence
\[
0\to \Hom_{\ov{L}}(\ov{\tau},r^{M(\La_E)}_{\ov{P}}(\om))\to\Hom_{\ov{L}}(\ov{\tau},\R_{\k'_P}(\pi))\to \Hom_{\ov{L}}(\ov{\tau},\R_{\k'_L}(r^G_P(\pi))).
\]
By right adjointness of~$\R_{\k'_P}$ and~$\R_{\k'_L}$ with~$\I_{\k'_P}$ and~$\I_{\k'_L}$ and right adjointness of restriction with compact induction this is isomorphic to the exact sequence
\[
0\to  \Hom_{\ov{L}}(\ov{\tau},r^{M(\La_E)}_{\ov{P}}(\om))\to \Hom_{J^{\circ}_P}(\k'_P\otimes\ov{\tau},\pi)\to \Hom_{J^{\circ}_L}(\k'_L\otimes \ov{\tau},r^G_P(\pi))
\]
As~$\om$ contains a subrepresentation with cuspidal support~$\ov{\tau}$,~$\Hom_{\ov{L}}(\ov{\tau},r^{M(\La_E)}_{\ov{P}}(\om))\neq 0$. However, by Theorem~\ref{Gcovers},~$(J'_P,\k'_P\otimes\ov{\tau})$ is a~$G$-cover of~$(J'_L,\k'_L\otimes\ov{\tau})$ relative to~$P$. Hence, by~\cite[Page~246,~(0.5)]{Blondel}, the map~$\Hom_{J^{\circ}_P}(\k'_P\otimes\ov{\tau},\pi)\to \Hom_{J^{\circ}_L}(\k'_L\otimes \ov{\tau},r^G_P(\pi))$ is injective, a contradiction.  
\end{proof}
%
%By~\cite[II~10.1]{vignerasselecta} and unicity of the adjoint, we have the following corollary of Theorem~\ref{Gcovers2}.
%\begin{corollary}\label{pararests}
%Let~$(J^{\circ}_P,\k^{\circ}_P\otimes\rho)$ be a~$G$-cover of~$(J^{\circ}_L,\k^{\circ}_L\otimes\rho)$ as constructed in Theorem~\ref{Gcovers}. We have an isomorphism 
%\[
%\R_{\k^{\circ}}(\pi)\simeq \R_{\k^{\circ}_L}(r^G_P(\pi))
%\]
%of representations of~$M^{\circ}(\La_E)$.  Furthermore for any smooth representation~$\tau$ of~$M^{\circ}(\La_E)$ we have an isomorphism
%\[
%\I_{\k^{\circ}}(\tau)\simeq i^G_P(\I_{\k^{\circ}_L}(\tau))
%\]
%of representations of~$G$.
%\end{corollary}
%This corollary extends the direct application of~\cite[Th\'eor\`eme 2]{Blondel} in the setting considered here.
%

%%%%%%%%%%%%%%%%%%%%%%%%%%%%%%%%%%%%%%%%%%%%%%%%%%%
%%%%%%%%%%%%%%%%%%%%%%%%%%%%%%%%%%%%%%%%%%%%%%%%%%%
\section{Quasi-projectivity of types}\label{Qproj}
%%%%%%%%%%%%%%%%%%%%%%%%%%%%%%%%%%%%%%%%%%%%%%%%%%%
%%%%%%%%%%%%%%%%%%%%%%%%%%%%%%%%%%%%%%%%%%%%%%%%%%%
This section shows that the types we consider are quasi-projective, so that Theorem \ref{quasiprojvigneras} applies.

\begin{lemma}\label{quasprojlemma} Suppose that~$n$ is a distinguished double coset representative of~$P_{\Upsilon}^\circ\backslash \GE/P^{\circ}_{\Upsilon}$ with projection~$w$ in the affine Weyl group of~$\GE$ such that, if~$P^{\circ}_{\Upsilon}$ corresponds to the subset~$K$ of the fundamental reflections in the affine Weyl group~$W'$ (cf. Section \ref{Levzerosection}), then~$wK=K$. Let~$\tau$ be a representation of~$M^{\circ}(\Upsilon_E)$.  Then, we have an isomorphism of vector spaces
\[\Hom_{J^1_{\Y}}(\kappa^{\circ}_{\Y},\ind^{J^{\circ}_{\Upsilon}}_{J^{\circ}_{\Upsilon}\cap (J^{\circ}_{\Upsilon})^n}(\kappa^{\circ}_{\Upsilon}\otimes\tau)^n)\simeq %\Hom_{P^1_{\Y}}(1,\ind_{P^{\circ}_{\Y}\cap (P^{\circ}_{\Y})^n}^{P^{\circ}_{\Y}}(\t^n))\simeq 
\tau^n,\]
which is an isomorphism of representations if~$n\in I_G(\kappa_\Y^{\circ})$.
\end{lemma}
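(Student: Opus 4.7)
The plan is to use the hypothesis $wK=K$ to collapse the compact induction on the left--hand side and then apply Lemma~\ref{BK532} directly. The key structural input is that a \emph{distinguished} double coset representative $n\in\GE$ with $wK=K$ lies in $N_{\GE}(P^{\circ}(\Upsilon_E))=P(\Upsilon_E)$. In the maximal case this is the argument from~\cite[Appendix]{Morris93} already invoked in the proof of Corollary~\ref{maincorollary}\ref{maincorollarypart2}; although the statement there is recorded under the maximality hypothesis, the underlying Coxeter--theoretic fact (a distinguished $w$ with $wK=K$ is a length--zero element of the extended affine Weyl group modulo $W_K$) is valid in general. Since $n\in P(\Upsilon_E)$ normalises the self-dual lattice sequence $\Upsilon_E$, it normalises all the groups attached to $\Upsilon_E$ and $\beta$; in particular it normalises $H^1_\Y$, $J^1_\Y$ and $J^{\circ}_\Y$, so $J^{\circ}_\Upsilon\cap(J^{\circ}_\Upsilon)^n=J^{\circ}_\Upsilon$ and the compact induction reduces to $(\kappa^{\circ}_\Upsilon\otimes\tau)^n$ itself.

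Next, since $n\in\GE\subseteq I_G(\theta_\Y)$ and $n$ normalises $H^1_\Y$, the non-vanishing of $\Hom_{H^1_\Y}(\theta_\Y,\theta_\Y^n)$ forces $\theta_\Y^n=\theta_\Y$, and hence $\eta_\Y^n=\eta_\Y$ by the uniqueness in Theorem~\ref{theorem33}. I then apply Lemma~\ref{BK532} with $X_1=X_1^1=J^1_\Y$, $X_2=J^{\circ}_\Y$, $X_2^1=J^1_\Y$, $\mu_1=\eta_\Y$, $\mu_2=\kappa^{\circ}_\Y{}^n$, $\zeta_1=1$ and $\zeta_2=\tau^n$. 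Its hypothesis holds because
\[
\Hom_{J^1_\Y}(\eta_\Y,\kappa^{\circ}_\Y{}^n)=\Hom_{J^1_\Y}(\eta_\Y,\eta_\Y^n)=\Hom_{J^1_\Y}(\eta_\Y,\eta_\Y)\simeq R,
\]
using $\kappa^{\circ}_\Y{}^n|_{J^1_\Y}=\eta_\Y^n=\eta_\Y$ and the uniqueness of the Heisenberg representation. Since $\kappa^{\circ}_\Y|_{J^1_\Y}=\eta_\Y$ and $\tau^n$ is trivial on $J^1_\Y$, the lemma furnishes the vector-space isomorphism
\[
\Hom_{J^1_\Y}\bigl(\kappa^{\circ}_\Y,(\kappa^{\circ}_\Y\otimes\tau)^n\bigr)\;\xrightarrow{\ \sim\ }\;\Hom_{J^1_\Y}(1,\tau^n)\;=\;\tau^n.
\]

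For the refinement to an isomorphism of $M^{\circ}(\Upsilon_E)$-representations when $n\in I_G(\kappa^{\circ}_\Y)$, one observes that the natural action of $M^{\circ}(\Upsilon_E)$ on the left-hand Hom space is given by $m\cdot f=(\kappa^{\circ}_\Y\otimes\tau)^n(j)\circ f\circ\kappa^{\circ}_\Y(j)^{-1}$ for $j\in J^{\circ}_\Y$ lifting $m$. The hypothesis $n\in I_G(\kappa^{\circ}_\Y)$ together with the fact that $n$ normalises $J^{\circ}_\Y$ yields a non-zero element $S\in\Hom_{J^{\circ}_\Y}(\kappa^{\circ}_\Y,\kappa^{\circ}_\Y{}^n)$, and using this $J^{\circ}_\Y$--equivariant $S$ inside the construction of Lemma~\ref{BK532}, the calculation carried out in the final paragraph of the proof of Lemma~\ref{lemmaifzerothen} applies essentially verbatim and shows that $T\mapsto S\otimes T$ intertwines the two $M^{\circ}(\Upsilon_E)$--actions. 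The chief technical obstacle is the opening step: verifying, without maximality of $P^{\circ}_\Upsilon$, that a distinguished $n$ with $wK=K$ really does normalise $P^{\circ}(\Upsilon_E)$; this is purely a statement about the extended affine Weyl group of $\GE$ and must be extracted carefully from~\cite[Appendix]{Morris93}.
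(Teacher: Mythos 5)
There is a genuine gap, and it is exactly at the step you flag as the ``chief technical obstacle''. Your argument rests on the claim that a distinguished representative $n$ of $P^{\circ}_{\Y}\backslash\GE/P^{\circ}_{\Y}$ with $wK=K$ lies in $N_{\GE}(P^{\circ}(\Y_E))=P(\Y_E)$, and that the underlying Coxeter-theoretic fact holds without maximality of $P^{\circ}_{\Y}$. This is false. The appeal to \cite[Appendix]{Morris93} in the proof of Corollary~\ref{maincorollary}\ref{maincorollarypart2} is made precisely under the maximality hypothesis, and the statement does not survive dropping it: take $K=\emptyset$, so that $P^{\circ}_{\Y}$ is an Iwahori subgroup and $W_K$ is trivial; then every $w\in W$ is distinguished and satisfies $wK=K$ vacuously, yet a positive-length $w$ does not normalise the Iwahori subgroup (nor is it of length zero modulo $W_K$). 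The present lemma is needed exactly in this non-maximal generality, since it feeds into Theorem~\ref{quasprojtheorem} (and Theorem D), where the type is arbitrary; note also Corollary~\ref{levelzerocorollaries}\ref{part1levelzerocorollaries}, where the sum over $w$ with $wK=K$ can be infinite for non-maximal $K$ --- these $w$ certainly do not all normalise $P^{\circ}_{\Y}$. The remark following the lemma in the paper (that it is not even proved that such $n$ intertwine $\k^{\circ}_{\Y}$) is a further signal that no such normalisation is available; under your claim, $n$ would fix $\th_{\Y}$ and $\eta_{\Y}$ and normalise $J^{\circ}_{\Y}$, which is much stronger than what can be established. Consequently your collapse $J^{\circ}_{\Y}\cap(J^{\circ}_{\Y})^n=J^{\circ}_{\Y}$, and everything built on it (the identity $\th_{\Y}^n=\th_{\Y}$, the application of Lemma~\ref{BK532} over the full group $J^1_{\Y}$), is not justified.

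What the hypothesis $wK=K$ actually buys is the weaker group-theoretic identity, via Morris's Lemma 3.19 (i.e.\ $P_K=U_K(P_K\cap\presuper{n}P_K)$, no maximality required), that
\[
J^{\circ}_{\Y}\;=\;J^1_{\Y}\,\bigl(P^{\circ}_{\Y}\cap (P^{\circ}_{\Y})^n\bigr)\;=\;J^1_{\Y}\,\bigl(J^{\circ}_{\Y}\cap (J^{\circ}_{\Y})^n\bigr).
\]
With this, Mackey restriction to $J^1_{\Y}$ of the induced representation involves a single coset, and Frobenius reciprocity reduces the left-hand side to $\Hom_{J^1_{\Y}\cap(J^{\circ}_{\Y})^n}\bigl(\k^{\circ}_{\Y},(\k^{\circ}_{\Y})^n\otimes\tau^n\bigr)$; one then needs the one-dimensionality of $\Hom_{J^1_{\Y}\cap(J^{\circ}_{\Y})^n}(\k^{\circ}_{\Y},(\k^{\circ}_{\Y})^n)$, which comes from Lemma~\ref{intetakappa} and Theorem~\ref{intetas} (steps your simplification skips), before Lemma~\ref{BK532} can be applied on the intersection group; the refinement to an isomorphism of $M^{\circ}_{\Y}$-representations when $n\in I_G(\k^{\circ}_{\Y})$ then proceeds as you describe. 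So the second half of your argument is in the right spirit, but it must be run over $J^1_{\Y}\cap(J^{\circ}_{\Y})^n$ rather than over $J^1_{\Y}$, and the opening normalisation claim has to be abandoned, not ``extracted carefully'' from Morris.
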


\begin{proof}
Observe that we have~$J^{\circ}_{\Y}=J_{\Y}^1(J^{\circ}_{\Y}\cap (J^{\circ}_{\Y})^n)\supseteq J_{\Y}^1(P^{\circ}_{\Y}\cap (P^{\circ}_{\Y})^n)$ and moreover~$J^{\circ}_{\Y}/J^1_{\Y}=J_{\Y}^1(P^{\circ}_{\Y}\cap (P^{\circ}_{\Y})^n)/J^1_{\Y}$, as~$wK=K$ (and using Section \ref{Levzerosection} \ref{Part1levzero}).  Therefore
 \[J^{\circ}_{\Y}= J_{\Y}^1(P^{\circ}_{\Y}\cap (P^{\circ}_{\Y})^n).\]
Thus, by Mackey theory, we have
\[\Res_{J^1_{\Y}}^{J^{\circ}_{\Y}}(\ind^{J^{\circ}_{\Y}}_{J^{\circ}_{\Y}\cap (J^{\circ}_{\Y})^n}(\kappa^{\circ}_{\Y}\otimes\tau)^n)\simeq\ind^{J^1_{\Y}}_{J^1_{\Y}\cap (J^{\circ}_{\Y})^n}(\kappa^{\circ}_{\Y}\otimes\tau)^n.\]
Therefore, we have isomorphisms of vector spaces
\begin{align*}
\Hom_{J^1_{\Y}}(\kappa^{\circ}_{\Y},\ind^{J^{\circ}_{\Upsilon}}_{J^{\circ}_{\Upsilon}\cap (J^{\circ}_{\Upsilon})^n}(\kappa^{\circ}_{\Upsilon}\otimes\tau)^n)
&\simeq \Hom_{J^1_{\Y}}(\kappa^{\circ}_{\Y},\ind^{J^1_{\Upsilon}}_{J^1_{\Upsilon}\cap (J^{\circ}_{\Upsilon})^n}(\kappa^{\circ}_{\Upsilon}\otimes\tau)^n)\\
&\simeq \Hom_{J^1_{\Y}\cap (J^{\circ}_{\Y})^n}(\kappa^{\circ}_{\Y},(\kappa^{\circ}_{\Upsilon})^n\otimes\tau^n)
\end{align*}
which, checking actions, is actually an isomorphism of representations of~$M_{\Y}^{\circ}$, where the action of \[M_{\Y}^{\circ}=J^1_{\Y}(J^{\circ}_{\Y}\cap (J^{\circ}_{\Y})^n)/J^1_{\Y}\simeq (J^{\circ}_{\Y}\cap (J^{\circ}_{\Y})^n)/(J^1_{\Y}\cap (J^{\circ}_{\Y})^n)\] on homomorphisms in~$\Hom_{J^1_{\Y}\cap (J^{\circ}_{\Y})^n}(\kappa^{\circ}_{\Y},(\kappa^{\circ}_{\Upsilon})^n\otimes\tau^n)$ is given in the usual way by pre-composition with~$(\kappa^{\circ}_{\Y})^{-1}$ and post-composition with~$(\kappa^{\circ}_{\Upsilon})^n\otimes\tau^n$.  By Lemma \ref{intetakappa} we can choose~$S\in\Hom_{J^1_{\Y}\cap (J^{\circ}_{\Y})^n}(\kappa^{\circ}_{\Y},(\kappa^{\circ}_{\Upsilon})^n)$ nonzero, and~$\Hom_{J^1_{\Y}\cap (J^{\circ}_{\Y})^n}(\kappa^{\circ}_{\Y},(\kappa^{\circ}_{\Upsilon})^n)=\Hom_{J^1_{\Y}\cap (J^1_{\Y})^n}(\kappa^{\circ}_{\Y},(\kappa_{\Upsilon}^{\circ})^n)\simeq R$ by Theorem \ref{intetas}.  Hence, by Lemma~\ref{BK532} (applied with~$X_1=X_1^1=J^1_{\Y}$,~$X_2=J_{\Y}^n$,~$X_2^1=(J_{\Y}^1)^n$,~$\mu_1=\eta_{\Y}$,~$\mu_2=\kappa_{\Y}^n$,~$\zeta_1=1$, and~$\zeta_2=\tau^n$) we have an isomorphism of vector spaces
\[ \Hom_{J^1_{\Y}\cap (J^{\circ}_{\Y})^n}(1,\tau^n)\rightarrow \Hom_{J^1_{\Y}\cap (J^{\circ}_{\Y})^n}(\kappa^{\circ}_{\Y},(\kappa^{\circ}_{\Upsilon})^n\otimes\tau^n), \]
given by the tensor product with~$S$ which is an isomorphism if~$S\in \Hom_{J^{\circ}_{\Y}\cap (J^{\circ}_{\Y})^n}(\kappa^{\circ}_{\Y},(\kappa^{\circ}_{\Upsilon})^n)$, which will be the case if~$\Hom_{J^{\circ}_{\Y}\cap (J^{\circ}_{\Y})^n}(\kappa^{\circ}_{\Y},(\kappa^{\circ}_{\Upsilon})^n)\neq 0$, i.e. if~$n\in I_G(\kappa_\Y^{\circ})$. Moreover, as a representation of~$M^{\circ}_{\Y}=(J^{\circ}_{\Y}\cap (J^{\circ}_{\Y})^n)/(J^1_{\Y}\cap (J^{\circ}_{\Y})^n)$,
\[ \Hom_{J^1_{\Y}\cap (J^{\circ}_{\Y})^n}(1,\tau^n)\simeq  \tau^n.\]%\Hom_{P^1_{\Y}\cap (P^{\circ}_{\Y})^n}(1,\tau^n).\]
%Write~$P_K=P^{\circ}_{\Y}$,~$U_K=P^1_{\Y}$, as in the notation of Section \ref{Levzerosection}.  We have
%\[\Hom_{U_K\cap P_K^n}(1,\tau^n)\simeq \Hom_{(U_K\cap P_K^n)U_K^n}(1,\tau^n)=\tau^n,\]
%as~$\tau^n$ is trivial on~$U_K^n$, and by Section \ref{Levzerosection} \ref{Part1levzero}.
\end{proof}

It seems likely that the elements~$n$ considered in Lemma \ref{quasprojlemma} do intertwine~$\kappa_{\Y}^{\circ}$, we do not prove this here as it is not needed for our application.

\begin{thm}\label{quasprojtheorem}
Suppose~$\tau$ is cuspidal.  The representation~$\I_{\kappa_{\Y}^{\circ}}(\tau)$ is quasi-projective.
\end{thm}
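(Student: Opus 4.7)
The plan is to apply Lemma~\ref{quasiprojlemma} with the compact open subgroup $J^{\circ}_{\Y}$, the irreducible representation $\l=\k^{\circ}_{\Y}\otimes\t$, and $\pi=\I_{\k^{\circ}_{\Y}}(\t)$. Since $J^1_{\Y}$ is pro-$p$ and $\ell\neq p$, the restriction $\Res_{J^1_{\Y}}\pi$ is semisimple; its $\eta_{\Y}$-isotypic part is $J^{\circ}_{\Y}$-stable because $\eta_{\Y}$ extends to $\k^{\circ}_{\Y}$, yielding a $J^{\circ}_{\Y}$-decomposition $\pi=\pi_\eta\oplus\pi'$ in which $\pi'$ admits no $\eta_{\Y}$-subquotient on $J^1_{\Y}$ and therefore no $\l$-subquotient. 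By Lemma~\ref{simpcrit3}, $\pi_\eta\simeq\k^{\circ}_{\Y}\otimes\Pi$ with $\Pi=\R_{\k^{\circ}_{\Y}}(\pi)$; moreover the $\l$-isotypic part of $\pi$ equals that of $\pi_\eta$ and corresponds under this equivalence to the $\t$-isotypic part $\Pi_{[\t]}$ of $\Pi$.

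To compute $\Pi$, I mimic the argument of Corollary~\ref{maincorollary}, but \emph{without} assuming $P^{\circ}(\Y_E)$ is maximal or that $\GE$ has compact centre. By Mackey theory together with Lemma~\ref{zeroif} (whose proof carries over to $\k^{\circ}_{\Y}$, using that $I_G(\eta_{\Y})=J^1_{\Y}\GE J^1_{\Y}$ by Theorem~\ref{intetas}) and the bijection of Lemma~\ref{doublecosets} adapted to $J^{\circ}_{\Y}$, write $\Pi$ as a direct sum indexed by distinguished double coset representatives $n\in P^{\circ}(\Y_E)\backslash\GE/P^{\circ}(\Y_E)$ with projection $w$ in the affine Weyl group. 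When $wK\neq K$, Lemma~\ref{Lemma11} applied inside $\GE$ combined with cuspidality of $\t$ gives
\[
\Hom_{P^1_{\Y}}\bigl(1,\ind^{P^{\circ}(\Y_E)}_{P^{\circ}(\Y_E)\cap P^{\circ}(\Y_E)^n}(\t^n)\bigr)=0,
\]
and the argument of Lemma~\ref{lemmaifzerothen}(i) (which applies \emph{mutatis mutandis} to $\k^{\circ}_{\Y}$) forces the corresponding summand of $\Pi$ to vanish. When $wK=K$, we have $n\in\GE\subseteq I_G(\k^{\circ}_{\Y})$, so Lemma~\ref{quasprojlemma} identifies the summand with $\t^n$ as a representation of $M^{\circ}(\Y_E)$. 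Hence $\Pi\simeq\bigoplus_{n:\,wK=K}\t^n$, which is semisimple since each $\t^n$ is irreducible.

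Since $\Pi$ is semisimple, $\Pi_{[\t]}$ is a direct summand of $\Pi$, and its complement $\Pi_{[\neq\t]}$ has all irreducible subquotients of the form $\t^n\not\simeq\t$. Via the equivalence of Lemma~\ref{simpcrit3}, the $\l$-isotypic part of $\pi_\eta$ is $\k^{\circ}_{\Y}\otimes\Pi_{[\t]}$, a direct summand of $\pi_\eta$ and hence of $\pi$. The complementary $J^{\circ}_{\Y}$-subrepresentation $(\k^{\circ}_{\Y}\otimes\Pi_{[\neq\t]})\oplus\pi'$ has no $\l$-subquotient: subquotients of the first factor are of the form $\k^{\circ}_{\Y}\otimes\t^n\not\simeq\l$, and the second has no $\eta_{\Y}$-subquotient on $J^1_{\Y}$. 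Lemma~\ref{quasiprojlemma} now yields that $\pi$ is quasi-projective.

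The main technical point to verify carefully is that, for distinguished $n$ with $wK=K$, conjugation by $n$ preserves both $J^1_{\Y}$ and $J^{\circ}_{\Y}$, so that Lemma~\ref{quasprojlemma} genuinely applies to the Mackey summand indexed by $n$; this should follow from $n\in\GE$ fixing the point of the Bruhat--Tits building of $\GE$ corresponding to $\Y_E$, together with the compatibility of the buildings of $G$ and $\GE$ of~\cite{BroussousLemaire} and~\cite{Lemaire}.
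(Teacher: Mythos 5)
Your overall strategy is exactly the paper's: split off the $\eta$-isotypic part of $\pi=\I_{\k^{\circ}_{\Y}}(\t)$ using that $J^1_{\Y}$ is pro-$p$, identify it as $\k^{\circ}_{\Y}\otimes\R_{\k^{\circ}_{\Y}}(\pi)$ via Lemma~\ref{simpcrit3}, decompose $\R_{\k^{\circ}_{\Y}}\circ\I_{\k^{\circ}_{\Y}}(\t)$ by Mackey theory over distinguished double cosets of $P^{\circ}(\Y_E)$ in $\GE$, kill the summands with $wK\neq K$ by cuspidality through the level-zero comparison of Lemma~\ref{lemmaifzerothen}, treat the summands with $wK=K$ by Lemma~\ref{quasprojlemma}, and finish with the quasi-projectivity criterion of Lemma~\ref{quasiprojlemma}. (For the vanishing step the paper quotes Corollary~\ref{levelzerocorollaries} rather than Lemma~\ref{Lemma11}, but that is the same cuspidality argument.)

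There is, however, one step you have not justified: the assertion that $\GE\subseteq I_G(\k^{\circ}_{\Y})$, which you use to upgrade the conclusion of Lemma~\ref{quasprojlemma} from an isomorphism of vector spaces to an isomorphism of $M^{\circ}(\Y_E)$-representations with $\t^n$, and hence to deduce semisimplicity of $\Pi=\R_{\k^{\circ}_{\Y}}\circ\I_{\k^{\circ}_{\Y}}(\t)$. This intertwining property of $\b$-extensions is not available here: the remark immediately after Lemma~\ref{quasprojlemma} states explicitly that the relevant elements $n$ are only \emph{expected} to intertwine $\k^{\circ}_{\Y}$, and that this is not proved because it is not needed. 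The results actually proved (Lemma~\ref{intetakappa}, Lemma~\ref{Cor616St}) only control Hom-spaces over the smaller groups $J^1_{\La}\cap J_{\Y}^g$, resp.\ give containment of $J^{\circ}_P N_{\La}(\rho)J^{\circ}_P$, not $I_G(\k^{\circ}_{\Y})\supseteq\GE$. Fortunately your argument does not need semisimplicity of $\Pi$: each Mackey summand is either zero or, by the vector-space statement of Lemma~\ref{quasprojlemma}, has dimension $\dim\t$; so any summand admitting $\t$ as a subquotient is isomorphic to $\t$, whence the $\t$-isotypic part of $\Pi$ is a direct summand and no subquotient of its complement is isomorphic to $\t$. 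This dimension count is how the paper concludes, and you should replace your appeal to $\GE\subseteq I_G(\k^{\circ}_{\Y})$ by it (or else supply a proof of that intertwining, which is genuinely more work). Finally, the "technical point" you flag at the end is not the right worry: Lemma~\ref{quasprojlemma} does not require $n$ to normalise $J^1_{\Y}$ or $J^{\circ}_{\Y}$; what it uses is the equality $J^{\circ}_{\Y}=J^1_{\Y}\bigl(P^{\circ}_{\Y}\cap (P^{\circ}_{\Y})^n\bigr)$ for distinguished $n$ with $wK=K$, which is part of its proof.
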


\begin{proof}
Notice that, as~$J^1_{\Upsilon}$ is pro-$p$, the~$\eta$-isotypic component of~$\I_{\kappa^{\circ}_{\Upsilon}}(\tau)$ is a summand of the restriction of~$\I_{\kappa^{\circ}_{\Upsilon}}(\tau)$ to~$J_{\Upsilon}$, and no representation in its complement contains~$\eta$, whence cannot be isomorphic to~$\lambda=\kappa^{\circ}_{\Upsilon}\otimes\tau$.  However, we have~$\I_{\kappa^{\circ}_{\Upsilon}}(\tau)^\eta\simeq\kappa^{\circ}_{\Upsilon}\otimes \R_{\kappa^{\circ}_{\Upsilon}}\circ\I_{\kappa^{\circ}_{\Upsilon}}(\tau)$ (\cf~\cite[Lemme~2.6]{VA2}).  We can decompose~$\R_{\k^{\circ}_{\Y}}\circ\I_{\k^{\circ}_{\Y}}(\t)$ as a direct sum and choose distinguished double cosets for each summand as in the proof of Theorem \ref{maincorollary}.  By Lemmas \ref{lemmaifzerothen} and \ref{quasprojlemma}, the summands are either zero (when the distinguished coset representative projects to an element~$w$ with~$wK\neq K$), or have the same dimension of~$\tau$.  Hence the~$\kappa^{\circ}_{\Upsilon}\otimes\tau$-isotypic component must be a direct summand of the~$\eta$-isotypic component of~$\I_{\kappa^{\circ}_{\Upsilon}}(\tau)$ and, by Lemma \ref{quasiprojlemma}, the representation~$\I_{\kappa^{\circ}_{\Upsilon}}(\tau)$ is quasi-projective.
\end{proof}

%%%%%%%%%%%%%%%%%%%%%%%%%%%%%%%%%%%%%%%%%%%%%%%%%%%
%%%%%%%%%%%%%%%%%%%%%%%%%%%%%%%%%%%%%%%%%%%%%%%%%%%
\section{Exhaustion}\label{Exhaust}
%%%%%%%%%%%%%%%%%%%%%%%%%%%%%%%%%%%%%%%%%%%%%%%%%%%
%%%%%%%%%%%%%%%%%%%%%%%%%%%%%%%%%%%%%%%%%%%%%%%%%%%

We show how Corollary~\ref{maincorollary} can be used to show certain
representations of~$G$ we have constructed are irreducible and
cuspidal. Moreover, with Theorem~\ref{skewcovers}, we show that this
construction exhausts all irreducible cuspidal representations
of~$G$. In the complex case this construction is the same
as~\cite[Corollary~6.19]{St08}.  However, in addition to extending
this construction to~$\ell$-modular representations,
Corollary~\ref{maincorollary} allows us to make some comparisons between
certain irreducible cuspidal representations in our exhaustive lists.

We call a skew semisimple stratum~$[\La,n,0,\b]$ \emph{cuspidal} if~$\GE$ has compact centre and~$P^{\circ}(\La_E)$ is a maximal parahoric subgroup.  A \emph{type} for~$G$ is a pair~$(J,\k\otimes\t)$ where~$J=J(\b,\La)$ for some self-dual semisimple stratum~$[\La,n,0,\b]$,~$\k$ is a~$\b$-extension of the unique Heisenberg representation~$\eta$ containing~$\th\in\Cc_{-}(\La,0,\b)$ and~$\t$ is an irreducible representation of~$J/J^1$ with cuspidal restriction to $J^{\circ}/J^1$.  We call a type~$(J,\k\otimes\t)$ \emph{cuspidal} if~$[\La,n,0,\b]$ is a cuspidal stratum.  We call a cuspidal type~$(J,\k\otimes\t)$ \emph{supercuspidal} if~$\t$ is supercuspidal on restriction to~$J^{\circ}/J^1$.

\begin{thm}\label{cusptype}
Let~$(J,\k\otimes\t)$ be a cuspidal type for~$G$ relative to the skew semisimple stratum~$[\La,n,0,\b]$, then~$\I_{\k}(\t)$ is irreducible and cuspidal.
\end{thm}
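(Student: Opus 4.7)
The plan is to verify the two conditions of the simple criterion for irreducibility (Lemma \ref{simpcrit1}) for $(J,\lambda)$ with $\lambda=\kappa\otimes\tau$, and then argue cuspidality separately. The decisive input is Corollary \ref{maincorollary}(ii), whose hypotheses hold precisely because $[\Lambda,n,0,\beta]$ is a cuspidal stratum: $G_E$ has compact centre, $P^\circ(\Lambda_E)$ is a maximal parahoric subgroup of $G_E$, and $\tau$ has cuspidal restriction to $M^\circ(\Lambda_E)\simeq J^\circ/J^1$. This gives the key identity $\R_\kappa\circ\I_\kappa(\tau)\simeq\tau$.

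For the endomorphism computation, Frobenius reciprocity combined with Lemma \ref{simpcrit3} (which realises $\I_\kappa$ and $\R_\kappa$ as adjoint functors between $\RR_R(M(\Lambda_E))$ and the $\eta$-isotypic subcategory of $\RR_R(J)$) yields
\[
\End_G(\I_\kappa(\tau))\simeq\Hom_{M(\Lambda_E)}(\tau,\R_\kappa\circ\I_\kappa(\tau))\simeq\End_{M(\Lambda_E)}(\tau)\simeq R,
\]
using Corollary \ref{maincorollary}(ii) and irreducibility of $\tau$. For the sub-implies-quotient condition, let $\pi$ be an irreducible $R$-representation of $G$ with $\lambda\hookrightarrow\pi|_J$. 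Frobenius reciprocity produces a non-zero, hence surjective, map $\I_\kappa(\tau)\twoheadrightarrow\pi$; applying the exact functor $\R_\kappa$ gives $\tau\simeq\R_\kappa\I_\kappa(\tau)\twoheadrightarrow\R_\kappa(\pi)$, while the embedding $\lambda\hookrightarrow\pi$ yields $\tau\hookrightarrow\R_\kappa(\pi)$. Irreducibility of $\tau$ forces $\R_\kappa(\pi)\simeq\tau$. Since $J^1$ is pro-$p$ of order invertible in $R$, the $\eta$-isotypic subspace $\pi^\eta$ is a $J$-direct summand of $\pi|_J$ and, by Lemma \ref{simpcrit3}, it equals $\kappa\otimes\R_\kappa(\pi)\simeq\lambda$, so $\lambda$ is also a $J$-quotient of $\pi$. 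Lemma \ref{simpcrit1} then yields irreducibility of $\I_\kappa(\tau)$.

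For cuspidality, $\I_\kappa(\tau)=\ind_J^G(\lambda)$ is compactly induced from the compact open subgroup $J$ with finite-dimensional $\lambda$, so a direct coset calculation shows that the generating matrix coefficient is supported on $J$ itself; by $G$-equivariance every matrix coefficient of the (now irreducible) $\I_\kappa(\tau)$ is compactly supported in $G$, a fortiori compactly supported modulo $Z(G)$, which is itself compact for classical groups. The matrix coefficient characterisation of cuspidality in the $\ell$-modular setting of \cite{Vig96} then concludes. The main obstacle is ensuring this criterion applies in positive characteristic without assuming admissibility a priori; the cleanest route is a direct computation of the Jacquet module $r^G_{P'}(\ind_J^G\lambda)$ via the geometric lemma, whose subquotients indexed by $P'\backslash G/J$ involve $(U'\cap J^g)$-coinvariants, or equivalently invariants since $U'\cap J^g$ is pro-$p$. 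The key structural input, which is where the cuspidal hypotheses on the stratum are essential, is that for any proper parabolic $P'=M'U'$ and any $g\in G$ the image of $g^{-1}U'g\cap J$ in $M(\Lambda_E)=J/J^1$ lies in the unipotent radical of a proper parabolic of $M^\circ(\Lambda_E)$; cuspidality of $\tau|_{M^\circ(\Lambda_E)}$ then annihilates the invariants, forcing every Jacquet module piece to vanish.
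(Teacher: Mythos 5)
Your treatment of irreducibility is essentially the paper's own argument: you verify the two hypotheses of Lemma~\ref{simpcrit1} using Corollary~\ref{maincorollary}, exactness of~$\R_{\k}$ (equivalently, of the~$\eta$-isotypic functor, since~$J^1$ is pro-$p$) and Lemma~\ref{simpcrit3} to identify~$\pi^\eta\simeq\k\otimes\t$; the computation~$\End_G(\I_{\k}(\t))\simeq\Hom_{M(\La_E)}(\t,\R_{\k}\circ\I_{\k}(\t))\simeq R$ is exactly what the paper leaves implicit when it quotes Corollary~\ref{maincorollary}.

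For cuspidality, however, the route you designate as your actual argument has a genuine gap. The paper simply invokes the classical argument (\cite[\S1]{Carayol}, \cite[II~2.7]{Vig96}): an irreducible representation compactly induced from a compact open subgroup has coefficients supported compactly modulo the (compact) centre and is therefore cuspidal; in Vign\'eras's formulation this needs no admissibility hypothesis, so the ``obstacle'' that prompts your pivot is not there. The Jacquet-module computation you propose instead does not work as stated. By Mackey decomposition, $r^G_{P'}\bigl(\ind_J^G\l\bigr)$ is a direct sum over~$P'\backslash G/J$ of the~$U'$-coinvariants of~$\ind_{P'\cap gJg^{-1}}^{P'}$ of the conjugate of~$\l$, and these summands are not just (inductions of) the~$(g^{-1}U'g\cap J)$-coinvariants of~$\l$. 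More seriously, your ``key structural input'' cannot force the vanishing you need: for many~$g$ one has~$g^{-1}U'g\cap J\subseteq J^1$, so its image in~$J/J^1$ is trivial; it then lies in the unipotent radical of a proper parabolic of~$M^\circ(\La_E)$ only vacuously, and cuspidality of~$\t$ annihilates invariants (or coinvariants) under the \emph{full} unipotent radical, not under an arbitrary subgroup of it. For such cosets the relevant summand dies only because~$U'$ is non-compact and acts on compactly supported functions --- which is precisely the support/coefficient mechanism you set aside. So either quote the classical argument, as the paper does, or, if you insist on a Jacquet-module proof, you must treat those cosets by that compactness mechanism rather than by cuspidality of~$\t$.
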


\begin{proof}
The conditions on~$[\La,n,0,\b]$ guarantee that~$P(\La_E)$ is its own normaliser.  By Corollary~\ref{maincorollary},~$\End_G(\I_{\k}(\t))\simeq R$. Let~$\pi$ be an irreducible~$R$-representation of~$G$ such that~$\k\otimes\t$ is a subrepresentation of~$\pi$ (hence~$\pi$ is a quotient of~$\I_{\k}(\t)$).  We must show that~$\k\otimes\t$ is also a quotient of~$\pi$ in order to apply Lemma~\ref{simpcrit1}.  As~$J^1$ is pro-$p$, we can decompose~$\pi\simeq \pi^\eta\oplus \pi(\eta)$ where~$\pi^\eta$ denotes the~$\eta$-isotypic component of~$\pi$ and no subquotient of~$\pi(\eta)$ contains~$\eta$.   By Corollary~\ref{maincorollary}, we have~$\I_{\k}(\t)^\eta\simeq \k\otimes \t$, and hence by exactness~$\pi^\eta\simeq  \k\otimes \t$ (or zero which it can't be as~$\k\otimes\t$ is a subrepresentation of~$\pi$).   Therefore, by Lemma~\ref{simpcrit1},~$\I_{\k}(\t)$ is irreducible. Cuspidality follows from a classical argument (\cf~\cite[\S1]{Carayol} and~\cite[\S2,~2.7]{Vig96}).
\end{proof}

\begin{thm}\label{exhaustion}
Every irreducible cuspidal representation of~$G$ contains a cuspidal type.
\end{thm}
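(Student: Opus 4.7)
The plan is to start with an irreducible cuspidal $R$-representation $\pi$ of $G$, produce a type $(J,\lambda)$ contained in $\pi$ by the minimality construction already introduced in Section~\ref{skewcoverssect}, and then use Theorem~\ref{skewcovers} to force the underlying stratum to be cuspidal via a contradiction with the cuspidality of $\pi$.

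Let $\pi$ be an irreducible cuspidal representation of $G$. As in the paragraph preceding Theorem~\ref{skewcovers}, choose a pair $([\La,n,0,\b],\th)$ consisting of a skew semisimple stratum and a self-dual semisimple character $\th\in\Cc_{-}(\La,0,\b)$ contained in $\pi$, with $P^\circ(\La_E)$ minimal under inclusion; existence of such a pair for every irreducible smooth $R$-representation of $G$ is the formal transfer to $\ell\neq p$ of the corresponding result of~\cite{St08}, valid because every intermediate group is pro-$p$. Let $\eta$ be the unique Heisenberg extension of $\th$ to $J^1=J^1(\b,\La)$ (Theorem~\ref{theorem33}), so that $\eta\subseteq\pi$. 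By Lemma~\ref{simpcrit3} applied on $J^\circ=J^\circ(\b,\La)$, there is a standard $\b$-extension $\kappa^\circ$ of $\eta$ to $J^\circ$ and a representation $\sigma$ of $J^\circ/J^1\simeq M^\circ(\La_E)$ such that the $\eta$-isotypic component of $\pi|_{J^\circ}$ is $\kappa^\circ\otimes\sigma$; picking an irreducible constituent $\rho\subseteq\sigma$, we have $\lambda^\circ=\kappa^\circ\otimes\rho\hookrightarrow\pi|_{J^\circ}$. The minimality of $P^\circ(\La_E)$ forces $\rho$ to be cuspidal by the argument of \cite[Lemma~7.4]{St08}: otherwise a proper Jacquet module of $\rho$ would give, via inflation along a strictly smaller parahoric $P^\circ(\La'_E)\subsetneq P^\circ(\La_E)$, a self-dual semisimple character $\th'\in\Cc_{-}(\La',0,\b)$ contained in $\pi$, contradicting minimality.

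I claim next that the stratum $[\La,n,0,\b]$ is necessarily cuspidal, i.e. $\GE$ has compact centre and $P^\circ(\La_E)$ is a maximal parahoric subgroup of $\GE$. Suppose not. Then Theorem~\ref{skewcovers} yields an exactly subordinate self-dual decomposition $V=\bigoplus_{j=-m}^m W^{(j)}$ with stabiliser a proper Levi $M\subsetneq G$, a parabolic $P=M\ltimes U$, and a $G$-cover $(J^\circ_P,\lambda^\circ_P)$ of the Levi pair $(J^\circ_P\cap M,\lambda_P|_M)$ relative to $P$. By Lemma~\ref{Lemma61St}, $\ind_{J^\circ_P}^{J^\circ}\lambda^\circ_P\simeq\lambda^\circ$, so $\pi|_{J^\circ_P}$ contains $\lambda^\circ_P$. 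The characterization of $G$-covers in the modular setting, as used in the proof of Theorem~\ref{Gcovers2} via Blondel's criterion, then forces $r^G_P(\pi)\neq 0$, contradicting the cuspidality of $\pi$. Hence $[\La,n,0,\b]$ is a cuspidal stratum.

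Finally, to promote $\lambda^\circ$ to a cuspidal type at the $J=J(\b,\La)$-level, pick any $\b$-extension $\kappa$ of $\eta$ to $J$ restricting to $\kappa^\circ$ on $J^\circ$. By Lemma~\ref{simpcrit3} applied on $J$, the $\eta$-isotypic component of $\pi|_J$ is of the form $\kappa\otimes\tau'$ for a representation $\tau'$ of the finite group $J/J^1$; pick any irreducible $\tau\hookrightarrow\tau'$, so $\kappa\otimes\tau\hookrightarrow\pi$. The constituent $\rho$ of the previous step occurs in $\tau|_{J^\circ/J^1}$, and since $J^\circ$ is normal in $J$ Clifford theory identifies $\tau|_{J^\circ/J^1}$ with a sum of $J/J^\circ$-conjugates of $\rho$, which are all cuspidal. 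Therefore $(J,\kappa\otimes\tau)$ is a cuspidal type contained in $\pi$, as required. The main obstacle is the initial existence step, producing a self-dual semisimple character contained in $\pi$: this requires translating the full stratum-exhaustion theory of~\cite{BK93,St05,St08} to the modular setting, which is routine but must be done in detail since it underlies the whole argument.
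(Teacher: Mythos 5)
Your proposal is correct and follows essentially the same route as the paper: existence of a skew semisimple character in $\pi$ (the paper cites \cite[Theorem~5.1]{St05}, whose proof transfers to $\ell\neq p$), the minimality choice forcing $\rho$ cuspidal, Lemma~\ref{simpcrit3} to produce $\k\otimes\t$, and Theorem~\ref{skewcovers} together with the cover criterion (\cite[II~10.1]{vignerasselecta}, equivalently Blondel's injectivity criterion) to rule out a non-cuspidal stratum by cuspidality of $\pi$. Your extra bookkeeping passing from $J^\circ$ to $J$ via Clifford theory is a harmless elaboration of what the paper does directly at the level of $J$.
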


\begin{proof}
Let~$\pi$ be an irreducible cuspidal representation of~$G$.  By~\cite[Theorem~5.1]{St05}, the proof of which applies in positive characteristic~$\ell\neq p$, there exist a skew semisimple stratum~$[\La,n,0,\b]$ and~$\th\in\Cc_{-}(\La,0,\b)$ such that~$\pi$ contains~$\th$.  Thus~$\pi$ contains the unique extension~$\eta$ of~$\th$ to~$J^1$.  Let~$\k$ be a standard~$\b$-extension of~$\eta$.  By Lemma~\ref{simpcrit3}, the functor~$\k\otimes -$ identifies the category of~$R$-representations of~$M(\La_E)$ with the category of~$\eta$-isotypic representations of~$J$.  Thus~$\pi$ contains~$\k\otimes \t$ for some irreducible representation~$\t$ of~$J/J^1$. The proof now follows, using~\cite[II~10.1]{vignerasselecta}, from Theorem~\ref{skewcovers} (\cf~\cite[Appendix~A]{MiSt} and~\cite[Theorem~7.14]{St08}). 
\end{proof}

A consequence of the of Corollary~\ref{maincorollary} is the following intertwining implies conjugacy theorem:

\begin{thm}\label{cuspcomp}
Suppose~$(J_{\La},\k_{\La}\otimes\t_{\La})$ and~$(J_{\Y},\k_{\Y}\otimes \t_{\Y})$ are cuspidal types associated to the semisimple strata~$[\La,n_{\La},0,\b]$ and~$[\Y,n_{\Y},0,\b]$.  If~$I_{\k_{\La}}(\t_{\La})\simeq \I_{\k_{\Y}}(\t_{\Y})$, then there exists~$g\in G_E$ such that~$(J_{\Y}^g,\k_{\Y}^g\otimes \t_{\Y}^g)=(J_{\La},\k_{\La}\otimes\t_{\La})$.
\end{thm}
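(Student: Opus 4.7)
The plan is to use the induction and restriction functors of Section~\ref{Redlevzerosect}, together with Corollary~\ref{maincorollary}, to transport the hypothesis into a level-zero statement inside $\GE$, and then to match the cuspidal types on a common compact-open subgroup.

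First I would apply $\R_{\kappa_\La}$ to the assumed isomorphism $\I_{\kappa_\La}(\tau_\La)\simeq \I_{\kappa_\Y}(\tau_\Y)$. Since $(J_\La,\kappa_\La\otimes\tau_\La)$ is a cuspidal type, $\GE$ has compact centre and $P^\circ(\La_E)$ is a maximal parahoric, so Corollary~\ref{maincorollary}\ref{maincorollarypart2} gives $\R_{\kappa_\La}\I_{\kappa_\La}(\tau_\La)\simeq \tau_\La$, which is nonzero. Hence $\R_{\kappa_\La}\I_{\kappa_\Y}(\tau_\Y)\ne 0$, and the contrapositive of Corollary~\ref{maincorollary}\ref{maincorollarypart1} forces $\R_{\La_E}\circ\I_{\Y_E}(\tau_\Y)\ne 0$ inside $\GE$. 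Now $\tau_\Y$ is cuspidal on $M^\circ(\Y_E)$ and both $P^\circ(\La_E), P^\circ(\Y_E)$ are maximal parahorics of $\GE$, so the contrapositive of Corollary~\ref{levelzerocorollaries}\ref{part2levelzerocorollaries}, applied inside $\GE$, produces $g_0\in\GE$ with $P^\circ(\Y_E)^{g_0}=P^\circ(\La_E)$.

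Next, replace $(J_\Y,\kappa_\Y\otimes\tau_\Y)$ throughout by its $g_0$-conjugate. By Theorem~\ref{theorem22} the semisimple stratum, the semisimple character and its Heisenberg representation, as well as the $\beta$-extension $\kappa_\Y$, all transport compatibly; the conjugate is again a cuspidal type. Since $P^\circ(\La_E)$ is a maximal parahoric vertex in the building of $\GE$, after a further adjustment of the affine class — which, by the remark following Theorem~\ref{theorem22}, changes no associated object — we may assume $\La=\Y$ as $\o_E$-lattice sequences. Then $J_\La=J_\Y$, $\theta_\La=\theta_\Y$ (related by the identity transfer), $\eta_\La=\eta_\Y$, and by Theorem~\ref{maxbetas} and Theorem~\ref{bijectionbetas} the two $\beta$-extensions $\kappa_\La,\kappa_\Y$ of the common $\eta_\La$ on the common $J_\La$ differ by some character $\chi$ of $J_\La/J^1_\La\simeq M(\La_E)$.

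Writing $\kappa_\Y=\kappa_\La\otimes\chi$, the hypothesis rewrites as $\I_{\kappa_\La}(\tau_\La)\simeq \I_{\kappa_\La}(\chi\otimes\tau_\Y)$. Applying $\R_{\kappa_\La}$ a second time and invoking Corollary~\ref{maincorollary}\ref{maincorollarypart2} yields $\tau_\La\simeq \chi\otimes\tau_\Y$ as representations of $M(\La_E)$, whence $\kappa_\La\otimes\tau_\La\simeq \kappa_\La\otimes\chi\otimes\tau_\Y=\kappa_\Y\otimes\tau_\Y$ as representations of $J_\La=J_\Y$. Composing with the initial $g_0$-conjugation gives the required element of $\GE$. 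The main obstacle will be the compatibility claims in the reduction step: verifying that $\GE$-conjugation together with adjustment of affine classes is enough to make the two self-dual $\o_E$-lattice sequences literally coincide (equivalently, to pin down the same vertex of the building of $\GE$), and tracking through Theorems~\ref{maxbetas} and~\ref{bijectionbetas} the precise sense in which two $\beta$-extensions of the same Heisenberg representation on the same group differ by a single character of the finite reductive quotient.
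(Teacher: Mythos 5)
Your proposal is correct and follows essentially the same route as the paper: Corollary~\ref{maincorollary}\ref{maincorollarypart1} together with Corollary~\ref{levelzerocorollaries}\ref{part2levelzerocorollaries} to put $\La_E$ and $\Y_E$ in the same $\GE$-orbit, reduction to $\La=\Y$ so that the two $\beta$-extensions of the common $\eta$ differ by a character $\chi$ of $M(\La_E)$, and then Corollary~\ref{maincorollary}\ref{maincorollarypart2} (via adjointness/functoriality of $\R_{\k}$) to conclude $\t_\La\simeq\chi\otimes\t_\Y$. The extra care you take over affine classes and the character-twist of $\beta$-extensions is exactly the glossed content of the paper's argument, so there is no gap to report.
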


\begin{proof}
By Corollary~\ref{maincorollary}~\ref{maincorollarypart1} and Corollary \ref{levelzerocorollaries}~\ref{part2levelzerocorollaries}, the lattice sequences~$\La_E$ and~$\Y_E$ are in the same~$\GE$-orbit.  Hence, by conjugating by an element of~$\GE$ if necessary, we can assume~$\La=\Y$.  Hence the groups of the cuspidal types coincide, and by twisting~$\t_{\La}$ by a character~$\chi$ of~$M(\La_E)$ if necessary, we can assume~$\k_{\La}=\k_{\Y}$.  By Corollary~\ref{maincorollary}~\ref{maincorollarypart2} and adjointness, we have
\[\Hom_{M(\Y_E)}(I_{\k_{\Y}}(\chi\otimes\t_{\La}), \I_{\k_{\Y}}(\t_{\Y}))=\Hom_{M(\Y_E)}(\chi\otimes\t_{\La},\t_{\Y}),\]
which is non-zero by hypothesis.  Hence~$\chi\otimes\t_{\La}\simeq \t_{\Y}$ and thus the cuspidal types are conjugate by an element of~$\GE$.
\end{proof}%
\bibliographystyle{plain}
\bibliography{CuspClass}
%%%%%%%%%%%%%%%%%%%%%%%%%%%%%%%%%%%%%%%%%%%%%%%%%%%

\end{document}